\newtheorem{axiom}{Axiom}
\newtheorem{claim}[axiom]{Claim}
\newtheorem{theorem}{Theorem}[section]
\newtheorem{lemma}[theorem]{Lemma}
\newtheorem{prop}[theorem]{Proposition}
\newtheorem{remark}[theorem]{Remark}
\newtheorem{open}[theorem]{Problem}
\newtheorem{corollary}[theorem]{Corollary}
\theoremstyle{remark}
\newtheorem{definition}[theorem]{Definition}
\numberwithin{equation}{section}
\def\beq{ \begin{equation} }
	\def\eeq{ \end{equation} }
\def\ep{\varepsilon}
\def\M{t_{\mathrm{meet}}} 
\def\RM{\tau_{\mathrm{meet}}}
\def\coal{t_{\mathrm{coal}}}
\def\Rcoal{\tau_{\mathrm{coal}}}
\def\E{\mathbb{E}}
\def\C{\mathcal{C}}
\def\G{\mathcal{G}}
\def\U{\mathcal{U}}
\def\R{\mathbb{R}}
\def\P{\mathbb{P}}
\def\RR{\mathbb{R}}
\def\N{\overline{N}}
\def\f{\bar{f}}
\def\r{\mathbf{r}}
\def\1{\mathbbm{1}}
\def\TV{\mathrm{TV}}
\def\ZZ{\mathbb{Z}}
\def\d{\bar{d}}
\def\rel{t_{\mathrm{rel}}}
\def\mix{\mathrm{mix}}
\def\t{\boldsymbol{t}}
\def\t{\boldsymbol{t}}
\def\s{\boldsymbol{s}}
\def\Poi{\hbox{Poisson}}
\def\u{\mathfrak{u}}
\def\scG{\mathfrak{G}}
\DeclareMathOperator{\Var}{Var}
\DeclareMathOperator{\Unif}{Unif}
\DeclareMathOperator{\Exp}{Exp}
\DeclareMathOperator{\CM}{\mathbb{CM}}
\DeclareMathOperator{\UGT}{\mathbb{UGT}}
\newcommand{\norm}[1]{\left\Vert#1\right\Vert}
\newcommand{\abs}[1]{\left\vert#1\right\vert}
\DeclareMathSymbol{\eset}{\mathalpha}{AMSb}{"3F}
\begin{document}

\begin{frontmatter}
\title{Mean Field Behavior during the Big Bang Regime for Coalescing Random Walks}
\runtitle{Mean Field Behavior for CRW}

\begin{aug}
\author[A]{\fnms{Jonathan} \snm{Hermon}\ead[label=e1]{jonathan.hermon@gmail.com}},
\author[B]{\fnms{Shuangping} \snm{Li}\ead[label=e2]{sl31@princeton.edu}},
\author[C]{\fnms{Dong} \snm{Yao}\ead[label=e3]{wonderspiritfall@gmail.com}}
\and
\author[D]{\fnms{Lingfu} \snm{Zhang}\ead[label=e4]{lingfuz@math.princeton.edu}}
\address[A]{Department of Mathematics, University of British Columbia, Vancouver, Canada.
\printead{e1}}

\address[B]{PACM, Princeton University, Princeton, NJ, USA.
\printead{e2}}

\address[C]{Research Institute of Mathematical Science, Jiangsu Normal University, Xuzhou, China.
		\printead{e3}}
	
\address[D]{Department of Mathematics, Princeton University, Princeton, NJ, USA. 
	\printead{e4}}
\end{aug}

\begin{abstract}
In this paper, we consider coalescing random walks on a general connected graph $\G=(V,E)$.
We set up a unified framework to study the leading order of the decay rate of $P_t$, the expectation of the fraction of occupied sites at time $t$, particularly for the `Big Bang' regime where $$t \ll \coal:=\E(\inf\{s:\text{There is only one particle at time }s \}).$$
Our results show that $P_t$ satisfies certain `mean-field behaviors' if the graphs satisfy certain `transience-like' conditions.

We apply this framework to two families of graphs: (1) graphs given by the configuration model whose degree distribution is supported in the interval $[3,\bar d]$ for some $\bar d\geq 3$, and (2) finite and infinite transitive graphs.
In the first case, we show that for $1 \ll t \ll \abs{V}$, $P_t$ decays in the order of $t^{-1}$, and $(tP_t)^{-1}$ is approximately the probability that two particles starting from the root of the corresponding unimodular Galton-Watson tree never collide after one of them leaves the root. The number $(tP_t)^{-1}$ is also roughly $|V|/(2t_{\text{meet}})$, where $t_{\text{meet}}$ is the mean meeting time of two independent walkers.
By taking the local weak limit, we prove convergence of $tP_t$ as $t\to\infty$ for the corresponding unimodular Galton-Watson tree. 
For the second family of graphs, we consider a growing sequence of finite transitive graphs $\mathcal{G}_n=(V_n, E_n)$, satisfying that $t_{\text{meet}}=O(\abs{V_n})$ and the inverse of the spectral gap $t_{\text{rel}}$ is $o(\abs{V_n})$. We show that 
$t_{\text{rel}} \ll t \ll t_{\text{coal}}$, $(tP_t)^{-1}$ is approximately the probability that two random walks never meet before time $t$, and it is also roughly $|V|/(2t_{\text{meet}})$.
In addition, we define a natural `uniform transience' condition, and show that in the transitive setup it implies the above estimates of $tP_t$ for all $1 \ll t\ll t_{\text{coal}}$.
Estimates of $tP_t$ are also obtained for all infinite transient transitive unimodular graphs, in particular, all transient transitive amenable graphs.
\end{abstract}

\begin{keyword}[class=MSC2020]
\kwd[Primary ]{60J90}
\kwd{60J27}
\end{keyword}

\begin{keyword}
\kwd{Coalescing random walks}
\kwd{Kingman's coalescent}
\end{keyword}

\end{frontmatter}
\tableofcontents

\section{Introduction}

We study \emph{coalescing random walks} (\textbf{CRW}) on a connected graph $\G=(V,E)$, defined as follows. 
Initially each vertex $v \in V$ is occupied by one particle.
Each particle performs an independent continuous-time edge simple random walk (\textbf{SRW}); i.e., for each edge incidental to the current location of the particle, with rate $1$ it jumps across that edge. Whenever two particles meet (or  `collide', i.e., one particle jumps to the current location of the other one), they merge into one particle which continues to perform a SRW. Whenever such a collision happens, the total number of particles in the system decreases by 1 if the graph is finite.

One can consider the more general case in which the jump rate depends on an edge. A \emph{graphical representation} of the model is obtained by letting each directed edge $(x,y)$ ring at times given by a Poisson process of rate $r_{x,y}$, independently for different directed edges. Whenever a particle is at $x$ at time $t_-$ and the directed edge $(x,y)$ rings at time $t$, the particle moves to $y$ at time $t$. It is easy to see that each particle follows a continuous-time Markov chain denoted by $(V,\r)$, where $\r=\{r_{x,y}\}_{x, y\in V}$. Any two particles evolve independently until they collide; and once they collide, they will be at the same location at all later times (and thus they have effectively coalesced).

Unless otherwise stated, throughout this paper we assume that $V$ is either finite or countable; and the Markov chain has symmetric jump rates, i.e., $r_{x,y}=r_{y,x}$ for all $x,y$.
This implies that the Markov chain $(V,\r)$  is reversible w.r.t.\ the counting measure, and when $V$ is finite, its stationary distribution $\pi$ is the uniform distribution on $V$. We also assume $(V,\r)$ is irreducible, i.e., for any $x,y\in V$ and any $t>0$, we have $p_t(x,y)>0$ where $p_t(\cdot, \cdot)$ is the time-$t$ transition probability corresponding to $\r$.  In particular, when $(V,\r)$ is a SRW on a graph $\G$, we assume $\G$ is connected. 

This model has received a lot of attention in the literature. 
For graphs such as $\ZZ^d$ and $(\ZZ/n\ZZ)^d$, the asymptotic behaviors of the particle densities in CRW have been studied in many works, including \cite{bramson1980asymptotics,cox1989coalescing,van2000asymptotic,BK}. Bounds on the particle densities and on the coalescence time (to be defined in \eqref{def:rcoal} below) for general graphs or some other families of graphs have also been proven, for instance in \cite{oliveira2012coalescence,oliveira2013mean,cooper2013coalescing,benjamini2016site,foxall2018coalescing,meet}.
See also the references therein. Part of the motivation of studying the CRW model stems from its connection with the voter model via duality.
We refer interested reader to the survey \cite[Chapter 2 and Chapter 5]{berestycki2009recent} for a detailed introduction of the CRW model and its mean field version Kingman's coalescent, including many known results for CRW on $\ZZ^d$, the duality between CRW and the voter model, and the decay rate
of $1/t$ for the particle densities in Kingman's coalescent. We will explain some of these results in the introduction.

We denote by $\xi_t$ the set of vertices in $V$ that are occupied at time $t$.
When $V$ is finite, a quantity of interest  is the \emph{coalescence time} 
\begin{equation} \label{def:rcoal}
\Rcoal:=\inf\{t:|\xi_t|=1 \},    
\end{equation}
which is the first time when there is only one particle left. 
By duality, the law of $\Rcoal$ is the same as that of the consensus time for the voter model (see Section \ref{s:prelim} for details).

We are interested in the rate of decay (as $t \to \infty$) of the probability $P_t(x)$ of some specific site $x$ being occupied at time $t$
\[P_t(x):=\P(x \in \xi_t). \]
In the finite setup, we can define the expected density $P_t$ of particles at time $t$
\[P_t:=\frac{\E |\xi_t|}{|V|}=\E P_t(\u), \quad \text{where} \quad \u \sim \Unif(V) \text{ and is independent of the CRW}. \]

Under transitivity\footnote{We say that $(V,\r)$ is \emph{transitive}, if for all $u,v \in V$ there exists a permutation $\phi=\phi_{u,v}:V \to V$ such that $\phi(u)=v$ and $r_{\phi(x),\phi(y)}=r_{x,y}$ for all $x,y \in V$.
	When $(V,\r)$ is transitive,
	$P_t(x)$ does not depend on $x$.}, we simply write $P_{t}$ for $P_t(x)$, even when $|V|=\infty$. We also use the notation $P_t(\G)$ sometimes, when we consider a graph $\G$ where each particle takes a SRW.
Clearly,
$P_t$ is a decreasing function of $t$, which  vanishes in the infinite setup and converges to $1/|V|$ in the finite setup as $t \to \infty$, whenever $(V,\r)$ is irreducible. 

In the finite setup, loosely speaking, the evolution of CRW can be divided into two time periods. The \emph{coalescing regime} $t \asymp \coal:=\E \Rcoal$ during which there are typically $O(1)$ particles; and the regime $t \ll \coal$, known as the \emph{Big Bang Regime}, a term coined (in this context) by Rick Durrett, during which (typically) there are many particles and the model evolves rapidly (see, e.g., \cite{durrett2010some}). 

The aim of this work is to improve the understanding of the CRW model beyond the setup of $\ZZ^d$, and in the finite setup to improve the understanding of the model during the big bang regime. The initial motivation of this work was to answer questions posed by Rick Durrett in an AMS workshop on Stochastic Spatial Models
(RI 2019), which we paraphrase here:
\begin{itemize}
	\item[(i)] Prove results for the decay of $P_t$ as $t\to\infty$, for CRW on random graphs generated by the configuration model or on Galton-Watson trees. In particular, since random walks on these graphs are transient, do we expect the density to decay in the order of $t^{-1}$, or even $\lim_{t \to \infty} t P_t=c$ for some constant $c$?
	\item[(ii)] For the constant in the first question, can we identify it abstractly? Moreover, can we show that it is given by the probability that two independent walks starting at a random vertex and a random neighbor of that vertex (respectively) do not collide?
\end{itemize}
In this paper we go beyond random graphs and set up a unified framework to determine the  decay rate  of $P_t$ for general $(V,\r)$, up to smaller order terms, under certain `transience-like' conditions (Proposition \ref{p:gengraph}). 
For this we consider sequences of finite graphs $\G_n=(V_n,E_n)$ and times $t_n$, restricted to $1 \ll t_n \ll |V_n|$, as well as infinite graphs.
We apply our framework to the configuration model with minimal degree $3$ and bounded maximal degree in Theorem \ref{t:config}, or the corresponding unimodular Galton-Watson tree in Theorem \ref{t:gwtree}. In addition, in Theorems \ref{transgraph}-\ref{infinitetrans} we apply our framework to  transitive Markov chains $(V,\r)$  which either are transient and unimodular (in the infinite setup), or satisfy certain finitary notions of transience (in the finite setup), as well as a certain spectral condition proposed by Aldous and Fill in this context \cite{aldous1995reversible}.

For finite graphs and Markov chains, the novelty of our results is that they are applicable to small values of $t$. For instance, for the configuration model and finite transitive graphs satisfying a `uniform transience' condition (Definition \ref{d:uniftran} below), our results apply to the entirety of the big bang period (more precisely, for any $t \gg 1$).
Such exact asymptotic results for the rate of decay of $P_t$ were previously known only for the $d$-dimensional integer lattice $\ZZ^d$.  
In a seminal work of Bramson and Griffeath \cite{bramson1980asymptotics} they showed that for the SRW on $\ZZ^d$, we have
\begin{equation}
P_t=
 \begin{cases} 
 \frac{1}{\sqrt{\pi t}}(1 \pm o(1)),
&\hbox{ for $d=1$;} \\
\frac{1}{\pi}\frac{\log t}{t}(1 \pm o(1)),
&\hbox{ for $d=2$;}\\
\frac{1}{\psi_d t}(1 \pm o(1)),
&\hbox{ for $d\geq 3$.}
\end{cases}
\label{BBSIR}
\end{equation}
Here $\psi_d$ is the probability that a simple random walk starting from the origin never returns to it and $o(1)$ is some error term that vanishes as $t\to \infty$.

The starting point of our arguments are rough upper and lower bounds on the decay of $P_t(x)$ (Theorem \ref{t:upperbound}), which also holds for general $(V,\r)$ (which need not satisfy any `transience-like' assumptions).
When $(V,\r)$ is transitive, these bounds give the first characterization of $P_t$ sharp up to a constant factor, as previously only non-matching upper and lower bounds were available (see Section \ref{s:related}). 

We then bootstrap such general bounds into bounds sharp up to smaller order terms. Our arguments are inspired by Bramson and Griffeath \cite{bramson1980asymptotics}. We determine the asymptotics of certain moments of a quantity $\hat N_t$, which has a natural interpretation in terms of both the voter model and CRW:
for the voter model it is the size of the set of vertices which have the same opinion as the origin at time $t$, while for CRW it is the number of particles which have the same location as the particle whose initial position is the origin at time $t$ (in the sense of the graphical representation).
For $\ZZ^d$ such moments calculation is due to Sawyer; see
\cite[Theorem 2.2]{sawyer1979limit}. Alas Sawyer's proof  involves a `magical' combinatorial calculation which relies heavily on the specific structure of $\ZZ^d$ and hence does not seem to generalize to other graphs. While Bramson and Griffeath were able to use Sawyer's theorem,  which they described as `coming tantalizingly close to determining the asymptotics
of $P_t$' as a `black box', we on the other hand have to study these moments directly in terms of CRW. This is the most difficult part of our arguments. As a corollary, we generalize Sawyer's theorem to many other graphs (see Theorems \ref{voterthm1} and \ref{saw1}), thus deducing that in those cases $\hat N_t/\E \hat N_t$ converges in distribution to a Gamma distribution.

We comment that determining the asymptotics of $P_t$ (or $P_t(x)$ in the non-transitive setup) actually gives one information about the distribution of the number of vertices occupied by particles in an arbitrary large finite set $A$, i.e., $| \xi_t \cap A |$. Determining the expectation of a random variable is generally more interesting when the random variable is concentrated around the expectation, and this is the case for $| \xi_t \cap A |$ when $\sum_{a \in A} P_t(a) \gg 1$.  Indeed, Arratia \cite[Lemma 1]{Arratia} showed that for all $x \neq y$,\[\mathrm{Cov}(\1[x \in \xi_t],\1[y \in \xi_t]) \le 0. \] 
Actually Arratia proved this when the underlying graph is $\ZZ^d$, but the proof works for general Markov chains.
It follows that  $\mathrm{Var}(| \xi_t \cap A |) \le \E | \xi_t \cap A |$. Thus if $A \subset V$ satisfies that $\E | \xi_t \cap A |=\sum_{a \in A} P_t(a) \gg 1$,  by Chebyshev's inequality $| \xi_t \cap A |$  has  $O((\E | \xi_t \cap A |)^{1/2})$  fluctuations around its expectation.

\subsection{Statements of our results}\label{ss;mainresults}
To formally state our results we need some further notations and terminologies. We start by introducing the key notion of the \emph{meeting time}.
\begin{definition}
	\label{def:meeting time}
	Consider a Markov chain $(V,\r)$. For any two measures $\mu, \nu$ on $V$, we write $\mathbb{P}_{\mu}$ for the law of a walk where its starting location is sampled from $\mu$, and $\mathbb{P}_{\mu,\nu}$ for the joint law of two independent walks $X, Y:\RR_{\ge 0} \rightarrow V$, where their starting locations are sampled from $X(0) \sim \mu$ and $Y(0) \sim \nu$ independently. When $\mu$ is the Dirac measure on some $x \in V$, we also write $\mathbb{P}_{x}$ for $\mathbb{P}_{\mu}$ and $\mathbb{P}_{x,\nu}$ for $\mathbb{P}_{\mu,\nu}$. Similarly, when $\mu$ and $\nu$ are the Dirac measures on some $x, y \in V$ respectively we write $\mathbb{P}_{x,y}$ for $\mathbb{P}_{\mu,\nu}$.
	We define the \emph{meeting time} as the first collision time of the two independent walks $X$ and $Y$:
	\[  
	\RM:=\inf\{t:X(t)=Y(t) \},
	\]
	which is a random variable whose law depends on $\mu, \nu$. An important quantity in the study of CRW is $\M(\r)=\M:=\E_{\pi,\pi}\RM$, where $\pi$ is the uniform distribution on $V$. We also write $\M(\G)$ when we consider a graph $\G$ where each particle performs a SRW. By an abuse of terminology, we also refer to $\M$ as the meeting time.
\end{definition}

\begin{definition}\label{defSRW}
	For a Markov chain $(V,\r)$ and any $x, y \in V$, we let $\nu_x(y):=\frac{r_{x,y}}{r(x)}$, where $r(x):=\sum_y r_{x,y}$ is the total jump rate from $x$ (by convention $r_{x,x}=0$). 
	For any $t\ge 0$ we define
	\begin{equation}\label{alpha_t(x)}
		\alpha_t(x):=r(x)\P_{x,\nu_x}(\RM >t).
	\end{equation}
	Roughly speaking, $\alpha_t(x)$ is the weighted probability that the two walkers, starting from $x$ and a random neighbor of $x$ respectively, don't meet by time $t$. 
	We also define
	\begin{equation}\label{alpha_inf(x)}
		\alpha_\infty(x):=r(x)\P_{x,\nu_x}(\RM =\infty),
	\end{equation}
	which is the weighted probability that the two walkers, starting from $x$ and a random neighbor of $x$ respectively, don't meet forever.
	When $|V|$ is finite, we let $\alpha_t:=\sum_{x\in V}\alpha_t(x)/|V|$ be its average over all vertices.
	Note that for a transitive Markov chain $(V,\r)$, $\alpha_t(x)$ is independent of $x$, so it should equal $\alpha_t$ when $|V|$ is finite. 
	For infinite transitive chains we also use $\alpha_t$ (resp. $\alpha_\infty$) to denote $\alpha_t(x)$ (resp. $\alpha_\infty(x)$), for any $x \in V$.
\end{definition}

Our main results are of the following forms:
\begin{align}
	&P_t\approx\frac{1}{t \alpha_t},  \tag{A1}\\
	&P_t\approx \frac{2\M}{t|V|}. \tag{A2}
\end{align}
The first form is predicted in a heuristic argument of van der Berg and Kesten \cite{van2000asymptotic,BK}, and the second form comes from a mean field picture conjectured by Aldous and Fill \cite{aldous1995reversible} and recently verified by Oliveria \cite{oliveira2013mean}.
See Section \ref{s:motivation} and \ref{sssec:meanfb} respectively for more discussions.

In many of the results below, we are taking a sequence of graphs or chains, and study their asymptotic behaviors. 
For simplicity we set up some asymptotic notations.
Consider sequences of quantities $A_n$ and $B_n$, $n\in \ZZ_+$.
We write $A_n=o(B_n)$ or $A_n \ll B_n$ if $\lim_{n\to \infty}\frac{A_n}{B_n}=0$. We write $A_n=O(B_n)$ or $A_n \lesssim B_n $ (and also $B_n=\Omega(A_n)$ and $B_n \gtrsim  A_n$) if there exists a constant $C>0$ such that $|A_n| \le C |B_n|$ for all $n$. We write
$A_n \asymp B_n$ or $A_n=\Theta(B_n)$ if we have both $A_n=O(B_n)$ and $A_n=\Omega(B_n)$.

\subsubsection{The configuration model}

We first study random graphs given by the \emph{configuration model}. As  a convention in this paper and unless otherwise noted, whenever we consider a `graph', we always mean that the jump rate across each edges equals $1$ (i.e., $r_{x,y}=\1[x \textrm{ is a neighbor of }y]$). 
\begin{definition}  \label{defn:configm}
	Let $D$ be a probability measure on $\ZZ_{\geq 0}$, and $n\in \ZZ_{+}$.
	We define $\CM_n(D)$ as the probability measure on the set of all $n$-vertices graphs, given by the configuration model with degree distribution $D$. In other words, we construct $\G_n \sim \CM_n(D)$ as follows.
	We take $n$ vertices labelled $1, \ldots, n$ and $d_1, \ldots, d_n$ i.i.d. sampled from $D$.
	For each vertex $i$ we attach $d_i$ half edges to it.
	Then we get $\G_n$ by uniformly matching all half edges, conditioned on $\sum_{i=1}^n d_i$ being even.
\end{definition}
In this paper we always assume that the support of $D$ is contained in $[3, \bar d]$ for some $\bar d\geq 3$.
Note that under this condition, as $n\to \infty$, the probability for $\G_n \sim \CM_n(D)$ being connected converges to $1$ (see, e.g. Theorem 4.15 in  \cite{vanrandomvol2}).
We refer interested readers to Chapter 7 of
\cite{van2016random} 
for a detailed introduction of the configuration model. 
\begin{remark}
    The assumption that the degree is at least $3$ is used to guarantee that the vertex expansion constant of $\G_n \sim \CM_n(D)$ is bounded from below by some constant with high probability, i.e., with probability tending to $1$ (as $n\to\infty$). Together with the assumption of 
    the degree being bounded from above by $\bar d$, the relaxation time of $\G_n$ is bounded from above by some constant with high probability, which is used throughout the proof of Theorem \ref{t:config}. See Lemma \ref{l:configroughbound}. 
    The assumption of bounded maximal degree also ensures that with high probability $\G_n$ is $\delta_D$-homogeneous (defined in Definition \ref{def:delta_homo}), i.e.,  the number of vertices with each possible degree is $\Theta(n)$. See also Remark     \ref{rmk:config} for the necessity of the boundedness of maximal degree in our proof. 
    It is of great interest to weaken the assumptions on $D$ 
    to include other types of degree distribution, e.g., the Poisson distribution and the power-law distributions.  We leave this for future investigations. 
\end{remark}
\begin{definition}
	We consider the \emph{local weak limit} of $\CM_n(D)$, which is a unimodular Galton-Watson tree whose root has offspring distribution $D$, and later generations have offspring distribution $D^*$, which is a size biased version of $D$, i.e., $D^*(k) = \frac{(k+1)D(k+1)}{\sum_{i=0}^{\infty} iD(i)}$ for each $k\in \ZZ_{\geq 0}$. We use $\UGT(D)$ to denote this law on the space of all rooted trees.
\end{definition}

For more details on the local weak convergence of $\CM_n(D)$ to $\UGT(D)$, see Appendix \ref{finitetoinfinite}.
For $\G_n \sim \CM_n(D)$, the density
$P_t$ is a random variable in the space of all graphs with $n$ vertices. Likewise, for $\G\sim \UGT(D)$, if we let $o$ be its root, $P_{t}(o)$ (which is the probability of $o$ being occupied at time $t$ in the CRW) is also a random variable in the space of all infinite trees.
To verify \eqref{e:MF1} and \eqref{e:MF2} for these random variables, we define
\begin{equation}\label{eq:alpha(D)}
	\alpha(D):= \E(\alpha_{\infty}(o)),
\end{equation}
where $\alpha_{\infty}(o)$ is defined in equation \eqref{alpha_inf(x)}.

\begin{theorem}  \label{t:config}
	Let $\G_n$ be sampled from $\CM_n(D)$, conditioned on that $\G_n$ is connected.
	For any sequence of times $t_n$ such that $1 \ll t_n \ll n$, we have
	\begin{align}
		&\lim_{n\to \infty} t_n P_{t_n}(\G_n) = \frac{1}{\alpha(D)}, \label{eq:thm11}\\
		&\lim_{n\to \infty} \frac{nt_n}{2\M(\G_n)}P_{t_n}(\G_n)= 1, \label{eq:thm12}
	\end{align}
	in probability.
\end{theorem}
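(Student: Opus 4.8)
The plan is to deduce both \eqref{eq:thm11} and \eqref{eq:thm12} from the general framework (Proposition \ref{p:gengraph}) applied to the random graph $\G_n \sim \CM_n(D)$, which reduces the problem to verifying a handful of `transience-like' and spectral hypotheses that hold with high probability. First I would record the structural facts about $\G_n$: with high probability $\G_n$ is connected, has relaxation time $\rel(\G_n)=O(1)$ (by the degree-$\ge 3$ vertex-expansion bound together with bounded maximal degree, as in Lemma \ref{l:configroughbound}), and is $\delta_D$-homogeneous, i.e.\ for each $k\in[3,\bar d]$ the number of degree-$k$ vertices is $\Theta(n)$. The rough two-sided bound of Theorem \ref{t:upperbound} then already gives $P_{t_n}(\G_n)\asymp 1/t_n$ on the whole range $1\ll t_n\ll n$, so the work is entirely in pinning down the constant.

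The heart of the argument is the local-weak-convergence computation: I would show that $\alpha_{t_n}(\G_n)$, the average over $v\in V_n$ of $r(v)\,\P_{v,\nu_v}(\RM>t_n)$, converges in probability to $\alpha(D)=\E(\alpha_\infty(o))$ when $1\ll t_n\ll n$. This has two pieces. (1) For a fixed large constant $s$, the event $\{\RM>s\}$ for two walkers started at a uniform $v$ and a uniform neighbor of $v$ depends only on a bounded neighborhood of $v$ (up to a small error from the walkers leaving that neighborhood, controlled via $\rel(\G_n)=O(1)$ and a union bound); since $\CM_n(D)\to\UGT(D)$ in the local weak sense, the empirical average of $r(v)\P_{v,\nu_v}(\RM>s)$ over $v$ converges in probability to $\E(\alpha_s(o))$, and $\E(\alpha_s(o))\downarrow\E(\alpha_\infty(o))=\alpha(D)>0$ as $s\to\infty$ because on $\UGT(D)$ the walk is transient (here degree $\ge 3$ is essential, so $\alpha_\infty(o)>0$ a.s.). (2) For $s\le t\le t_n$ I must show $\alpha_t(\G_n)$ does not drop much below $\alpha_s(\G_n)$ on the relevant range, i.e.\ the meeting probability between times $s$ and $t_n$ is small; this is where the `transience-like' hypothesis of Proposition \ref{p:gengraph} must be checked for $\G_n$ — concretely, that $\P_{v,\nu_v}(s<\RM<\infty)$ is small for the typical $v$ when $s$ is large, uniformly enough over $n$, which again follows from transience on the limit together with the spectral gap bound preventing the two walkers from returning close to each other at the `mesoscopic' scale $t_n\ll n$. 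Feeding $\alpha_{t_n}(\G_n)\to\alpha(D)$ into Proposition \ref{p:gengraph} yields $t_nP_{t_n}(\G_n)\to 1/\alpha(D)$ in probability, which is \eqref{eq:thm11}.

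For \eqref{eq:thm12} I would invoke the other conclusion of the framework, $P_t\approx 2\M/(t|V|)$, which is available once $\M(\G_n)=\Theta(n)$ and $\rel(\G_n)=o(n)$ (here $\rel=O(1)$); equivalently, it suffices to identify $2\M(\G_n)/n$ with $1/\alpha_{t_n}(\G_n)$ up to $1+o(1)$. The clean way is the standard identity relating the meeting time of two stationary walkers to a sum of non-meeting probabilities: writing the meeting time via the return structure of the difference chain, $\M(\G_n) = \tfrac{1}{2}\big(\sum_{v} r(v)\big)^{-1}|V_n| \cdot \big(\text{something}\big)$ — more precisely I would use the known relation $\M \sim |V|/(2\,\bar\alpha)$ valid when the meeting time is `exponential-like' (which holds here since $\rel\ll\M$, cf.\ the Aldous–Fill mean-field condition), where $\bar\alpha=\lim \alpha_{t_n}(\G_n)$. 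Combining this with \eqref{eq:thm11} gives \eqref{eq:thm12}.

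The main obstacle I anticipate is step (2) of the middle paragraph: transferring transience from the limiting tree $\UGT(D)$ to a \emph{uniform-in-$n$} statement that the meeting probability over the window $[s,t_n]$ is $o(1)$ as $s\to\infty$, for $t_n$ as large as $o(n)$. Local weak convergence alone only controls bounded-time, bounded-neighborhood events; to reach times $t_n\to\infty$ one must rule out the two walkers meeting after a long excursion, which requires a quantitative escape/transience estimate on $\G_n$ at all scales up to $o(n)$. I expect this to be handled by combining (a) a uniform lower bound on $\alpha_\infty$ on $\UGT(D)$ with a coupling of the walk on $\G_n$ with the walk on the tree up to the first cycle encountered, valid for times $\ll$ the typical cycle-appearance time $\Theta(\log n)$ iterated, and (b) the relaxation-time bound to show that once the two walkers are at distance $\gg 1$ they are essentially independent uniform samples and meet at rate $\asymp 1/n$, so the contribution of the window $[s,t_n]$ is $O(t_n/n)=o(1)$ plus the $o_s(1)$ tree estimate. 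Assembling these uniform estimates carefully is the technical crux; everything else is bookkeeping on top of Proposition \ref{p:gengraph} and Theorem \ref{t:upperbound}.
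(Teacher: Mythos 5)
Your high-level route is the paper's route: verify the hypotheses of Proposition \ref{p:gengraph} on a high-probability ``good'' event (connectivity, $\rel(\G_n)=O(1)$ via expansion, $\delta_D$-homogeneity), use Theorem \ref{t:upperbound} for the second condition, and identify the constant via local weak convergence and a Kac/exponential-approximation identity relating $\M(\G_n)/n$ to $\alpha(D)$ (this is exactly Lemma \ref{tmmetlim}; note the framework's conclusion is $P_t\approx 2\M/(nt)$, so \eqref{eq:thm11} and \eqref{eq:thm12} both come from that one output together with $\M(\G_n)/n\to(2\alpha(D))^{-1}$, not from ``feeding $\alpha_{t_n}\to\alpha(D)$'' into the proposition, which takes no such input).

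The genuine gap is that you never address the hypothesis that is actually hard for the configuration model: the bound $E_3\le\delta$ in \eqref{firstcond}, i.e.\ control of the error terms $e_k(t)=\int\sup_{I_k}|q(\t,I_k)-g(\t,I_k)|\,\mathrm{d}\t$ for every $2\le k\le K$. These compare the true $(k+1)$-particle branching/collision structure with the product form in which each newly created branch and its parent evolve from an independent uniform location; for transitive chains this error vanishes identically, but $\CM_n(D)$ is not transitive, and showing that the conditional law of the branch point $A_{\ell}$, given the whole past pattern of (non-)collisions, is close to uniform in total variation is the bulk of the paper's Section \ref{s:pfrandom}: the truncation to balls (Lemma \ref{l:trung}), the induction of Lemmas \ref{l:induct1}--\ref{l:induct2} and Lemma \ref{l:hatgq} (which uses $\delta_D$-homogeneity to keep the conditioning event non-degenerate, local tree-likeness to force the walks to exit a ball quickly, and $\rel=O(1)$ to re-equilibrate afterwards), assembled in Proposition \ref{p:control_ek}. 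Your proposal instead identifies the ``transience-like hypothesis'' with a two-particle statement, that $\P_{v,\nu_v}(s<\RM<t_n)$ is small uniformly in $n$; that estimate is needed (and is comparatively easy, via Lemma \ref{meetprob} with $\rel=O(1)$ and $t_n\ll n$, as in Claim \ref{claim2}), but it only enters the identification of the constant, not the verification of $E_3$. Without a $k$-particle ``almost independence between successive collisions'' argument, Proposition \ref{p:gengraph} cannot be invoked, so the proof as proposed does not go through; the effort you budget for transferring transience from $\UGT(D)$ to $\G_n$ is aimed at the easier of the two obstacles.
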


We will show in Section \ref{ssc:tmeetnconfig} that $\M(\G_n)/n$
converges to $1/(2\alpha(D))$ in probability. This was recently proven in the particular case of random $d$-regular graphs in \cite{chen2017meeting} using different techniques. This implies that \eqref{eq:thm11} is equivalent to \eqref{eq:thm12}.
\begin{remark}\label{rmk:config}
	We remark that our proof of Theorem \ref{t:config} works for more general graphs beyond the configuration model. Essentially we only need the following conditions:
	\begin{itemize}
		\item There exists $\delta>0$, such that for any degree $j$  present in $\G_n$, the number of vertices with degree $j$ is at least $\delta n$. 
		\item The relaxation time of $\G_n$ (defined as the inverse of its spectral gap) is upper bounded by a constant independent of $n$. 
		\item As $n\to\infty$, $\G_n$ converges to a transient (random) graph in the local weak sense. 
	\end{itemize}
	Note that the first condition and the third condition imply that the maximal degree of $\G_n$ should be bounded uniformly in $n$. The second condition implies that for the configuration model, the degree needs to be at least $3$, since otherwise there would be a path of length in the order of $\log n$, and the relaxation time would be unbounded as $n\to\infty$.
\end{remark}
A similar result holds for the unimodular Galton-Watson tree.
\begin{theorem}  \label{t:gwtree}
	For $\G \sim \UGT(D)$ with root $o$, we have 
	\[
	\lim_{t\to \infty} t \E P_{t}(o) = \frac{1}{\alpha(D)}.
	\]
\end{theorem}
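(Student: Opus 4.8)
\emph{The plan} is to derive Theorem~\ref{t:gwtree} from Theorem~\ref{t:config} by passing to the local weak limit $\CM_n(D)\Rightarrow\UGT(D)$ (Appendix~\ref{finitetoinfinite}). On top of Theorem~\ref{t:config}, two ingredients are needed: (i) a finite-range-of-dependence estimate, showing that $P_t(o)$ is, up to an error exponentially small in $t$, a bounded functional of the ball $B(o,Ct)$ for a constant $C=C(\bar d)$; and (ii) the rough a priori upper bound of Theorem~\ref{t:upperbound}, which upgrades the convergence in probability in Theorem~\ref{t:config} to convergence of expectations.

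\emph{Step 1 (finite range of dependence).} In the graphical representation, coalescence never moves a particle (it only identifies two of them), so the occupied set at time $t$ is exactly $\{X^v(t):v\in V\}$, where $X^v$ is the arrow-following trajectory started at $v$; in particular $o\in\xi_t$ iff $X^v(t)=o$ for some $v$. Since every vertex has degree at most $\bar d$, the number of jumps of a single walk by time $t$ is stochastically dominated by $\Poi(\bar d t)$, so a union bound over the at most $\bar d^{Ct}$ relevant walks shows that, for $C=C(\bar d)$ large enough, with probability at least $1-e^{-ct}$ every trajectory that is at $o$ at time $t$ stays inside $B(o,Ct)$ throughout $[0,t]$. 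On this event $\1[o\in\xi_t]$ is determined by the clocks inside $B(o,Ct)$, so $P_t(o)=\E[G_t(B_o)]+O(e^{-ct})$, where $B_o$ is the rooted ball $B(o,Ct)$ with its clocks attached and $G_t$ is the $[0,1]$-valued functional returning the occupation probability of $o$ for the arrow-system run only inside the given finite rooted ball. The same functional $G_t$ computes $P_t(v)$ up to the same error for a vertex $v$ of $\G_n\sim\CM_n(D)$ whenever $B(v,Ct)$ is a tree, which occurs with probability $\to1$ for fixed $t$.

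\emph{Step 2 (limit for fixed $t$).} Fix $t$. Since $\P(\G_n\text{ is connected})\to1$ and $\CM_n(D)\Rightarrow\UGT(D)$ locally weakly, the law of the rooted ball (with clocks) around a uniformly random vertex of $\G_n$ converges to the law of $B_o$ under $\UGT(D)$. As $G_t$ is bounded, averaging Step~1 over the vertex and over the random graph gives that $\E[P_t(\G_n)]$, i.e.\ the probability that a uniform vertex of $\G_n$ is occupied at time $t$, converges as $n\to\infty$ to $\E_{\UGT(D)}[P_t(o)]$ up to an error at most $2e^{-ct}$.

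\emph{Step 3 (conclusion) and the main difficulty.} By Theorem~\ref{t:config}, for every sequence $t_n$ with $1\ll t_n\ll n$ we have $t_nP_{t_n}(\G_n)\to1/\alpha(D)$ in probability; and by the rough upper bounds of Theorem~\ref{t:upperbound} (cf.\ Lemma~\ref{l:configroughbound}) we have $P_t(\G_n)\le C'/t$ with high probability for $n$ large and $t$ in this range, with $C'$ independent of $n$. Hence $\{t_nP_{t_n}(\G_n)\}_n$ is uniformly integrable, so $t_n\E[P_{t_n}(\G_n)]\to1/\alpha(D)$; a routine subsequence argument rephrases this as: for every $\ep>0$ there are $\delta>0$ and $N$ with $|t\,\E[P_t(\G_n)]-1/\alpha(D)|<\ep$ whenever $N\le t\le\delta n$. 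Fixing $\ep$ and $t\ge N$ and letting $n\to\infty$ along $n\ge t/\delta$, Step~2 yields $|t\,\E_{\UGT(D)}[P_t(o)]-1/\alpha(D)|\le\ep+2te^{-ct}$; letting $t\to\infty$ and then $\ep\downarrow0$ proves the theorem. The substantive work all sits in Theorem~\ref{t:config}; within this proof the delicate points are the finite-range estimate of Step~1 and the uniform integrability bound of Step~3 — the latter genuinely necessary, since $t_n\to\infty$ means convergence in probability of $t_nP_{t_n}(\G_n)$ does not by itself control its expectation — but both are handled by tools already developed in the paper.
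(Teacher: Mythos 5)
Your overall route---deducing Theorem \ref{t:gwtree} from Theorem \ref{t:config} through the local weak convergence of $\CM_n(D)$ to $\UGT(D)$, with a finite-ball truncation making the occupation probability a bounded local functional---is the same as the paper's: your Steps 1--2 essentially reproduce Proposition \ref{local1} (the paper truncates at a fixed radius $\rho$ and sends $\rho\to\infty$ after $n$, you truncate at radius $Ct$ with an error exponentially small in $t$; both work, modulo the minor point that your union bound must also cover walkers started outside $B(o,Ct)$, summed over distance shells). The problem is in Step 3: the uniform integrability claim is not justified. On the complement of the high-probability event of Lemma \ref{l:configroughbound} (expansion fails), the only available bound is $P_{t_n}(\G_n)\le 1$, so for $M$ larger than the constant $C'$ the exceptional contribution to $\E\bigl[t_nP_{t_n}(\G_n)\1\{t_nP_{t_n}(\G_n)>M\}\bigr]$ is only bounded by $t_n\,\P(\mathrm{bad}_n)$. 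The paper establishes $\P(\mathrm{bad}_n)=o(1)$ with no rate, while you invoke uniform integrability for every sequence $1\ll t_n\ll n$, including $t_n$ growing polynomially in $n$; for such $t_n$ the term $t_n\P(\mathrm{bad}_n)$ is not controlled by anything in the paper. Consequently the passage from ``$t_nP_{t_n}(\G_n)\to 1/\alpha(D)$ in probability'' to ``$t_n\E[P_{t_n}(\G_n)]\to 1/\alpha(D)$'', and with it the uniform statement over $N\le t\le\delta n$, is a genuine gap as written.

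The gap is repairable, and the repair is exactly the paper's device. You only ever use the uniform statement for fixed $t$ with $n\to\infty$, so you only need the convergence along sequences in which $n$ is chosen after $t$ (a diverging time sequence $t'_m$ with each value repeated for very many $m$, as in the paper's proof). Along such a diagonal sequence you can either (a) keep your annealed Step 2 and enforce $t'_m\P(\mathrm{bad}_{n_m})\to0$ by taking $n_m$ large for each fixed time value, which restores the expectation convergence; or (b) avoid expectations over the random graph altogether: for fixed $t$, $P_t(\G_n)\to\E_{\UGT(D)}P_t(o)$ in probability (Proposition \ref{local1}), so along the diagonal sequence both $\bigl|t'_mP_{t'_m}(\G_{n_m})-t'_m\E_{\UGT(D)}P_{t'_m}(o)\bigr|\to0$ in probability and $t'_mP_{t'_m}(\G_{n_m})\to1/\alpha(D)$ in probability, forcing the deterministic sequence $t'_m\E_{\UGT(D)}P_{t'_m}(o)$ to converge to $1/\alpha(D)$; since the diagonal sequence can be arranged to contain any prescribed divergent sequence of times, the theorem follows. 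Option (b) is precisely the paper's argument, and adopting it also makes the full-range statement $N\le t\le\delta n$ (which is the part your uniform integrability was meant to deliver) unnecessary.
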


\subsubsection{Transitive Markov chains}
Our next series of results are about transitive Markov chains $(V,\r)$. As a convention in this paper, whenever we consider a transitive Markov chain $(V,\r)$, we always assume that the total jump rate $r(x)=1$ for each vertex $x$. 

We denote by $\rel(\r)=\rel$ the \emph{relaxation time} of a Markov chain $(V,\r)$, defined as the inverse of its spectral gap (i.e., minus of the second largest eigenvalue of the infinitesimal generator of $(V,\r)$). We also write $\rel(\G)$ when we study the SRW on a graph $\G$.

\begin{theorem}  \label{transgraph}
	Suppose we are given a sequence of finite transitive Markov chains $(V_n,\r_n)$ such that $|V_n|\to \infty$ as $n\to\infty$, and $\M(\r_n)\lesssim|V_n|$, $\rel(\r_n)\ll  \M(\r_n)$. Then for any sequence of times $t_n$ satisfying 
	$\rel(\r_n)\ll t_n\ll \M(\r_n)$ we have that
	\[
	\lim_{n\to\infty}\frac{|V_n| t_n}{2\M(\r_n)}P_{t_n}=1,        
	\]
	\[\lim_{n \to \infty} t_n\alpha_{t_n}P_{t_n} = 1.
	\]
\end{theorem}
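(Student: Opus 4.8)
\emph{The plan} is to derive both limits from two ingredients. The first is the general framework of Proposition~\ref{p:gengraph}: once one verifies that, under the stated hypotheses, the pair $\bigl((V_n,\r_n),t_n\bigr)$ satisfies its ``transience-like'' conditions, it yields $t_n\alpha_{t_n}P_{t_n}\to1$. The second is the purely Markov-chain identity $\tfrac{2\M(\r_n)}{|V_n|}\alpha_{t_n}\to1$, which involves no CRW and which, combined with the first statement, gives $\tfrac{|V_n|t_n}{2\M(\r_n)}P_{t_n}\to1$ at once. So it will remain to check the hypotheses of Proposition~\ref{p:gengraph} and to prove the identity, and both rest on the same few consequences of the hypotheses. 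Note first that for a finite reversible transitive chain $\M(\r_n)\gtrsim|V_n|$ is automatic (Section~\ref{s:prelim}), so $\M(\r_n)\lesssim|V_n|$ is the same as $\M(\r_n)\asymp|V_n|$; writing $Z:=\int_0^\infty\!\bigl(p_s(o,o)-|V_n|^{-1}\bigr)\,ds$ and using the standard transitive identity $\M(\r_n)=\tfrac12\E_\pi[\tau_o]$ (with $\tau_o$ the hitting time of $o$ by a single walk; Section~\ref{s:prelim}), this says $Z=2\M(\r_n)/|V_n|\asymp1$: the chain is ``transient-like'' in that its regularized Green's function at a vertex is $\Theta(1)$. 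We shall also use the elementary spectral bound $\int_T^\infty\!\bigl(p_s(o,o)-|V_n|^{-1}\bigr)\,ds\le e^{-T/\rel(\r_n)}Z$, which is $o(1)$ once $T\gg\rel(\r_n)$.

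\emph{Step 1 (the function $\alpha$).} For transitive $\r_n$ one has $\alpha_s=\P_{o,\nu_o}(\RM>s)$, and we need two facts about it. \emph{(i) Slow variation:} $\alpha_{s'}=(1+o(1))\alpha_{s}$ whenever $\rel(\r_n)\ll s\le s'\le t_n$; indeed, conditionally on $\{\RM>s\}$ with $s\gg\rel(\r_n)$, the pair $(X(s),Y(s))$ is $L^2$-close to $\pi\otimes\pi$ off the diagonal, so $\P(\RM>s'\mid\RM>s)=(1+o(1))\P_{\pi,\pi}(\RM>s'-s)$, and by the uniform exponential approximation $\P_{\pi,\pi}(\RM>r)=\exp\!\bigl(-r(1+o(1))/\M(\r_n)\bigr)$ — valid for all $r\ge0$ when $\rel(\r_n)\ll\M(\r_n)$ (Section~\ref{s:prelim}) — together with $s'-s\le t_n\ll\M(\r_n)$, this equals $1-o(1)$. \emph{(ii) Lower bound:} $\alpha_{t_n}\gtrsim1$; here one runs a second-moment argument on $L$, the Lebesgue measure of $\{s\le t_n:X(s)=Y(s)\}$ for independent walks from $o$ and $\nu_o$. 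On the one hand $\{L=0\}=\{\RM>t_n\}$ and $\E L=\tfrac12\int_0^{2t_n}p_u(o,\nu_o)\,du=\tfrac12 Z_{o\nu_o}+o(1)$, where $Z_{o\nu_o}:=\int_0^\infty\!\bigl(p_u(o,\nu_o)-|V_n|^{-1}\bigr)du$ (using $\abs{p_u(o,\nu_o)-|V_n|^{-1}}\le p_u(o,o)-|V_n|^{-1}$, $t_n\ll|V_n|$, and the spectral bound). On the other hand, conditioning at the first meeting time and using the strong Markov property together with (i) — which ensures that, given $\RM\le t_n$, we have $\RM\le t_n/2$ with probability $1-o(1)$, so the residual diagonal occupation time contributes at least $\tfrac12 Z-o(1)$ in expectation — gives $\E L\ge\bigl(\tfrac12 Z-o(1)\bigr)\bigl(1-(1+o(1))\alpha_{t_n}\bigr)$. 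Comparing the two, and using that $Z-Z_{o\nu_o}=\int_0^\infty\!\bigl(p_u(o,o)-p_u(o,\nu_o)\bigr)du$ is at least a positive absolute constant (the integrand is nonnegative since $p_u(o,o)$ is maximal on the diagonal, and is $\ge2e^{-u}-1$ for small $u$) while $Z=O(1)$, forces $\alpha_{t_n}\gtrsim1$. Facts (i)--(ii), with $\rel(\r_n)\ll t_n$ absorbing mixing-time errors, are the transience-like input of Proposition~\ref{p:gengraph}, which then gives $t_n\alpha_{t_n}P_{t_n}\to1$.

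\emph{Step 2 (the identity).} Starting from $\tfrac{2\M(\r_n)}{|V_n|}=Z=\int_0^\infty\!\bigl(p_u(o,o)-|V_n|^{-1}\bigr)du$, I split the integral at $2t_n$: the tail is $o(1)$ by the spectral bound, and since $t_n\ll|V_n|$ the main part equals $\int_0^{2t_n}p_u(o,o)\,du+o(1)$. Now $\tfrac12\int_0^{2t_n}p_u(o,o)\,du=\int_0^{t_n}\P_{o,o}(X(s)=Y(s))\,ds$ is the expected occupation time of the diagonal, up to time $t_n$, of a pair of independent walks both started at $o$. A renewal decomposition over the excursions of the pair away from the diagonal — each returning before $t_n$ with probability $1-(1+o(1))\alpha_{t_n}$ by Step~1 (a returning excursion takes time $O(\rel(\r_n))\ll t_n$, and only $O(1)$ excursions occur because the escape probability per excursion is $\Theta(1)$), with each diagonal sojourn of mean length $\tfrac12$ since the pair leaves at rate $2$ — evaluates this expectation as $(1+o(1))/(2\alpha_{t_n})$. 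Hence $Z=(1+o(1))/\alpha_{t_n}$, i.e.\ $\tfrac{2\M(\r_n)}{|V_n|}\alpha_{t_n}\to1$; combined with Step~1 this yields $\tfrac{|V_n|t_n}{2\M(\r_n)}P_{t_n}\to1$.

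\emph{The main obstacle} is Step~1, and within it the slow-variation estimate (i): one must control the law of the pair of walks conditioned on not having met over the whole range of scales $\rel(\r_n)\ll s\le t_n$, invoke the uniform exponential approximation of $\RM$ from stationarity — precisely where the spectral hypothesis $\rel(\r_n)\ll\M(\r_n)$ enters — and then check that the resulting estimates are exactly what the transience-like hypotheses of Proposition~\ref{p:gengraph} demand. The renewal bookkeeping of Step~2 and the Green's-function tail bounds are comparatively routine given these inputs.
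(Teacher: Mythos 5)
Your overall architecture does match the paper's: invoke the general framework of Proposition \ref{p:gengraph} and separately prove the identity $\frac{2\M(\r_n)}{|V_n|}\alpha_{t_n}\to 1$ (the paper's Proposition \ref{tmeetandn}). But the central step is missing: you never verify the actual hypotheses of Proposition \ref{p:gengraph}, namely conditions \eqref{firstcond} and \eqref{secondcond}, and your Step 1 "facts (i)--(ii)" about $\alpha_s$ are not those hypotheses. The substance of the paper's proof of Theorem \ref{transgraph} is precisely this verification: condition \eqref{secondcond} comes from the general upper bound of Theorem \ref{t:upperbound} combined with \eqref{integralofp}; the error term $E_3$ vanishes because for transitive chains $g(\t,I_k)=q(\t,I_k)$ identically (the non-collision probability \eqref{const} does not depend on the branching location, see \eqref{gtiktrans}), so $e_k(t)\equiv 0$ --- this structural consequence of transitivity is what makes the framework applicable at all, and it is absent from your write-up; and $E_1,E_2,E_4=o(1)$ follow from $\rel(\r_n)\ll t_n\ll \M(\r_n)\lesssim |V_n|$, $t_n\gg 1$ and $\M(\r_n)/|V_n|=O(1)$. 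Moreover, the conclusion of Proposition \ref{p:gengraph} is $\abs{P_t-\frac{2\M}{nt}}\le \ep\frac{\M}{nt}$, i.e.\ the mean-field form, not $t\alpha_tP_t\to 1$; with your Step 2 in hand the two forms are interchangeable, but as written the claim that the framework "yields $t_n\alpha_{t_n}P_{t_n}\to 1$" is not what the proposition says, and no route from your $\alpha$-facts to its hypotheses is given. This is a genuine gap.

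Your Step 2 is a genuinely different route to the identity than the paper's. Proposition \ref{tmeetandn} gets $\frac{2\M\alpha_{t_n}}{n}\to 1$ in a few lines: Kac's formula \eqref{kac1} for the product chain gives $\alpha_t=\frac{n}{2}f(t)$ with $f$ the density of $\RM$ from stationarity, and the Aldous--Brown bound \eqref{ABdensity} gives $f(t)=(1\pm o(1))/\M$ for $\rel\ll t\ll\M$. Your excursion/Green-function computation of $\int_0^{2t_n}p_u(o,o)\,du$ is plausible (and the lower bound $\alpha_{t_n}\gtrsim 1$ via comparing $Z$ and $Z_{o\nu_o}$ is a nice touch), but it is heavier and leans on the slow-variation claim (i), whose justification --- that conditioned on $\{\RM>s\}$ the pair is close to $\pi\otimes\pi$ --- is itself nontrivial, since conditioning on non-meeting can distort the law; the cleanest repair is exactly the Kac plus Aldous--Brown identity you are circumventing. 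I would prove the identity the paper's way and spend the effort on the genuinely missing part, the verification of \eqref{firstcond}--\eqref{secondcond}.
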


We next prove a stronger result under the following stronger `transience condition'. A related notion in the context of cover times of random walks was proposed in \cite[Section 2]{hermon2020spectral}.
\begin{definition}
	\label{d:uniftran}
	A sequence of finite Markov chains $(V_n,\r_n)$ with $\max_{x \in V_n} \r_n(x) \asymp 1 \asymp  \min_{x \in V_n} \r_n(x)$ is said to be \emph{uniformly transient} if
	\begin{equation}\label{defuniftrans}
		\lim_{s\to \infty}
		\limsup_{n\to\infty} \max_{x,y\in V_n}\int_{s\wedge \rel(\r_n)}^{\rel(\r_n)}
		p_t^{(n)}(x,y)\mathrm{d}t=0,
	\end{equation}
	where $p_t^{(n)}(x,y)$ is the time $t$ transition probability from $x$ to $y$ corresponding to $(V_n,\r_n)$. 
\end{definition}

As will be discussed in Section \ref{ss:assumptions}, this condition is surprisingly only slightly stronger than the assumptions in Theorem \ref{transgraph}. 
\begin{theorem}\label{uniformtran}
	Let $(V_n,\r_n)$ be a sequence of finite transitive Markov chains which is uniformly transient, and $|V_n|\to\infty$ as $n\to\infty$.
	Then for every sequence of times $t_n$ satisfying $1\ll t_n\ll \M(\r_n)$ we have
	\begin{equation}
		\label{e:thm31}
		\lim_{n\to\infty}\frac{|V_n|t_n}{2\M(\r_n)}P_{t_n}=1,
	\end{equation}
	\begin{equation}
		\label{e:thm32}\lim_{n\to\infty} t_n \alpha_{t_n}
		P_{t_n}=1.
	\end{equation}
\end{theorem}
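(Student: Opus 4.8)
The plan is to deduce this result from Theorem \ref{transgraph} by showing that uniform transience closes the remaining gap, namely the range $1 \ll t_n \ll \rel(\r_n)$ which Theorem \ref{transgraph} does not cover. First I would record the two ingredients that are already available in the transitive finite setup: the rough two-sided bound $P_t(x) \asymp (t\alpha_t)^{-1}$ from Theorem \ref{t:upperbound} (valid with no transience assumption), and the equivalence of the two target statements \eqref{e:thm31} and \eqref{e:thm32}, which follows because under transitivity $P_t = \alpha_t^{-1}\cdot(\text{something}\to 1)$ forces $2\M(\r_n)/|V_n|$ and $\alpha_{t_n}^{-1}$ to agree asymptotically (this identification of $\M$ with the suitable integral of $\alpha_t$ is, I expect, carried out in the body via a renewal/occupation-time computation; I would cite that). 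So it suffices to prove \eqref{e:thm32}, i.e. $t_n\alpha_{t_n}P_{t_n}\to 1$, for $1\ll t_n\ll \M(\r_n)$.

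The core of the argument follows the Bramson–Griffeath-style moment method alluded to in the introduction: one studies the random variable $\hat N_t$ (the number of CRW particles colocated at time $t$ with the particle started at a fixed vertex, equivalently the voter cluster of the origin), shows $\E\hat N_t \sim t\alpha_t$ (or the relevant normalization) and that higher moments $\E \hat N_t^k$ converge to the moments of a Gamma law, so that $\hat N_t/\E\hat N_t$ is asymptotically Gamma-distributed; combined with the identity relating $P_t$ to $\E[\hat N_t\,;\,\text{origin occupied}]$ this pins down $tP_t\alpha_t\to 1$. The only place the range of $t_n$ enters is in controlling the pair (and $k$-tuple) non-collision probabilities that appear in these moment computations: one needs that two independent walks started from nearby vertices, conditioned not to have met, decorrelate on the relevant time scale, and this is exactly what \eqref{defuniftrans} provides once $t$ exceeds the relaxation time. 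For $t_n$ between $1$ and $\rel(\r_n)$ the walk has not yet mixed, so I would instead use uniform transience directly: \eqref{defuniftrans} says the expected time spent by the difference walk near the diagonal, during the pre-mixing window $[s,\rel]$, is uniformly small once $s$ is large; this lets one show $\alpha_{t}(x)/\alpha_{s}(x)\to 1$ uniformly as $s\to\infty$ with $s\le t\le \rel$, i.e. $\alpha_t$ is already essentially frozen at its value $\alpha_\infty$-analogue well before mixing, and more importantly that the $k$-point non-meeting events factorize up to $1+o(1)$. Plugging these estimates into the moment recursion then gives the Gamma limit in the full range $1\ll t_n\ll \M(\r_n)$, hence \eqref{e:thm32}, and then \eqref{e:thm31}.

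Concretely the steps are: (1) invoke Theorem \ref{t:upperbound} to get $P_{t_n}\asymp (t_n\alpha_{t_n})^{-1}$ and reduce to proving the constant is $1$; (2) show, using \eqref{defuniftrans}, that for $1\ll s_n \ll t_n$ with $t_n \ll \M(\r_n)$ one has $\alpha_{t_n}(x) = (1+o(1))\alpha_{s_n}(x)$ uniformly in $x$ and, more generally, that the probability that $k$ independent walks from prescribed nearby starts are pairwise non-colliding by time $t_n$ factors as $(1+o(1))\prod$ of pairwise probabilities; (3) run the $\hat N_t$ moment computation (the hard analytic work, done once in the body for the general `transience-like' setup) to conclude $\hat N_{t_n}/\E\hat N_{t_n}\Rightarrow \Gamma$ and $t_n\alpha_{t_n}P_{t_n}\to 1$; (4) use the transitive identity to pass from \eqref{e:thm32} to \eqref{e:thm31}. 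The main obstacle is step (2)–(3): controlling the $k$-point non-meeting probabilities uniformly over the entire big bang range, in particular splicing together the pre-mixing regime (where only \eqref{defuniftrans} is available, not a spectral gap bound) with the post-mixing regime; this is where one must show that uniform transience is genuinely strong enough to substitute for the hypothesis $\rel(\r_n)\ll t_n$ used in Theorem \ref{transgraph}, and I expect the bulk of the technical effort to lie in making the `factorization up to $1+o(1)$' of multi-walk non-collision events precise and quantitative in that short-time window.
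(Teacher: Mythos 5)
Your proposal is correct and follows essentially the same route as the paper: the paper also runs the moment/branching framework with a refined non-collision estimate in which \eqref{defuniftrans} replaces the spectral-gap input (its Lemma \ref{meetuniformtrans} and Lemma \ref{l:goodset:tran}, assembled into Proposition \ref{p:uniftrans}), uses Theorem \ref{t:upperbound} to control the near-zero behavior of $N_t/\E N_t$, and identifies $\alpha_{t_n}$ with $|V_n|/(2\M(\r_n))$ by freezing $\alpha_t$ across the pre-relaxation window via uniform transience and anchoring it at time of order $\rel$ with the Aldous--Brown estimate. Your steps (2)--(4) correspond precisely to these ingredients (with the minor caveat that the body's Proposition \ref{tmeetandn} alone does not cover $t_n\lesssim\rel$, which is exactly the splicing you flag and the paper handles through its estimates \eqref{ctlf1}--\eqref{ctlf4}).
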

Our methods of proving Theorems 
\ref{transgraph} to  \ref{uniformtran} can be adapted to certain infinite transitive graphs.
As a convention of this paper, all infinite graphs are assumed to be locally finite.
We will consider unimodular graphs, which are graphs that satisfy the mass-transport principle. See \cite[Chapter 8]{lyons2017probability} for details of unimodular graphs and mass-transport principle. Unimodular transitive graphs are a large sub-class of transitive graphs. All Cayley graphs and
all \emph{amenable}\footnote{An infinite transitive graph $\G=(V,E)$ is called `amenable', if
	for any $x\in V$,
	\[
	\lim_{t\to\infty} \frac{1}{t}\log p_t(x,x)=0,
	\]
	where $p_t$ denotes the transition probability of a SRW on this graph. 
	In other words, the probability of a SRW returning to the starting point does not decay exponentially in $t$. In particular, the integer lattice $\ZZ^d$ is amenable for all $d$. 
	For this and other equivalent spectral and geometric definitions of amenability, see \cite[Chapter 6]{lyons2017probability}.} transitive graphs are unimodular. 

\begin{theorem}\label{infinitetrans}
	For any infinite transient unimodular transitive graph $\G$, we have
	\[
	\lim_{t\to\infty}tP_t=\frac{1}{\alpha_{\infty}}.
	\] 
\end{theorem}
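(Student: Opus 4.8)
The plan is to obtain Theorem~\ref{infinitetrans} as an application of the general framework of Proposition~\ref{p:gengraph}, together with the Bramson--Griffeath-style moment analysis that underlies Theorems~\ref{voterthm1} and \ref{saw1}, adapted to the infinite setting. Concretely, three things must be supplied: (a) the ``transience-like'' hypotheses of Proposition~\ref{p:gengraph} hold here; (b) $\alpha_\infty>0$ and $\alpha_t\downarrow\alpha_\infty$; and (c) in the transitive \emph{unimodular} case the conclusion can be sharpened to the exact limit $tP_t\to 1/\alpha_\infty$. Throughout, the rough two-sided bound of Theorem~\ref{t:upperbound}, which already gives $tP_t\asymp 1$ for transient transitive graphs, serves as a seed and a source of a priori control.

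For (a) and (b): transience of $\G$ means its Green function $G(x,y)=\int_0^\infty p_t(x,y)\,dt$ is finite, and in particular $\sup_{x,y}\int_s^\infty p_t(x,y)\,dt\to0$ as $s\to\infty$, which is the kind of decay that feeds into Proposition~\ref{p:gengraph}. Writing $X,Y$ for two independent walks started from $x$ and a $\nu_x$-random neighbour, symmetry of the rates together with Chapman--Kolmogorov gives $\int_0^\infty\P_{x,\nu_x}(X_t=Y_t)\,dt=\tfrac12\int_0^\infty p_{2t}(x,\nu_x)\,dt=\tfrac12 G(x,\nu_x)<\infty$; since the time spent together during the $k$-th meeting is an i.i.d.\ strictly positive (exponentially distributed) variable, finiteness of the total time together forces the two walks to meet only finitely often a.s. A restart argument then forces $\alpha_\infty>0$: if from every pair of starting vertices the two walks met at least once a.s., then applying the strong Markov property at each meeting---after which the two particles sit at adjacent vertices---would produce infinitely many meetings a.s., a contradiction; hence ``never meet'' has positive probability from some, and by irreducibility from every, starting configuration, so $\alpha_\infty=r(x)\P_{x,\nu_x}(\RM=\infty)>0$ and $\alpha_t=r(x)\P_{x,\nu_x}(\RM>t)\downarrow\alpha_\infty$.

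For (c), the pivotal identity is $P_t=\E[1/\hat N_t]$, where $\hat N_t$ is the number of CRW particles occupying the location of the origin-particle at time $t$ (equivalently, the size of the origin's cluster in the dual voter model); this is exactly the point at which unimodularity is used. It follows from the mass-transport principle: in the graphical representation let each initial particle $v$ send mass $1/M_t(L_t(v))$ to its time-$t$ location $L_t(v)$, where $M_t(w)$ denotes the number of particles currently at $w$; the mass received at $w$ is $\1[w\in\xi_t]$, while the mass sent by $o$ is $1/\hat N_t$, so the mass-transport principle yields $P_t=\P(o\in\xi_t)=\E[1/\hat N_t]$. One also records $\hat N_t\ge 1$ and $\E\hat N_t=\sum_{v}\P_{o,v}(\RM\le t)<\infty$, and (combining with the rough bounds and Jensen's inequality) $\E\hat N_t\asymp t$. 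Carrying the paper's moment computation over to the infinite graph---with combinatorial counting systematically replaced by mass transport---then gives the convergence of $\hat N_t/\E\hat N_t$ to a Gamma law and, after identifying the constants using that in the transient regime the non-meeting probabilities of finitely many walks factorize asymptotically into powers of $\alpha_\infty$, the sharp statement $t\,\E[1/\hat N_t]\to 1/\alpha_\infty$; combined with $P_t=\E[1/\hat N_t]$ this is the theorem.

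I expect the last step to be the main obstacle, for two reasons. First, $P_t=\E[1/\hat N_t]$ is a \emph{negative} moment of $\hat N_t$, so the moment estimates and the Gamma limit do not suffice on their own: one must control the lower tail of $\hat N_t$---show that $\P(\hat N_t\le \epsilon\,\alpha_\infty t)$ is small uniformly in $t$, with a large enough power of $\epsilon$---so that $\alpha_\infty t/\hat N_t$ is uniformly integrable and $\E[\alpha_\infty t/\hat N_t]$ converges to the reciprocal mean of the limiting law, which is finite and, by the very choice of the $\alpha_\infty t$ scale, equal to $1$. Ruling out atypically small origin-clusters, together with transporting the multi-walk moment bounds from $\ZZ^d$---where Sawyer's exact combinatorial identity is unavailable and one must instead argue directly via the joint meeting structure of several walks on a general infinite transient unimodular transitive graph---is where the real work lies.
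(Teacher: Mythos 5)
Your proposal follows essentially the same route as the paper's proof: the identity $P_t=\E[N_t^{-1}]$ on unimodular transitive graphs (your mass-transport derivation is precisely \cite[Lemma 2.1]{foxall2018coalescing}, which the paper cites), then the moment computation via time reversal and the branching structure, with the mass-transport principle replacing the finite-volume counting so that $\E N_t^k\approx (k+1)!\,(\alpha_\infty t)^k$ as in \eqref{folner}, and finally the Gamma limit together with control of $N_t/\E N_t$ near $0$. Two remarks. First, Proposition \ref{p:gengraph} cannot be invoked verbatim, since its parameters $n$, $\M$ and the error terms $E_1,\dots,E_4$ are finite-chain quantities; the paper instead reruns the $h,g,q$ estimates directly on the infinite graph over the good set $\{\Delta_{\min}(\t)>\Delta\}$, with transience entering only through $\int_{2\Delta}^{\infty}p_s(o,o)\,\mathrm{d}s\to 0$ and with $\alpha_\infty$ playing the role of $n/(2\M)$ — which is exactly the adaptation you describe, so this is a matter of phrasing rather than substance. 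Second, the step you flag as the main obstacle (uniform integrability of $\E N_t/N_t$, i.e.\ the lower tail of $\hat N_t$) is closed in the paper by a one-line appeal to \cite[Lemma 2]{bramson1980asymptotics} applied with $f_t=t$: the only hypothesis of that lemma is that $tP_t$ stays bounded, which is exactly the a priori control you already named via Theorem \ref{t:upperbound} (Corollary \ref{cor:transitivecheapbounds}) and transience; its proof is the same ``split at time $(1-\eta)t$, then union-bound the survival of at most $\ep\,\E N_t$ opinions over the remaining time $\eta t$ using $P_{\eta t}\lesssim 1/(\eta t)$'' argument that the paper carries out as the $J_1,J_2,J_3$ estimates in the finite setting. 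So the gap you anticipate is real but requires no idea beyond the bound you already have in hand.
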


\subsubsection{Applications to the voter model: generalizing Sawyer's theorem}
\label{s:voterintro}
In this subsection, we present results about the voter model obtained by duality (see Section \ref{s:prelim} for more details).

Generally, the voter model is defined as follows. Given a Markov chain $(V,\r)$, we think of each vertex as a `voter'. Initially each vertex $x \in V$ is assigned a mutually different number, which we think of as its `opinion'. Then for any voters $x,y$, with rate $r_{x,y}$ the voter $y$ adopts the opinion of $x$.  

Now we state Sawyer's Theorem \cite[Theorem 2.2]{sawyer1979limit}.
\begin{theorem}[Sawyer's Theorem]
    Consider the voter model on a Markov chain with state space $\ZZ^d$, with $d\geq 2$. Let $\hat{N}_t$ be the number of voters that have the same opinion as the voter at the origin at time $t$.  Assume there exists a function $g(x)\geq 0$ so that $r_{x,y}=g(x-y)$ and $\sum_{x\in \ZZ^d} g(x)=1$. Also assume that 
    \begin{equation*}
    \begin{split}
                &\sum_{x\in \ZZ^d}\norm{x}^2 g(x)<\infty,\quad  \mbox{ if }d=2;\\
   & \sum_{x\in \ZZ^d}\norm{x}^{2+\ep} g(x)<\infty\;  \mbox{ for some }\epsilon>0,\quad \mbox{ if }d>2.
    \end{split}
    \end{equation*}
    Here $\norm{x}=(\sum_{i=1}^d x_i^2)^{1/2}$ for $x=(x_1,\ldots, x_d)$.
    Then we have 
    	\begin{align*}
		\frac{\hat{N}_{t}}{\E\hat{N}_{t}} \xrightarrow{d} \mathrm{Gamma}\left(2,2\right),
	\end{align*}
		as $n\to\infty$, where  $\mathrm{Gamma}\left(2,2\right)$ is the Gamma distribution with probability density function $f_{\mathrm{Gamma}\left(2,2\right)}(x)=4x\exp(-2x)\1[x\geq 0]$.
\end{theorem}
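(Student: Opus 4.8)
The plan is to deduce this result as a corollary of the moment computations that form the technical core of the paper (Theorems \ref{voterthm1} and \ref{saw1}, alluded to in the introduction), rather than by reproducing Sawyer's original combinatorial argument for $\ZZ^d$. The strategy of moments: since $\mathrm{Gamma}(2,2)$ is determined by its moments, it suffices to show that $\E[(\hat N_t/\E\hat N_t)^k] \to \E[G^k]$ for every fixed $k \ge 1$, where $G \sim \mathrm{Gamma}(2,2)$; one computes $\E[G^k] = \frac{(k+1)!}{2^k}\cdot\frac{1}{2} = \frac{(k+1)!}{2^{k+1}}$ (i.e. $\E[G]=1$, $\E[G^2]=3/2$, etc.). So the heart of the matter is to evaluate the asymptotics of $\E[\hat N_t^k]$ and show $\E[\hat N_t^k] / (\E \hat N_t)^k \to (k+1)!/2^{k+1}$.

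The key steps, in order. First, translate the $k$-th moment of $\hat N_t$ into a CRW/coalescent quantity via duality: $\hat N_t^k$ is (up to lower-order terms from coincidences among the chosen tuple) the number of ordered $k$-tuples of vertices that, under the dual coalescing system run for time $t$, all coalesce with the ancestral lineage of the origin. Concretely, $\E[\hat N_t^k]$ equals a sum over $k$-tuples of the probability that $k+1$ coalescing random walks (the $k$ tuple-lineages plus the origin-lineage) have all met by time $t$. Second, one needs the single-particle estimate: $\E\hat N_t = \E|\xi_t^{\{o\}}|$ behaves, on $\ZZ^d$ with $d\ge 2$, like $t/\log t$ (for $d=2$) or like $\psi_d t$ (for $d \ge 3$) — this is exactly $1/(t\alpha_t)$-type asymptotics dualized, and on $\ZZ^d$ it follows from \eqref{BBSIR} of Bramson–Griffeath together with $P_t = \E\hat N_t/|V|$-type identities in the infinite-volume limit (more precisely from the relation between $P_t(x)$ and the expected cluster size). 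Third — the crux — show that the dominant contribution to $\E[\hat N_t^k]$ comes from tuples whose $k+1$ lineages coalesce in a "binary-tree-like" fashion on well-separated time scales: pick a uniformly random split of $[0,t]$, argue that with the transience-type input the lineages behave like a mean-field (Kingman) coalescent, so the $k+1$ walks perform $k$ successive pairwise mergers, each merger contributing a factor governed by the meeting-time kernel, and the combinatorial count of which pairs merge when yields precisely the factor that produces $(k+1)!/2^{k+1}$. This is the analog of Sawyer's "magical" calculation, but carried out probabilistically: the moment method of Bramson–Griffeath applied through the general framework of this paper (Proposition \ref{p:gengraph} and the bootstrap from Theorem \ref{t:upperbound}), with the hypotheses $\sum_x\|x\|^2 g(x)<\infty$ (for $d=2$) and $\sum_x \|x\|^{2+\ep}g(x)<\infty$ (for $d>2$) used exactly to guarantee the CLT-type heat-kernel decay and the transience estimates needed to run this argument for the walk with step distribution $g$, which is a reversible symmetric chain on $\ZZ^d$ satisfying the standing assumptions.

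The main obstacle I expect is the third step: controlling the error from tuples that do \emph{not} coalesce via well-separated binary mergers (e.g. three lineages meeting nearly simultaneously, or lineages that meet, separate, and meet again), and showing these are of strictly smaller order than $(\E\hat N_t)^k$. On $\ZZ^d$ this is where Sawyer's structure was used; here one must instead invoke quantitative transience (the decay of the "don't-meet" probabilities $\P_{x,y}(\RM>t)$ and the heat kernel bounds implied by the moment assumptions on $g$) to bound the probability of "bad" coalescence patterns, and to show that conditioning two walks to have met by time $t$ does not appreciably distort the law of a third walk relative to the others. A secondary technical point is handling the passage from $\ZZ^d$ itself (where the graph is infinite and transitive but the chain is a general symmetric step-distribution walk, not nearest-neighbor SRW) — one checks that all the general-$(V,\r)$ inputs of the framework apply, and that $\alpha_t$ has the right logarithmic (for $d=2$) versus constant (for $d\ge 3$) behavior, so that the normalization $\E\hat N_t$ is correctly identified.
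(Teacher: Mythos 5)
You should first note that the paper does not prove this statement at all: it is quoted verbatim as Sawyer's classical theorem and attributed to \cite[Theorem 2.2]{sawyer1979limit}, with the paper explicitly remarking that Sawyer's original proof is a combinatorial computation special to $\ZZ^d$. The paper's own contribution is to generalize the moment computation to \emph{other} graphs (Theorems \ref{voterthm1} and \ref{saw1}), and those generalizations deliberately do not subsume the theorem as stated here. Your plan — duality, method of moments against $\mathrm{Gamma}(2,2)$, and the CRW moment framework of Proposition \ref{p:gengraph} — is exactly the spirit of the paper's generalizations, but as a proof of the quoted statement it has two genuine gaps. First, the case $d=2$: the walk is then recurrent, $\alpha_\infty=0$ and $\alpha_t\asymp 1/\log t$, whereas every version of the framework you invoke needs a transience-type input — Theorem \ref{infinitetrans}/\ref{saw1} is stated for \emph{transient} unimodular transitive graphs, the finite-chain results need $\rel\ll\M$ together with $\M\lesssim|V|$ or uniform transience, and the error analysis behind the "well-separated binary mergers" picture (Lemmas \ref{l:approxhbyg}, \ref{l:approxq}) uses that the non-meeting probability stays bounded away from $0$ on time scales $\ll\M$. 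In $d=2$ the normalization itself carries the $\log t$ and the mean-field approximation degrades logarithmically; the paper even lists the quadratic-growth case as an open problem, so your "third step" would not go through there without new ideas. Second, Sawyer's theorem does not require $g$ to be even — the paper points this out immediately after the statement — so the jump rates need not be symmetric; but reversibility is essential to the machinery you want to use (the time reversal producing the branching structure in Lemma \ref{l:reverdiffinit}, and the Aldous–Brown exponential approximation are both for reversible chains). Hence your route could at best recover the symmetric, $d\ge 3$ case, and even there one must extend Theorem \ref{infinitetrans} from locally finite graphs with unit edge rates to chains with possibly unbounded jump kernels.

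Two smaller points. Your moment identification contains an arithmetic slip: the $k$-th moment of $\mathrm{Gamma}(2,2)$ is $(k+1)!/2^k$ (consistent with your own examples $\E G=1$, $\E G^2=3/2$), not $(k+1)!/2^{k+1}$, and this is the value appearing in Theorems \ref{voterthm1} and \ref{saw1}; carrying the extra factor $1/2$ through would make the moments fail to match. Also, for the recurrent case one cannot simply import $\E\hat N_t$ from \eqref{BBSIR}, since Bramson–Griffeath's $d=2$ asymptotics themselves rely on Sawyer's theorem as an input — using them here would be circular. If you restrict the claim to symmetric $g$ with $d\ge 3$, your outline is essentially the paper's route to Theorem \ref{saw1}; for the full statement as quoted, the honest proof remains the citation to Sawyer.
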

We note that in this theorem $g$ is not required to be an even function so that $\r$ may not be symmetric.
One merit of Sawyer's Theorem is that it  gives  the limiting distribution of 
$\hat{N}_t/\E \hat{N_t}$ in the explicit form. This is very helpful in determining the asymptotics of $P_t$.

We now describe our extensions of Sawyer's Theorem to more general Markov chains. We consider either (1) a sequence of finite transitive Markov chains $(V_n,\r_n)$ with unit jump rate at each state, or (2) random graphs sampled from the configuration model $\CM_n(D)$ (from Definition \ref{defn:configm}) conditioned to be connected. Let $t_n$ be a sequence of times. 
These times and Markov chains or graphs satisfy the conditions in either Theorem \ref{t:config} or \ref{transgraph} or  \ref{uniformtran}.
Let  $\hat{N}_{n,t}$ be the number of voters that have the same opinion as the voter at $\U$ at time $t$, where $\U\sim \mbox{Unif}(V_n$).
\begin{theorem}\label{voterthm1}
	Under either setting, we have
	\[
	\forall k \in \mathbb{N}, \qquad \lim_{n\to\infty} \E\left(\left(\frac{\hat{N}_{n,t_n}}{\E \hat{N}_{n,t_n}}\right)^k\right)=\frac{(k+1)!}{2^k}.
	\]
	Here, if we are under the setting of Theorem \ref{t:config}, the expectation is conditional on the random graph, and the convergence is in the sense of  convergence in probability (i.e., it is a quenched result).
	Moreover, 
	\begin{align*}
		\frac{\hat{N}_{n,t_n}}{\E\hat{N}_{n,t_n}} \xrightarrow{d} \mathrm{Gamma}\left(2,2\right),
	\end{align*}
	as $n\to\infty$.
\end{theorem}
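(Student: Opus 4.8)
The plan is to derive Theorem \ref{voterthm1} from the density asymptotics already recorded in Theorems \ref{t:config}, \ref{transgraph} and \ref{uniformtran} together with the duality between the voter model and CRW. The starting point is the well-known identity
\[
\E\left( (\hat N_{n,t})^{\underline k}\right) = \E\bigl( \abs{\xi_t^{(k)} \text{ a single particle}}\cdots\bigr),
\]
or more precisely the fact that via the graphical representation, $\hat N_{n,t}$ equals the number of vertices whose genealogical walk (run backwards in time) has coalesced with the walk from $\U$ by time $t$. Thus the $k$-th falling factorial moment of $\hat N_{n,t}$ is the expected number of ordered $k$-tuples $(v_1,\dots,v_k)$ of distinct vertices all of whose backward walks have coalesced with that of $\U$ by time $t$; equivalently, it is $\abs{V_n}^{?}$ times a probability that $k+1$ coalescing random walks started from $\U,v_1,\dots,v_k$ (with the $v_i$ chosen uniformly) all occupy the same site at time $t$. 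The main work is to show that this $(k+1)$-fold coalescence probability, suitably normalized, has the right limit — and it is precisely here that the CRW moment estimates underlying the proofs of Theorems \ref{t:config}--\ref{uniformtran} come in: those theorems are proved (as the paper explains) by computing the asymptotics of the moments of $\hat N_t$ (there written with the hat notation) and showing $\hat N_t/\E\hat N_t$ converges to $\mathrm{Gamma}(2,2)$. So in effect the plan is to \emph{extract} the moment statement that is already established en route to those theorems and package it as the displayed factorial-moment formula.

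Concretely, I would proceed as follows. First, fix $k$ and express $\E\left((\hat N_{n,t_n})^{\underline k}\right)$ as a sum over ordered $k$-tuples of distinct vertices of the probability that all $k$ backward walks coalesce with the tagged walk from $\U$ by time $t_n$; symmetrize over $\U$ uniform and the tuple, reducing to a single coalescence probability. Second, compare the falling factorial moments with the ordinary moments: since $\E \hat N_{n,t_n} = 1/P_{t_n} \to \infty$ (as $1/P_{t_n} \asymp t_n \to \infty$ in all three settings), we have $\E\left((\hat N_{n,t_n})^{\underline k}\right) = (1+o(1))\E\left((\hat N_{n,t_n})^k\right)$ provided the $k$-th moments don't blow up faster than $(\E\hat N_{n,t_n})^k$, which is exactly the a priori control coming from Theorem \ref{t:upperbound}-type bounds. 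Third, invoke the key moment computation (the heart of the paper, used to prove Theorems \ref{t:config}--\ref{uniformtran}) which gives $\E\left((\hat N_{n,t_n}/\E\hat N_{n,t_n})^k\right) \to (k+1)!/2^k$; in the configuration model case this is the quenched statement, with convergence in probability over the random graph. Fourth, recognize $(k+1)!/2^k = \int_0^\infty x^k \cdot 4x e^{-2x}\,dx$ as the moments of $\mathrm{Gamma}(2,2)$, and since the Gamma distribution is determined by its moments (its moment generating function is finite in a neighborhood of $0$), the method of moments yields $\hat N_{n,t_n}/\E\hat N_{n,t_n} \xrightarrow{d} \mathrm{Gamma}(2,2)$.

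The main obstacle is Step 3: establishing that the normalized higher moments converge to $(k+1)!/2^k$ uniformly over the relevant range of $t_n$ and (in the configuration model) in the quenched sense. This requires a genuine multi-particle coalescence analysis — understanding the joint law of $k+1$ walks on the graph, the successive coalescence events, and showing that on the relevant time scale the system behaves like Kingman's coalescent run for the appropriate amount of "mean-field time," so that the limiting moments match those of the Kingman coalescent block containing the root. All the `transience-like' hypotheses (bounded relaxation time and $\delta_D$-homogeneity for the configuration model; $\rel \ll t_n \ll \M$ and the spectral/uniform-transience conditions in the transitive case) are used precisely to control the pairwise non-meeting probabilities $\alpha_{t}$ and to rule out pathological correlations among the $k+1$ walks; carrying this through is the technical crux and presumably occupies the bulk of the paper's later sections. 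Once that moment estimate is in hand, the deduction of Theorem \ref{voterthm1} — converting to factorial moments, comparing with ordinary moments using the a priori bounds, and applying the method of moments with the Gamma-is-moment-determined fact — is routine.
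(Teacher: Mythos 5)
Your proposal is correct and takes essentially the same route as the paper: Theorem \ref{voterthm1} is proved there precisely by extracting the normalized-moment estimates established en route to Theorems \ref{t:config}, \ref{transgraph} and \ref{uniformtran} (namely \eqref{eq:509} together with Proposition \ref{p:control_ek}, resp.\ \eqref{eq:509}, resp.\ \eqref{eq:intest}), identifying $\hat{N}_{n,t}$ with $N_t$ by duality, and applying the method of moments using that $\mathrm{Gamma}(2,2)$ is moment-determined (the paper invokes Carleman's condition where you use finiteness of the moment generating function). Your detour through falling factorial moments is only a cosmetic variant of the paper's computation, which works with ordinary moments of $N_t$ started from i.i.d.\ uniform (possibly coinciding) vertices and disposes of the coincidence terms via Lemma \ref{densityexpress}.
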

Theorem \ref{voterthm1} follows from our proof of the corresponding theorems: under the setting of Theorem \ref{t:config} or \ref{transgraph} or \ref{uniformtran}, the corresponding results can be obtained from \eqref{eq:509} and Proposition \ref{p:control_ek}, or from \eqref{eq:509}, or from \eqref{eq:intest}, respectively.

We have a similar result for infinite graphs. 
For a unimodular Galton-Watson tree $\G\sim\UGT(D)$, or an infinite transient unimodular transitive graph, let  $\hat{N}_{t}$ be the number of voters that have the same opinion as the voter at the root (in the Galton-Watson tree setting) or a certain vertex (in the infinite transitive graph setting) at time $t$.
\begin{theorem}\label{saw1}
	Under either setting, we have
	\[
	\forall k \in \mathbb{N}, \qquad \lim_{t\to\infty} \E\left(\left(\frac{\hat{N}_{t}}{\E \hat{N}_{t}}\right)^k\right)=\frac{(k+1)!}{2^k}.
	\]
	Moreover, 
	\begin{align*}
		\frac{\hat{N}_{t}}{\E\hat{N}_{t}} \xrightarrow{d} \mathrm{Gamma}\left(2,2\right),
	\end{align*}
	as $t\to \infty$.
\end{theorem}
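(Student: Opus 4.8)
\textbf{Proof proposal for Theorem \ref{saw1}.}

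The plan is to reduce the infinite (local-weak-limit) statement to the finite-graph statement of Theorem \ref{voterthm1}, exploiting the fact that the quantity $\hat N_t$ is a \emph{local} observable: by duality, $\{v : v \text{ has the same opinion as the root at time } t\}$ equals the set of initial positions of CRW particles that have coalesced with the particle started at the root by time $t$, and whether a given vertex contributes to $\hat N_t$ depends only on the trajectories of finitely many walkers, which with high probability stay within a ball of radius $O(t)$ around the root. Concretely, I would first express the $k$-th moment $\E\big((\hat N_t/\E\hat N_t)^k\big)$ in terms of the probabilities that $k$ tagged particles (plus the root particle) have all coalesced with the root by time $t$; these are exactly the moment quantities analyzed in the proofs of Theorems \ref{t:config}, \ref{transgraph}, \ref{uniformtran} via the quantity $\hat N_t$ and equations \eqref{eq:509}, \eqref{eq:intest} and Proposition \ref{p:control_ek}. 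The target value $\E \hat N_t \asymp 1/P_t(o) \sim \alpha(D)\, t$ (resp.\ $\sim \alpha_\infty t$ in the transitive case) is supplied by Theorem \ref{t:gwtree} (resp.\ Theorem \ref{infinitetrans}), so the content is the convergence of the normalized moments to $(k+1)!/2^k$.

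In the $\UGT(D)$ case, I would run the local-weak-convergence argument already set up in Appendix \ref{finitetoinfinite}: take $\G_n \sim \CM_n(D)$ conditioned connected, pick times $t_n = t$ fixed (or $t_n\to\infty$ slowly, but here $t$ is fixed), and note that the $k$-particle coalescence event up to time $t$, rooted at a uniform vertex of $\G_n$, is (with probability $\to 1$) determined inside a bounded neighbourhood of that vertex, which converges in distribution to the corresponding bounded neighbourhood of the root of $\UGT(D)$. Hence $\E\big((\hat N_{n,t}/\E\hat N_{n,t})^k\big) \to \E\big((\hat N_t/\E\hat N_t)^k\big)$ for the tree, and the left side converges to $(k+1)!/2^k$ by (the fixed-time version of) the argument behind Theorem \ref{voterthm1}. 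Actually it is cleaner to work directly on the tree: establish the analogue of Proposition \ref{p:control_ek} (control of the $k$-fold collision probabilities $e_k$) using transience of SRW on $\UGT(D)$ and the branching structure, which gives the same recursion $\E(\hat N_t^{k+1}) \sim (k+1)\,\E \hat N_t \cdot \E(\hat N_t^{k})$ up to lower order, and solve it to get the Gamma$(2,2)$ moments. For infinite transient unimodular transitive graphs, the same moment recursion is exactly what is proved en route to Theorem \ref{infinitetrans}, so Theorem \ref{saw1} in that case is a direct corollary of that proof together with the standard fact that a distribution on $[0,\infty)$ whose moments are $(k+1)!/2^k$ is uniquely determined (Carleman's condition) and equals Gamma$(2,2)$, whence moment convergence upgrades to convergence in distribution.

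The main obstacle is the moment recursion itself, i.e.\ showing $\E(\hat N_t^{k+1}) = (k+1)\,\E\hat N_t \cdot \E(\hat N_t^k)(1+o(1))$ as $t\to\infty$ on these infinite graphs. This requires (i) a sharp two-point estimate showing that two particles, once they have both left a small neighbourhood of the root, meet before time $t$ with probability $\sim t^{-1}/\alpha$ (the transient analogue of the $\ZZ^d$ computation of Bramson--Griffeath, using the Green's function / return-probability bounds that transience and, in the non-transitive case, the bounded relaxation time provide), and (ii) an inclusion–exclusion / union-bound argument that the dominant contribution to the $(k+1)$-st moment comes from configurations in which one of the $k+1$ tagged particles is the ``last'' to join a cluster already containing the other $k$, which produces the combinatorial factor $(k+1)$. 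Transience (uniform transience in the finite setting, genuine transience in the infinite unimodular setting) is what makes the off-diagonal collision probabilities summable and the cross terms negligible; the unimodularity/mass-transport principle is needed to average correctly over which particle plays the role of the root. Once (i) and (ii) are in place the recursion is immediate and the identification of the limit as Gamma$(2,2)$ is routine.
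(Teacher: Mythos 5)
Your first route is the paper's proof: for $\G\sim\UGT(D)$ the theorem is deduced from the quenched configuration-model statement (Theorem \ref{voterthm1}) by the same local-weak-convergence and subsequence-repetition argument used to get Theorem \ref{t:gwtree} from Theorem \ref{t:config}, and in the infinite transient unimodular transitive case it is read off from \eqref{folner} and \eqref{unimoment} in the proof of Theorem \ref{infinitetrans}, with Carleman's condition upgrading moment convergence to convergence in distribution. One caveat on how you phrase the reduction: at a \emph{fixed} time $t$ the normalized moments of $\hat N_{n,t}$ on $\CM_n(D)$ are not close to $(k+1)!/2^k$; the Gamma moments only appear in the regime $1\ll t_n\ll n$, so you must send $n\to\infty$ first (local weak convergence at fixed $t$) and then $t\to\infty$, which is exactly what the entry-repetition trick in the proof of Theorem \ref{t:gwtree} organizes. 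A "fixed-time version of Theorem \ref{voterthm1}" in the sense you invoke does not exist.

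The genuine problem is your proposed "cleaner" direct argument on the infinite graph. The recursion you set out to establish, $\E(\hat N_t^{k+1})\sim(k+1)\,\E\hat N_t\cdot\E(\hat N_t^{k})$, is quantitatively wrong: iterating it gives $\E(\hat N_t^{k})\sim k!\,(\E\hat N_t)^{k}$, the moments of an exponential limit, not $(k+1)!/2^{k}$. What the paper actually proves (equation \eqref{folner}) is $\E(\hat N_t^{k})\approx(k+1)!\,(\alpha_\infty t)^{k}$ together with $\E\hat N_t\approx 2\alpha_\infty t$, so the correct ratio of consecutive moments is $\tfrac{k+2}{2}\,\E\hat N_t$. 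Your combinatorial heuristic ("one of the $k+1$ tagged particles is the last to join, giving a factor $(k+1)$") miscounts on two scores: the $(k+1)$-st moment involves $k+2$ walkers (the root's particle plus $k+1$ tagged ones), and the per-collision factor is $\alpha_\infty t\approx\E\hat N_t/2$, not $\E\hat N_t$. The deeper point is the size-biasing built into $\hat N_t$ (cf.\ \eqref{Nteq}): $\hat N_t$ is the size-biased cluster size, which is precisely why the limit is $\mathrm{Gamma}(2,2)$ (the size-biased exponential) rather than an exponential, and a naive "last particle to join" recursion lands on the exponential. Following the paper's route (mass-transport principle, time reversal, the $k!$ collision patterns $I_k\in\Phi_k$ and the $(k+1)!$ prefactor of Lemma \ref{l:reverdiffinit}, then transience to replace $q(\cdot)$ by $\alpha_\infty$) produces the correct constants automatically; the direct recursion as you stated it would prove the wrong theorem.
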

In the setting of $\G\sim \UGT(D)$ this result follows from Theorem \ref{voterthm1} in the configuration model setting, using the same arguments in the proof of Theorem \ref{t:gwtree}.
In the setting of an infinite transient unimodular transitive graph, Theorem \ref{saw1} follows from equations \eqref{folner} and \eqref{unimoment} in the proof of Theorem \ref{infinitetrans}.

\subsection{Discussions of our results and related previous works}

\subsubsection{The big bang regime}
Recall that in the finite setup, the big bang regime refers to the setting where $t\ll t_{\mathrm{coal}}$, while our results are stated for $t\ll n$ for the configuration model, and $t\ll \M$ for transitive Markov chains. There is in fact no loss at all because in our setups, we always have $t_{\mathrm{coal}}\asymp \M$. We explain this now.

First,  we note that $\coal \ge \M$ holds for any Markov chain. This follows from the monotonicity of the CRW model w.r.t. initial conditions, as can be seen from the graphical representation which allows one to couple CRW with different starting conditions. 

For the other direction,
let
$t_{\mathrm{hit}}(\r)=t_{\mathrm{hit}}:=\max_{x,y}\E_x T_y$ be the maximal expected hitting time of a Markov chain $(V,\r)$, where $T_y$ is the hitting time of the state $y$. 
In \cite{oliveira2012coalescence}, using the powerful random target lemma, Oliveira proved that for a finite reversible chain $(V,\r)$, $t_{\mathrm{coal}}$ is upper bounded by a constant multiplying $t_{\mathrm{hit}}$. 

For transitive chains we have
\begin{equation}  \label{eq:hit-meet}
	t_{\mathrm{hit}}\leq 2\max_x \E_{\pi}(T_x) \leq 4\M, 
\end{equation}
where $\E_\pi$ means that the Markov chain starts from the stationary measure $\pi$. 
Here the first inequality is due to \cite[Lemma 10.2]{levin2017markov} and the second inequality follows from \cite[Proposition 14.5]{aldous1995reversible}.
Thus we conclude that $t_{\mathrm{coal}}\asymp \M$.

For the configuration model, the assumption that $D$ is at least three and upper bounded implies that the relaxation time $\rel$ is bounded by some constant with high probability (see Lemma \ref{l:configroughbound}). Hence the proof of Proposition \ref{p:treluniform} below shows that for $\G_n\sim \CM_n(D)$,
$t_{\mathrm{hit}}\leq Cn$ for some constant $C$ with high probability. Actually $\M(\G_n)$ grows linearly in $n$ (see Lemma \ref{tmmetlim}). Thus for the configuration model we also have $t_{\mathrm{coal}}\asymp \M$. 

There is a recent work \cite{meet} of Kanade, Mallmann-Trenn and Sauerwald, where they provide essentially optimal conditions for $t_{\mathrm{coal}} \asymp \M$. We note that they work in discrete-time settings and that the jumps rates are not assumed to be symmetric.

We also mention that while our results (in the finite setup) concentrate on the big bang regime, results on the regime $t \asymp |V|$ in various setups are already proven in \cite{oliveira2013mean} (see discussions in Section \ref{sssec:meanfb}). 

\subsubsection{A heuristic for the rate of decay}
\label{s:motivation}
The exact decay rate of $P_t$ is predicted by a heuristic of van der Berg and Kesten. They masterfully turned it into a proof for $\ZZ^d$ for $d \ge 3$  \cite{van2000asymptotic,BK} (as well as for a model generalizing CRW when $d \ge 5$). This heuristic involves an `independence approximation' in which the events that far away vertices are occupied are assumed to be approximately independent. In justifying this heuristic van der Berg and Kesten relied on the transience of $\ZZ^d$ for $d \ge 3$ as well as on some other properties of $\ZZ^d$.\footnote{It seems plausible that their arguments can be extended to transitive graphs satisfying a uniform polynomial growth estimate, but we do not believe that their arguments extend to all amenable transitive graphs without some substantial modifications, and likely also some new ideas. In the finite setup, it is not clear if their approach can be pushed to graphs which are just `barely transient' like a $n \times n \times m$ torus with $m \asymp \log n$. Our Theorems \ref{transgraph} and \ref{uniformtran} can treat such graphs.} The starting point is the equation\footnote{For a finite chain $(V,\r)$,  $-\frac{\mathrm{d}}{\mathrm{d}t} P_t=\frac{1}{|V|} \sum_{\text{all unordered }(x,y):\, x \neq y}r_{x,y}\mathbb{P}(x \text{ and }y \text{ are occupied at time }t)$. In the infinite setup \eqref{e:BKheuristic} 
	is valid for unimodular transitive graphs, since applying the mass transport principle to the transport functions $f(x,y)=r_{x,y}\1[xy \in E, x \in \xi_t,y \notin \xi_t ]$ yields     $\sum_{y:\, y \sim x}r_{x,y}\mathbb{P}(x \in \xi_t,y \notin \xi_t )=\sum_{y:\, y \sim x}r_{x,y}\mathbb{P}(x \notin \xi_t,y \in \xi_t )$, from which \eqref{e:BKheuristic} easily follows. }
\begin{equation}
	\label{e:BKheuristic}
	-\frac{\mathrm{d}}{\mathrm{d}t} P_t(x)=\sum_{y:\, y \sim x}r_{x,y}\mathbb{P}(x, y \in \xi_t). 
\end{equation}
\noindent Here recall that $\xi_t$ is the set of vertices that are occupied at time $t$.
Using the graphical representation of the CRW model we can write for any $s \in [0,t]$,
\[\mathbb{P}(x, y \in \xi_t)\approx \sum_{w,z}\mathbb{P}(w, z \in \xi_s)\mathbb{P}(B_{t,s}(w,z,x,y)), \]
where $B_{t,s}(w,z,x,y)$ is the event that the particles which are at $w$ and $z$ reach locations $x$ and $y$ respectively at time $t$. Note that here we write `$\approx$' instead of `=' because there might be particles at locations other than $w$ and $z$ that also reach $x$ or $y$ at time $t$. However, such events make a negligible contribution to $P_t$  provided we let $1\ll t-s \ll t$.
Again by the graphical representation, the probability of $B_{t,s}(w,z,x,y)$ is the same as the probability that two independent random walks started at $w$ and $z$ reach locations $x$ and $y$ respectively at time $t-s$, and do so without colliding. By reversibility, this is the same as $\alpha_{t-s}(x,y,w,z)$, which is defined to be the probability of two independent particles starting from $x$ and $y$ reaching locations $w$ and $z$ at time $t-s$ and doing so without colliding. 
Then we have
\[\sum_{w,z}\mathbb{P}(B_{t,s}(w,z,x,y)) \approx \sum_{w,z} \alpha_{t-s}(x,y,w,z)  =: \alpha_{t-s}(x,y).\] 
We also have the `independence approximation' in the transitive setup: for all $w,z$ that are far way from each other, 
\[\mathbb{P}(w, z \in \xi_s) \approx\mathbb{P}(w \in \xi_s)\mathbb{P}(z \in \xi_s)=P_{s}^2.  \] 
We note that actually, for any vertices $w,z$ one has that $\mathbb{P}(w, z \in \xi_s) \le P_{s}^2  $ \cite{Arratia}. 
When $t-s$ is large, for the sum $\sum_{w,z} \alpha_{t-s}(x,y,w,z)$, the main contribution is from those $w, z$ that are far away from each other.
Thus we have that
\[\mathbb{P}(x, y \in \xi_t) \approx \alpha_{t-s}(x,y)P_s^2.\] 
Note that $\alpha_{t-s}=\sum_{y:\, y \sim x}r_{x,y}\alpha_{t-s}(x,y)$ (where $\alpha_{t-s}=\alpha_{t-s}(x)$ is defined in \eqref{alpha_t(x)} and is independent of $x$ by transitivity). In many cases it is possible to verify that for some $s$ close $t$ we have that $\alpha_{t-s} \approx \alpha_t$ and $P_s \approx P_t$. This gives that $-\frac{\mathrm{d}}{\mathrm{d}t} P_t\approx \alpha_t P_t^2$ which  leads to the prediction:
\begin{equation}
	\label{e:MF1}
	\tag{A1}
	P_t\approx\frac{1}{t \alpha_t}.
\end{equation}
We note equation \eqref{BBSIR} (in the case $d\geq 2$) due to Bramson and Griffeath can also be re-written as \eqref{e:MF1}.
While we do not directly try to justify this independence approximation, our results confirm the conclusion \eqref{e:MF1} in many cases. We take a different approach which has the advantage of also generalizing Saywer's Theorem to other instances.

\subsubsection{Mean field behavior}  \label{sssec:meanfb}

Another motivation for our work comes from a conjecture of Aldous and Fill \cite[Open Problem 14.12]{aldous1995reversible} who suggested to study the distribution of $\Rcoal$ in the (finite) transitive setup under the spectral condition $\rel \ll t_{\mathrm{hit}}$. They conjectured that under this condition the coalescence time should exhibit certain mean-field behaviors. Our results show that in some sense, under appropriate transience assumptions, such mean-field behaviors are valid also during a large part of the big bang regime (for transitive graphs and the configuration model).  

Consider a sequence of graphs with the number of vertices growing to infinity. Aldous and Fill conjectured that in the transitive setup, under the spectral condition  $\rel \ll t_{\mathrm{hit}}$, the law of $\Rcoal$ is close in total variation to the law of $\M \sum_{k=2}^n\binom{k}{2}^{-1}\tau_{k}$ where $\tau_2,\ldots,\tau_n$ are i.i.d.\ Exp$(1)$ and $n$ is the number of vertices in the graph.   
It is instructive to note that for the complete graph this random variable equals $\Rcoal$ in law, and the evolution of the number of particles is the same as in Kingman's coalescent from the moment there are $n$ equivalence classes.
This conjecture was nearly solved by Oliveira \cite{oliveira2013mean} (for the stronger $W_1$ Wasserstein metric), but under the seemingly marginally stronger assumption that the mixing time $t_{\mix}$ satisfies $t_{\mix} \ll t_{\mathrm{hit}}$. 
This condition was recently shown to be equivalent to $\rel \ll t_{\mathrm{hit}}$ in \cite{hermon2019intersection}.

Let $\rho_k$ be the first time when there are $k$ particles. 
Using his earlier work \cite{oliveira2012coalescence}, Oliveira argues that for $k \gg 1$, the time $\rho_{k}=O(\M/k)=o(\M)$ with high probability. Hence the period up to $\rho_k$ can be seen as a short burn-in period. He then argues (in some precise quantitative sense) that if $k$ diverges slowly  enough in terms of some function of $\M/\rel$ then from that point on, the evolution of the process is close to the evolution on the complete graph of the same size $n$ 
with $k$ initial particles, up to a scaling of the time by a factor of $\M/n$. More precisely, he shows that with high probability, simultaneously for all $2 \le i \le k$, once there are $i$ particles left, the time until the next collision is much larger than the $k^{-2}$ mixing time of $i$ independent random walks. 
Hence the mixing time can be viewed as a burn-in period (compared to the time until the next collision), so the coalescence time is close in distribution to the sum $\sum_{i=2}^{k}\hat \tau_i$ where $\hat \tau_2,\ldots,\hat \tau_{k}$ are independent, and $\hat \tau_i$ is the first collision time of $i$ independent walks starting from i.i.d.\ stationary initial distributions. 

By a classic result of Aldous and Brown \cite{aldous}, the assumption $\rel \ll \M$ implies that the collision time between each pair of particles is roughly exponentially distributed with mean $\M$. Oliveira showed that in fact the law of the first collision time among the remaining $i$ particles is close to the law of the minimum of $\binom{i}{2}$ independent Exp$(1/\M)$ random variables, which is the law of $\binom{i}{2}^{-1}\tau_{i}$. He also showed that this holds simultaneously for all $2 \le i \le k$, in the sense that the sum of the error terms over $2 \le i \le k$ is $o(1)$.

Observe that for the complete graph, $\rho_k$ has the same distribution as $\M \sum_{i =k+1}^{n}\binom{i}{2}^{-1}\tau_{i}$. This leads to  the following mean field approximation for the density of particles: for $t \ll \M$ we have 
\begin{equation}
	\label{e:MF2}
	\tag{A2}
	P_t\approx \frac{2\M}{tn}.
\end{equation}
Here $t \ll \M$ corresponds to $k \gg 1$, which implies that $\M \sum_{i =k+1}^{n}\binom{i}{2}^{-1}\tau_{i}$ is concentrated, from which \eqref{e:MF2} follows by a simple calculation.

The number $k$ from Oliveira's arguments is allowed to diverge only as some function of $t_{\mathrm{hit}} /t_{\mix}$. Hence it could be the case that it is only allowed to diverge arbitrarily slowly, and the fastest it may be allowed to diverge is like $o(\sqrt{n})$ (for expanders).

The results in this work verify the validity of \eqref{e:MF2} under certain finitary notions of transience assumptions as well as the spectral condition $\rel \ll t_{\mathrm{hit}}$ proposed by Aldous and Fill for times $t \gg 1$ or $t \gg \rel$ (depending on the assumptions being made).  For instance, for transitive expanders and for the configuration model we allow any $1 \ll t \ll n$. This corresponds to taking any $1 \ll k \ll n$. 

Interestingly, like Oliveira, a key quantity of interest for us is $\mathbb{P}(\sum_{i=2}^{k}\hat \tau_i \le t)$, as it happens to be the $(k-1)$-th moment from Sawyer's Theorem at time $t$. We verify that, up to a scaling of the time by $\M/n$, this probability is well approximated by the corresponding probability for the complete graph, i.e., the mean field counterpart. As in \cite{oliveira2013mean}, this essentially shows that $\hat \tau_2,\ldots,\hat \tau_{k}$ are asymptotically independent, and that each $\hat \tau_i$ can be thought of as the minimum of $\binom{i}{2}$ independent collision times. The main difference is that, here we have to do some more careful analysis of this probability for $t$ much smaller than $\M$. While Oliveira estimates this probability up to some $o(1)$ additive error term, for the small $t$ that we consider this error term may be of a much higher order than the probability being estimated. One strategy we take is to reverse time and study a certain type of random walks with branching. See Section \ref{s:method} for details.

\subsubsection{Assumptions for transitive chains} \label{ss:assumptions}

In this section, we discuss the assumptions that appeared in Theorem \ref{transgraph} and Theorem \ref{uniformtran}.

We note that under transitivity the condition $\M(\r_n) \lesssim |V_n|$ is equivalent to $t_{\mathrm{hit}}(\r_n) \lesssim |V_n| $, by the fact that $2\M(\r_n) \le t_{\mathrm{hit}}(\r_n)$ and \eqref{eq:hit-meet}.
These conditions are also equivalent to several other conditions. For example, if the transitive Markov chain $(V_n, \r_n)$ is replaced by a $d_n$-regular transitive graph $\G_n$, it is equivalent to  $d_n\mathcal{R}_{\mathrm{max}}(\G_n)\lesssim 1$, where $\mathcal{R}_{\mathrm{max}}(\G_n)$ is the maximal effective resistance between a pair of vertices. It can also be shown to be equivalent to $\int_0^{\rel(\G_n)}p_t^{(n)}(x,x)\mathrm{d} t \lesssim 1$ (here we let $r_{x,y}=\1[x \sim y ]/d_n$ instead of $\1{[x\sim y]}$, and recall that $p_t^{(n)}$ is the transition probability of $(V_n,\r_n)$).

We now briefly explain the equivalence between $\int_0^{\rel(\G_n)}p_t^{(n)}(x,x)\mathrm{d} t \lesssim 1$ and $\M(\r_n) \lesssim |V_n|$. 
First one can show that
$\int_0^{\rel(\G_n)}p_t^{(n)}(x,x)\mathrm{d} t \lesssim 1$ if and only if $\int_0^{|V_n|}(p_t^{(n)}(x,x) - 1/|V_n|) \mathrm{d} t \lesssim 1$, which is also  equivalent to
$\int_0^{\infty}(p_t^{(n)}(x,x) - 1/|V_n|) \mathrm{d} t \lesssim 1$.
Then the equivalence follows from the facts that $\int_0^{\infty}(p_t^{(n)}(x,x) - 1/|V_n|) \mathrm{d} t=\E_{\pi}T_x/|V_n|$ and $\max_{x} \E_{\pi}T_x \ge 2 \max_{x,y} \E_{x,y}\RM$ (this holds even without transitivity). 
See the proof of Proposition \ref{p:treluniform} for details. 

We next compare the assumptions in
Theorem \ref{transgraph} and Theorem \ref{uniformtran}.
In the case where the transitive Markov chain $(V_n, \r_n)$ is replaced by a $d_n$-regular transitive graph $\G_n$ with $d_n=O(1)$, following recent works \cite{TT, tessera2020sharp}, it is shown in  \cite[Proposition 2.10]{Gumbel} that the condition \eqref{defuniftrans} is implied by 
\begin{equation}\label{implyunif}
	\mathrm{diam}(\G_n)^2\ll \abs{V_n}/\log \abs{V_n}.
\end{equation}
Condition \eqref{implyunif} provides a simple and easily verifiable sufficient condition for uniform transience.
For example, the $d$-dimensional torus of side length $n$ with $d\geq 3$ satisfies \eqref{implyunif}.
We note that it follows from \cite[Proposition 2.10]{Gumbel} that the condition $\mathrm{diam}(\G_n)^2\lesssim \abs{V_n}/\log \abs{V_n}$ implies that $\M(\G_n) \asymp |V_n|$, which implies part of the assumptions in Theorem \ref{transgraph}.
On the other hand, if $\mathrm{diam}(\G_n)^2 \gg \abs{V_n}$ and $\G_n$ is transitive of (uniformly in $n$) bounded degree, we have $\M(\G_n)\gg |V_n|$, so the assertion of Theorem \ref{transgraph} fails.\footnote{It follows from \cite{TT} that $\G_n$ satisfies a certain technical condition due to Diaconis and Saloff-Coste known as moderate growth, for which it is known that the mixing time is proportional to the diameter square. In our case, by the assumption $\mathrm{diam}(\G_n)^2 \gg \abs{V_n}$, this means that the mixing time and the meeting time are much larger than the number of vertices.}  It follows that there are relatively few instances satisfying the assumptions in Theorem \ref{transgraph} but not those in Theorem \ref{uniformtran} (i.e., uniform transience). Two such examples are an $n \times n \times \lceil \log n \rceil$ (discrete) torus and a Cartesian product of the $n$-cycle with a transitive expander of size  $n$.
Finally, we remark that in \cite[Theorem 1.8]{tessera2020sharp}, using results from \cite{TT} a different analog of transience is proven for finite transitive graphs satisfying a condition similar to \eqref{implyunif}.

\subsubsection{Some other existing bounds on $P_t$}  \label{s:related}

For a graph $\G=(V,E)$, Benjamini, Foxall, Gurel-Gurevich, Junge and Kesten \cite{benjamini2016site} proved the lower bound $P_t(x)\geq 1/(1+d_{\max} t)$ for all $x \in V$, where $d_{\max}$ is the maximal degree of $\G$. They also pointed out in \cite[Proposition 1.4]{benjamini2016site} that, for CRW on any infinite  graph with unit jump rate along each edge, $P_t(x)\leq (1+\ep) /(2\sqrt{\pi t})$ for all $x\in V$, $\ep>0$ and $t$ large enough (depending only on $\ep$). 
Later Foxall, Hutchcroft, and Junge \cite{foxall2018coalescing} showed, using an elegant application of the mass transport principle, that for any unimodular random rooted graph (with root $o$) we still have the bound $\E(P_t(o))\geq 1/(C_1+C_2t )$ for some constants $C_1,C_2$. Here  the expectation is taken over the law of the rooted graph.

As we mentioned before, 
in \cite{oliveira2012coalescence}  Oliveira  proved that for any finite reversible chain $(V,\r)$ one has that
$t_{\mathrm{coal}}\leq C t_{\mathrm{hit}}$ for some universal constant $C$.   Oliveira's proof  actually shows that for some absolute constant $C>0$, we have $\min\{P_t,1/\abs{V}\} \le \delta$ if $t \ge Ct_{\mix}+C\frac{t_{\mathrm{hit}}}{|V|\delta}$.

\subsection{Ingredients of our proofs}   \label{s:method}
As mentioned before in Section \ref{ss;mainresults}, we extend the analysis from \cite{bramson1980asymptotics} beyond $\ZZ^d$.
In the finite setup, we uniformly pick a particle at the beginning, and let $N_t$ be the number of particles that coalesce with it by time $t$, including itself. It has the same distribution as $\hat{N}_{t}$, the size of the set of vertices which have the same opinion as the origin at time $t$ in the voter model, via duality; see Section \ref{s:prelim} for a formal definition and discussions.
In the finite setup we also have that $P_t=\E[N_t^{-1}]$ (see Lemma \ref{P_t=E(N_tinverse)} below). Hence it suffices to study the distribution of $N_t$ via computing the moments of $N_t$, and controlling the behavior of $N_t/\E N_t$  near $0$
(specifically, showing that $\E((N_t/\E N_t)^{-1}\1[N_t/\E N_t \leq \ep])$ goes to 0 as $\ep\to 0$, uniformly in $t$), since then one can go from the distributional convergence of $N_t/\E N_t$ to  the convergence of $\E((N_t/\E N_t)^{-1})$.

In our paper we give an upper bound of $P_t$ for general Markov chains (Theorem \ref{t:upperbound} below), which enables us to close the gap between the moments of $N_t$ and the asymptotics of $P_t$ in various setups, whereas the proof of such bounds in \cite{bramson1980asymptotics} relies heavily on the geometric structure of $\ZZ^d$ (e.g., they used a partition of $\ZZ^d$ into boxes).

As in \cite{bramson1980asymptotics}, we also use moment calculations in our proofs. Such calculations in \cite{sawyer1979limit} are done using the voter model, and the arguments also rely on the lattice structure and are difficult to generalize. 
Instead, we directly analyze the CRW model, using a quite different approach. We mostly work in the finite setup, where some care is required since the limiting arguments have to be replaced by quantitative ones, and in particular when considering non-transitive graphs as in Theorem \ref{t:config}. 
The calculation of the $k$-th moments of $N_t$ can be reduced to the probability that $k+1$ particles starting from independent stationary positions coalesce by time $t$. 

By classic results of Aldous and Brown (see Section \ref{s:aldous} for details), under the assumption  $\rel\ll t\ll \M$ the meeting time  $\RM$ of two independent stationary walks is roughly distributed as $\Exp(1/\M)$, and the probability density is roughly $\frac{\exp(-t/\M)}{\M}$ for $t \gg \rel$. 
For $k+1$ particles, we consider the $k$ collision times.
We will show that its probability density function at $(t_1,\ldots t_k)$ is roughly $\frac{1+o(1)}{\M^k}$. Here the tuple $(t_1,\ldots t_k)$ satisfies that $0<t_1<\cdots <t_k<t$, and that each $t_{i+1}-t_i$ is not `too small'.
Specifically, in establishing this we wish to have that for all $1 \le i \le k$, the joint distribution of the locations of the remaining particles at the $i$-th collision time is close to $\pi^{\otimes (k+1-i)}$.
However, given information on past collisions, 
the joint distribution of the locations of particles can be complicated. 
We get around of this by showing that this information will have negligible effect after a short amount of time since the previous collision. Here `a short amount of time' sometimes means a time interval large compared to $\rel$, and sometimes means any large time interval.
For transitive graphs, such analysis relies on the transitivity and transience conditions.
For the configuration model, the relevant analysis is more involved and subtle due to the lack of transitivity. In this case we rely on several facts: (i) the configuration model graphs are expanders with high probability (this is the reason we require that the minimal degree is at least $3$) (ii) the law of the local structure of the transitive graph is the same around each vertex. See Remark \ref{rmk:config} for more general graphs that our approach can be applied to.

Another ingredient in these analysis is using the reversibility of Markov chains.
Thus we transform the probability density function of the $k$ collision times into the probability of having no collisions in a certain branching random walk process, in which $k$ new particles are created at time $t-t_k,t-t_{k-1},\ldots,t-t_1$ (see Section \ref{ss:branchstruc}). By a careful analysis of this non-collision probability we can single out all the factors that contribute to the error, and such analysis is summarized as Proposition 
\ref{p:gengraph}, which is our general framework. 
For transitive chains we can make some refinements and obtain Proposition \ref{p:uniftrans}. 
We then apply these frameworks to different families of graphs and obtain the above mentioned results.

\subsection{Further problems}
\label{s:open}

Starting from this work there are several directions that could be further developed. The first is on whether the assumptions on the degree distribution in Theorem \ref{t:config} can be relaxed. In particular, since the configuration model with Poisson degree distribution is contiguous to Erd\"os-R\'enyi graphs (see, e.g., \cite[Theorem 2.1]{kim2007poisson}), it would be very interesting to investigate the following problem. 
\begin{open}
    Fix any $\mu>1$ and let $D$ be $\mathrm{Poisson}(\mu)$.
Let $\G_n$ be sampled from $\CM_n(D)$ and $\bar{\G}_n$ be the giant component of $\G_n$. Let $\G\sim \UGT(D)$ conditioned to survive forever and define $\alpha(D)$ as in \eqref{eq:alpha(D)}. Do \eqref{eq:thm11} and \eqref{eq:thm12} hold  for $P_t(\bar{\G}_n)$?
\end{open}

Then one can ask whether the results for (finite or infinite transitive graphs) still hold under some other conditions.
\begin{open}
	Let $\G=(V,E)$ be an infinite transitive graph. Assume that the size of the balls of radius $r$ is at least $cr^2$ for some constant $c$. Does \eqref{e:MF1} hold? Here the $r^2$ comes from the fact that \eqref{e:MF1} holds for $\ZZ^d$ with $d\geq 2$. 
\end{open}
\begin{open}
	Let $\G_n=(V_n,E_n)$ be a sequence of finite  transitive graphs such that $|V_n|\to \infty$ as $n\to\infty$, and all graphs have the same degree. 
	As mentioned in Section \ref{ss:assumptions}, under the condition  $\mathrm{diam}(\G_n)^2 \ll |V_n|/\log |V_n|$, \eqref{e:MF1} holds for a sequence of times $t_n$ with $1 \ll t_n \ll \M(\G_n)$.
	Can we weaken this condition to $\mathrm{diam}(\G_n)^2 \lesssim |V_n|$?
\end{open}
One could also ask if our results can extend to some other random graphs.
\begin{open}
	Can we extend our results on unimodular Galton-Watson trees to general almost surely transient reversible random graphs, where the root has finite expected degree? (See Chapter 3 of \cite{curien2018} for the notion of reversible random graphs)
\end{open}

The next direction is to get more refined information on the fluctuation of the number of particles (in a finite subset $A$ of the states).
\begin{open}
	As commented before Section \ref{ss;mainresults}, for a large finite subset $A$ of the state space,  $| \xi_t \cap A |$ has  $O((\E | \xi_t \cap A |)^{1/2})$  fluctuations around its expectation. Can we upgrade this to show that in fact (for appropriately large $t$) the fluctuations are asymptotically Gaussian? If so, can we determine the variance? 
\end{open}

\subsection*{Organization of the remaining text}
The rest of this paper is organized as follows. In Section \ref{s:prelim} we set up CRW and the voter model, and their relations, and provide basic tools for Markov chains. 
Then we first prove non-sharp general bounds for the density $P_t$, in Section \ref{s:generalbounds}.
In Section \ref{s:framework} we lay out the framework for the more careful analysis.
In Section \ref{s:gengraph} we carry out the estimates for general Markov chains, and all the arguments are packed into Proposition \ref{p:gengraph}.
In Section \ref{s:pfrandom} we provide estimates that are specific for the configuration model, and prove Theorems \ref{t:config} and \ref{t:gwtree}.
In Section \ref{s:finitetrans} we prove Theorems 
\ref{transgraph}, \ref{uniformtran}, and Theorem \ref{infinitetrans} for finite and infinite transitive Markov chains. 

\section{Preliminaries: the model and Markov chains}

\subsection{CRW and the voter model}\label{s:prelim}
	In the voter model, for any $t\in \R_{\ge 0}$, and any vertex $x\in V$, we denote $\zeta_t(x)$ as the opinion of $x$ at time $t$.
	Let $\zeta^x_t$ be the set of voters that at time $t$ have the same opinion as voter $x$ at time $0$,  
	i.e., 
	\[
	\zeta^x_t:=\{y\in V:\zeta_t(y)=\zeta_0(x)\}.
	\]
	For CRW starting with one particle at each vertex, recall that we use $\xi_t \subset V$ to denote the locations of the remaining particles at time $t$.
The duality between the voter model and CRW implies
\begin{equation}\label{duality}
	P_t(x)=\P(x\in \xi_t)=\P(\zeta^x_t\neq \emptyset),
\end{equation} 
see, e.g., \cite[Section 3]{durrett1995ten} or \cite[Chapter 14]{aldous1995reversible} for an introduction to the duality. 

Consider paths $P_1, \ldots, P_k:\RR_{\ge 0} \rightarrow V$ satisfying the following property: for all $i,j \in \{1,\ldots,k\}$ and $s\ge 0$, if $P_i(s)=P_j(s)$ then $P_i(t)=P_j(t)$ for all $t \ge s$. We denote the coalescence time as
\[
\C(P_1,\ldots, P_k):=
\inf\left\{t \in\R_{\ge 0} \colon P_1(t)=\cdots=P_k(t)\right\}\cup\{\infty\}.
\]

Throughout this paper, we use the random functions $X_1, \ldots, X_k:\RR_{\ge 0} \rightarrow V$ to denote the locations of $k$ particles  performing CRW via the graphical representation of the model.
The starting locations are $X_i(0), 1\leq i\leq k$, which can be random (while independent of the walks) or deterministic.

We denote by $N_t$ the following random variable.
Suppose that $n=|V|$ and $X_1(0), \ldots, X_n(0)$ is an enumeration of all vertices in $V$.
Let $\iota$ be uniformly chosen from $[n]:=\{1,\ldots,n\}$. We define
\[      N_t:=|\{i:\C(X_i, X_\iota)\leq t \}|.
\]
Note that we always have $N_t\geq 1$ since $\C(X_\iota, X_\iota) = 0$. 

Our analysis of the decay of $P_t$ relies on its following relation with $N_t$.
\begin{lemma}\label{P_t=E(N_tinverse)}
For finite graphs we have $P_t= \E(N_t^{-1})$.
\end{lemma}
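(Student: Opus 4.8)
The plan is to exploit the duality between CRW and the voter model together with the basic observation that when we start CRW with one particle per vertex, the coalescence partition at time $t$ splits $V$ into the clusters $\{y \in V : \C(X_y, X_x) \le t\}$ indexed by their common location in $\xi_t$. First I would set up the bookkeeping: enumerate $V = \{X_1(0), \dots, X_n(0)\}$, and for each occupied site $z \in \xi_t$ let $C_z := \{i \in [n] : X_i(t) = z\}$ be the set of indices whose particle sits at $z$ at time $t$. Since each of the $n$ original particles is at exactly one location at time $t$, the sets $\{C_z\}_{z \in \xi_t}$ form a partition of $[n]$, and by the coalescing property two particles are at the same location at time $t$ if and only if they have coalesced by time $t$. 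Hence $|C_z| = N_t$ on the event $\{X_\iota(t) = z\}$, i.e.\ $N_t$ is precisely the size of the cluster containing the uniformly chosen index $\iota$.

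Next I would compute $\E(N_t^{-1})$ by conditioning on the CRW realization (equivalently, on the graphical representation) and only then averaging over the independent uniform choice of $\iota$. Conditionally on the walks,
\[
\E\big(N_t^{-1} \,\big|\, \text{CRW}\big) = \frac{1}{n}\sum_{i=1}^n \frac{1}{|C_{X_i(t)}|} = \frac{1}{n}\sum_{z \in \xi_t} \sum_{i \in C_z} \frac{1}{|C_z|} = \frac{1}{n}\sum_{z \in \xi_t} 1 = \frac{|\xi_t|}{n},
\]
where the key cancellation is that each cluster $C_z$ contributes $|C_z|$ terms each equal to $1/|C_z|$. Taking expectations over the CRW gives $\E(N_t^{-1}) = \E|\xi_t|/n = P_t$ by the definition of $P_t$ in the finite setup.

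There is essentially no hard step here; the only thing to be careful about is the order of conditioning (one must fix the CRW first so that the partition $\{C_z\}$ is deterministic and the sum-over-clusters rearrangement is valid), and the elementary fact that $N_t \ge 1$ always, so $N_t^{-1}$ is a bounded positive random variable and all the expectations are finite and the interchange of sum and expectation is trivially justified. I would also remark that this identity is special to the finite setup and to the initial condition of one particle per vertex, since it is exactly this condition that makes $\{C_z\}_{z\in\xi_t}$ a partition of all of $[n]$; the duality statement \eqref{duality} is the infinite-volume shadow of the same phenomenon. If one prefers to phrase it purely in voter-model language, the same argument shows $N_t^{-1}$ averaged over $\iota$ equals (number of distinct opinions surviving at time $t$)$/n$, whose expectation is $P_t$ by \eqref{duality}; I would present the CRW version as the cleaner one.
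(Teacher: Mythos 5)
Your proof is correct, and it takes a more direct route than the paper's. The paper proves the lemma entirely on the voter-model side: it introduces $\hat N_t$, the size of the opinion cluster of a uniform vertex, asserts $\hat N_t \eqd N_t$ by duality, establishes the size-biasing identity $\P(\hat N_t = k) = k\,\P(|\zeta^{\U}_t| = k)$ by an interchange of summations, and then sums $\frac1k\P(\hat N_t=k)$ against the duality relation \eqref{duality} to get $P_t$. You instead stay in the CRW picture: the particles at time $t$ partition $[n]$ into clusters $C_z$, $z\in\xi_t$, with $N_t = |C_{X_\iota(t)}|$ (using that $\C(X_i,X_\iota)\le t$ iff $X_i(t)=X_\iota(t)$, which holds since coalesced particles stay together and the infimum defining $\C$ is attained for right-continuous piecewise-constant paths), and the cancellation $\sum_{i\in C_z}|C_z|^{-1}=1$ gives $\E(N_t^{-1}\mid \mathrm{CRW}) = |\xi_t|/n$, hence $\E(N_t^{-1}) = \E|\xi_t|/n = P_t$ directly from the definition of $P_t$, with no appeal to duality at all. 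The underlying combinatorial cancellation is the same in both arguments, but yours is more elementary and self-contained for this lemma; what the paper's detour buys is the voter-model identity \eqref{Nteq}, which is recorded and reused as Corollary \ref{c:Ntnt1} (e.g.\ $\E(N_t^{-1}\1[N_t\ge M]) = \P(n_t\ge M)$) later in the moment analysis, so if you adopted your proof you would still want to derive that relation separately via duality.
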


\begin{proof}
	Take $\U\sim \Unif(V)$.
	Define $\hat{N}_t:=\abs{\{x\in V:\zeta_t(x)=\zeta_t(\U) \}}$.
	Then $\hat{N}_t$ has the same distribution as $N_t$ by duality (see, e.g.,  \cite[Section 3]{durrett1995ten}), which implies that it suffices to prove $P_t= \E(\hat{N}_t^{-1})$. 
	Now we claim
	\begin{equation}\label{Nteq}
		\forall k \ge 1, \qquad  \P(\hat{N}_t=k)=k\P( \abs{\zeta^\U_t}=k).
	\end{equation}
	Indeed, this is by an interchange of summations:
	\[
	\begin{split}
		\P(\hat{N}_t=k)&=\frac{1}{n}\sum_y \P(\abs{\{x:\zeta_t(x)=\zeta_t(y) \}}=k )\\
		&=\frac{1}{n}\sum_y \sum_z \E(\1[y\in \zeta^z_t, \abs{\zeta^z_t}=k ])\\
		&=\frac{1}{n}\sum_z \E \left( \sum_y \1[y\in \zeta^z_t, \abs{\zeta^z_t}=k ]\right)\\
		&=\frac{k}{n} \sum_z \E(\1[\abs{\zeta^z_t}=k ])\\
		&=k\P( \abs{\zeta^\U_t}=k ).
	\end{split}
	\]
	Now the assertion of this lemma follows from \eqref{duality} and \eqref{Nteq}:
	\[
	P_t=\P(\zeta^\U_t \neq \emptyset)=\sum_{k=1}^{\infty}
	\P( \abs{\zeta^\U_t}=k )=\sum_{k=1}^{\infty} \frac{1}{k}
	\P(\hat{N}_t=k)=\E \hat{N}_t^{-1}=\E N_t^{-1}.
	\]
	Thus the conclusion follows.
\end{proof}
Throughout we write $n^x_t:=\abs{\zeta^x_t}$ and $n_t := n^{\U}_t$ where $\U\sim \Unif(V)$ is independent of  $\{n^x_t,x\in V\}$.
For future reference we also record the following corollary.
\begin{corollary}  \label{c:Ntnt1}
	For any $M \in \ZZ_+$, we have $\E(N_t^{-1}\1[N_t \ge M]) =\P(n_t \ge M)$ and $\E(N_t^{-1}\1[N_t\le M]) =\P(0<n_t \le M)$ in finite graphs.
\end{corollary}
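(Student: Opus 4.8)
The plan is to read off both identities directly from equation \eqref{Nteq} established inside the proof of Lemma \ref{P_t=E(N_tinverse)}, together with two elementary facts: $N_t$ has the same law as $\hat N_t = \abs{\{x : \zeta_t(x) = \zeta_t(\U)\}}$, and $N_t \ge 1$ almost surely (since $\C(X_\iota,X_\iota)=0$). Recall that \eqref{Nteq} states $\P(\hat N_t = k) = k\,\P(\abs{\zeta^\U_t} = k) = k\,\P(n_t = k)$ for every $k \ge 1$, where $n_t = \abs{\zeta^\U_t}$.

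For the first identity I would simply expand the expectation as a sum over the values of $N_t$ and substitute \eqref{Nteq}:
\[
\E(N_t^{-1}\1[N_t \ge M]) = \E(\hat N_t^{-1}\1[\hat N_t \ge M]) = \sum_{k \ge M} \frac{1}{k}\,\P(\hat N_t = k) = \sum_{k \ge M}\P(n_t = k) = \P(n_t \ge M).
\]
For the second identity, since $N_t \ge 1$ always, the relevant range of summation is $1 \le k \le M$, so the same computation gives
\[
\E(N_t^{-1}\1[N_t \le M]) = \sum_{k=1}^{M} \frac{1}{k}\,\P(\hat N_t = k) = \sum_{k=1}^{M}\P(n_t = k) = \P(0 < n_t \le M).
\]
The event on the right is written as $\{0 < n_t \le M\}$ rather than $\{n_t \le M\}$ precisely because $n_t$ may equal $0$ with positive probability (the initial opinion $\zeta_0(\U)$ may have been eliminated by time $t$, equivalently the particle $X_\iota$ may have coalesced without surviving in the sense of $\zeta^\U_t$)—but this contributes nothing to $\E(N_t^{-1}\1[N_t \le M])$ since $N_t \ge 1$.

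I expect no real obstacle here: the corollary is pure bookkeeping built on the interchange-of-summations identity \eqref{Nteq}. The only point worth flagging is the lower endpoint of the summation, i.e.\ the matching of $N_t \ge 1$ with the possibility that $n_t = 0$; as a consistency check, taking $M = 1$ in the first identity recovers $P_t = \E(N_t^{-1}) = \P(n_t \ge 1) = \P(\zeta^\U_t \ne \emptyset)$, i.e.\ Lemma \ref{P_t=E(N_tinverse)} together with \eqref{duality}.
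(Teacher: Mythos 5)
Your proof is correct and matches the paper's argument: the paper dispatches this corollary as a direct consequence of \eqref{Nteq}, which is exactly the computation you spell out, summing $k^{-1}\P(\hat N_t=k)=\P(n_t=k)$ over $k\ge M$ and over $1\le k\le M$ respectively, with the observation $N_t\ge 1$ handling the lower endpoint and the event $\{n_t=0\}$.
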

This is a direct consequence of \eqref{Nteq}.

\subsection{Classical results on Markov chains}\label{ss:relaxmixtime}
In this subsection, we recall some important classical concepts and results on Markov chains. Some other detailed results and proofs are deferred to Section \ref{as:markov}.

\subsubsection{The Poincar\'{e} inequality}
\begin{definition}
	For any Markov chain $(V, \r)$ with a stationary distribution $\pi$, let $\ell_2(V,\pi)$ be the space of all functions $f:V\rightarrow \R$ with $\|f\|_2^2:=\sum_{x\in V}f(x)^2\pi(x) < \infty$.
	We define the inner-product $\langle \cdot,\cdot \rangle_{\pi}$ induced by $\pi$ on $\ell_2(V,\pi)$ as
	\[
	\langle f,g \rangle_{\pi}:=\sum_{x\in V}\pi(x) f(x)g(x).
	\]
	And we define
	\[ 
	\Var_{\pi}(f):=\|f-\E_{\pi}(f)\|_2^2,\quad \E_{\pi}(f):=\sum_{x\in V}\pi(x) f(x).
	\]
	Let $p_t=e^{tQ}$ be the time $t$ transition kernel of the Markov chain $(V, \r)$, where $Q(x,y)=r_{x,y}$ for $x \neq y$ and $Q(x,x)=-r(x)$. For a function $f \in \ell_2(V,\pi)$, the function $p_tf \in \ell_2(V,\pi)$ is defined as 
	\[
	p_t f(x):=\sum_{y\in V} p_t(x,y)f(y).
	\]
\end{definition}

Recall that $\rel$ is the inverse of the spectral gap (the second smallest eigenvalue of $-Q$).
By the Poincar\'e  inequality (see, e.g., \cite[Lemma 20.5]{levin2017markov}), we have the following result.
\begin{lemma}
	Given any continuous-time Markov chain $(V,\r)$ (not necessarily symmetric),
	for $t \ge 0$ and $f\in \ell_2(V,\pi)$, 
	\begin{equation}\label{prepoincare}
		\mathrm{Var}_{\pi}(p_{t} f)\leq \exp\left(-\frac{2t}{\rel}\right) \mathrm{Var}_{\pi}(f).
	\end{equation}
\end{lemma}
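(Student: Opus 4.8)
The final statement to prove is the Poincaré-type contraction inequality
\[
\mathrm{Var}_{\pi}(p_{t} f)\leq \exp\left(-\frac{2t}{\rel}\right) \mathrm{Var}_{\pi}(f),
\]
and the plan is to reduce it to the infinitesimal (differential) form of the Poincaré inequality and then integrate. The idea is to work with the centered function $\tilde f := f - \E_\pi(f)$; since $p_t \mathbf{1} = \mathbf{1}$ and $\E_\pi$ is preserved by the dynamics (as $\pi$ is stationary), we have $\E_\pi(p_t f) = \E_\pi(f)$ and $p_t \tilde f = p_t f - \E_\pi(f)$, so $\mathrm{Var}_\pi(p_t f) = \|p_t \tilde f\|_2^2$. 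Thus it suffices to show $\|p_t \tilde f\|_2^2 \le e^{-2t/\rel}\|\tilde f\|_2^2$ for $\tilde f$ with $\E_\pi(\tilde f)=0$.

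Here the subtlety flagged in the lemma is that $(V,\r)$ need not be symmetric (reversible). The standard trick is to pass to the additive reversibilization: let $Q^*$ be the adjoint of $Q$ in $\ell_2(V,\pi)$ and set $S := \tfrac12(Q + Q^*)$, which is self-adjoint, has the same stationary distribution $\pi$, and has the same Dirichlet form on real functions, hence the same spectral gap $1/\rel$. First I would define $\phi(t) := \|p_t \tilde f\|_2^2$ and compute
\[
\phi'(t) = 2\langle Q p_t \tilde f, p_t \tilde f\rangle_\pi = 2\langle S p_t\tilde f, p_t\tilde f\rangle_\pi = -2\mathcal{E}(p_t\tilde f, p_t\tilde f),
\]
using $\langle Qg,g\rangle_\pi = \langle Q^*g,g\rangle_\pi$ so only the symmetric part contributes. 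Since $p_t\tilde f$ again has mean zero under $\pi$, the variational characterization of the spectral gap gives $\mathcal{E}(p_t\tilde f, p_t\tilde f) \ge \rel^{-1}\mathrm{Var}_\pi(p_t\tilde f) = \rel^{-1}\phi(t)$, whence $\phi'(t) \le -\tfrac{2}{\rel}\phi(t)$. Grönwall's inequality then yields $\phi(t) \le e^{-2t/\rel}\phi(0)$, which is exactly the claim.

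Alternatively, and perhaps cleaner to present, one can simply cite \cite[Lemma 20.5]{levin2017markov} as the excerpt already does: that reference establishes precisely this contraction for continuous-time chains, and the non-reversible case is handled there (or by the reversibilization remark above). The main obstacle — really the only point requiring care — is the non-reversibility: one must be sure that the spectral gap $1/\rel$, defined via the second eigenvalue of $-Q$, coincides with the $\ell_2$ Dirichlet-form gap of the symmetrization $S$, so that the inequality $\mathcal{E}(g,g) \ge \rel^{-1}\mathrm{Var}_\pi(g)$ is legitimate; this is a standard fact (the relevant eigenvalue for the $\ell_2$ decay is that of $S$, not of $Q$, but under the paper's standing conventions these agree, and in any case the cited lemma absorbs this). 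Everything else is the routine differentiate-and-integrate argument above, so I would keep the write-up to one or two lines invoking the Poincaré inequality and Grönwall.
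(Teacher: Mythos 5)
Your argument is correct, but it is worth noting that the paper does not prove this lemma at all: it is stated as a quoted consequence of the Poincar\'e inequality with a bare citation to \cite[Lemma 20.5]{levin2017markov}. So your differentiate-and-Gr\"onwall write-up (set $\tilde f=f-\E_\pi f$, compute $\frac{\mathrm{d}}{\mathrm{d}t}\|p_t\tilde f\|_2^2=2\langle Qp_t\tilde f,p_t\tilde f\rangle_\pi=-2\mathcal{E}(p_t\tilde f,p_t\tilde f)\le -\frac{2}{\rel}\|p_t\tilde f\|_2^2$, using that $p_t\tilde f$ stays mean zero) is the standard proof behind that citation and is a genuine, if routine, supplement to what the paper provides; the two routes buy the same thing, yours being self-contained and the paper's being shorter.

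One caveat on your non-reversibility remark: it is \emph{not} a standard fact that the eigenvalue gap of $Q$ coincides with the Dirichlet-form gap of the additive symmetrization $S=\tfrac12(Q+Q^*)$. For a nonzero eigenvalue $\lambda$ of $-Q$ with eigenfunction $f$ one has $\langle f,\mathbf{1}\rangle_\pi=0$ and $\mathrm{Re}\,\lambda\,\|f\|_2^2=\langle -Sf,f\rangle_\pi\ge \gamma_S\|f\|_2^2$, so in general $\gamma_S\le \min_{\lambda\neq 0}\mathrm{Re}\,\lambda$, with strict inequality possible; since the instantaneous decay rate of the variance at $t=0$ equals $2\gamma_S$ for the extremal $f$, the inequality with $1/\rel$ taken to be the eigenvalue gap of $Q$ can fail for genuinely non-reversible chains. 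The honest reading of the lemma in the non-symmetric case is that $1/\rel$ must be the spectral gap of $S$ (equivalently the constant in $\mathcal{E}(g,g)\ge \rel^{-1}\Var_\pi(g)$), which is exactly what your proof delivers; under the paper's standing convention of symmetric jump rates — the only setting in which the lemma is subsequently used — the two gaps agree and there is no issue.
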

This lemma implies the following result.

\begin{lemma}  \label{l:poincareineq}
	Let $(V,\r)$ be a finite-state continuous-time Markov chain with symmetric transition rates $\r$.
	For $t>0$ and a probability distribution $\mu$ on $V$, we denote by $\mu_t$ the distribution on $V$ given by
	\[\mu_t(x):=\sum_y \mu(y)p_t(y,x)=\sum_y p_t(x,y)\mu(y)=p_t\mu(x), \quad \forall x \in V\]
	which is the law of $X(t)$ for a walker $X$ with initial location $X(0) \sim \mu$.
	Then for any $t,s>0$, we have
	\begin{equation}\label{tvd}
		\sum_x \left(\mu_{t}(x)-\frac{1}{|V|}\right)^2 \leq \exp\left(-\frac{2t}{\rel}\right)\sum_x\left(\mu(x)-\frac{1}{|V|}\right)^2,
	\end{equation}
	and
	\begin{equation}\label{poincare}
		\max_{x,y}p_{t+s}(x,y) - \frac{1}{|V|} \leq \exp\left(-\frac{t}{\rel}\right)
		\left(\max_x p_s(x,x)-\frac{1}{|V|}\right).
	\end{equation}
\end{lemma}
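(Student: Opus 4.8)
\textbf{Proof proposal for Lemma \ref{l:poincareineq}.}

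The plan is to derive both displays from the Poincar\'e-type contraction \eqref{prepoincare} by choosing suitable test functions. For \eqref{tvd}, I would apply \eqref{prepoincare} to the density $f(x) := |V| \mu(x)$ of $\mu$ with respect to $\pi = \Unif(V)$, for which one readily checks $p_t f(x) = |V| \mu_t(x)$ (using the symmetry $p_t(x,y)=p_t(y,x)$ and the identity $\sum_y \mu(y) p_t(y,x) = \sum_y p_t(x,y)\mu(y)$ already recorded in the statement). Since $\E_\pi(f) = \sum_x \pi(x) |V| \mu(x) = \sum_x \mu(x) = 1$, we have $\Var_\pi(f) = \sum_x \pi(x)(|V|\mu(x)-1)^2 = |V| \sum_x (\mu(x) - 1/|V|)^2$, and similarly $\Var_\pi(p_t f) = |V| \sum_x (\mu_t(x) - 1/|V|)^2$. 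Plugging these into \eqref{prepoincare} and cancelling the common factor $|V|$ gives \eqref{tvd} exactly.

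For \eqref{poincare}, I would use the spectral/semigroup structure together with \eqref{tvd}. The cleanest route: fix $x,y$ and note that by the Chapman--Kolmogorov identity and reversibility,
\[
p_{t+s}(x,y) = \sum_{z} p_{(t+s)/2 + \cdot}(\,\cdot\,) \ldots
\]
more precisely write $p_{t+s}(x,y) = \langle p_{t/2}\delta_x^{(s)},\, p_{t/2}\delta_y \rangle$-type inner products; but the slickest argument is to take $\mu = \delta_x$ in \eqref{tvd}, giving $\sum_z (p_u(x,z) - 1/|V|)^2 \le e^{-2u/\rel}(1 - 1/|V|) \le e^{-2u/\rel}$ for any $u>0$. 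Then, using reversibility and Chapman--Kolmogorov,
\[
p_{t+s}(x,y) - \frac{1}{|V|} = \sum_z \Bigl(p_{(t+s)/2}(x,z) - \tfrac{1}{|V|}\Bigr)\Bigl(p_{(t+s)/2}(z,y) - \tfrac{1}{|V|}\Bigr) \le \Bigl(\sum_z (p_{(t+s)/2}(x,z)-\tfrac1{|V|})^2\Bigr)^{1/2}\Bigl(\sum_z (p_{(t+s)/2}(z,y)-\tfrac1{|V|})^2\Bigr)^{1/2}
\]
by Cauchy--Schwarz, and each factor is at most $e^{-(t+s)/\rel}$... However this yields $e^{-(t+s)/\rel}$ rather than the stated $e^{-t/\rel}(\max_z p_s(z,z) - 1/|V|)$. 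To get the sharper form with $p_s(z,z)$ I would instead split the time as $t + s = (t/2 + s/2) + (t/2 + s/2)$ asymmetrically: write $p_{t+s}(x,y) - 1/|V| = \sum_z (p_{t/2 + s_1}(x,z) - 1/|V|)(p_{t/2 + s_2}(z,y) - 1/|V|)$ and bound one Cauchy--Schwarz factor using \eqref{tvd} with $\mu = p_{s_1}\delta_x$ (a contraction by $e^{-t/\rel}$ from the extra time $t/2$ on each side contributing $e^{-t/\rel}$ total, and by reversibility $\sum_z(p_{s_1}(x,z) - 1/|V|)^2 = p_{2s_1}(x,x) - 1/|V|$), choosing $s_1 = s_2 = s/2$. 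This gives $p_{t+s}(x,y) - 1/|V| \le e^{-t/\rel}(p_s(x,x) - 1/|V|)^{1/2}(p_s(y,y) - 1/|V|)^{1/2} \le e^{-t/\rel}(\max_z p_s(z,z) - 1/|V|)$, which is \eqref{poincare}.

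The main obstacle is bookkeeping the time-splitting so that the Poincar\'e factor comes out as exactly $e^{-t/\rel}$ (not $e^{-2t/\rel}$ or $e^{-(t+s)/\rel}$) while simultaneously producing the diagonal terms $p_s(x,x)$: this requires using reversibility to fold $\sum_z p_u(x,z)^2 = p_{2u}(x,x)$ at the right moment and applying the contraction \eqref{prepoincare} only to the ``extra'' time $t$. Everything else is a routine combination of Cauchy--Schwarz, Chapman--Kolmogorov, and the identity $\Var_\pi(g) = \|g\|_2^2 - (\E_\pi g)^2$; there are no convergence or measurability subtleties since $V$ is finite.
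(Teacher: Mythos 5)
Your proof is correct and essentially the same as the paper's: \eqref{tvd} is \eqref{prepoincare} applied to the density of $\mu$ (the normalization by $|V|$ cancels), and for \eqref{poincare} both arguments combine Cauchy--Schwarz and Chapman--Kolmogorov with the folding identity $\sum_z\left(p_u(x,z)-\tfrac{1}{|V|}\right)^2 = p_{2u}(x,x)-\tfrac{1}{|V|}$, applying \eqref{tvd} only to the extra time $t$. The paper first reduces $\max_{x,y}p_{t+s}(x,y)$ to the diagonal and then contracts, whereas you fold the contraction into the two Cauchy--Schwarz factors; this, and your explicitly discarded first attempt, are only cosmetic differences.
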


We define the $\ell_{\infty,\pi}$ distance between two probability measures $\mu$ and $\lambda$ w.r.t.\ $\pi$ as
\[
\norm{\mu-\lambda}_{\infty,\pi}:=\sup_{x} \frac{\abs{\mu(x)-\lambda(x)}}{\pi(x)}.
\]
Using the Poincar\'{e} inequality, we have the following corollary of Lemma \ref{l:poincareineq}.

\begin{corollary}
	\label{linftyctl}
	For any finite-state continuous-time symmetric Markov chain, 
	starting from any initial distribution $\mu$, we have 
	$\norm{\mu_t-\pi}_{\infty,\pi} \leq \frac{1}{|V|}$ for $t\geq 2\rel \log |V|$.
\end{corollary}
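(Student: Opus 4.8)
The plan is to derive the bound directly from the $\ell_2$-contraction estimate \eqref{tvd} of Lemma \ref{l:poincareineq}. Since the jump rates are symmetric, the stationary distribution $\pi$ is uniform, so $\pi(x)=1/|V|$ for every $x$, and the asserted bound $\norm{\mu_t-\pi}_{\infty,\pi}\le 1/|V|$ is equivalent to
\[
\max_{x\in V}\abs{\mu_t(x)-\frac{1}{|V|}}\le\frac{1}{|V|^2}.
\]
First I would reduce to the case where $\mu=\delta_y$ is a point mass. Writing $\mu_t(x)-1/|V|=\sum_y\mu(y)\bigl(p_t(y,x)-1/|V|\bigr)$ and using $\sum_y\mu(y)=1$, the triangle inequality gives $\abs{\mu_t(x)-1/|V|}\le\max_y\abs{p_t(y,x)-1/|V|}$; hence it is enough to bound $\abs{p_t(y,x)-1/|V|}$ uniformly over $x,y\in V$.

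Next, I would apply \eqref{tvd} with initial distribution $\delta_y$, whose time-$t$ image is the row $p_t(y,\cdot)$, together with the elementary identity $\sum_x(\delta_y(x)-1/|V|)^2=1-1/|V|\le 1$, to obtain
\[
\sum_{x\in V}\left(p_t(y,x)-\frac{1}{|V|}\right)^2\le\exp\left(-\frac{2t}{\rel}\right).
\]
Since the summands are nonnegative, each single term is at most the full sum, so $\abs{p_t(y,x)-1/|V|}\le\exp(-t/\rel)$ for all $x,y$. For $t\ge 2\rel\log|V|$ this is at most $\exp(-2\log|V|)=|V|^{-2}$, and combined with the reduction above we get $\max_x\abs{\mu_t(x)-1/|V|}\le|V|^{-2}$, i.e.\ $\norm{\mu_t-\pi}_{\infty,\pi}\le 1/|V|$, as claimed.

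As this is essentially a short unwinding of \eqref{tvd}, there is no genuine obstacle here; the only points that require (minor) care are the identification of $\pi$ with the uniform distribution (forced by the symmetry of the rates), the bound $\sum_x(\delta_y(x)-1/|V|)^2=1-1/|V|\le 1$, and matching the factor $2$ in the exponent of \eqref{tvd} against the threshold $t\ge 2\rel\log|V|$.
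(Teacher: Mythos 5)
Your proof is correct and follows the route the paper intends: the corollary is stated as a direct consequence of the $\ell_2$-contraction bound \eqref{tvd} in Lemma \ref{l:poincareineq}, and your argument (bound a single squared term by the full $\ell_2$ sum, then use $t\ge 2\rel\log|V|$ and $\pi\equiv 1/|V|$ to convert to the $\ell_{\infty,\pi}$ bound) is exactly that unwinding. The only cosmetic remark is that the reduction to point masses $\delta_y$ is unnecessary, since \eqref{tvd} applied to $\mu$ itself together with $\sum_x(\mu(x)-1/|V|)^2\le 1$ already gives $\max_x\abs{\mu_t(x)-1/|V|}\le e^{-t/\rel}$.
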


\subsubsection{Mixing time and relaxation time}
Now we define the mixing time of a Markov chain.
\begin{definition}\label{d:mixtime}
	The \emph{total variation distance} between two distributions $\mu$ and $\nu$ on $(V, \r)$ is defined as 
	\[
	\norm{\mu-\nu}_{\mathrm{TV}}=\frac{1}{2}\sum_x \abs{\mu(x)-\nu(x)}.
	\]
	For $\varepsilon \in (0,1)$, the $\varepsilon$-\emph{total variation mixing time} is defined as 
	\[t^{\mathrm{TV}}_{\mix}(\varepsilon):=\inf\{t\geq 0: \max_{x \in V} \norm{ p_t(x,\cdot)-\pi}_{\mathrm{TV}}\leq \varepsilon \}
	,\] while the $\varepsilon$-$\ell_{\infty}$ mixing time is defined as
	\[
	t^{\infty}_{\mix}(\varepsilon):=\inf\{t\geq 0:\max_{x \in V} \norm{p_t(x,\cdot)-\pi}_{\infty,\pi} \leq \varepsilon \}.\]
\end{definition}

Under Definition \ref{d:mixtime}, Corollary \ref{linftyctl} can be restated as $t^{\infty}_{\mix}(1/|V|)\leq 2\rel \log |V|$. We also have the following inequality
\begin{equation}\label{ltmix}
	\max_{x \in V} \norm{ p_{Lt^{\infty}_{\mix}(\varepsilon)}(x,\cdot)-\pi}_{\mathrm{TV}}\leq (2\varepsilon)^L
\end{equation}
for any integer $L\geq 1$.
See \cite[Equation (4.30)]{levin2017markov}. 

In the case of a SRW on a graph $\G=(V,E)$,
one way to control $\rel$ is through the  \emph{vertex expansion constant} $\kappa(\G)$, defined as
\begin{equation}\label{d:kappa}
	\kappa(\G):= \min_{0 < |S| \leq \frac{|V|}{2}} \frac{|\partial S|}{|S|},
\end{equation}
where $\partial S:= \{v \in V\setminus S: \exists u \in S, u\sim v\}$ is the out-boundary of $S$. We have the continuous-time analogue of Cheeger's inequality. 
\begin{lemma}\label{gapandtrel}
	For any graph $\G$, its spectral gap for a
	SRW
	is lower bounded by $\frac{\kappa(\G)^2}{2d_{\max}}$ where $d_{\max}$ is the maximal degree of vertices in $\G$. 
\end{lemma}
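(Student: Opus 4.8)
The plan is to prove this continuous-time Cheeger inequality by the classical co-area argument. Since the SRW on $\G$ is reversible with respect to the counting measure (with uniform stationary distribution $\pi$), its spectral gap has the variational characterization $\mathrm{gap}=\min\{\mathcal{E}(f,f)/\Var_\pi(f):f\text{ non-constant}\}$, where the Dirichlet form is $\mathcal{E}(f,f)=\langle f,-Qf\rangle_\pi=\frac{1}{|V|}\sum_{\{x,y\}\in E}(f(x)-f(y))^2$, using that the jump rate along each edge is $1$. Hence, writing $\mathcal{E}_0(f,f):=\sum_{\{x,y\}\in E}(f(x)-f(y))^2$, it suffices to show $\mathcal{E}_0(f,f)\ge\frac{\kappa(\G)^2}{2d_{\max}}\sum_x f(x)^2$ for every $f$ with $\sum_x f(x)=0$.

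First I would reduce to non-negative functions supported on a set of size at most $|V|/2$. Given $f$ with zero mean, choose a median level $m$, so that both $\{f>m\}$ and $\{f<m\}$ have size at most $|V|/2$; replacing $f$ by $f-m$ leaves $\mathcal{E}_0$ unchanged and can only increase $\sum_x f(x)^2$ (because the mean of $f$ is zero), so we may assume $0$ is a median of $f$. Splitting $f=f_+-f_-$ into positive and negative parts, one checks edge by edge that $(f(x)-f(y))^2\ge (f_+(x)-f_+(y))^2+(f_-(x)-f_-(y))^2$ (the only nontrivial case being when $f(x),f(y)$ have opposite signs, where the left side equals $f(x)^2+f(y)^2$ plus a non-negative term), hence $\mathcal{E}_0(f,f)\ge \mathcal{E}_0(f_+,f_+)+\mathcal{E}_0(f_-,f_-)$, while $\sum_x f(x)^2=\sum_x f_+(x)^2+\sum_x f_-(x)^2$. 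It is therefore enough to prove $\mathcal{E}_0(g,g)\ge\frac{\kappa(\G)^2}{2d_{\max}}\sum_x g(x)^2$ for a single $g\ge 0$ with $|\mathrm{supp}(g)|\le|V|/2$.

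The core step is the co-area estimate applied to $g^2$. By Cauchy--Schwarz, $\left(\sum_{\{x,y\}\in E}|g(x)^2-g(y)^2|\right)^2\le \mathcal{E}_0(g,g)\cdot\sum_{\{x,y\}\in E}(g(x)+g(y))^2$, and the last factor is at most $2\sum_x\deg(x)g(x)^2\le 2d_{\max}\sum_x g(x)^2$. For the remaining sum, order the vertices so that $g(x_1)\ge g(x_2)\ge\cdots$, write each $|g(x_i)^2-g(x_j)^2|$ as a telescoping sum over the level sets $S_k=\{x_1,\ldots,x_k\}$, and use that the number of edges joining $S_k$ to its complement is at least $|\partial S_k|\ge\kappa(\G)|S_k|$ whenever $|S_k|\le|V|/2$ — which holds for every $k$ with $g(x_k)>g(x_{k+1})$, since $g$ vanishes outside a set of size at most $|V|/2$. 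An Abel summation then yields $\sum_{\{x,y\}\in E}|g(x)^2-g(y)^2|\ge\kappa(\G)\sum_x g(x)^2$. Combining the two bounds gives $\kappa(\G)^2\left(\sum_x g(x)^2\right)^2\le 2d_{\max}\,\mathcal{E}_0(g,g)\sum_x g(x)^2$, i.e.\ $\mathcal{E}_0(g,g)\ge\frac{\kappa(\G)^2}{2d_{\max}}\sum_x g(x)^2$, which is what we wanted.

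The argument is entirely standard, so I do not anticipate a genuine obstacle; the only points requiring a little care are the normalization relating the generator $Q$, the Dirichlet form, and the uniform measure $\pi$, and making sure in the reduction step that every level set $S_k$ that actually contributes has size at most $|V|/2$, so that the definition of $\kappa(\G)$ in \eqref{d:kappa} applies.
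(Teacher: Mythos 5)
Your proof is correct. Note, though, that the paper does not actually prove Lemma \ref{gapandtrel}: it simply cites the discrete-time Cheeger inequality (Lemma 3.3.7 in Saloff-Coste's lecture notes), which is stated there in terms of the edge conductance of the transition kernel rather than the vertex expansion $\kappa(\G)$, and in discrete time. What your argument buys is a self-contained derivation tailored to the normalization actually used in the paper: the continuous-time walk with rate $1$ per edge has generator equal to the combinatorial graph Laplacian with uniform $\pi$, so the Dirichlet form is $\frac{1}{|V|}\sum_{\{x,y\}\in E}(f(x)-f(y))^2$, and your median reduction, splitting into $f_\pm$, Cauchy--Schwarz with the bound $\sum_{\{x,y\}\in E}(g(x)+g(y))^2\le 2d_{\max}\sum_x g(x)^2$, and the co-area/Abel step with $|E(S_k,S_k^c)|\ge|\partial S_k|\ge\kappa(\G)|S_k|$ give exactly the stated constant $\kappa(\G)^2/(2d_{\max})$ without having to translate between the discrete-time normalized walk and the continuous-time chain, or between edge-conductance and vertex-expansion versions of the isoperimetric constant (the latter translation is precisely where the factor $d_{\max}$ and the inequality $|E(S,S^c)|\ge|\partial S|$ enter, and you handle both explicitly). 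All the delicate points are in order: the subtraction of the median only increases $\sum_x f(x)^2$ because $f$ has zero mean, the edgewise inequality $(f(x)-f(y))^2\ge(f_+(x)-f_+(y))^2+(f_-(x)-f_-(y))^2$ holds in the mixed-sign case, and every level set $S_k$ that contributes to the telescoping sum lies inside $\mathrm{supp}(g)$, hence has size at most $|V|/2$ so that the definition of $\kappa(\G)$ applies.
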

For a proof of the discrete-time version of  Lemma \ref{gapandtrel} one can see Lemma 3.3.7 in \cite{saloff1997lectures}.

For future reference we record the following result about transition probabilities on graphs.
It is a direct consequence of Lemma  \ref{l:poincareineq} and Lemma \ref{gapandtrel}.

\begin{lemma}  \label{l:boundH}
	For any $\kappa_0$ and $\bar{d}>0$, there is a constant $C_{\kappa_0, \bar{d}}$ such that the following is true.
	For any graph $\G$ such that the maximum degree $d_{\max} \leq \bar{d}$ and $\kappa(\G) > \kappa_0$, 
	we have
	\[
	\forall \, t \leq |V|, \qquad \frac{\max_x \int_0^t  p_{s}(x,x)\mathrm{d} s}{\min_x \int_0^t p_{s}(x,x)\mathrm{d} s}+\max_x \int_0^t  p_{s}(x,x)\mathrm{d} s < C_{\kappa_0, \bar{d}}.
	\]
\end{lemma}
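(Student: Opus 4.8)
The idea is to bound everything in terms of two elementary inputs: the Poincar\'e decay estimate \eqref{tvd} and the trivial ``no-jump'' lower bound for the return probability.

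First I would use Lemma \ref{gapandtrel} to bound the relaxation time uniformly: since the spectral gap of the SRW on $\G$ is at least $\kappa(\G)^2/(2d_{\max}) > \kappa_0^2/(2\bar{d})$, we get $\rel < 2\bar{d}/\kappa_0^2 =: C_0$. Next, applying \eqref{tvd} of Lemma \ref{l:poincareineq} with $\mu = \delta_x$ yields $\sum_y \big(p_s(x,y) - 1/|V|\big)^2 \le e^{-2s/\rel}$, and hence $\big|p_s(x,x) - 1/|V|\big| \le e^{-s/\rel}$ for every $s > 0$ and every $x$ (and in fact $p_s(x,x) \ge 1/|V|$ by reversibility, though we will not need the sign).

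For the upper bound, when $t \le |V|$ we integrate the last display:
\[
\int_0^t p_s(x,x)\,\mathrm{d}s \le \int_0^t \Big(\tfrac{1}{|V|} + e^{-s/\rel}\Big)\,\mathrm{d}s \le \frac{t}{|V|} + \rel \le 1 + C_0 ,
\]
so $\max_x \int_0^t p_s(x,x)\,\mathrm{d}s \le 1 + C_0$, which controls the second summand in the statement. For the denominator in the ratio I would instead use the crude bound $p_s(x,x) \ge \P_x(\text{no jump in }[0,s]) = e^{-r(x)s} \ge e^{-\bar{d}s}$, giving
\[
\int_0^t p_s(x,x)\,\mathrm{d}s \ge \int_0^{t \wedge 1} e^{-\bar{d}s}\,\mathrm{d}s \ge (t \wedge 1)\,e^{-\bar{d}} .
\]

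To finish I would split at $t = 1$. If $t \le 1$, then $\max_x \int_0^t p_s(x,x)\,\mathrm{d}s \le t$ (since $p_s \le 1$) while $\min_x \int_0^t p_s(x,x)\,\mathrm{d}s \ge t\, e^{-\bar{d}}$, so the ratio is at most $e^{\bar{d}}$; if $1 < t \le |V|$, the numerator is at most $1 + C_0$ and the denominator at least $e^{-\bar{d}}$, so the ratio is at most $(1+C_0)e^{\bar{d}}$. Taking $C_{\kappa_0,\bar{d}} := (1+C_0)(e^{\bar{d}} + 1)$ with $C_0 = 2\bar{d}/\kappa_0^2$ completes the argument. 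The only mildly delicate point is that the Poincar\'e bound is vacuous for small $s$, so on the interval $[0,1]$ one has to fall back on the trivial two-sided bound $e^{-\bar{d}s} \le p_s(x,x) \le 1$; this is exactly why the analysis is split at $t=1$.
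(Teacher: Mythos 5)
Your proposal is correct and follows essentially the same route as the paper's own proof: Cheeger's inequality (Lemma \ref{gapandtrel}) to bound $\rel$ by $2\bar d/\kappa_0^2$, the Poincar\'e decay from Lemma \ref{l:poincareineq} to get $\int_0^t p_s(x,x)\,\mathrm{d}s \le t/|V| + \rel \le 1+\rel$ for $t\le |V|$, and the no-jump bound $p_s(x,x)\ge e^{-\bar d s}$ to control the denominator on a constant-length initial interval. The only cosmetic differences are that the paper splits at $t=1/\bar d$ rather than $t=1$ and invokes \eqref{poincare} instead of \eqref{tvd}, which yield the same pointwise estimate $p_s(x,x)-1/|V|\le e^{-s/\rel}$.
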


\subsubsection{Kac's formula and an exponential approximation for Markov chains}\label{s:aldous}
In this paper we will frequently use the following Kac's formula. Consider the case where $V$ is finite. Let $A,B \subset V$. Let $ Q(A,B):=\sum_{x\in A,y\in B} \pi(x) r_{x,y}$. The `exit-measure' $\nu_A$ is defined as
\begin{equation}\label{exitmeasure}
	\nu_A(x):=\frac{\sum_{y\in A} \pi(y)r_{y,x}}{Q(A,A^c)}=\lim_{t \to 0}\mathbb{P}_{\pi}(X_t=x \mid X_0 \in A,X_t \notin A).
\end{equation}
Note that when $A$ is a singleton, this definition coincides with the one given in Definition \ref{defSRW}. 

\begin{lemma}\label{kaceqns} For all $t \ge 0$ and $A \subsetneq V$,
	\begin{equation}\label{kac1}
		\P_{\pi}(T_A\in (t,t+\mathrm{d} t))=Q(A,A^c)\P_{\nu_A}(T_A>t)\mathrm{d} t,
	\end{equation}
	where $T_A:=\inf\{s\geq 0: X_s\in A\}$ is the hitting time of $A$.
	Integrating this  equation over $t>0$ we get 
	\begin{equation}\label{kac2}
		\pi(A^c)=Q(A,A^c)\E_{\nu_A}(T_A).
	\end{equation}
\end{lemma}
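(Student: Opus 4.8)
The plan is to prove the differential identity \eqref{kac1} by an excursion (Palm-calculus) argument applied to the chain run in stationarity, and then obtain \eqref{kac2} simply by integrating \eqref{kac1} over $t\in(0,\infty)$. Throughout we use that $V$ is finite and $(V,\r)$ is irreducible, so $\pi$ is well defined and $T_A<\infty$ a.s.\ from any starting point, and that, started off $A$, the hitting time $T_A$ has a density on $(0,\infty)$ (it is phase-type), hence no atoms there.

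First I would extend the chain to a two-sided stationary process $(X_s)_{s\in\R}$ with $X_0\sim\pi$, and let $N\subset\R$ be the (stationary) point process of times at which the path jumps from $A$ to $A^{c}$. By stationarity the intensity of $N$ is $\sum_{x\in A,\,y\in A^{c}}\pi(x)r_{x,y}=Q(A,A^{c})$, and by the strong Markov property its Palm version is the chain started at the jump time from the post-jump law $\nu_A$; this is precisely the content of \eqref{exitmeasure}. Now fix $t>0$. For a sample path with $X_0\notin A$ there is a unique excursion into $A^{c}$ straddling time $0$; if it begins at the jump time $s\in N$ (necessarily $s<0$) and has length $L_s$ (so it returns to $A$ at $s+L_s$), then one checks by a short case analysis that
\[
\1[\,X_0\notin A,\ T_A>t\,]\;=\;\sum_{s\in N,\ s<0}\1[\,L_s>t-s\,],
\]
since a jump $s'<s$ has $L_{s'}\le s-s'<t-s'$ and a jump $s\le s'<0$ is impossible when the excursion at $s$ survives past time $0$. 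Taking expectations and applying the refined Campbell (Mecke) formula for $N$ together with the Palm description above, and substituting $u=-s$,
\[
\P_{\pi}(X_0\notin A,\ T_A>t)=Q(A,A^{c})\int_0^{\infty}\P_{\nu_A}(T_A>t+u)\,\mathrm{d}u=Q(A,A^{c})\int_t^{\infty}\P_{\nu_A}(T_A>v)\,\mathrm{d}v.
\]
For $t>0$ the event $\{X_0\notin A\}$ contains $\{T_A>t\}$, so the left side is $\P_{\pi}(T_A>t)$; differencing this identity at two times $0<a<b$ (legitimate since $T_A$ has no atom in $(0,\infty)$) yields $\P_{\pi}(T_A\in(a,b))=Q(A,A^{c})\int_a^b\P_{\nu_A}(T_A>v)\,\mathrm{d}v$, which is \eqref{kac1}. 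Integrating \eqref{kac1} over all $t>0$ gives \eqref{kac2}: the left side becomes $\P_{\pi}(0<T_A<\infty)=\P_{\pi}(X_0\notin A)=\pi(A^{c})$ by irreducibility, and the right side becomes $Q(A,A^{c})\int_0^{\infty}\P_{\nu_A}(T_A>t)\,\mathrm{d}t=Q(A,A^{c})\,\E_{\nu_A}(T_A)$.

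The main point requiring care is the excursion/Palm bookkeeping — isolating the unique $A^{c}$-excursion through time $0$ and tracking its length so that the Mecke formula applies cleanly. If one wishes to avoid Palm calculus, the same facts can be obtained by a direct renewal–reward argument on the stationary chain: the long-run fraction of time spent in $A^{c}$ is $\pi(A^{c})$, consecutive $A\to A^{c}$ jumps occur at rate $Q(A,A^{c})$ (mean cycle length $Q(A,A^{c})^{-1}$), and the mean time spent in $A^{c}$ per cycle is $\E_{\nu_A}(T_A)$ because each $A^{c}$-excursion starts from $\nu_A$, which gives \eqref{kac2}; replacing the reward by ``time in $A^{c}$ during excursions that have already lasted more than $t$'' reproduces \eqref{kac1}. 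Since these are the classical Kac formulas for hitting times, one may alternatively simply cite them (e.g.\ \cite[Chapter 2]{aldous1995reversible}).
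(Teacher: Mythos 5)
Your proposal is correct, but note that the paper does not actually prove this lemma: it is quoted as a known result, with \eqref{kac1} and \eqref{kac2} cited as Equations (2.23) and (2.24) of Aldous and Fill \cite{aldous1995reversible}. What you supply is a self-contained derivation, and the excursion/Palm bookkeeping in it checks out: the point process $N$ of $A\to A^c$ jumps of the stationary two-sided chain has intensity $Q(A,A^c)$, its Palm mark is the post-jump position with law $\nu_A$ (supported on $A^c$) followed by the Markov future, and your pathwise identity $\1[X_0\notin A,\,T_A>t]=\sum_{s\in N,\,s<0}\1[L_s>t-s]$ is valid a.s.\ (for $s'<s$ one has $L_{s'}\le s-s'<t-s'$, and no point of $N$ can lie in $(s,0)$ while the straddling excursion survives past $0$; when $X_0\in A$ every term vanishes since $L_{s'}\le -s'<t-s'$). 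The Mecke formula then gives $\P_{\pi}(T_A>t)=Q(A,A^c)\int_t^{\infty}\P_{\nu_A}(T_A>v)\,\mathrm{d}v$, and since $v\mapsto\P_{\nu_A}(T_A>v)$ is continuous for a finite chain, differencing yields \eqref{kac1} for every $t>0$, with \eqref{kac2} following by integration (or by letting $t\downarrow 0$), exactly as you say. Two small remarks: your argument uses only stationarity and the strong Markov property, not the symmetry of $\r$, which matches the generality in which Aldous and Fill state the result (and is why the paper can apply it to product chains); and the closing renewal--reward sketch is the argument closest in spirit to the classical proof, so either route, or the citation itself, would be acceptable here.
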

This lemma can be found in many places, e.g.,
\eqref{kac1} is \cite[Equation (2.23)]{aldous1995reversible} and \eqref{kac2} is \cite[Equation (2.24)]{aldous1995reversible}.\footnote{Note there is a typo in their definition of $Q(A,A^c)$ between (2.23) and (2.24) in \cite{aldous1995reversible}: $q_{ij}$ should be replaced by $\pi_iq_{ij}$ there.}   

A powerful tool for controlling the cumulative distribution function as well as the probability density function of $T_A$ with initial condition $\pi$ is
the following classical exponential approximation result, due to Aldous and Brown \cite{aldous}. It will also be used extensively throughout this paper. We state it as a lemma here. 
\begin{lemma}[Exponential approximation for hitting times \cite{aldous}]\label{aldous1}
	For an irreducible 
	reversible Markov chain on a finite state space $V$ with stationary distribution $\pi$ and 
	$A \subset V$, if we denote the hitting time of $A$ by $T_A$ and its density function w.r.t.\ the stationary chain by $f_{T_{A}}$, then
	\begin{equation}\label{probestimate1}
		\abs{\P_{\pi}(T_A>t)-\exp\left(-\frac{t}{\E_{\pi}T_A}   \right)}
		\leq \frac{\rel}{\E_{\pi}T_A},
	\end{equation}
	and
	\begin{equation}\label{ABdensity}
		\frac{1}{\E_{\pi}T_A}\left( 1-\frac{2\rel+t}{\E_{\pi}T_A}\right)
		\leq 
		f_{T_A}(t)\leq \frac{1}{\E_{\pi}T_A}\left(1+\frac{\rel}{2t}\right). 
	\end{equation}
	As a consequence of \eqref{probestimate1}, for $t\leq \E_{\pi}T_A$, we have
	\begin{equation}\label{probestimate}
		\abs{\P_{\pi}(T_{A}\leq t)-\frac{t}{\E_{\pi}T_A}}\leq
		\left(\frac{t}{\E_{\pi}T_A}\right)^2+\frac{\rel}{\E_{\pi}T_A}.
	\end{equation}
\end{lemma}
Equations \eqref{probestimate1} and \eqref{ABdensity} are due to Theorem 1 and Lemma 13 in \cite{aldous}, whereas \eqref{probestimate} follows from \eqref{probestimate1} and the inequality that $1-x\leq e^{-x} \leq 1-x+x^2$ for $0\leq x\leq 1$.

We note that, very recently  Berestycki, Hermon and Teyssier give an elementary proof of a stronger version of Lemma \ref{aldous1}. See \cite[Chapter 4]{Gumbel}. 
\subsubsection{Relations between Markov chain parameters}
We will need the following lemma, giving relations  between $\M,\rel$ and $r_{\max}:=\max_{x \in V}  r(x)$ for a finite Markov chain.
\begin{lemma}\label{walkparamaters}
	For any Markov chain $(V,\r)$  with $\abs{V}=n\geq 2$ and symmetric jump rates $\r$, we have
	\begin{equation}\label{relandrmax}
		\rel\geq 1/(2r_{\max}), 
	\end{equation}
	\begin{equation}\label{mrmax/n}
		\frac{\M r_{\max}}{n} \geq \left(1-\frac{1}{n}\right)^2\geq \frac{1}{4},
	\end{equation}
	\begin{equation}\label{maxxyp}
		p_s(x,y)\leq \frac{1}{2}(p_s(x,x)+p_s(y,y)), \,\forall x,y\in V. 
	\end{equation}
\end{lemma}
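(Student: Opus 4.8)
The plan is to prove the three inequalities \eqref{relandrmax}, \eqref{maxxyp}, \eqref{mrmax/n} separately, since they are logically independent; the first two are one-line spectral/linear-algebra facts and essentially all the content lies in the meeting-time bound \eqref{mrmax/n}.

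For \eqref{relandrmax} I would argue via the Dirichlet form. Since $\r$ is symmetric and $\pi$ uniform, $-Q$ is self-adjoint and non-negative on $\ell_2(V,\pi)$, with $\langle -Qf,f\rangle_\pi=\tfrac12\sum_{x,y}\pi(x)r_{x,y}(f(x)-f(y))^2$. Bounding $(f(x)-f(y))^2\le 2f(x)^2+2f(y)^2$ and using $\sum_y r_{x,y}=r(x)\le r_{\max}$ for the $f(x)^2$-part and $\sum_x\pi(x)r_{x,y}=\pi(y)r(y)\le r_{\max}\pi(y)$ (symmetry of $\r$, $\pi$ uniform) for the $f(y)^2$-part yields $\langle -Qf,f\rangle_\pi\le 2r_{\max}\|f\|_2^2$ for all $f$. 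Hence the spectrum of $-Q$ lies in $[0,2r_{\max}]$; in particular its second smallest eigenvalue, namely $1/\rel$, is at most $2r_{\max}$, i.e.\ $\rel\ge 1/(2r_{\max})$. (Alternatively, testing the variational formula for the gap with $f=\1_{\{x\}}$ gives the slightly sharper $1/\rel\le r(x)/(1-\pi(x))$.)

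For \eqref{maxxyp} I would simply expand a square. As $\pi$ is uniform and $\r$ symmetric, $p_t$ is a symmetric kernel, so Chapman--Kolmogorov gives $\sum_z p_{s/2}(x,z)^2=p_s(x,x)$, $\sum_z p_{s/2}(y,z)^2=p_s(y,y)$ and $\sum_z p_{s/2}(x,z)p_{s/2}(y,z)=p_s(x,y)$. Then $0\le\sum_z\big(p_{s/2}(x,z)-p_{s/2}(y,z)\big)^2=p_s(x,x)-2p_s(x,y)+p_s(y,y)$, which rearranges to \eqref{maxxyp}. (Applying Cauchy--Schwarz to the cross term instead gives the stronger $p_s(x,y)\le\sqrt{p_s(x,x)p_s(y,y)}$, whence \eqref{maxxyp} by AM--GM.)

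The substantive part is \eqref{mrmax/n}. Run two independent stationary copies $X,Y$ of the chain and put $u_t:=\P_{\pi,\pi}(\RM>t)$, so $\M=\int_0^\infty u_t\,dt$ and $u_0=1-\sum_x\pi(x)^2=1-1/n$. The plan is to bound the hazard rate of $\RM$ by a first-moment (compensator) estimate for the collision instants of the free pair: when $(X_s,Y_s)=(x,y)$ with $x\ne y$ a collision occurs at rate $r_{x,y}+r_{y,x}=2r_{x,y}$, and since both marginals are $\pi$ we have $\P((X_s,Y_s)=(x,y))=1/n^2$, so the expected number of collisions in $(0,t]$ is $t\cdot\tfrac2{n^2}\sum_{x\ne y}r_{x,y}=t\cdot\tfrac2{n^2}\sum_x r(x)\le\tfrac{2r_{\max}}{n}t$. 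By a union bound $\P(\RM\le t)\le\P(X_0=Y_0)+\tfrac{2r_{\max}}{n}t$, i.e.\ $u_t\ge(1-1/n)-\tfrac{2r_{\max}}{n}t$ (and $u_t\ge0$ trivially). Integrating the positive part over $t\in[0,\tfrac{n(1-1/n)}{2r_{\max}}]$ gives $\M\ge\tfrac{n}{4r_{\max}}(1-1/n)^2$, hence $\tfrac{\M r_{\max}}{n}\gtrsim(1-1/n)^2$; this already yields $\tfrac{\M r_{\max}}{n}\ge\tfrac1{16}$ for all $n\ge 2$, and the sharp constant should follow by running the same estimate through Kac's formula \eqref{kac2} for the pair chain on $V\times V$ with $A$ the diagonal (where $Q(A,A^c)=\tfrac2{n^2}\sum_x r(x)\le\tfrac{2r_{\max}}{n}$) rather than through the crude union bound. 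I expect the only mildly delicate point is making the collision-counting/compensator step rigorous; everything else is bookkeeping.
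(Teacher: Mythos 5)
Your proofs of \eqref{relandrmax} and \eqref{maxxyp} are correct and essentially the same as the paper's: the paper phrases the first as positive semidefiniteness of $Q+2r_{\max}I$ (your Dirichlet-form bound is the same computation) and proves the second by Cauchy--Schwarz plus AM--GM, which is your expand-the-square argument.

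The gap is in \eqref{mrmax/n}. Your rigorous part (the compensator/union bound) only gives $\M r_{\max}/n\ge (1-1/n)^2/4$, and the proposed upgrade ``through Kac's formula \eqref{kac2}'' does not reach the stated constant. Formula \eqref{kac2} controls $\E_{\nu_A}T_A$, the expected hitting time of the diagonal started from the exit measure $\nu_A$, whereas $\M=\E_{\pi,\pi}\RM$ is the hitting time started from stationarity; converting between the two is exactly the nontrivial content of the result the paper's proof consists of citing, namely Lemma 2 of \cite{aldous} applied to the product chain and the diagonal set (its proof needs the density formula \eqref{kac1} together with complete monotonicity of $\P_{\nu_A}(T_A>t)$ for reversible chains, which yields $\E_{\pi}T_A\ge \pi(A^c)^2/Q(A,A^c)$). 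Moreover, even that inequality, combined with your own (correct) computation $Q(A,A^c)=\tfrac{2}{n^2}\sum_x r(x)\le 2r_{\max}/n$, gives $\M r_{\max}/n\ge (1-1/n)^2/2$, not $(1-1/n)^2$; and in fact no argument can attain the printed constant, since on the complete graph $K_n$ one has $\RM\sim\Exp(2)$ from any two distinct starting points, hence $\M=(1-1/n)/2$, $r_{\max}=n-1$ and $\M r_{\max}/n=(1-1/n)^2/2$ (already $1/8<1/4$ for $n=2$), so \eqref{mrmax/n} as stated carries a spurious factor $2$. This is harmless for the paper, which only ever uses a universal lower bound on $\M r_{\max}/n$ (your $1/16$, or the sharp $1/8$, suffices), but as a proof of the lemma as written your argument is quantitatively short, and the step you sketch to close the gap is missing the exit-measure-to-stationarity ingredient that the Aldous--Brown lemma supplies.
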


\subsubsection{Bounds for meeting probabilities}
For future reference, we record the following lemma which controls the probability for two walks to collide within some time interval.
\begin{lemma}\label{meetprob}
	Assume that $\r$ is symmetric. For any $x,y \in V$ with $x\neq y$ and any $0<T<t$, we have
	\[\P_{x,y}(T< \RM<t) \le 2\exp(-T/\rel)
	\frac{\max_{z}\int_0^{2T}  p_{s}(z,z)\mathrm{d} s }{\min_{z}\int_0^{2T}  p_s(z,z)\mathrm{d} s  }+\frac{8t}{n} (T^{-1}\vee r_{\max}).\]
	Here recall that $\P_{x,y}$ denotes two independent continuous-time random walks starting from $x$ and $y$ respectively.
\end{lemma}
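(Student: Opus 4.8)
The plan is to work with the product chain $Z_s=(X_s,Y_s)$ on $V\times V$: it is reversible with respect to the uniform measure, its spectral gap equals that of $(V,\r)$ so it has the same relaxation time $\rel$, and $\RM$ is the hitting time of the diagonal $\Delta:=\{(z,z):z\in V\}$. The first step is a union bound over the times at which the pair lands on $\Delta$: on $\{T<\RM<t\}$ we have $X_T\neq Y_T$, so $(X_s,Y_s)$ jumps onto $\Delta$ at some $s\in(T,t)$ (namely at $s=\RM$); hence $\P_{x,y}(T<\RM<t)$ is at most the expected number of such jumps. Since an off‑diagonal state $(w,z)$ jumps onto $\Delta$ at total rate $r_{w,z}+r_{z,w}=2r_{w,z}$ (the first coordinate moving $w\to z$, or the second moving $z\to w$), this expectation is $2\int_T^t\sum_{w\neq z}p_s(x,w)p_s(y,z)r_{w,z}\,\mathrm{d}s$, so
\[
\P_{x,y}(T<\RM<t)\ \le\ 2\int_T^t\ \sum_{w\neq z}p_s(x,w)\,p_s(y,z)\,r_{w,z}\ \mathrm{d}s .
\]

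Next I would estimate the integrand: using $p_s(x,w)p_s(y,z)\le\tfrac12(p_s(x,w)^2+p_s(y,z)^2)$ together with $\sum_w p_s(x,w)^2=p_{2s}(x,x)$ (symmetry of $p_s$) and $\sum_z r_{w,z}=r(w)\le r_{\max}$, the right side is at most $\tfrac{r_{\max}}{2}\int_{2T}^{2t}(p_u(x,x)+p_u(y,y))\,\mathrm{d}u$. Writing $p_u(z,z)=\tfrac1n+(p_u(z,z)-\tfrac1n)$, the $\tfrac1n$‑part contributes at most $\tfrac{2r_{\max}(t-T)}{n}\le\tfrac{8t}{n}(T^{-1}\vee r_{\max})$, which is the second term of the claim. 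For the excess, apply the $\ell_2$ Poincar\'e inequality \eqref{tvd} to the distribution $p_{v/2}(z,\cdot)$ run for an extra time $T$ — noting $p_{2T+v}(z,z)-\tfrac1n=\sum_x(p_{T+v/2}(z,x)-\tfrac1n)^2$ — to obtain $p_{2T+v}(z,z)-\tfrac1n\le e^{-2T/\rel}(p_v(z,z)-\tfrac1n)$; integrating in $v\ge 0$ and using $\int_0^\infty(p_v(z,z)-\tfrac1n)\mathrm{d}v\le\int_0^{2T}p_v(z,z)\mathrm{d}v+\int_{2T}^\infty(p_v(z,z)-\tfrac1n)\mathrm{d}v$ gives
\[
\int_{2T}^{2t}\Big(p_u(z,z)-\tfrac1n\Big)\mathrm{d}u\ \le\ \int_{2T}^{\infty}\Big(p_u(z,z)-\tfrac1n\Big)\mathrm{d}u\ \le\ \frac{e^{-2T/\rel}}{1-e^{-2T/\rel}}\int_0^{2T}p_u(z,z)\,\mathrm{d}u .
\]
Altogether this bounds $\P_{x,y}(T<\RM<t)$ by $r_{\max}\tfrac{e^{-2T/\rel}}{1-e^{-2T/\rel}}\max_z\int_0^{2T}p_s(z,z)\mathrm{d}s+\tfrac{2r_{\max}(t-T)}{n}$.

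It remains to recast the first summand in the stated form $2e^{-T/\rel}\frac{\max_z\int_0^{2T}p_s(z,z)\mathrm{d}s}{\min_z\int_0^{2T}p_s(z,z)\mathrm{d}s}$, and I expect this bookkeeping — not the probabilistic input, which is just the union bound plus the Poincar\'e estimate — to be the main obstacle. One absorbs the factor $\tfrac{r_{\max}}{1-e^{-2T/\rel}}$ and one power of $e^{-T/\rel}$ into $1/\min_z\int_0^{2T}p_s(z,z)\mathrm{d}s$, using $\int_0^{2T}p_s(z,z)\mathrm{d}s\ge 2T/n$ and also $\gtrsim\min\{T,1/r_{\max}\}$ (from $p_s(z,z)\ge\tfrac1n$ and $p_s(z,z)\ge e^{-2r(z)s}$), together with $\rel\ge 1/(2r_{\max})$ from \eqref{relandrmax}; this reduces matters to an elementary comparison of $r_{\max},\rel,T$ and $t/n$, and the case where the resulting bound already exceeds $1$ is handled trivially since $\P_{x,y}(T<\RM<t)\le 1$. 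In the applications (the configuration model and transitive chains) one has $r_{\max}\asymp 1$ and this step is routine; it is also where the constant $2$ and the precise factor $T^{-1}\vee r_{\max}$ in the statement get pinned down.
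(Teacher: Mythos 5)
Your steps (1)--(4) are correct and yield the valid inequality
\[
\P_{x,y}(T<\RM<t)\ \le\ r_{\max}\,\frac{e^{-2T/\rel}}{1-e^{-2T/\rel}}\,\max_z\int_0^{2T}p_s(z,z)\,\mathrm{d}s\;+\;\frac{2r_{\max}(t-T)}{n},
\]
but the deferred step (5) is not bookkeeping: it is where the argument breaks. To dominate your first term by $2e^{-T/\rel}\max_z\int_0^{2T}p_s\,\mathrm{d}s/\min_z\int_0^{2T}p_s\,\mathrm{d}s$ you would need $r_{\max}\,m\le 4\sinh(T/\rel)$, where $m:=\min_z\int_0^{2T}p_s(z,z)\,\mathrm{d}s$; that is an \emph{upper} bound on $m$ (your sketch cites lower bounds on $m$, which point the wrong way), and it is false in general. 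The reason is structural: the union bound over jumps onto the diagonal counts every crossing of the two walkers, so it overshoots the meeting probability by roughly the local-time factor $r_{\max}m$ that a pair accumulates once it has met, and on locally recurrent-like graphs this factor is unbounded. Concretely, take $\G=\ZZ_L^2\times H$ with $H$ a fixed vertex-transitive expander on, say, $16$ vertices, unit edge rates ($r_{\max}=7$), $T=2\rel\asymp L^2$ and $t=1.1T$: then $M=m\asymp\log L$, the right-hand side of the lemma is about $2e^{-2}+0.2<1$ (so the trivial bound $\P\le 1$ cannot rescue you), while your bound is of order $r_{\max}m\,e^{-2T/\rel}\asymp\log L$ and hence exceeds the lemma's bound (indeed exceeds $1$) once $L$ is large enough. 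So your intermediate inequality, though true, cannot imply the statement.

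The paper avoids this loss by a ratio-of-occupation-times argument instead of a first-moment count: with $N$ the co-location (Lebesgue) time of the two independent walks over a window of length comparable to $T$ after time $T$, it uses $\P(T<\RM<t)\le\E N/\E(N\mid T<\RM<t)$. The denominator is at least $\min_z\int_0^{T}p_{2s}(z,z)\,\mathrm{d}s=m/2$ by the Markov property at the meeting time, and the numerator is controlled by exactly the Poincar\'e block decomposition you used, giving $\frac{e^{-T/\rel}}{1-e^{-T/\rel}}\frac M2+\frac tn$ in place of your rate-weighted jump count. Dividing by $m/2$ is what produces the ratio $\max_z\int_0^{2T}p_s\,\mathrm{d}s/\min_z\int_0^{2T}p_s\,\mathrm{d}s$ in the statement, and the bound \eqref{intmin} (i.e.\ $m\ge\tfrac14(T\wedge r_{\max}^{-1})$ --- here your lower bounds on $m$ \emph{are} the right tool) converts $\frac tn\cdot\frac2m$ into $\frac{8t}{n}(T^{-1}\vee r_{\max})$; the prefactor $2$ comes from $\frac{e^{-T/\rel}}{1-e^{-T/\rel}}\le 2e^{-T/\rel}$ when $e^{-T/\rel}\le\frac12$, the remaining case being trivial since the probability is at most $1$. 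If you replace your union bound by this $\E N/\E(N\mid\cdot)$ device, the rest of your computation goes through.
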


\subsubsection{Meeting time and integral of transition probability for transitive chains}
In the setting of transitive chains, the quantity $\int_0^t p_{2s}(z,z)\mathrm{d} s$ satisfies, for $t\geq \rel$, 
\begin{equation}\label{b1}
	n \int_0^t p_{2s}(x,x)\mathrm{d} s=\int_0^{t}\mbox{Tr}(p_{2s})\mathrm{d} s 
	= \int_0^t \sum_{i=1}^n \exp(-2\lambda_i s)\mathrm{d} s
	\in \left[\frac{1}{4}\sum_{i=2}^n \frac{1}{\lambda_i}+t,\frac{1}{2}\sum_{i=2}^n \frac{1}{\lambda_i}+t\right],
\end{equation}
where $n=|V|$, and $\lambda_1=0<\lambda_2 \leq \ldots \leq \lambda_{n} \le 2 r_{\mathrm{max}}$ are the eigenvalues of $-Q$.
Moreover, 
\[
n\int_0^t p_{2s}(x,x)\mathrm{d} s \le t+ \sum_{i=2}^{n}\frac{1}{2\lambda_i}
\] 
holds for all $t\geq 0$. 
The eigentime identity (see, e.g., \cite[Lemma 12.17]{levin2017markov}) asserts that for all $x$,
\[
\sum_{i=2}^n \frac{1}{\lambda_i}=\sum_y \pi(y) \E_x T_y,
\]
where $T_y$ is the hitting time of $y$. 
For transitive chains we have $\E_x T_y=2\E_{x,y} \RM$ for all $x,y$ (see, e.g., \cite[Section 14.2]{aldous1995reversible}). Hence 
\begin{equation}\label{lambdameet}
	\sum_{i=2}^n \frac{1}{\lambda_i}=2\M. 
\end{equation}
Using \eqref{b1} and \eqref{lambdameet}
we see that for $t\geq \rel$,
\begin{equation}\label{integralofp}
	\frac{\M}{2n}\leq \int_0^t p_{2s}(x,x)\mathrm{d} s=\frac{1}{n}\int_0^{t}\mbox{Tr}(p_{2s})\mathrm{d} s
	\leq \frac{\M+t}{n}.
\end{equation}

\subsubsection{Uniformly transient Markov chains}
We state the following proposition which we will use in the proof of Theorem \ref{uniformtran}. 
\begin{prop}\label{p:treluniform}
	Let $(V_n,\r_n)$ be a sequence of uniformly transient transitive Markov chains with $r_n(x)=1$ for all $n$ and $x\in V_n$, and $|V_n|\to\infty$ as $n\to\infty$. Then we have
	\begin{equation}
		\rel(\r_n)=o(\M(\r_n)) \mbox{ and } \M(\r_n)=\Theta(\abs{V_n}),
	\end{equation}
	as  $n\to\infty$.
\end{prop}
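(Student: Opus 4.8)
The plan is to extract both conclusions from the uniform transience condition \eqref{defuniftrans} together with the integral identities for transitive chains recorded in \eqref{integralofp} and \eqref{lambdameet}. Recall that for a transitive chain with unit jump rate at each vertex, $r_{\max}=1$, and by \eqref{relandrmax} we have $\rel(\r_n)\geq 1/2$; moreover by \eqref{b1}, \eqref{lambdameet} and \eqref{integralofp},
\begin{equation}\label{e:prop-key}
\int_0^{\rel(\r_n)} p^{(n)}_{2s}(x,x)\,\mathrm{d}s = \frac{1}{|V_n|}\Big(\Theta(\M(\r_n)) + \Theta(\rel(\r_n))\Big),
\end{equation}
so that $|V_n|\int_0^{\rel(\r_n)} p^{(n)}_{2s}(x,x)\,\mathrm{d}s \asymp \M(\r_n)+\rel(\r_n)$ (here I use that the lower bound in \eqref{integralofp} already gives $\geq \M(\r_n)/(2|V_n|)$ and the upper bound gives $\leq (\M(\r_n)+\rel(\r_n))/|V_n|$). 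This reduces everything to understanding the left-hand side of \eqref{e:prop-key} via \eqref{defuniftrans}.

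First I would prove $\M(\r_n) = O(|V_n|)$. Split the integral $\int_0^{\rel(\r_n)} p^{(n)}_t(x,x)\,\mathrm{d}t$ (I will freely pass between $p_t$ and $p_{2s}$, absorbing constants) into $\int_0^{s_0}$ and $\int_{s_0}^{\rel(\r_n)}$ for a fixed constant $s_0$. The first piece is at most $s_0$ since $p^{(n)}_t(x,x)\leq 1$. For the second piece, by \eqref{defuniftrans} there is a choice of $s_0$ (independent of $n$, for $n$ large) making $\max_{x}\int_{s_0\wedge\rel(\r_n)}^{\rel(\r_n)} p^{(n)}_t(x,x)\,\mathrm{d}t$ as small as we like, in particular bounded. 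Hence $\int_0^{\rel(\r_n)} p^{(n)}_t(x,x)\,\mathrm{d}t = O(1)$, and feeding this into the lower bound of \eqref{integralofp} (rescaled) gives $\M(\r_n)/(2|V_n|) = O(1)$, i.e. $\M(\r_n)=O(|V_n|)$. The matching lower bound $\M(\r_n)=\Omega(|V_n|)$ is automatic and holds without any transience assumption: by \eqref{mrmax/n} with $r_{\max}=1$ we get $\M(\r_n)\geq |V_n|(1-1/|V_n|)^2 \geq |V_n|/4$. This establishes $\M(\r_n)=\Theta(|V_n|)$.

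It remains to show $\rel(\r_n) = o(\M(\r_n))$, equivalently (given the previous paragraph) $\rel(\r_n) = o(|V_n|)$. Here I would argue by contradiction: suppose along a subsequence $\rel(\r_n) \geq c|V_n|$ for some $c>0$. Using \eqref{e:prop-key} in the form $|V_n|\int_0^{\rel(\r_n)} p^{(n)}_{2s}(x,x)\,\mathrm{d}s \geq \frac14\sum_{i\geq 2}\lambda_i^{-1} + \rel(\r_n) \geq \rel(\r_n)$ (from the lower bound in \eqref{b1}), we get $\int_0^{\rel(\r_n)} p^{(n)}_{2s}(x,x)\,\mathrm{d}s \geq \rel(\r_n)/|V_n| \geq c$, a positive constant. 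But this lower bound must persist even after removing an initial segment: for any fixed $s_0$, $\int_{s_0}^{\rel(\r_n)} p^{(n)}_{2s}(x,x)\,\mathrm{d}s \geq c - s_0$, which is bounded away from $0$ once $s_0 < c$ — contradicting \eqref{defuniftrans}. (One small point to check: \eqref{defuniftrans} is stated with $p_t$ rather than $p_{2s}$ and over $[s\wedge\rel,\rel]$; a change of variables and the fact that $\rel(\r_n)\to\infty$ along the subsequence, which itself follows since $\rel(\r_n)\geq c|V_n|\to\infty$, let me match the two.) The main obstacle I anticipate is purely bookkeeping: carefully reconciling the interval of integration and the $p_t$ versus $p_{2s}$ normalization between \eqref{defuniftrans}, \eqref{b1} and \eqref{integralofp}, and making sure the lower bound $\frac14\sum_{i\geq2}\lambda_i^{-1}$ from \eqref{b1} is used in the right direction — none of the estimates themselves are delicate, but the constants and cutoffs must be tracked consistently.
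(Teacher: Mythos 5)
Your proposal is correct, and at its core it runs parallel to the paper's argument: both extract the two conclusions by playing the integral $\int_0^{\rel(\r_n)}p^{(n)}_t(x,x)\,\mathrm{d}t$ against the uniform transience condition \eqref{defuniftrans}, and both use $\M(\r_n)\geq |V_n|/4$ from \eqref{mrmax/n} to reduce $\rel=o(\M)$ to $\rel=o(|V_n|)$. The difference is in the intermediate identities for the upper bound $\M(\r_n)=O(|V_n|)$: the paper does not invoke \eqref{integralofp} but instead bounds $\int_0^\infty(p^{(n)}_t(x,x)-\pi(x))\,\mathrm{d}t$ by $\tfrac{e}{e-1}\int_0^{\rel}(p^{(n)}_t(x,x)-\pi(x))\,\mathrm{d}t$ via geometric decay over blocks of length $\rel$, identifies the full integral with $\pi(x)\E_\pi T_x$, and passes to $\M$ through $t_{\mathrm{hit}}\leq 2\max_x\E_\pi T_x$ and $\M=t_{\mathrm{hit}}/2$ for transitive chains; you instead quote \eqref{integralofp} (equivalently \eqref{b1} plus the eigentime identity \eqref{lambdameet}), which packages the same content and shortens the write-up. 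Likewise, for $\rel(\r_n)=o(\M(\r_n))$ the paper's contradiction uses the pointwise bound $p^{(n)}_t(x,x)\geq 1/|V_n|$ and integrates it over $[s,\rel]$, whereas you obtain the equivalent bound $\int_0^{\rel}p^{(n)}_{2s}(x,x)\,\mathrm{d}s\geq \rel(\r_n)/|V_n|$ from the trace inequality. The one point you deferred as bookkeeping does need a line: \eqref{defuniftrans} controls $\int_{s\wedge\rel}^{\rel}p^{(n)}_t\,\mathrm{d}t$, while \eqref{b1} and \eqref{integralofp} involve $\int_0^{\rel}p^{(n)}_{2s}\,\mathrm{d}s=\tfrac12\int_0^{2\rel}p^{(n)}_u\,\mathrm{d}u$, so the stretch $[\rel,2\rel]$ must be disposed of. This is immediate because $t\mapsto p^{(n)}_t(x,x)=\tfrac1{|V_n|}\sum_i e^{-\lambda_i t}$ is non-increasing, giving $\int_{\rel}^{2\rel}p^{(n)}_u(x,x)\,\mathrm{d}u\leq\int_0^{\rel}p^{(n)}_u(x,x)\,\mathrm{d}u$ for the upper-bound half and, in the contradiction half, $\int_{2s_0}^{\rel}p^{(n)}_u(x,x)\,\mathrm{d}u\geq \int_{s_0}^{\rel}p^{(n)}_{2s}(x,x)\,\mathrm{d}s-s_0\geq c-2s_0$; alternatively, in that half you can avoid the detour entirely by using $p^{(n)}_u(x,x)\geq 1/|V_n|$ directly on $[s,\rel]$, as the paper does. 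With that line added, every step checks out.
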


\section{General bounds on $P_t$}  \label{s:generalbounds}
In this section, we present an upper and a lower bound for $P_t$. For many families of graphs, including all transitive graphs, the two bounds differ only by some absolute constant.
As discussed in the introduction, while these bounds are interesting in their own rights, the upper bound will play a key role in the proofs of our main results. 

\medskip

Recall that $p_t(x,y)$ is the time $t$ transition probability from $x$ to $y$ for $(V,\r)$.
\begin{theorem}\label{t:upperbound}
	There exists an absolute constant $C_0$ such that for every Markov chain $(V,\r)$, 
	\begin{equation}
		\label{e:cheapbounds}
		\frac{m_t}{C_0t} \leq P_t(x)\leq \frac{C_0M_t}{t},\qquad \forall x \in V,\; t>0,
	\end{equation}
	where
	\begin{equation}\label{def_mt}
		M_t=\sup_{x\in V} \int_0^t  p_s(x,x)\mathrm{d}s, \quad \quad m_t=\inf_{x\in V} \int_0^t  p_s(x,x)\mathrm{d}s.
	\end{equation}
\end{theorem}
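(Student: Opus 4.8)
\noindent\emph{Proof proposal.}
The plan is to work throughout with the mean–one random variable $n_t^x=|\zeta_t^x|$, the number of CRW labels occupying $x$ at time $t$ (equivalently, by duality, the number of voters sharing $x$'s initial opinion). Since the trajectory of each individual label has the marginal law of a single continuous–time SRW and the jump rates are symmetric, $\E n_t^x=\sum_v p_t(v,x)=\sum_v p_t(x,v)=1$, while $P_t(x)=\P(n_t^x\ge 1)$. Hence $1/P_t(x)=\E[n_t^x\mid n_t^x\ge 1]$, and the theorem is equivalent to the two–sided estimate $t/(C_0M_t)\le\E[n_t^x\mid n_t^x\ge1]\le C_0t/m_t$: conditioned on $x$ being occupied, the occupying particle's family has size comparable to $t/M_t$ from below and $t/m_t$ from above.

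For the lower bound on $P_t(x)$ I would apply Cauchy--Schwarz to $1=\E[n_t^x;\,n_t^x\ge1]$ to get $P_t(x)\ge 1/\E[(n_t^x)^2]$, and then estimate the second moment. Expand $\E[(n_t^x)^2]=1+\sum_{y\ne z}\P(\text{labels }y,z\text{ both occupy }x\text{ at time }t)$; since two labels at $x$ must have collided, decompose each term over the first collision time and location $(\tau,W)$ of the two \emph{independent} walks started at $y,z$, so that the term equals $\E_{y,z}[\1(\tau<t)\,p_{t-\tau}(W,x)]$. The crucial point is that, writing $g_{y,z}(\tau,w)$ for the joint density of $(\tau,W)$, one has $\sum_{y,z}g_{y,z}(\tau,w)\le 2\alpha_\tau(w)$. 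Indeed $g_{y,z}(\tau,w)=\sum_{u\sim w}r_{u,w}\big(q_\tau((y,z),(w,u))+q_\tau((y,z),(u,w))\big)$, where $q$ is the transition kernel of the pair chain on $V\times V$ killed on the diagonal, and since this killed chain is reversible w.r.t.\ counting measure, $\sum_{y,z}q_\tau((y,z),(w,u))=\P_{(w,u)}(\RM>\tau)$; summing over the neighbours $u$ of $w$ with weights $r_{u,w}$ yields exactly $2\alpha_\tau(w)$. Therefore $\E[(n_t^x)^2]\le 1+C\int_0^t\sum_w\alpha_\tau(w)\,p_{t-\tau}(w,x)\,d\tau$. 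Finally I would bound $\alpha_\tau(w)=r(w)\P_{w,\nu_w}(\RM>\tau)$ by $r(w)$ for small $\tau$ and, for larger $\tau$, by $C\big(\int_0^\tau p_s(w,w)\,ds\big)^{-1}$ via a renewal/local-time argument (comparing $\E_{w,\nu_w}\!\int_0^\tau\1(X_s=Y_s)\,ds=\int_0^\tau p_{2s}(w,\nu_w)\,ds$ with the occupation time of the diagonal accumulated after the first collision); integrating this against $p_{t-\tau}(w,x)$ (which sums to $1$ over $w$ and to $t$ over $\tau$), and using the doubling bound $\int_\tau^{2\tau}p_s(w,w)\,ds\le\int_0^\tau p_s(w,w)\,ds$ to control the time integral, gives $\E[(n_t^x)^2]\le C_0t/m_t$.

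The harder direction is the upper bound $P_t(x)\le C_0M_t/t$, i.e.\ showing that conditioned on $x$ being occupied the occupying particle's family typically has size $\gtrsim t/M_t$. I would decompose $\{x\in\xi_t\}$ according to the genealogy of the particle at $x$: either its walk never coalesced (a ``singleton'', whose contribution should be $O(M_t/t)$, since a label whose walk avoids the entire — initially full — configuration for time $t$ is rare), or one reads off its most recent merge time $\sigma$, which gives
\[
P_t(x)\ \le\ O\!\Big(\tfrac{M_t}{t}\Big)\ +\ \int_0^t\sum_{w}\Big(\sum_{u\sim w}r_{u,w}\,\P(u,w\in\xi_\sigma)\Big)\,p_{t-\sigma}(w,x)\,d\sigma,
\]
the inner sum being the rate at which a merge produces a particle at $w$ at time $\sigma$. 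It then remains to bound $\P(u,w\in\xi_\sigma)$ for neighbours $u\sim w$ by, roughly, $P_\sigma(u)P_\sigma(w)$ times an $O(M_\sigma/\sigma)$ ``closeness factor'' — equivalently, to quantify that the survival of two distinct ancestral lineages ending at neighbouring sites is doubly rare — which I would extract by applying the same killed-pair-chain/reversibility analysis as above to pairs of lineages, together with Arratia's inequality $\P(u,w\in\xi_\sigma)\le P_\sigma(u)P_\sigma(w)$ and the lower bound on $P_\sigma$ just established. Feeding a crude a priori bound (the $O(1/\sqrt t)$ estimate of \cite{benjamini2016site} for unit-rate graphs, and its analogue in general) into this inequality and iterating should converge to $P_t(x)\le C_0M_t/t$.

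I expect the upper bound — and within it the estimate on $\P(u,w\in\xi_\sigma)$ for adjacent $u,w$ (equivalently, a matching lower bound on the instantaneous coalescence rate) — to be the main obstacle. The two soft bounds available, $\P(u,w\in\xi_\sigma)\le P_\sigma(u)P_\sigma(w)$ and $\P(u,w\in\xi_\sigma)\le\P_{u,w}(\RM>\sigma)$, are each far too lossy (the first is not transience-sensitive and already produces a divergent $\sigma$-integral for $\G=\ZZ$; the second does not even tend to $0$ for $\G=\ZZ^3$), so the correct $M_\sigma/\sigma$ decay must come from genuinely combining them with the genealogical structure, and moreover uniformly over all reversible $(V,\r)$, finite or infinite, with no transience hypothesis.
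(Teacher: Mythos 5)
Your reduction to $n_t^x$ is sound as far as it goes ($\E n_t^x=1$, $P_t(x)=\P(n_t^x\ge 1)$, and the reversibility identity $\sum_{y\ne z}g_{y,z}(\tau,w)=2\alpha_\tau(w)$ is exactly the time-reversal mechanism used elsewhere in this paper), but the lower bound as you set it up does not go through. The key estimate you need, $\alpha_\tau(w)\lesssim \bigl(\int_0^\tau p_s(w,w)\,\mathrm{d}s\bigr)^{-1}$, is false for general symmetric chains: on the comb lattice $\mathrm{Comb}(\ZZ)$ the walk is recurrent, so $\int_0^\tau p_s(w,w)\,\mathrm{d}s\to\infty$, yet by Krishnapur--Peres two independent walks collide only finitely often, and from any two distinct starting sites they never meet with positive probability; hence $\alpha_\tau(w)\ge\alpha_\infty(w)>0$ for all $\tau$ while your right-hand side tends to $0$. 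The ``renewal/local-time'' comparison you invoke also runs in the wrong direction: comparing $\E_{w,\nu_w}\int_0^\tau\1(X_s=Y_s)\,\mathrm{d}s$ with the occupation time accumulated after a first collision bounds $\P_{w,\nu_w}(\RM\le\tau)$ from \emph{above} (this is how Lemma \ref{meetprob} is used), whereas you need it from below, and no such general bound exists. Worse, the failure is not just of this lemma but of the route: your identity gives $\E[(n_t^x)^2]=1+2\int_0^t\sum_w\alpha_\tau(w)p_{t-\tau}(w,x)\,\mathrm{d}\tau$, so whenever the non-meeting probabilities stay bounded below (as on the comb) the second moment grows linearly in $t$, and Cauchy--Schwarz can never yield more than $P_t(x)\gtrsim 1/t$, strictly weaker than the required $m_t/(C_0t)$ once $m_t\to\infty$. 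The cluster size $n_t^x$ simply is not second-moment-controlled in the generality of the theorem.

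For the upper bound, which you correctly flag as the main obstacle, the sketch also breaks at a concrete point: the displayed inequality drops the constraint that the particle occupying $x$ does not coalesce again on $[\sigma,t]$, so even with the \emph{true} values of $\P(u,w\in\xi_\sigma)$ the contribution of small $\sigma$ (say $\sigma\in[0,1]$, where $\sum_{u\sim w}r_{u,w}\P(u,w\in\xi_\sigma)$ is of order one and $\sum_w p_{t-\sigma}(w,x)=1$) is already $\Theta(1)$, so no bootstrap from this inequality can produce decay in $t$; reinstating the no-further-coalescence factor is exactly the coalescence-rate lower bound you were trying to avoid. Moreover the intermediate target $\P(u,w\in\xi_\sigma)\lesssim P_\sigma(u)P_\sigma(w)M_\sigma/\sigma$ is false already on $\ZZ^d$, $d\ge3$: there $\P(0,e_1\in\xi_\sigma)\asymp\sigma^{-2}$ (from $-\frac{\mathrm{d}}{\mathrm{d}t}P_t\asymp t^{-2}$), while your right-hand side is $\asymp\sigma^{-3}$. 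For contrast, the paper proves both halves with only first-moment (occupation-time) tools and no pair-correlation or meeting-probability lower bounds: the upper bound runs a multi-round ``sacrifice'' scheme on a Poissonized initial configuration, in which during a round of length $\asymp 2^iM_T$ each tagged surviving particle is removed with probability at least $3/4$ by a first-moment estimate on its co-location time with the fresher, denser class (Lemma \ref{ldierate}); the lower bound starts the CRW from a Bernoulli$(m_t/8t)$ initial condition, uses monotonicity in the initial configuration and reversibility to reduce to a single particle avoiding a sparse cloud, and again a first-moment collision estimate. You would need a genuinely different mechanism of this kind to close either direction.
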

\begin{corollary}
	\label{cor:transitivecheapbounds}
	There exists an absolute constant $C_0$ such that for every transitive Markov chain,
	\begin{equation}
		\label{e:trancheapbounds}
		\frac{1}{C_0} \frac{\int_0^t p_s(o,o)\mathrm{d}s}{t} \leq P_t\leq C_0\frac{\int_0^t p_s(o,o)\mathrm{d}s}{t}, \qquad \forall t>0.
	\end{equation}
\end{corollary}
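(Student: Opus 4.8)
The plan is to prove the two inequalities by separate arguments, working throughout with the dual voter cluster $\zeta^x_t$ of $x$, so that $P_t(x)=\P(\zeta^x_t\neq\emptyset)$ by \eqref{duality} and, by mass conservation and symmetry of $\r$, $\E|\zeta^x_t|=\sum_y p_t(y,x)=1$; in the finite case one may additionally use $P_t=\E[N_t^{-1}]$ (Lemma~\ref{P_t=E(N_tinverse)}) and Corollary~\ref{c:Ntnt1}, and the general case reduces to finite subchains via the monotone coupling of the graphical representation. For the \textbf{lower bound} $P_t(x)\ge m_t/(C_0t)$, Cauchy--Schwarz gives $P_t(x)=\P(|\zeta^x_t|\ge1)\ge(\E|\zeta^x_t|)^2/\E[|\zeta^x_t|^2]=1/\E[|\zeta^x_t|^2]$, so it suffices to prove $\E[|\zeta^x_t|^2]\le C_0t/m_t$. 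Expanding over pairs and passing to the dual picture, $\E[|\zeta^x_t|^2]=\sum_{y,z}\P(\widehat W^y(t)=x=\widehat W^z(t))$ where $\widehat W^y,\widehat W^z$ are walkers issued from $y,z$ coalescing upon meeting; such a pair contributes only after it has met, and if it meets at time $s$ at a site $w$ it then contributes $p_{t-s}(w,x)$. Bounding $p_{t-s}(w,x)\le\tfrac12(p_{t-s}(w,w)+p_{t-s}(x,x))$ via \eqref{maxxyp} and reversing time once more, the sum collapses to a truncated‑Green's‑function integral against the meeting density, and the bound reduces to the elementary fact that two walkers conditioned to meet within time $t$ re‑collide on average only $\asymp\int_0^t p_{\Theta(s)}(\cdot,\cdot)\,\mathrm{d}s\asymp m_t$ times — which is exactly Lemma~\ref{meetprob} together with the comparison estimates for $\int_0^t p_s$ (Lemma~\ref{l:boundH}). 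In averaged form this is the statement $\E N_t=|V|\,\P_{\pi,\pi}(\RM\le t)\lesssim t/m_t$, and $P_t\ge1/\E N_t$ then follows from $P_t=\E[N_t^{-1}]$ and Jensen.

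For the \textbf{upper bound} $P_t(x)\le C_0M_t/t$, which is the harder half and where we depart from \cite{bramson1980asymptotics} (replacing their partition of $\ZZ^d$ into spatial boxes by a renewal in time), note $P_t(x)=1/\E[|\zeta^x_t|\mid\zeta^x_t\neq\emptyset]$, so it suffices to show $\E[|\zeta^x_t|\mid\zeta^x_t\neq\emptyset]\ge t/(C_0M_t)$. I would deduce this from two inputs: (i) a second‑moment lower bound $\E[|\zeta^x_t|^2]\ge c\,t/M_t$, and (ii) a concentration estimate $\E[|\zeta^x_t|^2]\,P_t(x)\le C$ (equivalently $\E[|\zeta^x_t|^2\mid\neq\emptyset]\le C\,\E[|\zeta^x_t|\mid\neq\emptyset]^2$); combined with $\E[|\zeta^x_t|\mid\neq\emptyset]=1/P_t(x)$ these yield $P_t(x)\le(C/c)M_t/t$. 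For (i), condition on one dual walker ending at $x$: then $\zeta^x_t$ contains every dual walker coalescing with this "spine", and the expected number of the $|V|$ dual walkers meeting the spine's space–time trace is of order $t\big/\!\int_0^t p_{\Theta(s)}(\cdot,\cdot)\,\mathrm{d}s\asymp t/M_t$; summing over the choice of terminal walker and using $\sum_{x_0}p_t(x,x_0)=1$ converts this to $\E[|\zeta^x_t|^2]\ge c\,t/M_t$. For (ii) I would run the spine construction as a restart argument: cut $[0,t]$ into $\asymp t/M_t$ windows on each of which, conditionally on the past and on the spine surviving, at least one fresh dual walker is absorbed with probability bounded below by a universal constant, so that conditionally on survival $|\zeta^x_t|$ stochastically dominates a sum of $\asymp t/M_t$ nearly independent indicators and is concentrated about its (order $t/M_t$) mean.

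The crux — and the step I expect to be the main obstacle — is the per‑window absorption estimate underlying (ii): for a general, possibly non‑transitive chain, conditioning on survival of a single opinion biases the law of the spine's position after each window in an uncontrolled way, and one must still produce a lower bound on the within‑window absorption probability that is uniform in the past and in the chain. This is precisely where a window's worth of expected self‑intersection, which one can only control from above by the supremum quantity (hence $M_t$ rather than $m_t$), is turned into an honest collision by a second‑moment (Paley--Zygmund) argument inside the window; the rest is bookkeeping with the identities above and the heat‑kernel comparisons \eqref{maxxyp}, Lemma~\ref{meetprob}, Lemma~\ref{l:boundH}. Finally, Corollary~\ref{cor:transitivecheapbounds} is immediate: in a transitive chain $p_s(x,x)$ does not depend on $x$, so $M_t=m_t=\int_0^t p_s(o,o)\,\mathrm{d}s$ and \eqref{e:cheapbounds} specializes to \eqref{e:trancheapbounds}.
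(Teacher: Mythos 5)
Your final step is the same as the paper's: Corollary~\ref{cor:transitivecheapbounds} is obtained by observing that transitivity forces $M_t=m_t=\int_0^t p_s(o,o)\,\mathrm{d}s$ and specializing \eqref{e:cheapbounds}. Everything else in your proposal is an attempt to re-prove Theorem~\ref{t:upperbound} itself by a route quite different from the paper's (voter-cluster moments, Cauchy--Schwarz/Paley--Zygmund, and a spine construction, instead of the paper's modified Poisson model with dyadic classes and rounds for the upper bound, and sparse Bernoulli initial data plus time reversal for the lower bound), so it has to be judged as a proof of the theorem, and there it has a genuine gap. The decisive one is your step (ii): the conditional concentration estimate $\E\bigl[|\zeta^x_t|^2\bigr]\,P_t(x)\le C$ with a constant uniform over all chains, all $x$ and all $t$. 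This is a reverse-Cauchy--Schwarz statement for the cluster size conditioned on survival, i.e. exactly the kind of Sawyer-type second-moment control that the paper only obtains much later, under transience-like hypotheses and via the full moment machinery of Sections 4--6; it is not available "for free" at the level of Theorem~\ref{t:upperbound}. Your proposed repair --- cutting $[0,t]$ into windows of length $\asymp M_t$ and arguing that, conditionally on the past and on survival, a fresh dual walker is absorbed in each window with probability bounded below --- fails precisely where you say it does: conditioning on survival of the opinion biases the law of the spine's position entering each window, and no uniform per-window absorption bound is justified (nor is the claimed near-independence of the window indicators). The paper's proof of the upper bound deliberately avoids any such conditional concentration: in the modified Poisson model the only probabilistic input is a first-moment over conditional-first-moment ratio (Lemma~\ref{ldierate}), giving geometric decay of the surviving density across rounds, with no conditioning on survival of a tagged opinion.

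Two smaller points. In the lower bound, the reduction of $\E\bigl[|\zeta^x_t|^2\bigr]\le C t/m_t$ to "Lemma~\ref{meetprob} together with Lemma~\ref{l:boundH}" does not go through as stated: Lemma~\ref{l:boundH} carries expander-type hypotheses ($\kappa(\G)>\kappa_0$, bounded degree) and cannot be invoked for an arbitrary chain, and Lemma~\ref{meetprob} bounds $\P_{x,y}(T<\RM<t)$, not the first-meeting density integrated against the endpoint constraint $p_{t-s}(w,x)$; handling that constraint (even after symmetrizing with \eqref{maxxyp}) is where the actual work lies. The paper sidesteps it by reversing time at the outset, so that "ends at $x$" becomes "starts at $x$" and only an unconstrained non-collision probability is needed, estimated by the same first/second-moment ratio you have in mind --- so your lower bound is repairable, but not by the citations you give. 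Finally, your appeal to $P_t=\E[N_t^{-1}]$ and "reduction to finite subchains" needs care for infinite chains and in any case only controls the averaged density, not $P_t(x)$ for each $x$; this is harmless for the transitive corollary but not for the theorem you set out to prove.
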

\begin{remark}
	\label{r:alternative interpretation}
	We comment that the ratio $\frac{\int_0^t p_s(o,o)\mathrm{d}s}{t}$ has two neat probabilistic interpretations:
	\begin{itemize}
		\item It is the probability that a random walk is at the origin at time $T \sim \mathrm{Unif}[0,t]$.
		\item
		For a transitive chain, it can be shown that up to a universal constant (independent of the chain and $t$) its inverse is proportional to the expected number of vertices visited by time $t$. 
	\end{itemize}
\end{remark}
\begin{proof}[Proof of the second item in Remark \ref{r:alternative interpretation}]
We assume that $\sum_y r_{x,y}=1$.   Using a standard coupling between the continuous-time Markov chain and the corresponding discrete-time Markov chain, it is easy to show that $\sum_{i=0}^{t}P^i(o,o) \asymp \int_0^t p_s(o,o)\mathrm{d}s  $, where $P^t$ is the transition probability for the discrete time Markov chain; and that the expected number of vertices visited by time $t$ for the discrete- and continuous-time chains are also comparable up to a universal constant. We omit the details.

Hence we may work in discrete time.
Let $J_i$ be the indicator that the vertex visited at time $i$ is visited at least $16\sum_{s=0}^{t}P^s(o,o) $ times during $\{i,i+1,\ldots,t \}$. The probability of this event is at most $1/16$ by Markov's inequality. Hence with probability at least $3/4$ there are at most $(t+1)/4$ times such that the event $J_i$ occurs. On this event the number of vertices visited by time $t$ is at least  $\frac{t+1}{64 \sum_{i=0}^{t}P^i(o,o) }$ deterministically.

We now prove the upper bound on $\E |R_t|$, where $R_{t}$ is the set of states visited by  the chain by time $t$. Let $N_x(s)$ be the amount of time spent at state $x$ by time $s$. Then
\[\P(x \in R_t)=\mathbb{P}(|N_{x}(t)| \ge 1) \le \frac{\E |N_{x}(2t)|}{\E(|N_{x}(2t)| \mid  |N_{x}(t)| \ge 1)} \le \frac{\E |N_{x}(2t)|}{\sum_{s=0}^{t}P^s(o,o)}.  \]
Summing over $x$ yields $\E |R_t| \le 2t/\sum_{s=0}^{t}P^s(o,o)$ as desired.
\end{proof}

\medskip

We note that the assertion of Corollary \ref{cor:transitivecheapbounds} is valid for many other graphs. In some cases, the constant $C_0$ has to depend on the graph or on the family of considered graphs. For instance, with some effort one can prove this is the case for a roughly transitive graph $\G=(V,E)$ (meaning that for some $K,K'>0$, for all $x,y \in V$, the rooted graphs $(V,E,x)$ and $(V,E,y)$ are $(K,K')$-quasi isometric \footnote{Two rooted graphs $\G=(V,E,o)$ and $\G'=(V',E',o')$ are said to be $(K,K')$- quasi isometric if there exists a map $\phi$ from $V$ to $V'$ s.t. $\phi(o)=o'$ and $K^{-1}\mbox{dist}(x,y)-K'\leq \mbox{dist}(\phi(x),\phi(y))\leq K \mbox{dist}(x,y)+K'$ for all $x,y\in V$.}) of bounded degree. 

\begin{proof}[Proof of the upper bound in \eqref{e:cheapbounds}]
We define a modified Poisson model in the following way. We fix a time $T>0$. Recall the definition of $M_t$ in Equation \eqref{def_mt}. Define $\lambda=\log (T/M_T)+2$. At the beginning, we put $\mathrm{Poisson}(\lambda)$ particles independently on each site. We assign to each particle $x$ a uniform random variable $U_x\sim \text{Unif} [0,1]$ independently. We partition the particles into disjoint sets $A_1, A_2, \ldots, A_i, \ldots$ by defining $A_i=\{x:U_x\in (2^{-i},2^{-i+1}]\}$. Define $m\in \ZZ_{\geq 0}$ to be such that $2^{m-1}<\lambda/4 \leq 2^m$. Define $t_0=t_1=\cdots=t_m=0$. For $i\in \ZZ_+$, define $t_{i+m}=2^{i+2}M_T$ and $S_i=\sum_{j=0}^it_j$. For the convenience of notations, we set $S_0=0$. For $i\geq m+1$, we define Round $i$ to be the time period $(S_{i-1},S_i]$. And we let Round 1 to Round $m$ all happen at time $0$. In the modified Poisson model, each particle performs an independent SRW as in the original model but we only allow a specific type of coalescence in each round. Namely, in Round $i$, for particles $x\in A_{j}$ and $y\in A_{i}$ with $j<i$, if they meet, the particle $x$ is removed. In other words the particle $x$ `coalesces into' $y$. No other type of coalescence is allowed in Round $i$. 
	
As above, we use $P_t(x)$ to denote the density of surviving particles at site $x$ at time $t$, i.e., $\P(x \mbox{ is occupied at time }t)$ in the CRW model. And we define $\hat P_t(x)$ to be the expected number of surviving particles at site $x$ at time $t$ for the modified Poisson model. We claim that
\begin{align*}
		P_t(x)\leq \hat P_t(x)+e^{-\lambda}.
\end{align*}
To prove it, we define a second modified Poisson model to be the same as the modified Poisson model except we allow all coalescences at any time. Then the second modified Poisson model is the same as putting $\mathrm{Ber}(1-e^{-\lambda})$ particles per site independently at the beginning ($t=0$) and run CRW. Let $R_t$ denote the set of surviving particles at time $t$ in the modified Poisson model and let $R_t'$ be the second modified Poisson model. Then we can couple the two model so that $R_t'\subset R_t$ for any $t$ almost surely. See Proposition 3.2 in \cite{oliveira2012coalescence} for a similar treatment. This readily implies the inequality because for the original CRW model. Indeed, we can color each particle red independently with probability $e^{-\lambda}$. If we do not allow red particles to coalesce, then the density of particles is the same as that for the second modified Poisson model plus $e^{-\lambda}$, which is bounded by that for the modified Poisson model plus $e^{-\lambda}$.
	
Define $\hat P_t(v,j)$ to be the expected number of particles from $A_j$ at site $v$ at time $t$. Then for any $i>0$, we have 
	\begin{align}
		\hat P_{S_{i+m}}(v)&= \E(\text{number of particles at }v \text{ from } \cup_{j=i+m}^\infty A_j \text{ at time } S_{i+m} )+\sum_{j=1}^{i+m-1}\hat P_{S_{i+m}}(v,j)\\
		&=\lambda 2^{-(i+m-1)}+\sum_{j=1}^{i+m-1}\hat P_{S_{i+m}}(v,j),\label{eq:pcontrol}
	\end{align}
	since during the time period $[0,S_{i+m}]$, particles from $\cup_{j=i+m}^\infty A_j$ are not removed, and at time $S_{i+m}$ the number of particles at $v$ from $\cup_{j=i+m}^\infty A_j$ follows Poisson$(\lambda 2^{-(i+m-1)})$.
	
	For $j\leq m$, with $m$ defined as above, we consider the following lemma.
	\begin{lemma}
		For any $v\in V$, the number of particles at site $v$ from $A_j$ that survives at time $0$ has the following distribution: with probability $\exp(-\lambda2^{-j}+\lambda2^{-m})$ it follows Poisson$(\lambda 2^{-j})$ and with probability $1-\exp(-\lambda2^{-j}+\lambda2^{-m})$ it is $0$.
	\end{lemma}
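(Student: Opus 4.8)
The plan is to track, site by site, how the instantaneous coalescences performed in Rounds $1$ through $m$ --- all of which occur at time $0$, when no particle has moved --- act on the particles of each class. First I would record the initial configuration at a fixed vertex $v$: by Poisson thinning, since $v$ receives $\mathrm{Poisson}(\lambda)$ particles and each particle is independently assigned to class $A_i$ with probability $\P(U_x\in(2^{-i},2^{-i+1}])=2^{-i}$, the counts $K_i:=\#\{\text{initial }A_i\text{-particles at }v\}$ are independent with $K_i\sim\mathrm{Poisson}(\lambda 2^{-i})$. Note also that $A_0=\eset$ since $U_x\le 1$, so only the classes $A_1,A_2,\dots$ are present.

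Next I would work out the deterministic effect of the rounds on a fixed class. Fix $j$ with $1\le j\le m$. In Round $i$ (for $1\le i\le m$), a class-$j'$ particle at $v$ with $j'<i$ is removed exactly when there is at least one class-$i$ particle at $v$; moreover removal in any Round $i'\le i$ affects only classes strictly below $i'$, hence never class $i$ itself. Since there is no motion at time $0$, the $K_i$ class-$i$ particles at $v$ are still present at the start of Round $i$, so Round $i$ deletes every class-$j$ particle at $v$ whenever $i>j$ and $K_i>0$. Running through Rounds $j+1,\dots,m$, the class-$j$ particles at $v$ survive all of Rounds $1,\dots,m$ if and only if $K_{j+1}=K_{j+2}=\cdots=K_m=0$, and in that case all $K_j$ of them survive; hence the number of class-$j$ particles at $v$ alive at time $0$ equals $K_j\,\1[K_{j+1}=\cdots=K_m=0]$.

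Finally I would read off the distribution. Since $K_j$ is independent of $(K_{j+1},\dots,K_m)$ and $\sum_{i=j+1}^{m}2^{-i}=2^{-j}-2^{-m}$, the event $\{K_{j+1}=\cdots=K_m=0\}$ has probability $\prod_{i=j+1}^{m}e^{-\lambda 2^{-i}}=\exp(-\lambda 2^{-j}+\lambda 2^{-m})$ (an empty product equal to $1$ when $j=m$); on this event the surviving count is $K_j\sim\mathrm{Poisson}(\lambda 2^{-j})$, and on its complement it is $0$, which is exactly the claimed mixture. The one step needing care, which I would spell out, is the bookkeeping in the middle paragraph: a class-$i$ particle is never deleted before Round $i$, so the index $I^\ast:=\max\{i\le m:K_i>0\}$ read off from the initial configuration correctly identifies the unique class (if any) surviving at $v$ among $A_1,\dots,A_m$; the rest is the one-line thinning computation above.
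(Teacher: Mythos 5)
Your proof is correct and follows essentially the same route as the paper: the paper's one-line argument is exactly that a class-$j$ particle at $v$ survives time $0$ iff no particle from $\cup_{\ell=j+1}^m A_\ell$ starts at $v$, followed by the Poisson-thinning computation you spell out. Your extra bookkeeping (class-$i$ particles are never removed before Round $i$, so the surviving count is $K_j\,\1[K_{j+1}=\cdots=K_m=0]$ with the $K_i$ independent Poissons) just makes explicit what the paper calls a straightforward computation.
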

	\begin{proof}
		The event that $x\in A_j$ survives at time $0$ is equivalent to that no particle from $\cup_{\ell=j+1}^m A_{\ell}$ starts at the same position as $x$. By a straightforward computation, we have the lemma.
	\end{proof}
	
	\begin{lemma}\label{ldierate}
		For $i>0$, let $\mathcal{F}_i$ be the $\sigma$-algebra of the trajectories of the particles in $\cup_{j=1}^{i} A_j$ up to time $S_i$. For $\ell>m$ such that $S_\ell\leq T$ and $j<\ell$, for any particle $x$ from $A_j$ that survives to time $S_{\ell-1}$, we have that almost surely,
		\begin{align*}
			\P(x \text{ survives to time } S_{\ell}|\mathcal{F}_{\ell-1}) \leq \frac{1}{4}.
		\end{align*}
	\end{lemma}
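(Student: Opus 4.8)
The plan is to show that a surviving particle $x \in A_j$ has at least a constant probability of meeting one of the particles of $A_\ell$ during Round $\ell = (S_{\ell-1}, S_\ell]$, so that its conditional survival probability is at most $1/4$. Fix such an $x$ and condition on $\mathcal{F}_{\ell-1}$. First I would record the two facts about the particles of $A_\ell$ at time $S_{\ell-1}$ that matter: they are not yet affected by any coalescence rule (only coalescences into particles of $A_\ell$, and only from lower-index sets, are permitted in Rounds $m+1,\ldots,\ell$, and $A_\ell$ has the highest index among those involved so far), hence at time $S_{\ell-1}$ the particles of $A_\ell$ still sit, independently of $\mathcal{F}_{\ell-1}$, as independent Poisson$(\lambda 2^{-\ell})$ clouds on each vertex — more precisely, by the first of the two lemmas above (for $\ell\le m$ trivially, and for $\ell>m$ since $t_0=\cdots=t_m=0$ and no removals among $A_\ell$ occur), the count at each vertex is either an independent Poisson$(\lambda 2^{-\ell})$ or $0$; in any case the expected number of $A_\ell$-particles per vertex is of order $\lambda 2^{-\ell}$, and their future trajectories are independent of the trajectory of $x$.

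Next I would run $x$ and a single tagged $A_\ell$-particle (started from the random $A_\ell$-configuration) forward for the duration $t_\ell = 2^{\ell-m+2} M_T$ of Round $\ell$ and lower bound the probability that they meet. The key computation is that for two independent walks $X, Y$ the expected local time of the difference at $0$ — equivalently $\int_0^{t_\ell} p_{2s}(z,z)\,\mathrm{d}s$ — is comparable to $m_{2t_\ell} \le \int_0^{t_\ell} p_{2s}(z,z)\,\mathrm{d}s \le M_{2t_\ell}$, while conditioned on meeting at all the expected local time accumulated afterwards is at most $M_{2t_\ell}$ (Markov-chain last-exit / strong Markov argument, as in the proof of the upper bound of Theorem \ref{t:upperbound}). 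Hence a single $A_\ell$-particle meets $x$ during Round $\ell$ with probability at least of order $m_{2t_\ell}/M_{2t_\ell}$ times the probability its starting vertex carries it — this will be too weak by itself, so instead I would sum the collision indicator over all $A_\ell$-particles: the expected number of $A_\ell$-particles meeting $x$ during Round $\ell$ is at least $c\,\lambda 2^{-\ell} \cdot t_\ell / M_{2t_\ell}$ by linearity of expectation (each vertex visited by $x$ contributes its $\approx \lambda 2^{-\ell}$ expected $A_\ell$-particles, each of which subsequently meets $x$ with the appropriate local-time ratio). Using $t_\ell = 2^{\ell-m+2}M_T \asymp 2^{\ell-m} M_T \ge 2^{\ell-m} M_{2t_\ell}/C$ for an absolute constant (here one uses $2t_\ell \le 2T$ together with Lemma \ref{l:boundH}-type doubling bounds $M_{2t}\lesssim M_t$ in the regime $t\le T$, or more simply $M_{2t_\ell}\le M_{2T}\lesssim M_T$ when the relevant comparison holds, e.g. via \eqref{poincare}), this expectation is at least $c' \lambda 2^{-m} \asymp c' \lambda / \lambda = c'$, a positive absolute constant times the ratio $t_\ell/(2^{\ell-m}M_{2t_\ell})$, which is bounded below by construction of $t_\ell$.

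Finally, by the correlation inequality of Arratia (nonpositive correlations between occupation-type events, cited before Section \ref{ss;mainresults}) or more directly by a second-moment/independence argument exploiting that distinct $A_\ell$-particles are independent given $\mathcal{F}_{\ell-1}$, a constant lower bound on the expected number of meetings upgrades to a constant lower bound on the probability of at least one meeting; choosing the absolute constant $\lambda = \log(T/M_T)+2$ and the normalization $t_{i+m}=2^{i+2}M_T$ precisely so that this probability exceeds $3/4$ gives $\P(x \text{ survives to } S_\ell \mid \mathcal{F}_{\ell-1}) \le \tfrac14$. The main obstacle I anticipate is the second step: controlling the meeting probability of the \emph{pair} $(x, A_\ell\text{-particle})$ cleanly when $x$'s own past trajectory (through $S_{\ell-1}$) is arbitrary and only the \emph{future} increments are fresh — one must phrase the local-time argument conditionally on $x$'s position at time $S_{\ell-1}$ and use that the $A_\ell$-cloud is stationary-like (Poisson of constant-per-vertex intensity) and independent, so that the meeting probability does not degrade; getting the constants to line up with the specific choice of $\lambda$ and $t_i$ made in the construction is where the real bookkeeping lies.
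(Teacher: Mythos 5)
There is a genuine gap at the last step of your argument. Your first two steps (the Poisson structure of the $A_\ell$-cloud being independent of $\mathcal{F}_{\ell-1}$, and the computation that the expected number of $A_\ell$-particles meeting $x$ during Round $\ell$ is at least $c\,\lambda 2^{-\ell}t_\ell/M_{2t_\ell}\gtrsim 1$) are essentially the paper's numerator/denominator estimates recast per particle, and they can be made to work with the same care the paper takes (first-collision conditioning, Lemma \ref{lstochas}, and reversibility to get the $\int p_{2u}(y,y)\,\mathrm{d}u\le M_{2t_\ell}/2$ bound, with $x$'s post-collision increments fresh because its trajectory is a random walk path chosen in advance). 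But the upgrade from ``expected number of meetings $\ge c$'' to ``probability of at least one meeting $\ge 3/4$'' does not follow from what you establish. Given $x$'s trajectory the meetings of distinct $A_\ell$-particles are indeed conditionally independent, so $\P(\text{no meeting}\mid x)=e^{-\mu(x)}$ for a trajectory-dependent intensity $\mu(x)$; however Jensen gives $\E e^{-\mu}\ge e^{-\E\mu}$, i.e.\ the wrong direction, so a lower bound on $\E\mu$ (equivalently on the expected number of meetings) alone yields no upper bound on the survival probability. To close this you would need either a pointwise bound $\mu(x)\ge c$ valid for (almost) every trajectory of $x$, or a second-moment bound on $\mu$ for Paley--Zygmund, neither of which you supply; and Arratia's negative-correlation inequality concerns occupation events of CRW, not this quantity, so it is not applicable here.

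The paper sidesteps this entirely: it applies the ratio identity $\P(\N_\ell(x)>0)=\E\N_\ell(x)/\E(\N_\ell(x)\mid \N_\ell(x)>0)$ to the \emph{total} co-location time $\N_\ell(x)$, whose unconditional expectation is exactly $t_\ell\lambda 2^{-\ell}$, and bounds the conditional expectation uniformly by $t_\ell\lambda 2^{-\ell}+M_{2t_\ell}/2$ by conditioning on the first collision time and place and using the Poisson domination of Lemma \ref{lstochas}. With $\lambda\ge 2^{m+1}$, $t_\ell=2^{\ell+2-m}M_T$ and $t_\ell\le T$ this gives the meeting probability $\ge 8M_T/(9M_T)\ge 3/4$ directly, with no concentration or second-moment input needed. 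If you want to salvage your route, note that $\E[\N_\ell(x)\mid x\text{'s trajectory}]=t_\ell\lambda2^{-\ell}$ holds for every trajectory, so the natural fix is to run the ratio argument (not a counting argument) and bound the conditional expected co-location given a meeting uniformly, which is exactly the paper's proof; the per-particle counting plus an appeal to independence does not by itself deliver the constant $1/4$.
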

	The lemma will be proven later. With this lemma, we have for any $i>0$ with $S_{i+m}\leq T$,
	\begin{align*}
		\sum_{j=1}^{m}\hat P_{S_{i+m}}(v,j)
		\leq & \left(\frac{1}{4}\right)^{i} \sum_{j=1}^{m} \lambda 2^{-j} \exp \left(-\lambda 2^{-j}+\lambda 2^{-m}\right)\\
		= & \left(\frac{1}{4}\right)^{i} \lambda\sum_{j=1}^{m} \exp(-\log(2)j -\lambda 2^{-j}+\lambda 2^{-m})\\
		(\text{using } \lambda> 2^{m+1}) \leq & \left(\frac{1}{4}\right)^{i} \lambda\sum_{j=1}^{m} \exp(-\log(2)j -2^{m-j+1}+2)\\
		\leq & \left(\frac{1}{4}\right)^{i} \lambda\sum_{j=1}^{m} \exp(\log(2)(j-2m))
		\leq 2\lambda 2^{-m}\left(\frac{1}{4}\right)^{i},
	\end{align*}
	as well as
	\begin{align*}
		\sum_{j=m+1}^{i+m-1}\hat P_{S_{i+m}}(v,j)\leq \sum_{j=m+1}^{i+m-1}\left(\frac{1}{4}\right)^{i+m-j}\lambda2^{-j}\leq \lambda 2^{-m}\left(\frac{1}{2}\right)^i.
	\end{align*}
	Combine the two estimates with \eqref{eq:pcontrol}, we have that
	\begin{align*}
		\hat P_{S_{i+m}}(v)\leq 5 \lambda 2^{-m} \left(\frac{1}{2}\right)^i\leq 20 \left(\frac{1}{2}\right)^i.
	\end{align*}
	Note that the same bound holds for $i=0$ trivially. Now, take $i=\lceil \log_2(\frac{T}{M_T}+8) \rceil -4$. Then $S_{i+m}<T\leq S_{i+m+1} $. Therefore,
	\begin{align*}
		\hat P_T(v)\leq \hat P_{S_{i+m}}(v) \leq 20 \left(\frac{1}{2}\right)^{i} \leq \frac{160M_T}{T}.
	\end{align*}
	So
	\begin{align*}
		P_T(v)\leq \frac{160M_T}{T}+e^{-\lambda}\leq \frac{CM_T}{T}.
	\end{align*}
	This implies the upper bound (since $v$ is any vertex in $V$ and $T$ is any positive real number). Now we prove Lemma \ref{ldierate}.
	
	Consider a particle $x\in A_j$. For $\ell>m$ such that $S_\ell\leq T$ and $j<\ell$, we want to study the probability that $x$ survives during the time interval $[S_{\ell-1}, S_{\ell}]$. We let $x$ continue to perform a random walk with rate $\r$ independently after colliding with particles in $A_{\ell}$ if this happens and define $\N_{\ell}(x)$ to be the (Lebesgue measure of the) time units that particles from $A_\ell$ spent together with $x$ during $[S_{\ell-1}, S_\ell]$ (to put it differently, we pretend that the particle $x$ picks its trajectory in advance, and define $\N_{\ell}(x)$ w.r.t.\ this trajectory, without terminating it when $x$ should be removed). As $x$ plays no further rule in the model after a collision, the event $\N_{\ell}(x)>0$ is well defined in the probability space we work with. We have the following
	\begin{align*}
		\mathbb{P}(\N_\ell(x)>0)&=\frac{\E \N_{\ell}(x)}{\E(\N_\ell(x)|\N_{\ell}(x)>0)}\\
		&= \frac{(S_{\ell}-S_{\ell-1})\lambda2^{-\ell}}{\E(\N_\ell(x)|\N_\ell(x)>0)}.
	\end{align*}
	Here the last equality follows from the observation that on each site, the number of particles from $A_\ell$ follows Poisson$(\lambda 2^{-\ell})$ distribution independently. See Fact 2.1 in \cite{MR4108126} for more explanations. Note that the denominator
	\begin{align*}
		\E(\N_\ell(x)|\N_\ell(x)>0)= \int_{S_{\ell-1}}^{S_{\ell}}\E(N_\ell(x)|\text{ the first collision is at time }s)\f(s)\mathrm{d} s,
	\end{align*}
	where $\f(s)$ is the probability density of the first collision time during $[S_{\ell-1},S_\ell]$ given such a collision happens.
	We use the following easy fact to control the expectation in the r.h.s..
	\begin{lemma}\label{lstochas}
		Let $X$ be a random variable whose distribution is given by $\mathrm{Poisson}(\lambda)$ conditioned on being positive, then $X$ is stochastically dominated by $1+\mathrm{Poisson}(\lambda)$.
	\end{lemma}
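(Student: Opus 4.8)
The statement to prove is: if $X \sim \mathrm{Poisson}(\lambda)$ conditioned on $X \ge 1$, then $X$ is stochastically dominated by $1 + \mathrm{Poisson}(\lambda)$. The plan is to establish this by a direct comparison of tail probabilities, i.e., to show that $\P(X \ge k) \le \P(1 + Y \ge k)$ for every integer $k \ge 1$, where $Y \sim \mathrm{Poisson}(\lambda)$. Equivalently, writing $p_j := e^{-\lambda}\lambda^j/j!$ for the Poisson$(\lambda)$ point masses, one must show
\[
\frac{\sum_{j \ge k} p_j}{1 - p_0} \le \sum_{j \ge k-1} p_j \qquad \text{for all } k \ge 1.
\]
For $k = 1$ both sides equal $1$ (the left side is $(1-p_0)/(1-p_0)$, the right side is $\sum_{j \ge 0} p_j$), so the inequality holds with equality; the content is in $k \ge 2$.

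For $k \ge 2$, since $1 - p_0 < 1$ it suffices to prove the stronger pointwise statement $\sum_{j \ge k} p_j \le \sum_{j \ge k-1} p_j$, which is trivially true because the left sum is a sub-sum of the right sum (all terms $p_j$ are nonnegative). Wait — that is too crude only if $1 - p_0$ were helping; in fact it is, so the argument is genuinely immediate: for $k \ge 2$,
\[
\P(X \ge k) = \frac{\sum_{j \ge k} p_j}{1 - p_0} \le \frac{\sum_{j \ge k-1} p_j}{1-p_0}\cdot(1-p_0) \cdot \frac{1}{1-p_0}
\]
is not quite the clean chain; the cleanest route is: $\P(X \ge k) = \frac{1}{1-p_0}\sum_{j\ge k}p_j \le \frac{1}{1-p_0}\sum_{j \ge k-1} p_j$ and separately $\P(1+Y \ge k) = \sum_{j \ge k-1} p_j$, so it remains only to check $\frac{1}{1-p_0}\sum_{j\ge k}p_j \le \sum_{j\ge k-1}p_j$. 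Multiplying out, this is $\sum_{j \ge k} p_j \le (1-p_0)\sum_{j \ge k-1}p_j = \sum_{j\ge k-1}p_j - p_0\sum_{j\ge k-1}p_j$, i.e. $p_{k-1} \ge p_0 \sum_{j \ge k-1} p_j$, which holds since $p_0 \sum_{j\ge k-1}p_j \le p_0 \cdot 1 \le p_0 \le \ldots$ — this requires $p_{k-1} \ge p_0 \cdot \P(Y \ge k-1)$, true because $p_{k-1} \ge p_0 p_{k-1} + p_0\sum_{j\ge k} p_j$ would need checking. The genuinely simplest correct argument: note $1 + \mathrm{Poisson}(\lambda)$ has the same distribution as $\mathrm{Poisson}(\lambda)$ conditioned to be at least $1$ \emph{only} in the sense of a size-bias-free shift, so instead I would use the explicit coupling below.

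\textbf{Coupling approach (the route I would actually write).} Construct $Y \sim \mathrm{Poisson}(\lambda)$ and, on the same space, set $X = Y$ if $Y \ge 1$ and resample $X$ from Poisson$(\lambda)$ conditioned on $\ge 1$ if $Y = 0$; then $X \sim \mathrm{Poisson}(\lambda \mid {\ge}1)$, and whenever $Y \ge 1$ we have $X = Y \le 1 + Y$, while whenever $Y = 0$ we only know $X \ge 1 = 1 + Y$, which is the wrong direction. So the naive coupling fails, confirming that the monotone coupling must be built via the quantile transform: let $U \sim \mathrm{Unif}[0,1]$ and define $X = F_X^{-1}(U)$, $Y = F_Y^{-1}(U)$ where $F_X, F_Y$ are the CDFs of $\mathrm{Poisson}(\lambda\mid{\ge}1)$ and $\mathrm{Poisson}(\lambda)$; then $X \le 1+Y$ a.s. iff $F_X(k) \ge F_{1+Y}(k)$ for all $k$, which is exactly the tail inequality above. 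Thus the whole lemma reduces to the elementary inequality $p_{k-1}(1 - p_0) \ge p_0 \sum_{j \ge k} p_j$ for $k \ge 1$. Since $\sum_{j \ge k} p_j \le 1 - p_0$, it is enough that $p_{k-1} \ge p_0$ — which is false for large $k$ — so one must instead use $\sum_{j\ge k}p_j = \sum_{j \ge k-1}p_j - p_{k-1}$ and rearrange to the clean target $p_{k-1} \ge p_0 \sum_{j \ge k-1} p_j$. This last inequality is what I would verify directly: $p_0 \sum_{j\ge k-1}p_j = p_0\,\P(Y \ge k-1)$, and one shows $\P(Y \ge k-1) \le p_{k-1}/p_0 = e^{\lambda}p_{k-1}$ by comparing ratios $p_{j}/p_{j-1} = \lambda/j$, i.e. the Poisson pmf decreases fast enough past its mode that the tail from $k-1$ on is at most $e^\lambda$ times the single term $p_{k-1}$ — equivalently $\sum_{j \ge k-1} p_j \le e^\lambda p_{k-1}$, which after cancelling $e^{-\lambda}$ is $\sum_{j\ge k-1}\lambda^j/j! \le \lambda^{k-1}/(k-1)!\cdot e^{\lambda}$, true since the left side is a sub-series of $e^\lambda \lambda^{k-1}/(k-1)!$ only when... it is in fact bounded by $\frac{\lambda^{k-1}}{(k-1)!}\sum_{i\ge 0}\frac{\lambda^i (k-1)!}{(k-1+i)!} \le \frac{\lambda^{k-1}}{(k-1)!}\sum_{i\ge 0}\frac{\lambda^i}{i!} = \frac{\lambda^{k-1}}{(k-1)!}e^\lambda$, using $(k-1+i)!/(k-1)! \ge i!$. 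That chain closes the proof.

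\textbf{Main obstacle.} There is no deep obstacle; the only subtlety is that the obvious "condition-and-resample" coupling goes the wrong way, so one is forced either to use the monotone (quantile) coupling and verify the CDF domination, or to argue purely at the level of tail sums. The one genuine computation is the elementary bound $\sum_{j \ge m}\lambda^j/j! \le e^\lambda \lambda^m/m!$, which follows from $(m+i)! \ge m!\, i!$; everything else is bookkeeping. I would present the tail-sum version since it is shortest and self-contained.
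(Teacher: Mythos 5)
Your final argument is correct, but it is a genuinely different route from the paper's. You verify the tail (CDF) domination directly: for $k\ge 2$ the inequality $\P(X\ge k)\le \P(1+Y\ge k)$ reduces, after clearing the normalization $1-p_0$, to $p_{k-1}\ge p_0\sum_{j\ge k-1}p_j$, i.e.\ $\P(Y\ge k-1)\le e^{\lambda}\P(Y=k-1)$, which you prove from the elementary bound $(k-1+i)!\ge (k-1)!\,i!$; together with the trivial case $k\le 1$ this closes the lemma. The paper instead gives a one-line probabilistic proof: realize $\mathrm{Poisson}(\lambda)$ as the number of points of a rate-$\lambda$ Poisson process on $[0,1]$; conditioning on positivity means there is a first point, and given that the first point is at $x$ the number of additional points is $\mathrm{Poisson}(\lambda(1-x))$, which is stochastically dominated by $\mathrm{Poisson}(\lambda)$, so the conditioned variable is dominated by $1+\mathrm{Poisson}(\lambda)$. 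The paper's argument is shorter, needs no computation, and mirrors the Poisson-thinning/restarting picture actually used in the surrounding proof of Lemma \ref{ldierate} (where one conditions on the time and place of the first collision); your argument is elementary and self-contained, and makes the domination quantitative at the level of point masses. If you write yours up, trim the abandoned attempts (the sub-sum remark, the failed resampling coupling, the "it is enough that $p_{k-1}\ge p_0$" detour): the clean proof is just the three displayed reductions ending in $\sum_{i\ge 0}\lambda^{k-1+i}/(k-1+i)!\le e^{\lambda}\lambda^{k-1}/(k-1)!$.
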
 
	\begin{proof}
	To see this, consider the number of points in $[0, 1]$ for a rate $\lambda$ Poisson process. Given that the first point is at some $x\in [0,1]$, the number of additional points has a Poisson$(\lambda(1-x))$ distribution, and is stochastically dominated by Poisson$(\lambda)$. See Section 3 of \cite{MR4108126} for a similar treatment.
	\end{proof}
	Fix the path that $x$ takes during $[S_{\ell-1},s]$ (for some $S_{\ell-1}\leq s\leq S_\ell$). Let $W_{v,\ell}(s)$ be the set of particles from $A_\ell$ at $v$ at time $s$. Consider the conditional law of $(|W_{v,\ell}|)_{v\in V}$ given that no particle from $A_\ell$ visits the space-time coordinates $\{(x(t),t): t\in [S_{\ell-1},s]\}$. Here $x(\cdot)$ is the trajectory of particle $x$. Then $|W_{v,\ell}|$ for each $v\in V$  is Poisson distributed with parameter less than or equal to $2^{-\ell}\lambda$, and all the $|W_{v,\ell}|$ are independent. This can be obtained by a Poisson thinning argument and Fact 2.1 in \cite{MR4108126}. Now, by Lemma \ref{lstochas}, given that the first collision occurs at time $s$, we have that the joint law of $(|W_{v,\ell}(s)|)_{v\in V}$ is stochastically dominated by $(Y_v)_{v\in V}$, where each $Y_v$ is independent with $\mathrm{Poisson}(\lambda2^{-\ell})+\1[v=x(s)]$ distribution. (To see this, consider the following two cases. The first is that $x$ jumps to $x(s)$ right on time $s$, then we apply Lemma $\ref{lstochas}$. The second case is that $x$ comes to $x(s)$ before time $s$ and stays there until at least $s$, then the number of particle from $A_\ell$ at $x(s)$ is $1$, which is also stochastically dominated by $Y$.) Define $\f(s,y)$ to be the joint probability density of the first collision time and place, given that the first collision happens during $[S_{\ell-1},S_{\ell}]$. Then we have
	\begin{align*}
		&\E(\N_{\ell}(x)|\N_{\ell}(x)>0)\\
		=& \sum_y\int_{S_{\ell-1}}^{S_{\ell}}\E(N_{\ell}(x)|\text{ the first collision is at time }s \text{ and at }y)\f(s,y)\mathrm{d} s\\
		\leq & \sum_y\int_{S_{\ell-1}}^{S_{\ell}} \left(t_\ell\lambda 2^{-\ell}+\sum_z\int_{0}^{t_{\ell}-(s-S_{\ell-1})} p_t(y,z)p_t(y,z)\mathrm{d} t \right)\f(s,y)\mathrm{d} s\\
		=& \sum_y\int_{S_{\ell-1}}^{S_{\ell}} \left(t_\ell\lambda 2^{-\ell}+\int_{0}^{t_{\ell}-(s-S_{\ell-1})} p_{2t}(y,y)\mathrm{d} t \right)\f(s,y)\mathrm{d} s\\
		\leq& \sum_y\int_{S_{\ell-1}}^{S_{\ell}} \left(t_\ell\lambda 2^{-\ell}+\sup_{z\in V}\int_{0}^{t_\ell} p_{2t}(z,z)\mathrm{d} t \right)\f(s,y)\mathrm{d} s\\
		= & t_\ell\lambda 2^{-\ell}+\frac{M_{2t_\ell}}{2},
	\end{align*}
	where we used the fact that $p_t(y,z)=p_t(z,y)$ to get the fourth line from the third line. Then 
	\begin{align*}
		\mathbb{P}(\N_\ell(x)>0)
		&= \frac{\E \N_\ell(x)}{\E(\N_\ell(x)|\N_\ell(x)>0)}\\
		&\geq \frac{ t_\ell\lambda2^{-\ell}}{t_\ell\lambda 2^{-\ell}+M_{2t_\ell}/2}\\
		\text{(using }\lambda \ge 2^{m+1}, t_{\ell}\leq T \text{ and }t_{\ell}=2^{\ell+2-m}M_T \text{)}  &\geq\frac{3}{4}.
	\end{align*}
	This completes the proof of Lemma \ref{ldierate} and hence the proof of the upper bound in \eqref{e:cheapbounds}. 
\end{proof}
\begin{proof}[Proof of the lower bound in \eqref{e:cheapbounds}]
	We consider a CRW model, where at the beginning ($t=0$), at each site independently we have one particle with probability $\frac{m_t}{8t}$. For each $t>0$ and site $x$,
	\begin{align*}
		P_t(x)=&\mathbb{P}(x \text{ is occupied at time } t)\\
		\geq &\mathbb{P}(x \text{ is occupied at time } t \text{ by a particle that hasn't collided with anyone})\\
		=&\mathbb{P}(\text{the particle from } x \text{ at time } 0 \text{ survives without colliding by time } t),
	\end{align*}
	where the last equality comes from the reversibility of $(V,\r)$. Then the above equals
	\begin{align*}
		\frac{m_t}{8t}\mathbb{P}(\text{the particle from } x \text{ has no collision by time } t|\text{there is a particle at $x$ at time } 0).
	\end{align*}
	To bound this conditional probability from below, we let the particles not coalesce with one another upon collisions, i.e., they continue to perform random walks independently. By a straightforward coupling argument, this cannot increase the probability that the particle from $x$ has not collided with another particle by time $t$. Let $\tilde{N}_t$ be the time units that other particles spend together with the particle from $x$ during $[0,t]$, given that there is a particle at $x$ at time $0$. Then we just need to lower bound $\P(\tilde{N}_t=0)$.
	We have
	\begin{align*}
		\mathbb{P}(\tilde{N}_t>0)\leq \frac{\E \tilde{N}_{2t}}{\E(\tilde{N}_{2t}|\tilde{N}_t>0)}.
	\end{align*}
	The numerator satisfies
	\begin{align*}
		\E \tilde{N}_{2t}\leq 
		\frac{m_t}{8t} 2t\leq \frac{m_{2t}}{4}.
	\end{align*}
	The denominator satisfies
	\begin{align*}
		\E(\tilde{N}_{2t}|\tilde{N}_t>0)\geq\int_0^{2t-s}p_{2u}(y,y) \mathrm{d}u \geq \int_0^t p_{2u}(y,y)\mathrm{d} u \geq \frac{m_{2t}}{2},
	\end{align*}
	where we assume the first collision is at time $s\in[0,t]$ at vertex $y$. This implies that
	\begin{align*}
		\mathbb{P}(\tilde{N}_t>0)\leq \frac{1}{2}.
	\end{align*}
	Thus we have
	\begin{align*}
		P_t(x)=\mathbb{P}(x \text{ is occupied at time } t)\geq \frac{m_t}{16t},
	\end{align*}
	which gives us the lower bound.
\end{proof}

\section{A general framework: moments, reversibility, and branching structures}
\label{s:framework}
In this section we set up a framework to analyze the decay of the density of particles.

\subsection{The decay of density and moments of $N_t$}  \label{ss:k-reverse}

Recall in Lemma \ref{P_t=E(N_tinverse)}
we established that $P_t=\E N_t^{-1}$, where $N_t$ is the number of particles that have coalesced with the particle starting from a uniform location $\U$ up to time $t$. We can rewrite this formula as
$$P_t=\E N_t^{-1}=(\E N_t)^{-1} \E\left(\left(\frac{N_t}{\E(N_t)}\right)^{-1}\right).$$

If we could determine 
each term in the above product
up to smaller order terms then we would
have a good control of $P_t$. 
Our general strategy is similar to that in \cite{bramson1980asymptotics}. A crucial difficulty we encounter is that we do not have Sawyer's Theorem (which is a crucial component of the analysis in \cite{bramson1980asymptotics}) at our disposal, which for $\ZZ^d$ approximates all moments of $N_t$. We will
estimate the $k$-th moment $\E N_t^k$ for any $k$ and use the method of moments to show that the distribution of $N_t/\E N_t$ is close to the Gamma$(2,2)$ distribution. We will need to take care of the behavior of $N_t/\E N_t$ near $0$ when computing the limit of $\E ((N_t/\E N_t)^{-1})$. We will show that under some conditions, $\E\left((N_t/\E N_t)^{-1}\right)$ will be close to 2.

For the time being, we concentrate on controlling $\E(N_t^k)$. 
Let the state space $V$ be finite, and be denoted as $V=[n]:=\{1,2,\ldots,n \}$.
It follows from the definition of $N_t$ and the graphical representation of the model, which is implicitly used in the second equality that
\begin{equation}\label{moment}
	\begin{split}
		\E(N_t^k)
		&=\E\left(\sum_{x \in [n]} 
		\1[\mbox{the particles starting at }
		x \mbox{ and }\U \mbox{ have coalesced by time } t]   \right)^k\\
		&=\frac{1}{n}\sum_{x_1, \ldots, x_{k+1} \in V}
		\E\left(\1[X_i(0)=x_i, \forall 1\leq i\leq k+1]\1[\C(X_1,\ldots,X_{k+1})\leq t]\right)\\ &=n^{k}\P_{\pi^{\otimes k+1}}(\C(X_1, \ldots, X_{k+1})\leq t),
	\end{split}
\end{equation}
where $\pi^{\otimes k+1}$ means that initially the $k+1$ particles are independently uniformly  distributed.
Observe that to estimate the r.h.s.\ for small values of $t$ it does not suffice to estimate $\P_{\pi^{\otimes k+1}}(\C(X_1, \ldots, X_{k+1})\leq t)$ up to an $o(1)$ additive error. This is the reason we cannot rely on the elegant techniques developed by Oliveira in \cite{oliveira2013mean}, which allow one to estimate this probability up to some $o(1)$ additive error, when $t_{\mix} \ll \M$. The $o(1)$ error is much larger than the probability in question, unless $\M/t$ is not too large.

In particular, for $k=1$ we have
\begin{equation}
	\E(N_t)=n\P_{\pi^{\otimes 2}}(\C(X_1,X_2)\leq t ).
\end{equation}

To control this probability, we use the exponential approximation result, i.e., equation \eqref{probestimate}, and
consider the product chain of two independent random walks. 
Let $A$  be the diagonal set $\{(x,x):x\in V\}$.
The relaxation time of the product chain is the same as the original chain. The coalescence time for a pair of independent walks has the same distribution as the hitting time of $A$ of the product chain. 
It follows from equation \eqref{probestimate} that for $t\leq \M$,
\begin{equation}\label{N_t}
	\abs{\E N_t-\frac{nt}{\M }} \leq n\left( \left(\frac{t}{\M}\right)^2+\frac{\rel}{\M} \right).
\end{equation}
Note that the right hand side is $o\left(\frac{nt}{\M}\right)$ when $t \vee \rel \ll \M$, so it is also $o(\E N_t)$.

To obtain estimates on $\E N_t^k$ for $k\geq 2$, or equivalently $\P_{\pi^{\otimes(k+1)}}(C_1(X_1, \ldots, X_{k+1})\leq t )$,  we use reversibility of random walks.
Specifically, if $X(s),0\leq s\leq t$ is a random walk evolving according to rates $\r$ and starting from a uniform initial location, then the reversed path $\gamma(s):=X(t-s),0\leq s\leq t$ is also a random walk path that starts from a uniform initial location and has jump rates $\r$ (because $\r$ is symmetric).
After reversal of time, the collision events become `branching' events, i.e., if particle $i$ jumps to particle $j$  at time $t$, then after reversal of time this corresponds to that particle $j$ gives birth to particle $i$. We will define branching structures in the next section to study this.  

\subsection{Reversal of time and branching structures}   \label{ss:branchstruc}

In this section, we define branching structures to study the probability density of the coalescing time of $k+1$ particles. We set
$$
I_k=[i_0, i_1,\ldots, i_k],$$
where $i_0,\ldots, i_k$ is a sequence of integers satisfying $i_0=0$, and $0 \le i_{\ell}\le \ell-1$, $\forall \ell \in \{1, \ldots, k\}$. 
We use $\Phi_k$ to denote  the collection of all such $I_k$. 
We write 
$\R^k_{<, t}:= \{(t_1,\ldots, t_k):0<t_1<\ldots<t_k<t\}$,
and let $\mathrm{d} \t$ be the $k$ dimensional Lebesgue measure on it.
Take $\t=(t_1,\ldots, t_k)\in \R^k_{<, t}$.
For each $0\leq \ell \leq k$, we define $\gamma_{\ell}$ to be a random walk path from time $t_{\ell}$ to $t$. We do this inductively. First, $\gamma_0:[0, t]\to V$ is defined to be a continuous-time Markov chain evolving according to $(V,\r)$, whose starting location $\gamma_0(0)$ is uniformly distributed.
Then for $\ell \geq 1$, let $A_{\ell}:=\gamma_{i_{\ell}}(t_{\ell})$.
Conditioned on $A_{\ell}$, we let
$B_{\ell}$ be a random vertex sampled from the distribution $\nu_{A_\ell}$. Here we recall the exit measure $\nu_v$
defined by
$
\nu_v(u):=\frac{r_{v,u}}{r(v)},
$
where $r(v):=\sum_x r_{v,x}$ is the total transition rate at $v$. We define $\gamma_{\ell}:[t_{\ell}, t]\to V$  to be a random walk starting at $B_{\ell}$. 
Using this branching structure we can define
\begin{equation}\label{cond_form}
	h(\t, t, I_k):= \E
	\left(\prod_{i=1}^{k}  r(A_i) \1[\forall \, 0\leq \ell_1< \ell_2\leq k,\; \forall \, t'\in[ t_{\ell_2},t], \; \gamma_{\ell_1}(t')\neq \gamma_{\ell_2}(t')]\right),
\end{equation}
which, intuitively, describes the probability that no coalescence happens for all branches, weighted by the transition rates. 

We may think of the above as follows. At time $t_j$ a particle is born at a random position $\sim \nu_{A_j}$ (a location adjacent to the current position of the particle $i_j$). This particle can be thought of as an offspring of the particle $i_j$.
The offspring particle is then required to avoid all other particles by time $t$. Upon reversal of time, the times at which particles are created correspond to collision times.
For simplicity of notations, we shall usually omit the dependence on $t$, and just write $h(\t, I_k)=h(\t, t, I_k)$.

We have the following lemma relating $\E(N_t^k)$ and the branching structure.
\begin{lemma}   \label{l:reverdiffinit}
	For $k\geq 1$, let $X_0,\ldots, X_k$ be coalescing random walks. Then we have
	\begin{align*}
	n^{k}\P_{\pi^{\otimes (k+1)}}(\C(X_0, \ldots, X_{k})\leq t ,\forall 0  \leq i<j\leq k, & \, X_i(0) \neq X_j(0))\\ &=(k+1)!\sum_{I_k \in \Phi_k} \int_{\R^k_{<, t}} h(\t, I_k)\mathrm{d}\t,
	\end{align*}
	where $\P_{\pi^{\otimes (k+1)}}$ means that the initial locations $X_0(0),\ldots,X_k(0)$  are random, and are independently uniformly distributed.
\end{lemma}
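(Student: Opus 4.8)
The plan is to expand the left-hand side as a sum over orderings of the collision events and over which particle ``births'' which, then recognize each term as an integral of $h(\t, I_k)$ after reversing time. First I would note that on the event $\{\C(X_0,\dots,X_k)\le t\}\cap\{X_i(0)\ne X_j(0)\ \forall i<j\}$, the $k+1$ particles coalesce into one by time $t$ through exactly $k$ distinct collision events, occurring at times $0<t_1<\dots<t_k<t$ almost surely (simultaneous collisions have probability zero for independent continuous-time walks, and no collision happens at time $0$ since the initial positions are distinct). Each collision merges two of the current equivalence classes. The factor $(k+1)!$ arises because the $k+1$ i.i.d.\ uniform starting points can be relabeled: instead of summing over labeled particles we may fix a canonical labeling of the \emph{clusters} in order of appearance, which overcounts/undercounts by the number of labelings $(k+1)!$; more precisely, $n^k \P_{\pi^{\otimes(k+1)}}(\cdots) = n^k \cdot \frac{1}{n^{k+1}}\#\{\text{labeled configurations}\}$, and passing to the unlabeled ``genealogy'' picture introduces the $(k+1)!$.

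The key step is the change of perspective via reversibility. Using the graphical representation, condition on the (unordered) collision times being $t_1<\dots<t_k$ and on the combinatorial genealogy: at the $\ell$-th collision, the newly-merging cluster attaches to cluster number $i_\ell$ with $0\le i_\ell\le \ell-1$, which is exactly the data recorded in $I_k=[i_0,i_1,\dots,i_k]\in\Phi_k$. For a fixed $I_k$ and fixed $\t\in\R^k_{<,t}$, I would write the probability density (in $\t$) of this configuration as an expectation over the trajectories. Now reverse time on $[0,t]$: since $\r$ is symmetric, each walk run backwards is again a walk with rates $\r$, and a uniform initial law is stationary, so the reversed initial law is again uniform. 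Under time reversal a collision ``$X_a$ jumps onto $X_b$ at time $s$'' becomes ``at time $t-s$ a particle is emitted from the current location of $b$ to an adjacent site.'' The transition rate across that edge contributes the weight $r_{A_\ell,B_\ell}$; summing over the neighbor $B_\ell$ and using $\nu_{A_\ell}(u)=r_{A_\ell,u}/r(A_\ell)$ converts $\sum_u r_{A_\ell,u}(\cdots) = r(A_\ell)\,\E_{B_\ell\sim\nu_{A_\ell}}(\cdots)$, which is precisely the factor $r(A_\ell)=r(A_i)$ appearing in \eqref{cond_form}. The non-collision requirement ``no two clusters have met before their merge time'' translates, after reversal, into the requirement that for all $\ell_1<\ell_2$ and all $t'\in[t_{\ell_2},t]$ we have $\gamma_{\ell_1}(t')\ne\gamma_{\ell_2}(t')$, matching the indicator in \eqref{cond_form}. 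Hence the density for fixed $I_k$ equals $h(\t,I_k)$ up to the normalization, and integrating $\mathrm{d}\t$ over $\R^k_{<,t}$ and summing over $I_k\in\Phi_k$ gives the claim.

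The step I expect to be the main obstacle is making the time-reversal bijection between ``coalescence genealogies'' and ``branching structures'' fully rigorous at the level of the graphical representation: one must check that the map from realizations of the $k+1$ reversed trajectories $(\gamma_0,\dots,\gamma_k)$ together with the birth times $\t$ and the attachment data $I_k$ back to forward CRW realizations is measure-preserving with the correct Jacobian (the $\prod r(A_i)$ weights), that the parametrization of $\Phi_k$ by $0\le i_\ell\le\ell-1$ captures each genealogy exactly once, and that the $(k+1)!$ bookkeeping of labeled versus unlabeled particles is exactly right (no over- or under-counting from automorphisms of the genealogy, which do not occur because the $t_\ell$ are distinct and ordered). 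Once the bijection and its weight are pinned down, the remaining manipulations — pushing the sum over neighbors through the exit measure, and recognizing the avoidance indicator — are routine. I would organize the write-up by first handling $k=1$ explicitly to fix intuition, then doing the general case by peeling off the last (largest-time) collision and inducting, which also clarifies why the constraint is $i_\ell\le\ell-1$.
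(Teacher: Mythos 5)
Your proposal is correct and follows essentially the same route as the paper: decompose the event over collision patterns (which particle jumps onto which, at ordered times), use reversibility of the symmetric rates together with the uniform initial law to turn collisions into births in the branching structure, obtain the $r(A_\ell)$ weights by rewriting $r_{B_\ell,A_\ell}=r(A_\ell)\,\nu_{A_\ell}(B_\ell)$, and extract the $(k+1)!$ from permutation invariance of the particle labels, which reduces the count of collision patterns to the set $\Phi_k$. The only cosmetic difference is that the paper computes the density $f(\t,I_k)=n^{-k}h(\t,I_k)$ directly for a fixed reduced pattern rather than inducting on the last collision, and it makes the label-counting precise via invariance of the density under relabeling rather than your "labeled configurations" heuristic.
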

\begin{proof}
	We say a   $X$ `jumps onto' another particle $Y$ to create a collision if for some $t>0$, $X_{t-}\neq X_t, Y_{t-}=Y_t, X_t=Y_t$. We set the following convention that 
	after the coalescence the new particle keeps the label of $Y$. 
	Now let us
	consider all possible ways  that particles  jump onto other particles to create the  $k$ collisions. For $1\leq m\leq k$, we assume that $X_{j_m}$ jumps to $X_{\ell_m}$ for the $m$-th collision. Then clearly $j_1,\ldots, j_k, \ell_1,\ldots, \ell_k$ have to satisfy the conditions that 
	all $j_m$ are mutually distinct numbers chosen from $\{0,\ldots, k\}$ and each $\ell_m$ belongs to the set $\{0,\ldots, k\} \setminus \{j_1,\ldots, j_m\}$. Conversely, given any sequence of numbers $(j_1,\ldots, j_k, \ell_1,\ldots, \ell_k)$ satisfying the above conditions, we have that for any $1\leq m\leq k$, with positive probability, the particle with label $j_m$ jumps onto the particle with label $\ell_m$ for the $m$-th collision.
	We use $\hat{\Phi}_k$ to denote the collection of all such vectors, and we call each $\hat{I}_k \in \hat{\Phi}_k$ a `collision pattern'. 
	We use $L(a,b,s)$ to denote the event that the particle with label $a$ jumps onto the particle with label $b$ at time $s$. Then for any $(s_1,\ldots, s_k)\in \R^k_{<, t}$ and $\hat{I}_k = (j_1,\ldots, j_k, \ell_1,\ldots, \ell_k)$, we set
	\[
	\hat{L}((s_1,\ldots, s_k), \hat{I}_k)=\cap_{m=1}^k L(j_m,\ell_m,s_m).
	\]
	Using $f((s_1,\ldots, s_k),  \hat{I}_k)$ to denote the probability density of the event $L((s_1,\ldots, s_k), \hat{I}_k)$, we have 
	\begin{align*}
	\P_{\pi^{\otimes (k+1)}}(\C(X_0, \ldots, X_{k})\leq t ,\forall \, 0  \leq i<j\leq k, & \, X_i(0) \neq X_j(0))\\
	&=\int_{\R^k_{<,t}}\sum_{\hat{I}_k\in \hat{\Phi}_k}
	f((s_1,\ldots, s_k),  \hat{I}_k)\mathrm{d}\s.
	\end{align*}
	By the invariance of $f((s_1,\ldots, s_k),  \hat{I}_k)$ w.r.t. the permutation of indices of the particles, we only need to deal with $\hat{I}_k$ where $j_m=k+1-m$, for any $1\leq m\leq k$. For such $\hat{I}_k$ we can associate an $I_k$ with it
	by setting $i_m=\ell_{k+1-m}$ for each $1\leq m \leq k$, and $i_0=0$. One can check that $(i_0,i_1,\ldots, i_k)$ is indeed an element of the set $\Phi_k$. 
	Setting $t_m=t-s_{k+1-m}$, we have $L(j_m,\ell_m,s_m)=L(k+1-m,i_{k+1-m}, t-t_{k+1-m})$, so
	\[
	\hat{L}((s_1,\ldots, s_k), \hat{I}_k)=\cap_{m=1}^k L(m,i_{m}, t-t_{m}), 
	\]
	which we also denote by $L((t_1,\cdots, t_k),I_k)$. We also use $f((t_1,\cdots, t_k),I_k)$ to denote the probability density of the event $L((t_1,\cdots, t_k),I_k)$.
	Thus
	\begin{multline*}
		\P_{\pi^{\otimes (k+1)}}(\C(X_0, \ldots, X_{k})\leq t ,\forall\, 0  \leq i<j\leq k, X_i(0) \neq X_j(0))\\=(k+1)!\int_{\R^k_{<,t}}\sum_{I_k \in \Phi_k}f((t_1,\cdots, t_k),I_k)\mathrm{d} \s.
	\end{multline*}
	Thus to prove Lemma \ref{l:reverdiffinit} it suffices to show that
	\[
	f((t_1,\cdots, t_k),I_k)=n^{-k}h(\t,I_k).
	\]
	Define the event
	\[
	F(m,i_m,B_m,A_m,t-t_m)=\{
	X_m(t-t_m)=B_m, X_{i_m}(t-t_m)=A_m
	\},
	\]
	and set
	\begin{equation}
		\begin{split}
			F_k=F_k(\{A_m,B_m, i_m, t-t_m\}_{m=1}^k)
			=\cap_{m=1}^kF(m,i_m,B_m,A_m,t-t_m).
		\end{split}
	\end{equation}
	We write $f_k(\{A_m,B_m\}_{m=1}^k)$ as a shorthand for the probability density of $F_k$. Then we can write
	$f((t_1,\cdots, t_k),I_k)$ as the sum
	over all possible collision locations:
	$$
	\sum_{\{A_m,B_m\}_{m=1}^k} f_k\left(\{A_m,B_m\}_{m=1}^k\right).
	$$
	Since the jump rate from any $B_m$ to $A_m$ is $r_{B_{m},{A_{m}}}$, and
	$\P(X_m(t-t_m)=B_m) = 1/n$ (as $X_m(t-t_m)$ is uniformly random), by reversal of time, we see the above sum is equal to
	\begin{equation}\label{density}
		n^{-k}\sum_{A_m, B_m, \forall 1\leq m\leq k}\left(\prod_{m=1}^kr_{B_{m},A_{m}}\right)\P(
		\hat{\gamma}_{i_m}(t_m)=A_m, \forall 1\leq m\leq k, 
		\textrm{all }\hat{\gamma}_m  \textrm{'s don't collide}),
	\end{equation}
	where
	$\hat{\gamma}_m:[t_m, t]\to V$ is the reversal of
	$X_m(s)$ for $s\in [0,t-t_m]$ conditioned on
	$X_m(t-t_m)=B_m$. 
	Note that $\hat{\gamma}_m$ also performs a continuous-time random walk by reversibility, and $X_m$ performs an independent random walk until a collision happens. We have the indicator $\1[\textrm{all }\hat{\gamma}_m  \textrm{'s don't collide}]$ in our equation because collisions cannot happen outside $\{t-t_m,1\leq m\leq k\}$.
	Moreover, by writing 
	$r_{B_{m},A_{m}}$ as 
	$r(A_{m}) \cdot (r_{B_{m},A_{m}})/r(A_{m})$ we see 
	that \eqref{density} is exactly equal to $n^{-k}h(\t,I_k)$. This completes the proof of Lemma
	\ref{l:reverdiffinit}. \end{proof}

We have the following corollary of Lemma \ref{l:reverdiffinit}, which is about two or more particles starting from the same place.
Let $\Xi$ be any partition of $\{0,1,\ldots, k-1,k\}$, represented as a collection of subsets of  $\{0,1,\ldots, k-1,k\}$.
For $0\leq i<j\leq k$ we write  $i\leftrightarrow j$ if $i$ and $j$ belong to the same set of $\Xi$ and  $i\nleftrightarrow j$ otherwise. 
\begin{corollary}\label{cor:rev2}   
	For $k\geq 1$ and $\abs{\Xi}\geq 2$ we have
	\begin{multline*}
		n^{k}\P_{\pi^{\otimes (k+1)}}(\C(X_0, \ldots, X_{k})\leq t; \forall i\leftrightarrow j,  X_i(0) = X_j(0);\forall i\nleftrightarrow j, X_i(0) \neq X_j(0)
		)\\ =\abs{\Xi}!\sum_{I_{\abs{\Xi}-1} \in \Phi_{\abs{\Xi}-1}} \int_{\R^{\abs{\Xi}-1}_{<, t}} h(\t, I_{\abs{\Xi}-1})\mathrm{d} \t.
	\end{multline*}
\end{corollary}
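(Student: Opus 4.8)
The plan is to reduce this statement to Lemma \ref{l:reverdiffinit} by a collapsing argument: particles whose initial positions are forced to coincide behave, from time $0$ onward, as a single particle, because in the CRW model two particles that are ever at the same location at the same time remain together forever. First I would observe that, on the event $\{\forall i\leftrightarrow j,\ X_i(0)=X_j(0)\}$, the trajectories $X_i$ and $X_j$ are identically equal for all $t'\ge 0$ by the definition of CRW (particles that have collided stay together). Hence for each block $B\in\Xi$ one may pick a representative index, and the collection of distinct trajectories is indexed by $\Xi$; write $Y_0,\dots,Y_{|\Xi|-1}$ for these $|\Xi|$ walks, one per block, with the representatives' initial positions $Y_0(0),\dots,Y_{|\Xi|-1}(0)$ i.i.d.\ uniform (conditioning on the event $\{\forall i\nleftrightarrow j,\ X_i(0)\ne X_j(0)\}$ only requires the representatives to start at distinct locations). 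Moreover $\C(X_0,\dots,X_k)\le t$ is equivalent to $\C(Y_0,\dots,Y_{|\Xi|-1})\le t$, since the coalescence time of the full family is the time at which all of the distinct trajectories have merged. Therefore the left-hand side of the claimed identity equals
\[
n^{|\Xi|-1}\,\P_{\pi^{\otimes|\Xi|}}\bigl(\C(Y_0,\dots,Y_{|\Xi|-1})\le t;\ \forall\,0\le a<b\le|\Xi|-1,\ Y_a(0)\ne Y_b(0)\bigr)\,,
\]
where I must be careful with the power of $n$: the probability that $|\Xi|$ i.i.d.\ uniform starting points land in a prescribed product of singletons is $n^{-|\Xi|}$, and summing over the $n^{\,\#\text{blocks}}=n^{|\Xi|}$ admissible location tuples, then comparing with the $n^k$ prefactor and the $n^{-k}$ in $P_{\pi^{\otimes(k+1)}}$, gives exactly the $n^{|\Xi|-1}\cdot n^{|\Xi|}\cdot n^{-|\Xi|}=n^{|\Xi|-1}$ scaling matching the $n^{|\Xi|-1}$ in Lemma \ref{l:reverdiffinit} with $k$ replaced by $|\Xi|-1$.

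Once this reduction is in place, the right-hand side is immediate: apply Lemma \ref{l:reverdiffinit} with $k$ replaced by $k':=|\Xi|-1\ge 1$ (which is why the hypothesis $|\Xi|\ge 2$ is needed), yielding
\[
n^{k'}\,\P_{\pi^{\otimes(k'+1)}}\bigl(\C(Y_0,\dots,Y_{k'})\le t;\ \forall\,0\le a<b\le k',\ Y_a(0)\ne Y_b(0)\bigr)
=(k'+1)!\sum_{I_{k'}\in\Phi_{k'}}\int_{\R^{k'}_{<,t}}h(\t,I_{k'})\,\mathrm{d}\t\,,
\]
which is exactly $|\Xi|!\sum_{I_{|\Xi|-1}\in\Phi_{|\Xi|-1}}\int_{\R^{|\Xi|-1}_{<,t}}h(\t,I_{|\Xi|-1})\,\mathrm{d}\t$. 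The only remaining subtlety is that the random variable on the left counts over all $k+1$ original labels while Lemma \ref{l:reverdiffinit} counts over $|\Xi|$ labels; but since collapsing is a deterministic, measurable operation on trajectories that is a bijection between the conditioned event here and the ``all-distinct'' event for $|\Xi|$ walks (compatible with the uniform product initial law via the $n$-counting above), the two probabilities — multiplied by the appropriate powers of $n$ — coincide.

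The main obstacle, and the only place requiring genuine care, is bookkeeping the combinatorial factors: verifying that the exact power of $n$ and the factorial $|\Xi|!$ come out right under the collapse, and confirming that the event $\{\forall i\nleftrightarrow j,\ X_i(0)\ne X_j(0)\}$ translates precisely to the representatives starting at pairwise-distinct positions (one must check no two blocks are inadvertently allowed to coincide). I would also note in passing that the branching-structure quantity $h(\t,I_{k'})$ on the right is insensitive to how we chose block representatives, since by transitivity of the underlying reversibility argument it depends only on the walk dynamics and not on labels, so the identity is well posed. Everything else is a direct invocation of Lemma \ref{l:reverdiffinit}.
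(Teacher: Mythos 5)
Your proposal is correct and follows essentially the same route as the paper: collapse each block of $\Xi$ to a single walker (particles sharing a starting location coincide forever), observe that the left-hand side then equals $n^{\abs{\Xi}-1}\,\P_{\pi^{\otimes \abs{\Xi}}}\bigl(\C(X_0,\ldots,X_{\abs{\Xi}-1})\le t;\ \text{all starts distinct}\bigr)$, and invoke Lemma \ref{l:reverdiffinit} with $k$ replaced by $\abs{\Xi}-1$. Your bookkeeping sentence is phrased loosely (each $(k+1)$-tuple of prescribed starts has probability $n^{-(k+1)}$, not $n^{-k}$, so the per-tuple comparison gives the factor $n^{-(k+1)+\abs{\Xi}}$ and hence the prefactor $n^{\abs{\Xi}-1}$), but the stated identity and the overall argument are the same as the paper's.
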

\begin{proof}
	Using Lemma \ref{l:reverdiffinit} we have
	\[
	\begin{split}
		&n^{k}\P_{\pi^{\otimes (k+1)}}(\C(X_0, \ldots, X_{k})\leq t; \forall i\leftrightarrow j,  X_i(0) = X_j(0);\forall i\nleftrightarrow j, X_i(0) \neq X_j(0)
		)\\
		=&n^{\abs{\Xi}-1} \P_{\pi^{\otimes (\abs{\Xi})}}(\C(X_0, \ldots, X_{\abs{\Xi}-1})\leq t; \forall 0  \leq i<j\leq \abs{\Xi}-1, X_i(0) \neq X_j(0))\\ =&\abs{\Xi}!\sum_{I_{\abs{\Xi}-1} \in \Phi_{\abs{\Xi}-1}} \int_{\R^{\abs{\Xi}-1}_{<, t}} h(\t, I_{\abs{\Xi}-1})\mathrm{d}\t.
	\end{split}
	\]
	Thus the conclusion follows.
\end{proof}
Lemma \ref{l:reverdiffinit} is the special case of Corollary \ref{cor:rev2},
where $i\nleftrightarrow j$ for any $0\le i < j \le k$.
For the case where $i\leftrightarrow j$ for any $0\le i < j \le k$, i.e., $\abs{\Xi}=1$, we have
\[
\begin{split}
	&n^{k}\P_{\pi^{\otimes (k+1)}}(\C(X_0, \ldots, X_{k})\leq t; \forall i\leftrightarrow j,  X_i(0) = X_j(0))\\ 
	=&n^{k}\P_{\pi^{\otimes (k+1)}}(\forall \,\, 0\leq i<j\leq k,  X_i(0) = X_j(0))=1.
\end{split}
\]
By summing over all possible partitions $\Xi$ and using Corollary \ref{cor:rev2}, 
we get the following lemma.
\begin{lemma}\label{densityexpress}
	For $k\geq 1$, we have
	\begin{equation}\label{meanfield}
		\begin{split}
			0&\leq \P_{\pi^{\otimes (k+1)}}(\C(X_0,\ldots, X_{k})\leq t)-
			n^{-k}(k+1)!\sum_{I_k \in \Phi_k} \int_{\R^{k}_{<, t}} h(\t,I_k)\mathrm{d} \t\\
			&\leq \sum_{1\leq \ell\leq k-1} (k+1)^{\ell+1} n^{-k} (\ell+1)! \sum_{I_\ell\in \Phi_\ell}\int_{\R^{\ell}_{<, t}  }  h(\t,I_{\ell})\mathrm{d} \t+n^{-k}.
		\end{split}
	\end{equation}
\end{lemma}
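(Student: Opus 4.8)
The plan is to decompose the event $\{\C(X_0,\ldots,X_k)\le t\}$ according to which of the initial positions $X_0(0),\ldots,X_k(0)$ coincide. Since the $X_i(0)$ are i.i.d.\ uniform on $V$, these coincidences induce a random partition $\Xi$ of $\{0,\ldots,k\}$ (with $i\leftrightarrow j$ iff $X_i(0)=X_j(0)$), and conditioning on it gives
$$\P_{\pi^{\otimes(k+1)}}(\C(X_0,\ldots,X_k)\le t)=\sum_{\Xi}\P_{\pi^{\otimes(k+1)}}\!\bigl(\C\le t;\ \forall i\leftrightarrow j,\ X_i(0)=X_j(0);\ \forall i\nleftrightarrow j,\ X_i(0)\ne X_j(0)\bigr),$$
the sum running over all partitions of $\{0,\ldots,k\}$. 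The all-singletons partition ($|\Xi|=k+1$) will produce the main term subtracted in \eqref{meanfield}, the one-block partition ($|\Xi|=1$) the trailing $n^{-k}$, and the intermediate partitions ($2\le|\Xi|\le k$) the sum over $\ell$ on the right.

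I would first read off the two extreme partitions. For $|\Xi|=k+1$, Lemma \ref{l:reverdiffinit} identifies that summand as exactly $n^{-k}(k+1)!\sum_{I_k\in\Phi_k}\int_{\R^k_{<,t}}h(\t,I_k)\,\mathrm{d}\t$. For $|\Xi|=1$ all initial positions agree, so $\C=0\le t$ automatically and the summand is $\P_{\pi^{\otimes(k+1)}}(X_0(0)=\cdots=X_k(0))=n^{-k}$. For every partition with $2\le|\Xi|\le k$, Corollary \ref{cor:rev2} gives the summand as $n^{-k}\,|\Xi|!\sum_{I_{|\Xi|-1}\in\Phi_{|\Xi|-1}}\int_{\R^{|\Xi|-1}_{<,t}}h(\t,I_{|\Xi|-1})\,\mathrm{d}\t$, and each such term is nonnegative because $h$ is, by its definition \eqref{cond_form}, the expectation of a product of nonnegative quantities (the rates $r(A_i)$ and an indicator). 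Dropping these nonnegative terms and keeping only the $|\Xi|=k+1$ term gives the lower bound in \eqref{meanfield} at once.

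For the upper bound I would group the intermediate partitions by their number of blocks, writing $\ell=|\Xi|-1$, which ranges over $\{1,\ldots,k-1\}$. The quantity $n^{-k}(\ell+1)!\sum_{I_\ell}\int_{\R^\ell_{<,t}}h(\t,I_\ell)\,\mathrm{d}\t$ supplied by Corollary \ref{cor:rev2} depends on $\Xi$ only through $|\Xi|$, so summing over all partitions with exactly $\ell+1$ blocks multiplies it by the number of such partitions — a Stirling number of the second kind — which I would bound crudely by a constant depending only on $k$ (e.g.\ by $(k+1)^{\ell+1}$). Summing over $\ell=1,\ldots,k-1$ and adding back the isolated main term and the $n^{-k}$ reproduces the right-hand side of \eqref{meanfield}. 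The whole argument is bookkeeping built on Lemma \ref{l:reverdiffinit} and Corollary \ref{cor:rev2}: there is no analytic content, and the only mild point of friction — choosing a clean crude bound on the number of partitions of $\{0,\ldots,k\}$ with a prescribed number of blocks so that the stated right-hand side comes out in exactly the advertised form — is routine.
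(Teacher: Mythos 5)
Your proposal is correct and follows essentially the same route as the paper: the paper also conditions on the partition $\Xi$ of $\{0,\ldots,k\}$ induced by coincidences among the initial positions, identifies the $|\Xi|=k+1$ term via Lemma \ref{l:reverdiffinit}, the $|\Xi|=1$ term as $n^{-k}$, the intermediate terms via Corollary \ref{cor:rev2}, and then bounds the number of partitions with $|\Xi|=\ell+1$ by $(k+1)^{\ell+1}$, exactly as you do. No substantive difference to report.
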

\begin{proof}
Note that we have the decomposition
	\begin{equation}
		\begin{split}
			&\P_{\pi^{\otimes (k+1)}}(\C(X_0,\ldots, X_{k})\leq t)\\
			=&\sum_{\Xi}\P_{\pi^{\otimes (k+1)}}(\C(X_0, \ldots, X_{k})\leq t ,\forall i\leftrightarrow j,  X_i(0) = X_j(0);\forall i\nleftrightarrow j, X_i(0) \neq X_j(0))
		\end{split}
	\end{equation}
	where the sum of $\Xi$ is over all partitions of the set $\{0,1,\ldots, k-1,k\}$.
	Equation \eqref{meanfield} now follows by applying Corollary \ref{cor:rev2}, and using the trivial upper bound that the number of partition $\Xi$ of the set $\{0,1,\ldots, k-1,k\}$   s.t. $\abs{\Xi}=\ell+1$ is at most $(k+1)^{\ell+1}$.
\end{proof}
We record the following lemma which will be used in Section \ref{s:pfrandom}.
We recall the notation for maximal and minimal jump rate:
$$
r_{\max}=\max_{x\in V} r(x) \qquad \text{and} \qquad
r_{\min}=\min_{x\in V} r(x).
$$
Throughout, we denote their ratio by
$$
R:=r_{\max}/r_{\min}.
$$
Recall that $\gamma_{\ell}:[t_{\ell}, t]\to V$ is a random walk starting at $B_{\ell}$.
\begin{lemma}\label{maxgamma_t(x)}
	For any $\ell$, and any $s\in [t_{\ell},t]$, $x\in V$, we have
	$$
	\P(\gamma_{\ell}(s)=x)\leq \frac{R^{\ell}}{n}.
	$$
\end{lemma}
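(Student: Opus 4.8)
The plan is to prove the bound $\P(\gamma_\ell(s) = x) \le R^\ell/n$ by induction on $\ell$, tracking how each branching step can inflate the one-step distribution away from uniformity by at most a factor of $R = r_{\max}/r_{\min}$. The base case $\ell = 0$ is immediate: $\gamma_0$ starts from the uniform distribution and evolves as the Markov chain $(V,\r)$, which has uniform stationary distribution; but in fact we need more than stationarity since $R$ need not be $1$ here and the chain on a graph is genuinely reversible w.r.t.\ the counting measure, so $\gamma_0(s)$ is exactly uniform for every $s$, giving $\P(\gamma_0(s)=x) = 1/n = R^0/n$.

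For the inductive step, recall that $\gamma_\ell$ on $[t_\ell, t]$ is started at $B_\ell \sim \nu_{A_\ell}$ where $A_\ell = \gamma_{i_\ell}(t_\ell)$ with $i_\ell < \ell$. First I would bound the law of $B_\ell$: for any $y \in V$,
\[
\P(B_\ell = y) = \sum_{z \in V} \P(A_\ell = z)\, \nu_z(y) = \sum_{z} \P(\gamma_{i_\ell}(t_\ell) = z)\, \frac{r_{z,y}}{r(z)}.
\]
By the inductive hypothesis applied to $\gamma_{i_\ell}$ at time $t_\ell$ (note $i_\ell \le \ell - 1$, so the hypothesis gives $\P(\gamma_{i_\ell}(t_\ell) = z) \le R^{i_\ell}/n \le R^{\ell-1}/n$), together with the symmetry $r_{z,y} = r_{y,z}$ and the bound $r(z) \ge r_{\min}$, this gives
\[
\P(B_\ell = y) \le \frac{R^{\ell-1}}{n} \sum_{z} \frac{r_{y,z}}{r_{\min}} = \frac{R^{\ell-1}}{n} \cdot \frac{r(y)}{r_{\min}} \le \frac{R^{\ell-1}}{n} \cdot \frac{r_{\max}}{r_{\min}} = \frac{R^\ell}{n}.
\]
Then I would propagate this through the random walk: conditionally on $B_\ell = y$, $\gamma_\ell(s)$ for $s \in [t_\ell, t]$ has law $p_{s - t_\ell}(y, \cdot)$, so
\[
\P(\gamma_\ell(s) = x) = \sum_{y} \P(B_\ell = y)\, p_{s-t_\ell}(y, x) \le \frac{R^\ell}{n} \sum_y p_{s-t_\ell}(y,x) = \frac{R^\ell}{n},
\]
using that $\sum_y p_u(y,x) = 1$ by symmetry (equivalently, reversibility w.r.t.\ counting measure makes the transition matrix doubly substochastic, in fact doubly stochastic). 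This closes the induction.

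I do not expect any genuine obstacle here; the only points requiring a little care are: (i) making sure the inductive hypothesis is invoked at the correct index $i_\ell$ and time $t_\ell$, and noting $i_\ell \le \ell-1$ so the exponent bound is $R^{i_\ell} \le R^{\ell-1}$ (one could even get the sharper $R^{i_\ell+1}/n$, but $R^\ell/n$ is all that is claimed and suffices); and (ii) the repeated use of symmetry of $\r$ both to get $\sum_y p_u(y,x) = 1$ and to replace $r_{z,y}$ by $r_{y,z}$ in the sum defining the law of $B_\ell$. Both are guaranteed by the standing assumption that the jump rates are symmetric.
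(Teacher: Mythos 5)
Your proof is correct and follows essentially the same route as the paper: induction on $\ell$, picking up one factor of $R$ at each branching time through the exit measure $\nu_{A_\ell}$ (using symmetry of $\r$ and $r_{\min}\le r(z)\le r_{\max}$), and then propagating the sup-bound forward in time via the doubly stochastic property $\sum_y p_u(y,x)=1$, which is exactly the paper's observation that a symmetric kernel does not increase $\sup_x\mu_s(x)$. No gaps.
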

\begin{proof}[Proof of Lemma \ref{maxgamma_t(x)}]
	We use the fact that for any initial measure $\mu_0$ and any symmetric Markov transition kernel, $\sup_x \mu_s(x)$ is not non-increasing in $s$. Indeed, 
	\begin{equation}
		\begin{split}
			\sup_x \mu_s(x)
			&\leq \sup_x \sum_y \mu_0(y)p_s(y,x)\\
			& \leq \sup_x \sum_y (\sup_z \mu_0(z))p_s(x,y) =
			\sup_z \mu_0(z). 
		\end{split}
	\end{equation}
	We now prove Lemma \ref{maxgamma_t(x)} inductively. The case of $\ell=0$ follows from the fact that $\gamma_0(t_0)$ is sampled from the uniform distribution. Suppose that this lemma holds for $\ell\leq m-1$. Then for $\ell=m$ we have
	\begin{equation}
		\begin{split}
			\P(\gamma_m(t_m)=x)&=\sum_y \P(\gamma_{i_m}(t)=y)
			\frac{r_{y,x}}{r(y)}\\
			\leq &\frac{R^{m-1}}{n}\sum_y \frac{r_{y,x}}{r_{\min}}
			= \frac{R^{m-1}}{n}\frac{r(x)}{r_{\min}} \leq
			\frac{R^m}{n}
		\end{split}
	\end{equation}
	This proves the induction step and hence completes the proof of Lemma \ref{maxgamma_t(x)}. 
\end{proof}

\subsection{Intermediate quantities}  \label{ss:roadmap}
We now estimate the integral of $h(\t, I_k)$. We let $\U\sim\pi$ and $\tilde \U\sim \nu_{\U}$. We also let $W_\U$ and $W_{\tilde \U}$ be two random walks starting from $\U$ and $\tilde \U$ respectively.

We define some intermediate quantities. Let
\begin{equation}\label{gt,phi_k}
	g(\t, t, I_k):= \E
	\left(\prod_{\ell=1}^{k}  r(A_\ell) \1[\forall t'\in[ t_{\ell},t_{\ell+1}],\gamma_{\ell}(t')\neq \gamma_{i_{\ell}}(t')]\right).
\end{equation}
It describes the probability that no collision happens between a newly born branch and its parent until the next branching time. We also define
\begin{equation}\label{cond_form4}
	q(\t, t, I_k):= \prod_{\ell=1}^{k}\E
	( r(\U) \1[\forall t'\in[ 0,t_{\ell+1}-t_\ell],W_\U(t')\neq W_{\tilde \U}(t')]).
\end{equation}
Then $q(\t, t, I_k)$ describes the probability that no collision happens between a newly born branch and its parent until the next branching time, assuming that the place of branching is uniformly distributed. 
Similar to $h(\t, I_k)$, for simplicity of notations, we often omit the dependence on $t$, and just write $g(\t, I_k)=g(\t, t, I_k)$ and $q(\t, I_k)=q(\t, t, I_k)$.

Our general approach for estimating the integral of $h(\t, I_k)$ is to control each of the following.
\begin{enumerate}
	\item $n^{-k}(k+1)!\sum_{I_k \in \Phi_k}\int_{\R^{k}_{<, t}} \left(h(\t,I_k) - g(\t,I_k)\right)\mathrm{d} \t$,
	\item $n^{-k}(k+1)!\sum_{I_k \in \Phi_k}\int_{\R^{k}_{<, t}}
	\left(g(\t,I_k) - q(\t,I_k)\right)\mathrm{d} \t$,
	\item $\sum_{I_k \in \Phi_k}\int_{\R^{k}_{<, t}}q(\t,I_k)\mathrm{d} \t - \left(\frac{nt}{2\M}\right)^k$.
\end{enumerate}
\begin{remark}
The first error term describes the change in the coalescence probability 
	if we ignore collisions that are \emph{not} between a new branch and its parent.  We will  control this error  in Lemma \ref{l:approxhbyg} (for general Markov chains)
	and Lemma \ref{l:goodset:tran} (for transitive Markov chains).
	Going back to the original  CRW system (i.e., undoing the reverse of time), showing that this error term is small is equivalent to saying that conditioned on that two walkers collide in a certain time interval with length $\ll \M$, with high probability we don't see any other collisions in this interval.
\end{remark}

\begin{remark}
	As will be seen in the proof of Lemma \ref{l:ntk-est}, the contribution to the integral of $h(\t,I_k)$ that comes from $\t$  outside the $\Delta$-good set (see \eqref{d:smallt} to \eqref{G(t)} for the definition) is negligible under some conditions.  
	This implies that the typical spacing between successive collisions is much larger than  $\rel$, even if we assume that $\C(X_0,\ldots, X_k)\leq t$.
\end{remark}

\section{Estimates for general Markov chains}  \label{s:gengraph}
In this section we provide estimates for general finite Markov chains.

To control the integral of $h(\t,I_k)-g(\t,I_k)$ and the integral of $q(\t, I_k)$, we consider $\t\in\R^k_{<, t}$ such that the difference
$t_{\ell+1}-t_{\ell}$ is not too small for each $1 \leq\ell \leq k$ (here we set $t_{k+1}=t$). Specifically, let
\begin{equation}\label{d:smallt}
	\Delta_{\min}(\t,t):=\min\{t_{\ell+1}-t_{\ell}:1 \leq \ell \leq k \},\quad \forall \t \in \R^k_{<, t}.
\end{equation}
For simplicity of notations, we also usually write $\Delta_{\min}(\t)=\Delta_{\min}(\t,t)$ since $t$ is mostly fixed.
We define
\begin{equation}\label{defofH}
	H(t):=\sup_{\rel/2 <s<2t}\frac{\max_z \int_0^s  p_{s'}(z,z)\mathrm{d} s'}{\min_z \int_0^s  p_{s'}(z,z)\mathrm{d} s'}.
\end{equation}
Given $\Delta>2$, let
\begin{equation}\label{G(t)}
	G(t,\Delta):=\rel \log(H(t))+\rel \Delta.
\end{equation}
We call $\t$ `$\Delta$-good' if $\Delta_{\min}(\t)>G(t,\Delta)$. We now give an implication of being `$\Delta$-good'.
\begin{lemma}  \label{l:deltagood}
	If $\t\in \R^k_{<, t}$ is $\Delta$-good, we have $\Delta_{\min}(\t) \geq 1/r_{\max}$,
	and
	$$
	\max_{1\leq r\leq k} \exp\left(-\frac{\Delta_{\min}(\t)}{\rel} \right) \frac{\max_z \int_0^{2(t_{r}-t_{r-1})}  p_{s}(z,z)\mathrm{d} s }{\min_z \int_0^{2(t_{r}-t_{r-1})} p_{s}(z,z)\mathrm{d} s}
	\leq e^{-\Delta}.
	$$
\end{lemma}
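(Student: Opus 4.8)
The plan is simply to unwind the definitions of $\Delta$-good, of $G(t,\Delta)$ in \eqref{G(t)}, and of $H(t)$ in \eqref{defofH}. The key preliminary observation is that $H(t)\geq 1$: for each fixed $s$ the quantity $\max_z\int_0^s p_{s'}(z,z)\mathrm{d}s'$ is at least $\min_z\int_0^s p_{s'}(z,z)\mathrm{d}s'$, so every ratio in the supremum defining $H(t)$ is $\geq 1$. Consequently $\log H(t)\geq 0$ and $G(t,\Delta)=\rel\log(H(t))+\rel\Delta\geq \rel\Delta$.

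For the first claim, being $\Delta$-good means $\Delta_{\min}(\t)>G(t,\Delta)\geq \rel\Delta>2\rel$, where the last inequality uses $\Delta>2$. Combining this with $\rel\geq 1/(2r_{\max})$ from \eqref{relandrmax} yields $\Delta_{\min}(\t)>2\rel\geq 1/r_{\max}$, which is (slightly more than) what is asserted.

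For the second claim, fix $1\leq r\leq k$ and set $\delta:=t_r-t_{r-1}$, so that $\delta\geq \Delta_{\min}(\t)$ by the definition of $\Delta_{\min}$. From the first part $\delta\geq\Delta_{\min}(\t)>2\rel$, hence $2\delta>\rel/2$; and $2\delta<2t$ since $\delta<t$. Therefore $2\delta$ lies in the interval $(\rel/2,2t)$ over which the supremum in \eqref{defofH} is taken, so
\[
\frac{\max_z\int_0^{2\delta} p_s(z,z)\mathrm{d}s}{\min_z\int_0^{2\delta} p_s(z,z)\mathrm{d}s}\leq H(t).
\]
Using the defining inequality of $\Delta$-good once more, $\Delta_{\min}(\t)>\rel\log(H(t))+\rel\Delta$, so $\exp(-\Delta_{\min}(\t)/\rel)<H(t)^{-1}e^{-\Delta}$, and hence
\[
\exp\!\left(-\frac{\Delta_{\min}(\t)}{\rel}\right)\frac{\max_z\int_0^{2\delta} p_s(z,z)\mathrm{d}s}{\min_z\int_0^{2\delta} p_s(z,z)\mathrm{d}s}<H(t)^{-1}e^{-\Delta}\cdot H(t)=e^{-\Delta}.
\]
Since this bound is uniform in $r$, taking the maximum over $1\leq r\leq k$ finishes the proof.

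\textbf{Main obstacle.} There is no real difficulty here; the statement is a direct consequence of the definitions. The only points requiring a moment of care are (i) recording that $H(t)\geq 1$ so that $G(t,\Delta)\geq \rel\Delta$ and the first claim follows from \eqref{relandrmax}, and (ii) verifying that $2(t_r-t_{r-1})$ genuinely falls inside the range $(\rel/2,2t)$ on which $H(t)$ is defined, so that the ratio of truncated return-probability integrals is controlled by $H(t)$. After that the conclusion is pure algebra with the exponential.
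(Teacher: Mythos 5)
Your proof is correct and follows essentially the same route as the paper's: bound each ratio by $H(t)$ because the doubled gap lies in the range $(\rel/2,2t)$ defining $H(t)$, then combine $\Delta_{\min}(\t)>\rel\log H(t)+\rel\Delta$ with $\rel\geq 1/(2r_{\max})$ from \eqref{relandrmax}. The one small caveat — that for $r=1$ the gap $t_1-t_0$ is not literally among the gaps appearing in the definition \eqref{d:smallt} of $\Delta_{\min}(\t)$ — is an indexing convention shared with the paper's own proof, so it is not a defect of your argument.
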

\begin{proof}
	Let $\t$ be $\Delta$-good. For $\Delta>2$, we have
	$t_{\ell}-t_{\ell-1}>2\rel$ for all $1\leq \ell\leq k$, which implies that
	$$
	\max_{1\leq r\leq k}  \frac{\max_z \int_0^{2(t_{r}-t_{r-1})}  p_{s}(z,z)\mathrm{d} s }{\min_z \int_0^{2(t_{r}-t_{r-1})}  p_{s}(z,z)\mathrm{d} s}
	\leq H(t).
	$$
	It follows that 
	\begin{align*}
	& \max_{1\leq r\leq k}  \exp\left(-\frac{\Delta_{\min}(\t)}{\rel} \right)  \frac{\max_z \int_0^{2(t_{r}-t_{r-1})}  p_{s}(z,z)\mathrm{d} s }{\min_z \int_0^{2(t_{r}-t_{r-1})} p_{s}(z,z)\mathrm{d} s}\\
	&\leq \exp\left(-\frac{(\log H(t)+\Delta)\rel}{\rel}\right)H(t)\leq e^{-\Delta}.
	\end{align*}
	Also, as $
	\rel\geq 1/2r_{\max}$ (see \eqref{relandrmax} in Lemma \ref{walkparamaters}), we have $\Delta_{\min}(\t)\geq 1/r_{\max}$. 
\end{proof}

To control $q(\t, I_k)-g(\t,I_k)$,
for $k\geq 2$ we define the following error parameters:
\begin{equation}\label{e3kt}
	\hat{e}_{k}(\t):=\sup_{I_k\in \Phi_k}\abs{q(\t,I_k)-g(\t,I_k)}, \qquad \text{and}
\end{equation}
\begin{equation}\label{e3kt2}
	e_k(t):=\int_{\R^k_{<,t}}\hat{e}_{k}(\t) \mathrm{d} \t.
\end{equation}

Now we are ready to state our main result of this section. This is a framework to be used in the following sections.
The main thing we establish here is that, given bounds on some error terms, we could estimate the density up to a multiplicative $1+o(1)$ factor.
Recall that
$
R=\frac{r_{\max}}{r_{\min}}=\frac{\max_{x\in V} r(x)}{\min_{x\in V} r(x)}
$.
\begin{prop}  \label{p:gengraph}
	For any $\varepsilon>0$ and $K'>0$, there exist some $K\in \ZZ_+$ and $\delta>0$, such that the following holds.
	Consider an arbitrary Markov chain $(V,\r)$ with $|V|=n$. 
	Take 
	\begin{equation}\label{defDelta}
		\Delta:=K\log\left(\frac{\M r_{\max}}{n}+1\right)+ \log R+ \frac{1}{\delta}.
	\end{equation}
	Define 
	\begin{equation}\label{d:capitalI}
		\begin{split}
			E_1&:=\left(\frac{\M r_{\max}}{n}\right)^{K} \frac{G(t,\Delta)}{t}, \qquad
			E_2:=\left(\frac{\M r_{\max}}{n}\right)^{K} R
			\frac{r_{\max}t}{n},\\
			E_3&:=\sum_{k=2}^K \left(\frac{\M}{tn}\right)^{k}e_k(t), \qquad \qquad
			E_4:=\left(\frac{\M r_{\max}}{n}\right)^{K} \frac{1}{r_{\max}t}.
		\end{split}
	\end{equation}
	If the following two conditions hold
	\begin{align}
		E_1+E_2+E_3+E_4
		&\leq \delta,\label{firstcond}\\
		\sup_{x\in V,\,  0\leq s\leq t}P_s(x)\frac{ns}{\M}&\leq K', \label{secondcond}
	\end{align}
	then we have 
	\[
	\abs{P_t-\frac{2\M}{nt}}\leq \varepsilon \frac{\M}{nt}.
	\]
\end{prop}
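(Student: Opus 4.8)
The plan is to exploit the identity $P_t=\E N_t^{-1}$ of Lemma~\ref{P_t=E(N_tinverse)}, written as
\[
P_t=\frac{1}{\E N_t}\,\E\left(\left(\frac{N_t}{\E N_t}\right)^{-1}\right),
\]
and to show that, under \eqref{firstcond}--\eqref{secondcond}, both factors are pinned down: $\E N_t=(1+o_\delta(1))\,nt/\M$, while $N_t/\E N_t$ is close in distribution to a $\mathrm{Gamma}(2,2)$ law \emph{and} $(N_t/\E N_t)^{-1}$ is uniformly integrable near $0$, whence $\E((N_t/\E N_t)^{-1})=2+o(1)$, the errors being controllable by taking $K$ large and $\delta$ small in this order. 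Multiplying the two estimates gives $P_t=(2+o(1))\M/(nt)$, hence the claim for a suitable choice of $K,\delta$ in terms of $\varepsilon,K'$. As a preliminary, I would read off from \eqref{firstcond} together with $\M r_{\max}/n\ge1/4$ (see \eqref{mrmax/n}) that $\rel/t$ and $t/\M$ are bounded by a fixed power of $\delta$, while $nt/\M\gtrsim\delta^{-1}$; in particular $\E N_t=(1+o_\delta(1))nt/\M$ by \eqref{N_t}, and $\E N_t\to\infty$ as $\delta\to0$.

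For the distributional part I would estimate the integer moments $\E N_t^k$, $2\le k\le K$. By \eqref{moment} and Lemma~\ref{densityexpress},
\[
\E N_t^k=n^k\,\P_{\pi^{\otimes(k+1)}}(\C(X_0,\dots,X_k)\le t)=(k+1)!\sum_{I_k\in\Phi_k}\int_{\R^k_{<,t}}h(\t,I_k)\,\mathrm d\t+\mathrm{Err}_k,
\]
where $\mathrm{Err}_k$, coming from the $\ell<k$ terms of \eqref{meanfield}, is smaller by a factor $\asymp(nt/\M)^{-1}\ll1$ than the main term. I would then run the approximation chain $h\rightsquigarrow g\rightsquigarrow q$ of Section~\ref{ss:roadmap}. (a) The difference $\sum_{I_k}\int(h-g)\,\mathrm d\t$ is negligible: restricting to the $\Delta$-good set (its complement contributes negligibly by a volume bound driven by $E_1$), Lemma~\ref{l:deltagood} together with Lemma~\ref{meetprob} and the meeting-time bounds of Section~\ref{ss:relaxmixtime} show that a collision which is \emph{not} between a freshly born branch and its parent is unlikely, the total error being absorbed into $E_1+E_2$. (b) The difference $\sum_{I_k}\int(g-q)\,\mathrm d\t$ is exactly governed by $e_k(t)$, hence by $E_3$, via \eqref{e3kt}--\eqref{e3kt2}. (c) The integral $\sum_{I_k}\int q(\t,I_k)\,\mathrm d\t$ is computed directly from the product form \eqref{cond_form4}: each factor $\phi(s):=\E(r(\U)\1[W_\U,W_{\tilde\U}\text{ do not meet on }[0,s]])$ satisfies $\phi(s)=(1+o(1))\,n/(2\M)$ for $\rel\ll s\ll\M$ (a consequence of Kac's formula \eqref{kac2} for the product chain on the diagonal, together with the exponential approximation), and the $\t$-integral concentrates on such $s$; hence $\int_{\R^k_{<,t}}q(\t,I_k)\,\mathrm d\t=(1+o(1))\big(\tfrac{n}{2\M}\big)^k\tfrac{t^k}{k!}$, and since $|\Phi_k|=k!$ the sum over shapes is $(1+o(1))\big(\tfrac{nt}{2\M}\big)^k$ up to an error absorbed into $E_4$. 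Collecting, $\E N_t^k=(1+o_\delta(1))(k+1)!\,\big(\tfrac{nt}{2\M}\big)^k=(1+o_\delta(1))\,\tfrac{(k+1)!}{2^k}(\E N_t)^k$, so $\E((N_t/\E N_t)^k)\to(k+1)!/2^k$ for $2\le k\le K$. Since $(k+1)!/2^k$ are the moments of $\mathrm{Gamma}(2,2)$ and this law is determined by its moments, the method of moments shows that $N_t/\E N_t$ is $\mathrm{Gamma}(2,2)$-distributed up to an error vanishing as $K\to\infty$.

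The remaining and technically most delicate step is the control of $N_t/\E N_t$ near $0$, i.e.\ the uniform smallness of $\E((N_t/\E N_t)^{-1}\1[N_t\le M])$ for $M=\lfloor\eta\E N_t\rfloor$ with $\eta$ small. By Corollary~\ref{c:Ntnt1} this equals $\E N_t\cdot\P(0<n_t\le M)$, so it suffices to control the left tail $\P(N_t\le j)$. The crude starting bound is $\P(N_t\le j)\le jP_t$, valid because the number of initial particles lying in time-$t$ coalescence clusters of size $\le j$ is at most $j\,|\xi_t|$ and $\E|\xi_t|=nP_t$; combined with the \emph{uniform} density bound $P_s(x)\le K'\M/(ns)$ of \eqref{secondcond} (used at every scale $s\le t$), and with the fact — extracted from the moment estimates above — that the mass absorbed by the tagged cluster over a time window comparable to $t$ has mean of order $\E N_t$ and is spread out on that scale, one can restart the CRW at a dyadic family of intermediate times and bootstrap $\P(N_t\le j)\le jP_t$ into the quadratic bound $\P(N_t\le j)\le C(j/\E N_t)^2$ for $j\le\eta\E N_t$, with $C$ absolute. (Here one uses that $s\mapsto N_s$ is non-decreasing, so $\{N_t\le j\}\subseteq\{N_s\le j\}$, together with Theorem~\ref{t:upperbound} to keep the density under control along the way.) An Abel summation then yields $\P(0<n_t\le M)=\sum_{k\le M}\P(N_t=k)/k\le C'\eta/\E N_t$, so that $\E((N_t/\E N_t)^{-1}\1[N_t\le M])\le C'\eta$.

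Finally I would assemble the pieces. With $Z_t:=N_t/\E N_t$, for a fixed small $\eta$ the convergence in distribution and the portmanteau theorem give $\E(Z_t^{-1}\1[Z_t>\eta])\to\E(\mathrm{Gamma}(2,2)^{-1}\1[\mathrm{Gamma}(2,2)>\eta])$, which lies within $\int_0^\eta 4e^{-2x}\,\mathrm dx\le4\eta$ of $\E(\mathrm{Gamma}(2,2)^{-1})=2$; together with the bound $\E(Z_t^{-1}\1[Z_t\le\eta])\le C'\eta$ this gives $|\E Z_t^{-1}-2|\le(C'+4)\eta+o(1)$, where $o(1)$ depends on $K$ and the $o_\delta(1)$ errors. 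Choosing first $\eta$ small, then $K$ large, then $\delta$ small (so that the moment-method error and the $o_\delta(1)$ terms from $E_1,\dots,E_4$ and from \eqref{N_t} are all negligible) turns this into $|P_t-2\M/(nt)|\le\varepsilon\M/(nt)$, matching the required quantifier structure. I expect the left-tail estimate of the third paragraph to be the main obstacle: the method of moments alone cannot detect how rarely the tagged cluster remains small, and it is precisely here that the general upper bound of Theorem~\ref{t:upperbound} and hypothesis \eqref{secondcond} enter in an essential way.
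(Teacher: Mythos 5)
Your first two paragraphs follow the paper's own route essentially verbatim: the factorization $P_t=(\E N_t)^{-1}\E((N_t/\E N_t)^{-1})$, the estimate \eqref{N_t} for $\E N_t$, the chain $h\rightsquigarrow g\rightsquigarrow q$ with errors matched to $E_1,\dots,E_4$ (this is Lemma \ref{l:ntk-est}, built from Lemmas \ref{l:approxhbyg} and \ref{l:approxq} and Lemma \ref{densityexpress}), and the method of moments against the $\mathrm{Gamma}(2,2)$ law. Up to minor bookkeeping (e.g.\ $E_4$ actually absorbs the $(1+r_{\max}t)^{k-1}$ term coming from coinciding initial positions in Lemma \ref{densityexpress}, not the $q$-integral error), this part is correct and is the same argument as in the paper.

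The genuine gap is in your treatment of the left tail. You want $\P(N_t\le j)\le C(j/\E N_t)^2$ with an absolute constant, obtained by ``restarting at dyadic times'' and arguing that, conditioned on $N_{t/2}\le j$, it is unlikely (probability $\lesssim j/\E N_t$) that $N_t\le j$. That conditional claim is quantitatively false: the tagged cluster merges with other clusters at rate roughly $|\xi_s|/\M\approx nP_s/\M\approx 2/s$, so the probability of \emph{no} merger at all during $[t/2,t]$ is about $e^{-2\log 2}$, a constant bounded away from $0$, not $O(j/\E N_t)$. Hence the two-scale factorization gives no gain, and a multi-scale version would require a \emph{lower} bound on the conditional absorption rate given an adversarial configuration at the restart time; the moment estimates are annealed (stationary initial data) and give no such bound — indeed, conditioned on the cluster having stayed small, its local environment is biased towards being sparse. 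This is precisely the difficulty the paper circumvents: following Bramson--Griffeath, it writes (via Corollary \ref{c:Ntnt1}) $\E\bigl(\tfrac{\E N_t}{N_t};\tfrac{N_t}{\E N_t}<\hat\ep\bigr)=\E N_t\,\P(0<n_t<\hat\ep\E N_t)$ and splits this, using the two times $(1-\eta)t$ and $t$ with $\eta=\sqrt{\hat\ep}$, into $J_1-J_2+J_3$: the term $J_1-J_2$ (``large at $(1-\eta)t$, small at $t$'') is controlled by the Gamma approximation applied at \emph{both} times, and $J_3$ (``small at $(1-\eta)t$ yet surviving to $t$'') only needs an \emph{upper} bound on the survival probability of a small opinion cluster, obtained from duality and hypothesis \eqref{secondcond} via $\P(n_t>0\mid \zeta^{\U}_{(1-\eta)t}=A)\le|A|\sup_x P_{\eta t}(x)$. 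Neither of these steps needs a conditional lower bound on absorption, which is what your bootstrap would require. Unless you replace the dyadic-restart step by an argument of this type (or otherwise prove the quadratic tail bound), the final assembly in your last paragraph does not go through.
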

\begin{remark}\label{variant}
	Since $G(t,\Delta)$ is increasing in $\Delta$ and hence decreasing in $\delta$, Proposition \ref{p:gengraph}
	still holds if  we let $\Delta$ be
	\begin{equation}\label{newDelta}
		\Delta:=K\log\left(\frac{\M r_{\max}}{n}+1\right)+ \log R+ \frac{1}{\delta'}
	\end{equation}
	for some $\delta'\leq \delta$. 
	We will use this variant in the proof of Theorem \ref{t:config}. 
\end{remark}

\subsection{Estimating $g(\t,I_k)-h(\t,I_k)$}  \label{ss:approxh}

Now we bound the difference $g(\t,I_k)-h(\t,I_k)$ for $\Delta$-good $\t$.
\begin{lemma}\label{l:approxhbyg}
	For any Markov chain $(V,\r)$ with $\abs{V}=n$, and any $k\in \ZZ_+$, $I_k \in \Phi_k$, and $\Delta$-good $\t$, we have
	\begin{equation}
		0\leq g(\t,I_k)-h(\t,I_k) \leq r_{\max}^k \frac{k(k+1)}{2}
		(10R+1)\left(2 e^{-\Delta} + \frac{8tr_{\max}}{n}\right).
	\end{equation}
\end{lemma}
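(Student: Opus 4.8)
The plan is to prove $g(\t,I_k)-h(\t,I_k)\ge 0$ by an inclusion of events, and to prove the upper bound by a union bound over pairs of branches, after which a single, genuinely delicate, case remains. For the sign: the event defining $h(\t,I_k)$ requires $\gamma_{\ell_1}(t')\ne\gamma_{\ell_2}(t')$ for \emph{all} $0\le\ell_1<\ell_2\le k$ and all $t'\in[t_{\ell_2},t]$, whereas the event defining $g(\t,I_k)$ requires only $\gamma_\ell(t')\ne\gamma_{i_\ell}(t')$ on $[t_\ell,t_{\ell+1}]\subseteq[t_\ell,t]$, which is implied by the former; since the non-negative weight $\prod_{\ell=1}^k r(A_\ell)$ is identical in both expectations, $g(\t,I_k)\ge h(\t,I_k)$.

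For the upper bound, write $g(\t,I_k)-h(\t,I_k)=\E\big(\prod_{\ell=1}^k r(A_\ell)\,\1_{E_g}(1-\1_{E_h})\big)$, where $E_g,E_h$ are the two events, bound $\prod_{\ell=1}^k r(A_\ell)\le r_{\max}^k$, and use $1-\1_{E_h}\le\sum_{0\le\ell_1<\ell_2\le k}\1[M_{\ell_1,\ell_2}]$, where $M_{\ell_1,\ell_2}$ is the event that $\gamma_{\ell_1}(t')=\gamma_{\ell_2}(t')$ for some $t'\in[t_{\ell_2},t]$. Since there are $\binom{k+1}{2}=\tfrac{k(k+1)}{2}$ such pairs (and necessarily $\ell_2\ge 1$), it suffices to show $\P(E_g\cap M_{\ell_1,\ell_2})\le(10R+1)\big(2e^{-\Delta}+8tr_{\max}/n\big)$ for every pair. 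The structural fact I would use here is that, conditionally on $\gamma_{\ell_1}(t_{\ell_2})$ and on $B_{\ell_2}=\gamma_{\ell_2}(t_{\ell_2})$, the restrictions of $\gamma_{\ell_1}$ and $\gamma_{\ell_2}$ to $[t_{\ell_2},t]$ are two independent continuous-time walks (Markov property for $\gamma_{\ell_1}$, and $\gamma_{\ell_2}$ is by construction a fresh walk from $B_{\ell_2}$), so $\P(E_g\cap M_{\ell_1,\ell_2})$ is an average of meeting probabilities $\P_{x,y}(\RM\le t-t_{\ell_2})$.

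I would then split the window at $t_{\ell_2}+\Delta_{\min}(\t)$, which is legitimate since $\Delta_{\min}(\t)\le t_{\ell_2+1}-t_{\ell_2}\le t-t_{\ell_2}$. On the \emph{late} part, the meeting time measured from $t_{\ell_2}$ lies in $(\Delta_{\min}(\t),t)$; because $\t$ is $\Delta$-good, Lemma \ref{l:deltagood} gives $\Delta_{\min}(\t)\ge 1/r_{\max}$ and $e^{-\Delta_{\min}(\t)/\rel}H(t)\le e^{-\Delta}$, while $\Delta_{\min}(\t)\in(2\rel,t)$ forces $\frac{\max_z\int_0^{2\Delta_{\min}(\t)}p_s(z,z)\mathrm{d}s}{\min_z\int_0^{2\Delta_{\min}(\t)}p_s(z,z)\mathrm{d}s}\le H(t)$, so Lemma \ref{meetprob} applied with $T=\Delta_{\min}(\t)$ bounds the late part by $2e^{-\Delta}+8tr_{\max}/n$, uniformly in the starting positions (the degenerate case $x=y$ being trivial). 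When $\ell_1=i_{\ell_2}$, the event $E_g$ already forbids a meeting on $[t_{\ell_2},t_{\ell_2+1}]\supseteq[t_{\ell_2},t_{\ell_2}+\Delta_{\min}(\t)]$, so only the late part survives and this pair contributes at most $2e^{-\Delta}+8tr_{\max}/n$.

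The remaining case — and the main obstacle — is the \emph{early} part $\{\gamma_{\ell_1},\gamma_{\ell_2}\ \text{meet in}\ [t_{\ell_2},t_{\ell_2}+\Delta_{\min}(\t)]\}$ when $\ell_1\ne i_{\ell_2}$, for which $E_g$ gives no leverage. I would estimate it by conditioning on $A_{\ell_2}=\gamma_{i_{\ell_2}}(t_{\ell_2})$ (the \emph{parent}'s position) rather than on $B_{\ell_2}$: given $A_{\ell_2}$, the walk $\gamma_{\ell_2}$ starts from $B_{\ell_2}\sim\nu_{A_{\ell_2}}$, so its law at any $s\ge t_{\ell_2}$ is at most $R$ times the marginal of a walk from $A_{\ell_2}$ (and the supremum of a density is non-increasing in time), and by Lemma \ref{maxgamma_t(x)} the latter is comparable to $\pi$. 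A first-moment estimate — the expected time the conditionally independent walk $\gamma_{\ell_1}$ spends within one step of $\gamma_{\ell_2}$ over a window of length $\Delta_{\min}(\t)$, divided by a lower bound on the expected coincidence time after a meeting — then controls this early probability by a constant times $R$ times $2e^{-\Delta}+8tr_{\max}/n$. I expect the delicate point to be precisely the bookkeeping that extracts only a single power of $R$ here — which is why one conditions on the parent's position (so that $\nu_{A_{\ell_2}}$ contributes exactly one factor of $R$) and why the prefactor is $10R+1$ rather than a power of $R$. Bundling the late and early bounds gives the per-pair estimate $(10R+1)(2e^{-\Delta}+8tr_{\max}/n)$; summing over the $\tfrac{k(k+1)}{2}$ pairs and multiplying by $r_{\max}^k$ yields the claimed inequality.
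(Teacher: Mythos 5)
Your sign argument, the union bound over the $\tfrac{k(k+1)}{2}$ pairs with the crude bound $\prod_{\ell}r(A_\ell)\le r_{\max}^k$, your late-window estimate (Lemma \ref{meetprob} with threshold $\Delta_{\min}(\t)$ plus Lemma \ref{l:deltagood}), and your observation that the pair $\ell_1=i_{\ell_2}$ has no early contribution all match the paper's argument. The genuine gap is exactly where you flag it: the early window for a pair with $\ell_1\neq i_{\ell_2}$, and the mechanism you propose does not close it. Conditionally on $A_{\ell_2}$ (or on the positions at time $t_{\ell_2}$), the paths of $\gamma_{\ell_1}$ and $\gamma_{\ell_2}$ on $[t_{\ell_2},t]$ are indeed independent, but then the law of $\gamma_{\ell_2}$ is concentrated near $A_{\ell_2}$, not comparable to $\pi$; and since you have discarded all of $E_g$, nothing in your conditioning prevents $\gamma_{\ell_1}(t_{\ell_2})$ from sitting at or adjacent to $A_{\ell_2}$, in which case the conditional early-meeting probability is of constant order, so no uniform-in-$(x,y)$ bound of size $R(e^{-\Delta}+tr_{\max}/n)$ can come out of this conditioning alone. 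If instead you work unconditionally so that Lemma \ref{maxgamma_t(x)} applies, the two walks are no longer independent (the offspring is born adjacent to the parent, whose position is correlated with $\gamma_{\ell_1}$), so the product-of-marginals first-moment computation is invalid; and even setting that aside, Lemma \ref{maxgamma_t(x)} costs $R^{\ell_2}$ rather than a single $R$, produces no $e^{-\Delta}$ term, and the step ``the law of $\gamma_{\ell_2}$ is at most $R$ times that of a walk from $A_{\ell_2}$'' is itself false without a time shift (at small times the offspring can be where the parent's kernel has mass zero); this is what Claim \ref{cl1} repairs.

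The missing small parameter is the time elapsed since the \emph{previous} branching: $t_{\ell_2}-t_{\ell_2-1}\ge\Delta_{\min}(\t)>G(t,\Delta)$, during which $\gamma_{\ell_1}$ and the parent $\gamma_{i_{\ell_2}}$ evolve as independent walks. The paper conditions on their positions $x,y$ at time $t_{\ell_2-1}$ and runs the first-moment ratio for the co-location time of $\gamma_{\ell_1}$ and $\gamma_{\ell_2}$ over $[t_{\ell_2},t_{\ell_2}+(t_{\ell_2+1}-t_{\ell_2-1})]$: the numerator is at most $\max_{x,y}\int_{t_{\ell_2}-t_{\ell_2-1}}^{t_{\ell_2+1}+t_{\ell_2}-2t_{\ell_2-1}}\sum_w p_s(x,w)\hat p_s(y,w)\,\mathrm{d}s$, where $\hat p$ carries the forced branching jump \eqref{defofhatp}; Claim \ref{cl1} turns that jump into $10R\,p_{s+1/r_{\max}}$ (the single factor of $R$), and because the integration starts at time at least $\Delta_{\min}(\t)$, the Poincar\'e bound \eqref{poincare} together with $\Delta$-goodness (absorbing the ratio of integrated return probabilities into $H(t)$) yields the $e^{-\Delta}$ term plus the bulk term $t/n$. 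The denominator is bounded below by $\min_z\int_0^{t_{\ell_2}-t_{\ell_2-1}}p_{2s}(z,z)\,\mathrm{d}s\ge 1/(8r_{\max})$, again by $\Delta$-goodness via $\Delta_{\min}(\t)\ge 1/r_{\max}$, giving the per-pair bound $10R\,(2e^{-\Delta}+8tr_{\max}/n)$. (The paper also inserts, by a choose-the-right-colliding-pair argument, the indicator that $\gamma_{\ell_1}$ avoided $\gamma_{i_{\ell_2}}$ on $[t_{\ell_2-1},t_{\ell_2}]$, but the analytic engine is this elapsed-time-since-last-branching mechanism, which is precisely what your early-window sketch never invokes.)
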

\begin{proof}
	By definition 
	\begin{equation}\label{g-h}
		\begin{split}
			0\leq &g(\t,I_k)-h(\t,I_k)\\
			\leq &r_{\max}^{k}
			\P(\{\forall 1 \leq \ell \leq k \text{ and } t' \in [t_\ell, t_{\ell+1}] \text{, we have }
			\gamma_{\ell}(t')\neq \gamma_{i_\ell}(t')\} \\
			& \cap \{ \exists 0\leq \ell_1 < \ell_2 \leq k \text{ and } t''\in [t_{\ell_2}, t] \text{, such that } \gamma_{\ell_1}(t'')=\gamma_{\ell_2}(t'') \} )\\
			\leq & r_{\max}^{k} \sum_{0\leq \ell_1 < \ell_2 \leq k}
			\E\Big(\1[\{\forall 1 \leq \ell \leq k \text{ and } t' \in [t_\ell, t_{\ell+1}], \text{ we have }
			\gamma_{\ell}(t')\neq \gamma_{i_\ell}(t')\} \\ & \cap \{
			\exists  t''\in [t_{\ell_2}, t], \text{ such that } \gamma_{\ell_1}(t'')=\gamma_{\ell_2}(t'')\}] \vphantom{\frac12}
			\\
			&\times \left(\1[\ell_1= i_{\ell_2}]+\1[\ell_1\neq i_{\ell_2} \text{ and } \forall t''' \in [t_{\ell_2-1},t_{\ell_2}], \text{ we have } \gamma_{\ell_1}(t''')\neq 
			\gamma_{i_{\ell_2}}(t''')]\right) \vphantom{\frac12} \Big).
		\end{split}
	\end{equation}
	
	For the expectation inside the summation in the last expression, depending on whether the collision happens before or after $t_{\ell_2+1}$, we  upper bound it by
	\begin{equation}  \label{e:goodsetpf1}
	\begin{split}
		&\P(\{\gamma_{\ell_1}(t')\neq \gamma_{\ell_2}(t') \text{ for all } t' \in [t_{\ell_2}, t_{\ell_2+1}]\} \cap \{\gamma_{\ell_1}(t'')=\gamma_{\ell_2}(t'') \text{ for some }
		t''\in [t_{\ell_2+1}, t] \})
		\\+&\P(\{\gamma_{\ell_1}(t')\neq \gamma_{i_{\ell_2}}(t') \text{ for all } t' \in [t_{\ell_2-1}, t_{\ell_2}]\} \cap
		\{\gamma_{\ell_1}(t'')=\gamma_{\ell_2}(t'') \text{ for some } t''\in [t_{\ell_2}, t_{\ell_2+1}] \})\\
		:=&P_1+P_2.
		\end{split}
	\end{equation}
	Here we have used the observation that if $\gamma_{\ell_1}$ collides with $\gamma_{\ell_2}$ within the time period
	$[t_{\ell_2},t_{\ell_2+1}]$ then it must be true that $\ell_1\neq i_{\ell_2}$, since otherwise the indicator 
	\begin{align*}
		\1[\{\forall 1 \leq \ell \leq k \text{ and all } t' \in [t_\ell, t_{\ell+1}], &\text{ we have that }
		\gamma_{\ell}(t')\neq \gamma_{i_\ell}(t')\} \\ & \cap \{
		\exists  t''\in [t_{\ell_2}, t] \text{ such that } \gamma_{\ell_1}(t'')=\gamma_{\ell_2}(t'')\}]\end{align*}
	would be $0$. 
	
	For the first term, since $\gamma_{\ell_1}$ and $\gamma_{\ell_2}$ are independent  random walks, by Lemma \ref{meetprob}, we have
	\begin{equation*}
	    \begin{split}
	        	P_1&\leq 2\exp\left(-\frac{t_{\ell_2+1}-t_{\ell_2}}{\rel}\right)
	\frac{\max_z \int_0^{2(t_{\ell_2+1}-t_{\ell_2})}  p_{s}(z,z)\mathrm{d} s }{\min_z \int_0^{2(t_{\ell_2+1}-t_{\ell_2})}  p_s(z,z)\mathrm{d} s  }+\frac{8t}{n} ((t_{\ell_2+1}-t_{\ell_2})^{-1}\vee r_{\max})\\
	&\leq 2 e^{-\Delta} + \frac{8tr_{\max}}{n},
	    \end{split}
	\end{equation*}
	where the second inequality follows from the assumption that $\t$ is $\Delta$-good and Lemma \ref{l:deltagood}.

	For the second term, 
	we modify the proof of Lemma \ref{meetprob} to control it.
	Let
	$N_1$ denote the amount of time $\gamma_{\ell_1}$ and $\gamma_{\ell_2}$ spend together during $[t_{\ell_2}, t_{\ell_2}+(t_{\ell_2+1}-t_{\ell_2-1})]$.
	Then it suffices to upper bound
	$$
	\frac{\E N_1}{\E(N_1 \mid \{\gamma_{\ell_1}(t')\neq \gamma_{i_{\ell_2}}(t'),\forall t' \in [t_{\ell_2-1}, t_{\ell_2}]\} \cap \{
		\gamma_{\ell_1}(t')=\gamma_{\ell_2}(t') \text{ for some } t'\in [t_{\ell_2}, t_{\ell_2+1}] \})}.
	$$
	Note that in $[t_{\ell_2-1},t_{\ell_2}]$, $\gamma_{\ell_1}$ and $\gamma_{i_{\ell_2}}$ perform independent  random walks; in $[t_{\ell_2},t_{\ell_2+1}]$, $\gamma_{\ell_1}$ and $\gamma_{\ell_2}$ perform independent  random walks 
	and $\gamma_{\ell_2}(t_{\ell_2})$ is sampled from $\nu_{\gamma_{i_{\ell_2}}(t_{\ell_2})}$.
	For the denominator, we can lower bound it by
	$$
	\min_z \int_0^{t_{\ell_2}-t_{\ell_2-1}} p_{2s}(z,z) \mathrm{d} s \geq 
	\frac{(t_{\ell_2}-t_{\ell_2-1})\wedge r_{\max}^{-1}}{8},
	$$
	using arguments similar to those in the proof of Lemma \ref{meetprob}.
	Since $\t$ is $\Delta$-good, by Lemma \ref{l:deltagood}, we have $t_{\ell_2}-t_{\ell_2-1} \geq \Delta_{\min}(\t) \geq 1/r_{\max}$, so the denominator is bounded from below by
	$1/(8 r_{\max})$. For the numerator, it is bounded from above by
	$$
	\max_{x,y} \int_{t_{\ell_2}-t_{\ell_2-1}}^{t_{\ell_2+1}+t_{\ell_2}-2t_{\ell_2-1}}\sum_w p_s(x,w)\hat{p}_s(y,w)\mathrm{d} s,
	$$
	where $\hat{p}_s$ is the transition probability of a random walk with rate  $\r$ that makes a jump at time $t_{\ell_2}-t_{\ell_2-1}$. More precisely,
	\begin{equation}\label{defofhatp}
		\hat{p}_s(y,z):=
		\begin{cases}
			p_s(y,z), &\text{if $s<t_{\ell_2}-t_{\ell_2-1}$,}\\
			\sum_{a,b}p_{t_{\ell_2}-t_{\ell_2-1}}(y,a) \frac{r_{a,b}}{r(a)} p_{s-(t_{\ell_2}-t_{\ell_2-1})}(b,z),&\text{if $s>t_{\ell_2}-t_{\ell_2-1}$}.
		\end{cases}
	\end{equation}
	
	To bound $\hat{p}_s(y,z)$, we will need to look at the term $\frac{r_{a,b}}{r(a)}$. We have the following claim.
	\begin{claim}\label{cl1}
		For all $a,b\in V$,
		$$ \frac{r_{a,b}}{r(a)}\leq
		10R p_{1/r_{\max}}(a,b).
		$$
	\end{claim}
	\begin{proof}[Proof of Claim \ref{cl1}]
		Recall that $R=r_{\max}/r_{\min}$.
		Note
		the event that the random walk transits from $a$ to $b$
		in time $1/r_{\max}$ includes the event that it makes a jump from $a$ to $b$ before time $1/r_{\max}$
		and then stays at $b$ until (at least) $1/r_{\max}$. Thus we have 
		$$
		p_{1/r_{\max}}(a,b)
		\geq \left(1-\exp\left(-\frac{r(a)}{r_{\max}}\right)\right)
		\frac{r_{a,b}}{r(a)} \exp\left(-\frac{r(b)}{r_{\max}}\right).
		$$
		Using the inequality  $1-e^{-x} \geq x/2$ for $0\leq x\leq 1$ we see
		$$
		p_{1/r_{\max}}(a,b)\geq \frac{1}{2e} \frac{r(a)}{r_{\max}} \frac{r_{a,b}}{r(a)} 
		=
		\frac{1}{2e}\frac{r_{a,b}}{r_{\max}}.
		$$
		Rearranging this inequality we get
		$$
		\frac{r_{a,b}}{r(a)}\leq 2e\frac{r_{\max}}{r(a)}p_{1/r_{\max}}(a,b)
		\leq  10\frac{r_{\max}}{r_{\min}}p_{1/r_{\max}}(a,b)=10R p_{1/r_{\max}}(a,b),
		$$
		as desired. 
	\end{proof}
	Now, given the upper bound of $\frac{r_{a,b}}{r(a)}$, we have that for $s>t_{\ell_2}-t_{\ell_2-1}$,
	\begin{equation}\label{bdhatp}
		\hat{p}_s(y,z)
		\leq 10R  
		\sum_{a,b}p_{t_{\ell_2}-t_{\ell_2-1}}(y,a) 
		p_{1/r_{\max}}(a,b)
		p_{s-(t_{\ell_2}-t_{\ell_2-1})}(b,z) =10R 
		p_{s+1/r_{\max} }(x,y).
	\end{equation}
	Hence we can upper bound the numerator $\E N_1$ by
	\begin{multline}
		10R \max_{x,y}
		\int_{t_{\ell_2}-t_{\ell_2-1}}^{t_{\ell_2+1}+t_{\ell_2}-2t_{\ell_2-1}}
		\sum_w p_s(x,w)p_{s+1/r_{\max}}(y,w)\mathrm{d} s
		\\
		\leq
		10R \max_{x,y}
		\int_{t_{\ell_2}-t_{\ell_2-1}}^{t_{\ell_2+1}+t_{\ell_2}-2t_{\ell_2-1}}
		p_{2s+1/r_{\max}}(z,w)\mathrm{d} s.   
	\end{multline}
	Using Lemma \ref{l:poincareineq} and equation \eqref{maxxyp} to replace $\max_{x,y}$ with $\max_x$, and writing $a(\ell_2):=\left\lceil \frac{t_{\ell_2+1}-t_{\ell_2-1}}{t_{\ell_2}-t_{\ell_2-1}} \right\rceil$, we further upper bound this by
	\begin{multline}
		10R
		\left(\left(\sum_{i=1}^{a(\ell_2)}\exp\left(-\frac{i(t_{\ell_2}-t_{\ell_2-1})}{\rel}\right)\right)\max_z\int_{0}^{t_{\ell_2}-t_{\ell_2-1}} p_{2s}(z,z)\mathrm{d} s + \frac{t_{\ell_2+1}-t_{\ell_2-1}}{n} \right)  
		\\
		\leq
		10R\left(
		\frac{\exp(-\Delta_{\min}(\t)/\rel)}{1-\exp(-\Delta_{\min}(\t)/\rel)} \max_z
		\int_{0}^{t_{\ell_2}-t_{\ell_2-1}} p_{2s}(z,z)\mathrm{d} s + \frac{t}{n} \right).
	\end{multline}
	Thus we can bound $P_2$ (defined in \eqref{e:goodsetpf1}) by
	$$
	10R \left(
	\frac{\exp(-\Delta_{\min}(\t)/\rel)}{1-\exp(-\Delta_{\min}(\t)/\rel)}
	\frac{\max_z\int_{0}^{t_{\ell_2}-t_{\ell_2-1}} p_{2s}(z,z)\mathrm{d} s}{\min_z\int_{0}^{t_{\ell_2}-t_{\ell_2-1}} p_{2s}(z,z)\mathrm{d} s} + \frac{8tr_{\max}}{n} \right).
	$$
	Using the fact that $\t$ is $\Delta$-good, and Lemma \ref{l:deltagood}, we can bound this by
	$10R\left(2 e^{-\Delta} + \frac{8tr_{\max}}{n}\right)$.
	Finally, the conclusion follows by summing over all $0\leq \ell_1 < \ell_2 \leq k$.
\end{proof}

\subsection{Estimating $q(\t,I_k)$}  \label{ss:approxq}
\begin{lemma}   \label{l:approxq}
	For any Markov chain  $(V,\r)$ with $\abs{V}=n$ and $\M>3t$,  any $k\in \ZZ_+$, $I_k \in \Phi_k$, and any $\Delta$-good $\t \in \R^k_{<,t}$, we have that
	\begin{equation}
		\left|q(\t,I_k)-\left(\frac{n}{2\M}\right)^k \right|\leq \left(\frac{3tk}{\M} + \frac{2^{k-1}\rel}{G(t, \Delta)}\right)\left(\frac{n}{2\M}\right)^k.
	\end{equation}
\end{lemma}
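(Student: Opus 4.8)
The plan is to notice that $q(\t,I_k)$ factors as a product of single-gap non-meeting probabilities, to pin each such factor down via Kac's formula and the Aldous--Brown density estimate, and then to multiply the resulting per-factor bounds together. Indeed, comparing \eqref{cond_form4} with Definition \ref{defSRW}, the $\ell$-th factor of $q(\t,I_k)$ is exactly $\alpha_{t_{\ell+1}-t_\ell}$: writing $s:=t_{\ell+1}-t_\ell$, the event inside the indicator is the event $\{\RM>s\}$ for two independent walks started from $\U\sim\pi$ and $\tilde{\U}\sim\nu_{\U}$, so the factor equals $\E\big(r(\U)\,\P_{\U,\nu_{\U}}(\RM>s)\big)=\E(\alpha_s(\U))=\alpha_s$, since $\U$ is uniform and $\alpha_s$ is the vertex-average of $\alpha_s(\cdot)$. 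Hence $q(\t,I_k)=\prod_{\ell=1}^{k}\alpha_{t_{\ell+1}-t_\ell}$, and it suffices to control each factor $\alpha_s$.

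Next I would identify $\alpha_s$, up to the factor $n/2$, as the density at time $s$ of the hitting time $T_A$ of the diagonal $A=\{(x,x):x\in V\}$ by the two-particle product chain (two independent copies of $(V,\r)$) started from $\pi\otimes\pi$. This product chain is irreducible, reversible w.r.t.\ $\pi\otimes\pi$, has relaxation time $\rel$, and satisfies $\E_{\pi\otimes\pi}T_A=\M$. A short computation — each coordinate leaves $A$ from $(x,x)$ at rate $r(x)$, so $Q(A,A^c)=\frac{2\bar r}{n}$ with $\bar r:=\frac1n\sum_x r(x)$, and from \eqref{exitmeasure} one gets $\nu_A((x,y))=\frac{r_{x,y}}{n\bar r}$ for $x\ne y$ — substituted into Kac's formula \eqref{kac1} yields
\[
f_{T_A}(s)=Q(A,A^c)\,\P_{\nu_A}(T_A>s)=\frac{2\bar r}{n}\cdot\frac1{n\bar r}\sum_{x}r(x)\P_{x,\nu_x}(\RM>s)=\frac{2}{n}\,\alpha_s ,
\]
i.e.\ $\alpha_s=\frac n2 f_{T_A}(s)$ (the atom of $T_A$ at $0$, of mass $\P_{\pi\otimes\pi}(X_0=Y_0)=1/n$, being irrelevant to the density at $s>0$). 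Feeding $f_{T_A}(s)=\frac2n\alpha_s$ into the Aldous--Brown density estimate \eqref{ABdensity} for the product chain then gives, for every $s>0$,
\[
\frac{n}{2\M}\Big(1-\frac{2\rel+s}{\M}\Big)\ \le\ \alpha_s\ \le\ \frac{n}{2\M}\Big(1+\frac{\rel}{2s}\Big).
\]

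Finally I would combine these per-gap bounds. For $\Delta$-good $\t$ (so in particular $\Delta>2$) and each $1\le\ell\le k$, unrolling \eqref{d:smallt}, \eqref{defofH}, \eqref{G(t)} and using $H(t)\ge1$ gives $2\rel<\rel\Delta\le G(t,\Delta)<\Delta_{\min}(\t)\le s_\ell<t<\M/3$, where $s_\ell:=t_{\ell+1}-t_\ell$, the inequality $s_\ell<t$ holds since $0<t_1<\dots<t_k<t$, and $t<\M/3$ is the hypothesis $\M>3t$. The displayed two-sided bound then reads $\frac n{2\M}(1-v)\le\alpha_{s_\ell}\le\frac n{2\M}(1+u)$ with $u:=\frac{\rel}{2G(t,\Delta)}\le\frac14$ and $v:=\frac{2\rel+t}{\M}<\frac23$; multiplying over $\ell$ gives $\big(\frac n{2\M}\big)^k(1-v)^k\le q(\t,I_k)\le\big(\frac n{2\M}\big)^k(1+u)^k$, so
\[
\Big|q(\t,I_k)-\Big(\tfrac n{2\M}\Big)^k\Big|\ \le\ \Big(\tfrac n{2\M}\Big)^k\max\big\{(1+u)^k-1,\ 1-(1-v)^k\big\}.
\]
Using $u\le1$ (hence $u^j\le u$ for $j\ge1$) one bounds $(1+u)^k-1=\sum_{j\ge1}\binom kj u^j\le u(2^k-1)\le\frac{2^{k-1}\rel}{G(t,\Delta)}$, and using $2\rel\le t$ one bounds $1-(1-v)^k\le kv=\frac{k(2\rel+t)}{\M}\le\frac{3kt}{\M}$; bounding the maximum by the sum of these two terms yields exactly the claimed inequality.

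I expect the main obstacle to be the identification in the second paragraph: one must compute $Q(A,A^c)$ and the exit distribution $\nu_A$ of the two-particle product chain correctly — in particular the factor $2$ coming from the fact that either of the two coordinates may jump onto the other, and checking that the atom of the meeting time at $0$ does not interfere — so that the Aldous--Brown estimate \eqref{ABdensity} may legitimately be invoked with $\E_\pi T_A=\M$ and relaxation time $\rel$. Once this identity for $\alpha_s$ is in place, the factorization of $q$ and the elementary manipulations with $(1+u)^k$ and $(1-v)^k$ are routine.
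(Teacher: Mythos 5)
Your proposal is correct and follows essentially the same route as the paper: factor $q(\t,I_k)$ into single-gap non-meeting probabilities, identify each factor with $\tfrac n2$ times the density of the diagonal hitting time for the product chain (Kac's formula \eqref{kac1}), apply the Aldous--Brown density bound \eqref{ABdensity} with $\E_{\pi\otimes\pi}T_A=\M$ and relaxation time $\rel$, and finish with the elementary $(1+u)^k$, $(1-v)^k$ estimates using $\Delta$-goodness and $\M>3t$. The only cosmetic difference is that you carry out the Kac computation for the product chain explicitly (and phrase the factor as $\alpha_s$), whereas the paper obtains the same single-gap identity by citing Lemma \ref{l:reverdiffinit} with $k=1$; the bounds and the final algebra coincide.
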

\begin{proof}
Recall that we let $\U\sim\pi$ and $\tilde \U\sim \nu_{\U}$, and that $W_\U$ and $W_{\tilde \U}$ are two random walks starting from $\U$ and $\tilde \U$ respectively.
We first show that for all $\hat{t} > 0$,
	\begin{equation}   \label{e:approxq:pf1}
		\frac{n}{2\M}\left(1 - \frac{2\rel + \hat{t}}{\M}\right)\leq
		\E\left( r(\U) \1[W_\U(t')\neq W_{\tilde \U}(t'), \forall t'\in[ 0, \hat{t}] ] \right) 
		\leq \frac{n}{2\M}\left(1 + \frac{\rel}{2\hat{t}}\right).
	\end{equation}
	By Lemma \ref{l:reverdiffinit} (in the case $k=1$) and \eqref{kac1}, we have
	$$
	h(\mathbf{s},I_1)=\frac{n}{2}f(\hat{t}),
	$$
	when $\mathbf{s}:=(0,\hat{t},2\hat{t})$.
	Here $f$ is the density function of the meeting time of two independent random walks starting from $\pi^{\otimes 2}$.  
	By the definition of the branching structure,
	\begin{equation}\label{eq:densityfortwo}
		\E\left( r(\U) \1[W_\U(t')\neq W_{\tilde \U}(t') \text{ for all } t'\in[ 0, \hat{t}] ]\right)
		=h(\s,I_1)=
		\frac{n}{2}f(\hat{t}).
	\end{equation}

	Consider the product chain of two independent walks on $(V,\r)$. It has the same relaxation time as the original chain. Using \eqref{ABdensity} for the diagonal set in the product chain, we get
	\begin{equation}\label{eq:fhattctl}
		\frac{1}{\M} \left(1-\frac{2\rel(\r)+\hat{t}}{\M} \right)\leq
		f(\hat{t}) \leq \frac{1}{\M}\left(1+\frac{\rel(\r)}{2\hat{t}}\right),
	\end{equation}
	where we have used the definition  $\M=\E_{\pi,\pi}(\RM)$. 
	Thus equation \eqref{e:approxq:pf1} follows from \eqref{eq:densityfortwo} and \eqref{eq:fhattctl}.
	
	By taking $\hat{t} =\hat{t}_{\ell} = t_{\ell+1}-t_{\ell}$, for $1 \leq \ell \leq k$, we have
	\begin{equation*}
		\left(\frac{n}{2\M}\right)^k\prod_{\ell=1}^k\left(1 - \frac{2\rel + t_{\ell+1}-t_{\ell}}{\M}\right)\leq
		q(\t, I_k)
		\leq \left(\frac{n}{2\M}\right)^k\prod_{\ell=1}^k\left(1 + \frac{\rel}{2(t_{\ell+1}-t_{\ell})}\right),
	\end{equation*}
	and this implies that
	\begin{equation}   \label{e:approxq:pf3}
		\left(\frac{n}{2\M}\right)^k\left(1 - \frac{3t}{\M}\right)^k\leq
		q(\t, I_k)
		\leq \left(\frac{n}{2\M}\right)^k\left(1 + \frac{\rel}{2\Delta_{\min}(\t)}\right)^k.
	\end{equation}
	Since $3t < \M$, we have
	\begin{equation}\label{lobdq}
		q(\t, I_k) - \left(\frac{n}{2\M}\right)^k
		\geq - \frac{3tk}{\M}\left(\frac{n}{2\M}\right)^k.
	\end{equation}
	Since $\t$ is $\Delta$-good, we have $\Delta_{\min}(\t) > G(t, \Delta)$. Recall that we only consider $\Delta>2$, so we have $\rel \leq  \Delta_{\min}(\t)$. It follows that
	\begin{equation}\label{upbdq}
		q(\t, I_k) - \left(\frac{n}{2\M}\right)^k
		\leq \frac{2^{k-1}\rel}{G(t, \Delta)}\left(\frac{n}{2\M}\right)^k.
	\end{equation}
	Thus our conclusion follows from \eqref{lobdq} and \eqref{upbdq}. 
\end{proof}

\subsection{From moments to coalescing probability}  \label{ss:fram}
We start by assembling the above results to estimate $\E N_t^k$ for each $k\in \ZZ_+$, and large enough $t$.

\begin{lemma}  \label{l:ntk-est}
	For any Markov chain  $(V,\r)$ with $\abs{V}=n$, and any $k\in \ZZ_+$, $t < \M/3$, we have
	\begin{multline}
		\left| \E N_t^k - (k+1)!\left(\frac{nt}{2\M}\right)^k \right|
		\\
		\leq
		C_k\left((1+r_{\max}t)^{k-1} + 
		e_k(t) + \left(\frac{t}{\M} + \frac{\rel}{G(t, \Delta)}\right)\left(\frac{nt}{2\M}\right)^k
		\right.
		\\
		\left.+
		r_{\max}^k t^k
		R\left(e^{-\Delta}
		+ \frac{tr_{\max}}{n}\right)
		+
		G(t, \Delta)t^{k-1} \left(r_{\max}^k + \left(\frac{n}{2\M}\right)^k  \right)
		\right),
	\end{multline}
	where $G(t, \Delta)$ is defined in \eqref{G(t)}, and $C_k$ is some constant depending on $k$.
\end{lemma}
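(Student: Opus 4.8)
The case $k=1$ is essentially equation \eqref{N_t}: I would just verify that the error $n(t/\M)^2+n\rel/\M$ appearing there is dominated, up to a universal constant, by the $k=1$ instances of $(t/\M+\rel/G(t,\Delta))(nt/2\M)$ and $G(t,\Delta)(r_{\max}+n/2\M)$ (using $G(t,\Delta)\ge\rel$ and $\M r_{\max}/n\ge 1/4$ from \eqref{mrmax/n}, and reading $e_1(t)$ as $0$ since $g=q$ when $k=1$). So assume $k\ge 2$, and implicitly $\Delta>2$ so that ``$\Delta$-good'' and Lemma \ref{l:deltagood} are meaningful. The starting point is \eqref{moment}, $\E N_t^k=n^k\P_{\pi^{\otimes(k+1)}}(\C(X_0,\dots,X_k)\le t)$; multiplying the two-sided estimate of Lemma \ref{densityexpress} by $n^k$ reduces the task to (i) comparing the ``main term'' $(k+1)!\sum_{I_k\in\Phi_k}\int_{\R^k_{<, t}}h(\t,I_k)\,\mathrm d\t$ with $(k+1)!(nt/2\M)^k$, and (ii) bounding the ``lower-order block'' $\sum_{\ell=1}^{k-1}(k+1)^{\ell+1}(\ell+1)!\sum_{I_\ell\in\Phi_\ell}\int_{\R^\ell_{<, t}}h(\t,I_\ell)\,\mathrm d\t$ together with the additive $1$. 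For (ii) I would invoke only the crude pointwise bound $0\le h(\t,I_\ell)\le g(\t,I_\ell)\le r_{\max}^{\ell}$ (immediate from the definitions \eqref{cond_form}, \eqref{gt,phi_k}, together with the event inclusion underlying Lemma \ref{l:approxhbyg}), together with $|\Phi_\ell|=\ell!$ and $|\R^\ell_{<, t}|=t^\ell/\ell!$, to get $\sum_{I_\ell}\int_{\R^\ell_{<, t}}h(\t,I_\ell)\,\mathrm d\t\le(r_{\max}t)^{\ell}\le(1+r_{\max}t)^{k-1}$ for each $1\le\ell\le k-1$; summing over $\ell$, and absorbing the $1$, bounds the whole block by $C_k(1+r_{\max}t)^{k-1}$. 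Here and below $C_k$ denotes a constant depending only on $k$ which may change from line to line.

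\noindent For (i), I would first note the bookkeeping identity $(k+1)!(nt/2\M)^k=(k+1)!\sum_{I_k\in\Phi_k}\int_{\R^k_{<, t}}(n/2\M)^k\,\mathrm d\t$ (again using $|\Phi_k|=k!$ and $|\R^k_{<, t}|=t^k/k!$), so that it remains to control $(k+1)!\sum_{I_k}\int_{\R^k_{<, t}}(h-(n/2\M)^k)\,\mathrm d\t$. I would telescope $h-(n/2\M)^k=(h-g)+(g-q)+(q-(n/2\M)^k)$ and split the domain $\R^k_{<, t}$ into the $\Delta$-good set $\mathcal G=\{\t:\Delta_{\min}(\t)>G(t,\Delta)\}$ and its complement. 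The $(g-q)$ piece is handled over all of $\R^k_{<, t}$ straight from the definitions \eqref{e3kt}--\eqref{e3kt2}: $|\sum_{I_k}\int_{\R^k_{<, t}}(g-q)\,\mathrm d\t|\le|\Phi_k|\,e_k(t)=k!\,e_k(t)$, contributing $C_k e_k(t)$. On $\mathcal G$, Lemma \ref{l:approxhbyg} gives $|h-g|\le C_k r_{\max}^{k}(R+1)(e^{-\Delta}+tr_{\max}/n)$ and Lemma \ref{l:approxq} (valid since $t<\M/3$) gives $|q-(n/2\M)^k|\le C_k(t/\M+\rel/G(t,\Delta))(n/2\M)^k$; integrating over a set of measure $\le t^k/k!$ and multiplying by $(k+1)!\,k!$ produces exactly the advertised terms $C_k r_{\max}^{k}t^{k}R(e^{-\Delta}+tr_{\max}/n)$ and $C_k(t/\M+\rel/G(t,\Delta))(nt/2\M)^k$ (using $(n/2\M)^k t^k=(nt/2\M)^k$).

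\noindent Finally I must control $(h-g)$ and $(q-(n/2\M)^k)$ on $\mathcal G^c$, where none of the Aldous--Brown-type estimates apply. The point is an elementary measure bound: $\mathcal G^c\subseteq\bigcup_{\ell=1}^{k}\{\t\in\R^k_{<, t}:t_{\ell+1}-t_\ell\le G(t,\Delta)\}$ (with the convention $t_{k+1}=t$), and integrating out the short gap shows each of these sets has Lebesgue measure $\le G(t,\Delta)t^{k-1}/(k-1)!$, whence $|\mathcal G^c|\le kG(t,\Delta)t^{k-1}/(k-1)!$. On $\mathcal G^c$ I would use only $0\le h\le g\le r_{\max}^{k}$, $0\le q\le r_{\max}^{k}$, and $(n/2\M)^k\le 2^k r_{\max}^{k}$ (the last from \eqref{mrmax/n}), so that $|h-g|$ and $|q-(n/2\M)^k|$ are both $\le C_k(r_{\max}^{k}+(n/2\M)^k)$ there; multiplying by $|\mathcal G^c|$ and by $(k+1)!\,k!$ yields the remaining term $C_k G(t,\Delta)t^{k-1}(r_{\max}^{k}+(n/2\M)^k)$. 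Summing the five blocks and enlarging $C_k$ completes the proof. I expect the only genuine work (beyond citing Lemmas \ref{l:approxhbyg} and \ref{l:approxq}, which carry the analytic substance) to be this last bit of bookkeeping: checking that every crude estimate invoked off the $\Delta$-good set is absorbed by the single $G(t,\Delta)t^{k-1}$ term, which ultimately hinges on the measure bound $|\mathcal G^c|\le kG(t,\Delta)t^{k-1}/(k-1)!$.
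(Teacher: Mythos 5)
Your proposal is correct and follows essentially the same route as the paper: it starts from \eqref{moment} and Lemma \ref{densityexpress}, bounds the lower-order block crudely by $(r_{\max}t)^{\ell}$, splits the main integral into the $\Delta$-good set and its complement (bounded in measure by $O_k(G(t,\Delta)t^{k-1})$), and on the good set invokes Lemmas \ref{l:approxhbyg} and \ref{l:approxq} together with the definition of $e_k(t)$. The only deviations — treating $k=1$ separately via \eqref{N_t} and handling the $g-q$ term over the whole domain rather than just the good set — are cosmetic and do not change the argument.
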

\begin{proof}
	By \eqref{moment}  and Lemma \ref{densityexpress},
	we have
	\begin{equation}\label{eq:ntk}
		\begin{split}
			&\left| \E(N_t^k) - (k+1)!\sum_{I_k \in \Phi_k} \int_{\R^{k}_{<, t}} h(\t,I_k)d\t\right| \\
			\leq &\sum_{1\leq \ell\leq k-1} (k+1)^{\ell+1}(\ell+1)! \sum_{I_\ell\in \Phi_\ell}\int_{\R^{\ell}_{<, t}  }  h(\t,I_\ell)\mathrm{d} \t+1
			\\
			\leq &
			\sum_{1\leq \ell\leq k-1} (k+1)^{\ell+1}(\ell+1)! (r_{\max}t)^\ell+1.
		\end{split}
	\end{equation}
	We also have
	\begin{multline}\label{eq:htikctl}
		\left|\sum_{I_k \in \Phi_k}\int_{\R^{k}_{<, t}} h(\t,I_k) \mathrm{d} \t - \left(\frac{nt}{2\M}\right)^k\right|
		=
		\left|\sum_{I_k \in \Phi_k}\int_{\R^{k}_{<, t}} \left(h(\t,I_k) - \left(\frac{n}{2\M}\right)^k \right) \mathrm{d} \t\right|
		\\
		\leq
		\left|\sum_{I_k \in \Phi_k}\int_{\t \text{ is } \Delta\text{-good}} \left(h(\t,I_k) - \left(\frac{n}{2\M}\right)^k \right)\mathrm{d} \t\right|
		+
		k!\cdot k G(t, \Delta)t^{k-1} \left(r_{\max}^k + \left(\frac{n}{2\M}\right)^k  \right),
	\end{multline}
	where in the last inequality we use the definition of $\Delta$-good (to argue that $\int_{\t:\, \t \text{ is not } \Delta\text{-good}}\mathrm{d}\t \le k G(t, \Delta)t^{k-1} $), as well as the bound  $h(\t,I_k) \le r_{\max}^k$.
	By Lemma \ref{l:approxhbyg} and Lemma \ref{l:approxq}, and the definition of $e_k(t)$, the first term in the second line of equation \eqref{eq:htikctl} can be bounded above by
	\[
		k!e_k(t) + \left(\frac{3tk}{\M} + \frac{2^{k-1}\rel}{G(t, \Delta)}\right)\left(\frac{nt}{2\M}\right)^k
		+
		r_{\max}^k t^k \frac{k(k+1)}{2}
		(10R+1)\left(2 e^{-\Delta} + \frac{8tr_{\max}}{n}\right).
	\]
	By putting everything together the conclusion follows.
\end{proof}

\begin{proof}[Proof of Proposition \ref{p:gengraph}]
	The value of $K$ is determined later.
	Throughout this proof we will use $C_K$ to denote a constant depending only on $K$, and its value may change from line to line.
	
	Our main task is to bound $\E N_t$ as well as $\E N_t^k/(\E N_t)^k$.
	By \eqref{N_t} and the fact that $\M r_{\max}/n \geq 1/4$ (by Lemma \ref{walkparamaters}), we have that
	\[
	\abs{\E N_t-\frac{nt}{\M}} \leq 
	\left(\frac{t}{\M}+\frac{\rel}{t}\right)\frac{nt}{\M}\leq \left(\frac{4r_{\max}t}{n} + \frac{G(t,\Delta)}{t}\right) \frac{nt}{\M}\leq C_K(E_1+E_2)\frac{nt}{\M}.
	\]
	To estimate $(\E N_t^k) \M^k /(nt)^k$, we control the (relative) error terms in Lemma \ref{l:ntk-est} one by one:
	\begin{equation*}
		\begin{split}
			&(r_{\max}t)^{k-1}\left(\frac{\M}{nt}\right)^k =
			\left(\frac{\M r_{\max}}{n}\right)^k \frac{1}{r_{\max}t} \leq C_KE_4, \qquad
			\left(\frac{\M}{nt}\right)^k \leq C_KE_4^k,\\
			& \sum_{k=2}^K e_k(t)\left(\frac{\M}{nt}\right)^k\leq E_3, \qquad
			\frac{t}{\M}+\frac{\rel}{G(t,\Delta)}\leq C_K(E_2+\delta),\\
			&r_{\max}^k
			R\left(e^{-\Delta}
			+ \frac{tr_{\max}}{n}\right)\left(\frac{\M}{n}\right)^k\leq C_K(\delta+E_2),\\
			&G(t, \Delta)t^{k-1} \left(r_{\max}^k + \left(\frac{n}{2\M}\right)^k\right)
			\left(\frac{\M}{nt}\right)^{k}\leq C_KE_1.
		\end{split}
	\end{equation*}
	It follows that  under the first condition of Proposition
	\ref{p:gengraph}, we have
	\begin{equation}  \label{eq:509}
		\begin{split}
			\abs{\E N_t-\frac{nt}{\M}}&\leq
			C_K\delta \frac{nt}{\M}, \\
			\abs{\frac{\E N_t^k}{(\E N_t)^k}-
				\frac{(k+1)!}{2^k}} &\leq C_K \delta, \quad \mbox{for } 1\leq k\leq K.
		\end{split}
	\end{equation}
	From this, we see that by picking a sufficiently small  $\delta$, we can make $\E N_t\M/(nt)$ arbitrarily close to $1$ and 
	$\E N_t^k/(\E N_t)^k$ arbitrarily close to $(k+1)!/2^k$ for $1\leq k\leq K$. 
	Consider a random variable $\Gamma$ under the Gamma distribution with 
	the probability density function  $\P(\Gamma=x)=4x e^{-2x}\1[x\geq 0]$.
	Its $k$-th moment is $(k+1)!/2^k$, and it also satisfies the Carleman's moment condition 
	$$
	\sum_{m=0}^{\infty}(\E\Gamma^{2m})^{-1/2m}= \sum_{k\text{ even}} \frac{1}{
		\left(  (k+1)!/2^k\right)^{1/k}}=\infty .
	$$
	Hence the Gamma distribution is the unique distribution with the prescribed moments $(k+1)!/2^k$.
	We claim that 
	for any bounded piecewise continuous function $\psi$ on $\R$ and any $\eta>0$, there exist $K_0,\delta_0$ depending on $\psi$ and $\eta$, s.t. for any $K\geq K_0,\delta<\delta_0$, if Condition $1$ \eqref{firstcond} is satisfied for such $(K,\delta)$, then we have
	\begin{equation}\label{ingre0}
		\abs{\E\left(\psi\left(\frac{N_t}{\E(N_t)}\right)  \right)-\E(\psi(\Gamma))}<\eta.
	\end{equation}
	Indeed, assuming this is not true, we can find a sequence of (non-negative) random variables $\{\Gamma^{(n)}\}_{n\in\ZZ_+}$, whose $k$-th moment converges to $(k+1)!/2^k$ but $|\E(\psi(\Gamma^{(n)})) - \E(\psi(\Gamma))|$ does not decay to zero.
	The moment condition also means that $\{\Gamma^{(n)}\}_{n\in\ZZ_+}$ is tight, so by taking a subsequence we can assume that $\abs{\Gamma}^{(n)}$ converges in distribution, and the limit must have its $k$-th moment equal to $(k+1)!/2^k$ (since $\abs{\Gamma^{(n)}}^k$ is uniformly integrable by the convergence of all moments) and therefore
	must be $\Gamma$ by the Carleman's moment condition.
	We conclude that when $\psi$ is bounded and continuous, we have $|\E(\psi(\Gamma^{(n)})) - \E(\psi(\Gamma))|\to 0$; and this extends to any bounded piecewise continuous $\psi$ since the probability density function of $\Gamma$ is bounded.

	For later reference, we remark that the arguments leading to \eqref{ingre0} also imply that for any fixed $0<a<1$, the $k$-th moment of  $N_{at}/\E(N_{at})$ is close to that of $\Gamma$ (for the same values of $k$ for which this holds for $a=1$). Hence, equation \eqref{ingre0} also holds for $N_{at}/\E(N_{at})$ (with possibly different $K_0,\delta_0$).
	
	Let $\hat{\ep}$ be some small number which satisfies 
	\begin{equation}\label{ingre2}
		\E(\Gamma^{-1} \1[\Gamma <\hat{\ep}])\leq \frac{\ep}{5}
	\end{equation}
	and is to be determined.
	Using \eqref{ingre0} 
	we can pick $K$ large enough and  $\delta$  small enough (depending on $\ep$ and $\hat{\ep}$), so that 
	\begin{equation}\label{ingre1}
		\abs{\E\left(\frac{\E N_t}{N_t};\frac{N_t}{\E N_t}\geq \hat{\ep}\right)-\E(\Gamma^{-1};\Gamma\geq \hat{\ep}) }\leq \frac{\ep}{5}.
	\end{equation}
	It remains to estimate $\E\left(\E N_t/N_t;N_t/\E N_t<\hat{\ep}\right)$. For this we adapt an argument from \cite[Lemma 2]{bramson1980asymptotics}.
	By Corollary \ref{c:Ntnt1} we see that
	\begin{equation}\label{ingre8}
		\E\left(\frac{\E N_t}{N_t};\frac{N_t}{\E N_t}<\hat{\ep}\right)
		=\E N_t\P(0<n_t<\hat{\ep}\E N_t),
	\end{equation}
	where $n_t$ is the size of the opinion cluster of a uniformly random vertex in the voter model, as defined at the end of Section \ref{s:prelim}. Write $\eta=\sqrt{\hat \varepsilon}$, then the probability on the right hand side of \eqref{ingre8} can be bounded from above by
	$$
	\P \left(n_{(1-\eta)t}\geq \hat{\ep}\E N_t\right)-
	\P \left(n_{t}\geq \hat{\ep}\E N_t\right)+
	\P \left(n_{(1-\eta)t}<\hat{\ep}\E N_t, n_t>0       \right)=:\frac{J_1-J_2+J_3}{\E N_t}.
	$$
	By choosing $\hat{\ep}<1/8$ (so that $(1-\hat{\ep})^{-1}<(1+\hat{\ep})^2$), taking $\delta$ small and using  \eqref{eq:509}, we have 
	\begin{equation}\label{ctlEN_t}
		\begin{split}
			&(1-\hat{\ep})
			\frac{nt}{{(1-\eta)\M}} \leq \E N_{(1-\eta)t}\leq (1+\hat{\ep})
			\frac{nt}{{(1-\eta)\M}},\\
			& (1-\hat{\ep})\frac{nt}{\M}\leq \E N_t\leq (1+\hat{\ep})\frac{nt}{\M}\leq (1+\hat{\ep})^3\E N_{(1-\eta)t}.
		\end{split}
	\end{equation}
	It follows that 
	\begin{align*}
	   J_1&=\E N_t \E(N_{(1-\eta)t}^{-1}\1[N_{(1-\eta)t}>\hat{\ep}\E N_t])\\
	   &\leq (1+\hat{\ep})^{4} \E\left(\frac{\E N_{(1-\eta)t}}{N_{(1-\eta)t}};\frac{N_{(1-\eta)t}}{\E N_{(1-\eta)t}}> \frac{\hat{\ep}}{(1+\hat{\ep})^4}\right).
	\end{align*}
	Using the definition of $J_2$ together with Corollary \ref{c:Ntnt1} we get that
	$$
	J_2=\E\left(\frac{\E N_t}{N_t};\frac{N_t}{\E N_t}>\hat{\ep}\right).
	$$ Using this, as well as equation \eqref{ingre0}, we see
	that if we choose $K$ large enough and $\delta$ small enough, we have  
	\begin{equation}\label{ingre3}
		J_1-J_2\leq (1+\hat{\ep})^5\E\left(\Gamma^{-1}\1\left[\Gamma> \frac{\hat{\ep}}{(1+\hat{\ep})^4}\right]\right) -(1+\hat{\ep})^{-1} \E(\Gamma^{-1} \1[\Gamma> \hat{\ep}]),
	\end{equation}
	which can be made smaller than $\ep/5$ by taking $\hat{\ep}$ sufficiently small. 
	
	Finally, using the second condition \eqref{secondcond} together with \eqref{ctlEN_t} (to bound $\E N_t$), we see that
	\begin{equation}\label{ingre4}
	\begin{split}
	    	J_3&\leq \E N_t\sum_{A\subset V:1\leq \abs{A}\leq \hat{\ep} \E(N_t)}
		\P(\zeta^\U_{(1-\eta)t}=A)
		\P(n_t>0|\zeta^\U_{(1-\eta)t}=A)\\
		&\leq \left(\E N_t\P(n_{(1-\eta)t}>0)\right) \left(\hat{\ep} \E N_t \sup_{x\in V}P_{\eta t}(x)\right)\\
		&\leq \frac{\hat{\ep}((1+\hat{\ep})K')^2}{\eta (1-\eta)}=\frac{\sqrt{\hat{\ep}}((1+\hat{\ep})K')^2 }{1-\sqrt{\hat{\ep}}},
	\end{split}
	\end{equation}
	which can be made smaller than $\ep/5$ by taking $\hat{\ep}$ small enough. Here, we have used a union bound and the duality between the voter model and CRW in the second inequality of \eqref{ingre4}: 
	\begin{equation*}
		\begin{split}
			\P(n_t>0|\zeta^\U_{(1-\eta)t}=A)
			=\P\left( \cup_{x\in A} \{\zeta^x_{\eta t}\neq \emptyset\} \right)
			&\leq \sum_{x\in A} 
			\P\left( \{\zeta^x_{\eta t}\neq \emptyset\} \right)\\
			&=\sum_{x\in A}  P_{\eta t}(x) \leq \abs{A}\sup_{x\in V}P_{\eta t}(x).  
		\end{split}
	\end{equation*}
	Combining equations \eqref{ingre2} to \eqref{ingre4} we get $$
	\abs{\E\left(\frac{\E N_t}{N_t}\right)-\E(\Gamma^{-1}) }\leq 4\ep/5.
	$$
	Hence by using \eqref{ctlEN_t} to control $(\E N_t)^{-1}$ we get
	$$
	\abs{P_t-\frac{2\M}{nt}}=\abs{  (\E N_t)^{-1}
		\E\left(\frac{\E N_t}{N_t}\right)-\frac{2\M}{nt}}
	\leq \left(\frac{10\hat{\ep}}{1-\hat{\ep}}(1+\frac{4\ep}{5})+\frac{4\ep}{5} \right)\frac{\M}{nt},
	$$
	which can be made smaller than $(\ep \M)/(nt)$  by choosing $\hat{\ep}<\ep/100$. 
	
	In conclusion, for the given $\ep>0$, we first pick $\hat{\ep}$ small enough s.t.\ \eqref{ingre2} holds and the right hand side of \eqref{ingre3} is smaller than $\ep/5$. Then we choose a large enough $K$ and a small enough $\delta$ s.t.\  \eqref{ingre1}, \eqref{ctlEN_t}, \eqref{ingre3} and \eqref{ingre4} hold. This completes the proof of Proposition \ref{p:gengraph}. 
\end{proof}

\section{Random graphs: the configuration model and Galton-Watson trees}
\label{s:pfrandom}
In this section, we use the framework given by Proposition \ref{p:gengraph} to study the decay of density for the configuration model and unimodular Galton-Watson trees. Throughout this section, we let $D$ be a degree distribution that is at least $3$ and upper bounded (i.e., the degree distribution has finite support).
For CRW, the jump rate is taken as $r_{x,y}=\1[x \sim y]$.

To prove Theorem \ref{t:config}, we will apply Proposition \ref{p:gengraph}, checking that the conditions are satisfied. The most efforts will be devoted to controlling the term $E_3$ defined in equation \eqref{d:capitalI}, i.e., showing that the behaviors of the particles after each branching are similar to being independent. 
In other words, we show that the coalescing process of $k+1$ particles, starting from i.i.d.\ stationary locations, evolves similarly to the case in which after each collision the positions of the remaining particles are re-sampled to be i.i.d.\ stationary. 
We also give estimates for $\frac{2\M}{n}$, showing that it converges to $\alpha(D)^{-1}$ in probability as $n\to \infty$, with $\alpha(D)$ defined in equation \eqref{eq:alpha(D)}. 

One important feature of the configuration model that helps in our analysis is that it is an expander graph with high probability. Recall that in equation \eqref{d:kappa} we defined $\kappa(\G)$ to be the vertex expansion constant of the graph $\G$, i.e., 
\begin{equation*}
	\kappa(\G):= \min_{0 < |S| \leq \frac{|V|}{2}} \frac{|\partial S|}{|S|},
\end{equation*}
where $\partial S:= \{v \in V\setminus S: \exists u \in S, u\sim v\}$ is the out-boundary of $S$.

\begin{lemma}\label{l:configroughbound}
    There exists a constant $\kappa_0$, such that the following is true. Take $\G_n \sim \CM_n(D)$, conditioned on that it is connected.
	Then we have $\P(\kappa(\G_n) \geq \kappa_0) \to 1$ as $n\to\infty$. Consequently, $\lim\limits_{n\to \infty}\P(\rel(\G_n) \leq 2\bar d/\kappa_0^2) = 1$, where $\rel(\G_n)$ is the relaxation time of $\G_n$ and $\bar d$ is the supremum of the support of $D$.
\end{lemma}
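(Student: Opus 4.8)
The plan is to establish the expansion bound $\P(\kappa(\G_n)\ge\kappa_0)\to1$ by a first-moment (union-bound) argument, and then read off the relaxation-time bound from the Cheeger-type inequality of Lemma \ref{gapandtrel}. Since all degrees lie in $[3,\bar d]$ deterministically, I would carry out the first-moment estimate in the unconditioned model $\CM_n(D)$ and transfer at the end: conditioning on $\sum_i d_i$ being even costs only a bounded factor, $\P(\G_n\text{ connected})\to1$ has already been recorded, and the number of vertices of each degree is $\Theta(n)$ with probability $\to1$, so it suffices to bound the first moment on this last event. It is cleanest to control the edge-isoperimetric number first: since each $v\in\partial S$ receives between $1$ and $\bar d$ edges from $S$ we have $|\partial S|\ge e(S,S^c)/\bar d$, so it is enough to exhibit a constant $c=c(\bar d)>0$ with $\P\bigl(\exists\,S,\ 1\le|S|\le n/2,\ e(S,S^c)\le c|S|\bigr)\to0$, and then take $\kappa_0=c/\bar d$.

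Fix $S$ with $|S|=a$, and write $x=a/n$, $D_S=\sum_{i\in S}d_i\in[3a,\bar d a]$, and $2m=\sum_i d_i=\mu_D n$ with $\mu_D\in[3,\bar d]$. If $e(S,S^c)\le ca$ then at least $D_S-ca$ of the half-edges incident to $S$ are matched within $S$, so there is a set of $2\ell\ge D_S-ca$ internally-paired half-edges of $S$. Using the configuration-model identity $\P(\text{a fixed set of }2\ell\text{ half-edges is internally matched})=(2\ell-1)!!\,(2m-2\ell-1)!!/(2m-1)!!$ together with Stirling's formula, a short computation shows that for any given $S$, $\P(e(S,S^c)\le ca)\le\exp\bigl(D_S\,H(ca/D_S)-m\,H(\ell/m)+O(\log n)\bigr)$, where $H$ is the binary entropy and $\ell=\lceil(D_S-ca)/2\rceil$; the worst case is $D_S=3a$, for which this bound is $\exp\bigl(3a\,H(c/3)-\tfrac{\mu_D n}{2}\,H((3-c)x/\mu_D)+O(\log n)\bigr)$. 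Multiplying by the number $\binom na=\exp(nH(x)+O(\log n))$ of sets of size $a$ (and noting there are only polynomially many degree profiles to sum over), the first moment at scale $a$ is governed by the sign of $H(x)+3x\,H(c/3)-\tfrac{\mu_D}{2}H((3-c)x/\mu_D)$.

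For $c=0$ this equals $H(x)-\tfrac{\mu_D}{2}H(3x/\mu_D)=:-\phi(x)$, and $\phi$ satisfies $\phi(0)=0$ and $\phi'(x)=\tfrac32H'(3x/\mu_D)-H'(x)\ge\tfrac12H'(x)\ge0$, because $\mu_D\ge3$ forces $3x/\mu_D\le x$ and $H'$ is decreasing on $(0,\tfrac12)$ --- this is exactly where the minimum-degree-$\ge3$ hypothesis is used, and the inequality would fail at minimum degree $2$. Hence $\phi(x)\ge\tfrac12H(x)>0$ on $(0,\tfrac12]$, and for $c$ a small enough positive constant (depending on $\bar d$) the two correction terms $3xH(c/3)$ and $\tfrac{\mu_D}{2}\bigl(H(3x/\mu_D)-H((3-c)x/\mu_D)\bigr)$ are dominated by $\phi(x)$ uniformly in $x\in(0,\tfrac12]$; so this expression is strictly negative there, with the per-set exponential rate tending to $-\infty$ as $x\to0$. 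Summing the first moment over $1\le a\le n/2$ gives $\E[\#\{\text{bad }S\}]\to0$, and Markov's inequality yields $\P(\exists\text{ bad }S)\to0$, proving $\P(\kappa(\G_n)\ge\kappa_0)\to1$.

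Finally, on the event $\{\kappa(\G_n)\ge\kappa_0\}$ --- which has probability $\to1$, also after conditioning on connectivity --- Lemma \ref{gapandtrel} together with $d_{\max}\le\bar d$ gives that the spectral gap of the SRW on $\G_n$ is at least $\kappa(\G_n)^2/(2\bar d)\ge\kappa_0^2/(2\bar d)$, hence $\rel(\G_n)\le2\bar d/\kappa_0^2$, which gives the second assertion. The main obstacle is the second and third paragraphs: one must run the Stirling asymptotics of the matching probability carefully, retaining the $ca$-slack and the polynomial corrections, and then verify uniformly over all $1\le a\le n/2$ that the entropy expression is negative (the small-set range $a=o(n)$, where $\binom na$ is sub-exponential, is comparatively routine). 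A reader content to cite the literature could instead invoke the known isoperimetry of random graphs with minimum degree $\ge3$, in the spirit of Bollob\'{a}s's analysis of random regular graphs, and pass directly to the Cheeger step.
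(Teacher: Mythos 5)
Your proposal is correct in substance, but it takes a different (much more self-contained) route than the paper: the paper's entire proof of this lemma is a citation --- the vertex-expansion statement is quoted from Durrett's \emph{Random Graph Dynamics}, Theorem 6.3.2, and the relaxation-time bound then follows from Lemma \ref{gapandtrel} exactly as in your final paragraph, which coincides with the paper verbatim. What you do differently is re-derive the cited isoperimetry result by a first-moment argument in the Bollob\'as style: bound the edge boundary via the pairing probability $(2\ell-1)!!\,(2m-2\ell-1)!!/(2m-1)!!$, pass to entropies, and observe that the minimum-degree-$3$ hypothesis is precisely what makes $\phi'(x)=\tfrac32H'(3x/\mu_D)-H'(x)\ge\tfrac12H'(x)$, so $\phi(x)\ge\tfrac12H(x)>0$; the transfer through the parity conditioning and the connectivity conditioning is also handled correctly. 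What your route buys is a proof from first principles that makes visible where degree $\ge 3$ is used (and why degree $2$ is borderline); what the paper's route buys is brevity. One point you should tighten before treating the argument as complete: the assertion that $D_S=3a$ is the worst case relies on $D_S/(2m)\le\tfrac12$, and when $x$ is close to $\tfrac12$ and the vertices of $S$ carry large degrees one can have $D_S/(2m)>\tfrac12$, where $H$ is decreasing and the monotonicity argument reverses; the standard fix is to run the internal-matching count on whichever side of the cut carries the smaller number of half-edges (or to check that regime directly, where the bound remains comfortably negative). This is exactly the kind of uniform-in-$a$ verification you flag in your closing paragraph, so the gap is one of bookkeeping rather than of ideas; alternatively, citing the known isoperimetry result, as you suggest and as the paper does, short-circuits it entirely.
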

The first part of this result can be found in \cite[Theorem 6.3.2]{durrett2007random}.
The second part follows from the first part together with Lemma \ref{gapandtrel} (Cheeger's inequality).

\subsection{Truncate random walks within a ball}   \label{ss:trunc}
We define some intermediate quantities. For a vertex $v$ and a distance $\rho \in \ZZ_+$, define $N_{\rho}(v)=N_{\rho}(v,\G_n)$ to be the subgraph of $\G_n$ induced on $\{u\in V_n:d(u,v)\leq \rho\}$ (the ball of radius $\rho$ centered at $v$), where $d(\cdot,\cdot)$ is the graph distance in $\G_n$. Recall the construction of the random paths $\gamma_1,\ldots,\gamma_{k+1}$, given $\t$ and a collision pattern $I_k$. In particular, we let $A_{\ell}=\gamma_{i_{\ell}}(t_{\ell})$ and $B_{\ell}$ be sampled from $\nu_{A_{\ell}}$, for each $1 \le \ell \le k$. Let $\tau_\ell$ be the first time after $t_\ell$ when $\gamma_\ell$ or $\gamma_{i_{\ell}}$ exits $N_{\rho}(\gamma_\ell(t_\ell))$ (whichever occurs first). We consider the following quantity
\begin{equation}\label{defhat_g}
	\hat g(\t, I_k)=\hat g(\t, t, I_k,\rho):= \E
	\left(\prod_{\ell=1}^{k}  r(A_\ell) \1[\gamma_{\ell}(t')\neq \gamma_{i_{\ell}}(t')  \text{ for all } t'\in[t_{\ell},\tau_\ell \wedge t_{\ell+1}] ]\right),
\end{equation}
which describes the weighted probability that no coalescence happens between a newly born branch and its parent until one of them exits $N_{\rho}(\gamma_\ell(t_\ell))$ or until the next branching time (whichever happens first).

Take $\U\sim\pi$ and $\tilde \U\sim \nu_{\U}$, and two independent walks $W_\U$ and $W_{\tilde \U}$ starting from $\U$ and $\tilde \U$, respectively. We define $\tau_\U$ to be the first time that $W_\U$ or $W_{\tilde \U}$ exits $N_{\rho}(\U)$ (whichever occurs first), and set
\begin{equation}\label{defhat_q}
	\hat q(\t, I_k)=\hat q(\t, t, I_k,\rho):= \prod_{\ell=1}^{k}\E
	\left( r(\U) \1[W_\U(t')\neq W_{\tilde \U}(t') \text{ for all } t'\in[ 0,\tau_\U\wedge (t_{\ell+1}-t_\ell)]\right).
\end{equation}
The quantity $\hat{q}(\t, I_k)$ describes the weighted probability that no coalescence happens between a newly born branch and its parent until one of them exits $N_{\rho}(\gamma_\ell(t_\ell))$ or the next branching time, assuming that the place of branching is uniformly distributed. For the sake of ease of notation, below we suppress the dependence of $\hat g(\t, I_k)$ and $\hat q(\t, I_k)$ on $\rho$ and $t$ in our notations.

We show that  $\hat{g}(\t,I_k)$ and $\hat{q}(\t,I_k)$ well approximate $g(\t,I_k)$ and $q(\t,I_k)$ (defined in \eqref{gt,phi_k} and \eqref{cond_form4}) provided $1\ll t\ll n$ and $\rho \gg 1$, for any fixed graph with lower bounded vertex expansion constant and upper bounded degree. 
\begin{lemma}  \label{l:trung}
	Let $\kappa_0 > 0$ and $\bar{d}\in \ZZ_+$.
	For any graph $\G=(V,E)$ with $\kappa(\G)>\kappa_0$ and maximum degree bounded by $\bar{d}$, we have that for all $\rho>100$, $k\in \ZZ_+$, $I_k \in \Phi_k$ and any time sequence $\t$,
	\begin{equation}
		0 \leq \hat{g}(\t,I_k) -g(\t,I_k) \leq 
		8\bar{d}^{k} \left(
		kC_{\kappa_0, \bar{d}}\exp(-\sqrt{\rho}/\bar{d}\rel)
		+\frac{t\bar{d}}{n} 
		\right),
	\end{equation}
	\begin{equation}
		0 \leq \hat{q}(\t,I_k) -q(\t,I_k) \leq
		8k\bar{d}^{k} \left(
		kC_{\kappa_0, \bar{d}}\exp(-\sqrt{\rho}/\bar{d}\rel)
		+\frac{t\bar{d}}{n}
		\right),
	\end{equation}
	where $C_{\kappa_0, \bar{d}}$ is the constant from Lemma \ref{l:boundH}, and $n=|V|$.
\end{lemma}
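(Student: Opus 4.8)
The plan is to prove both inequalities in Lemma \ref{l:trung} by controlling, for each branch $\ell$, the discrepancy between the event appearing in the definition of $g(\t,I_k)$ (resp.\ $q(\t,I_k)$) and the corresponding ``truncated'' event appearing in $\hat g$ (resp.\ $\hat q$). The lower bounds $\hat g \geq g$ and $\hat q \geq q$ are immediate from the definitions, since replacing the constraint ``$\gamma_\ell$ avoids $\gamma_{i_\ell}$ on $[t_\ell,t_{\ell+1}]$'' by the weaker constraint ``$\gamma_\ell$ avoids $\gamma_{i_\ell}$ on $[t_\ell,\tau_\ell\wedge t_{\ell+1}]$'' only enlarges the event (and the weights $r(A_\ell)\le\bar d$ are the same); likewise for $\hat q$ versus $q$. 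So the real content is the upper bound on the difference.

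For the upper bound, I would expand $\hat g(\t,I_k)-g(\t,I_k)$ as a sum over $\ell$ of terms where the avoidance holds up to the exit time $\tau_\ell$ but fails somewhere in $[\tau_\ell\wedge t_{\ell+1},\, t_{\ell+1}]$. More precisely, using that $\prod_\ell r(A_\ell)\le \bar d^k$ and a telescoping/union-bound argument over which branch is the first to exhibit the discrepancy, it suffices to bound, for each $\ell$, the probability that $\gamma_\ell$ and $\gamma_{i_\ell}$ do not collide before one of them exits $N_\rho(\gamma_\ell(t_\ell))$ but do collide afterwards within $[t_\ell,t_{\ell+1}]$. This event forces that during the time interval in question, $\gamma_\ell$ (or $\gamma_{i_\ell}$) travels graph-distance at least $\rho$ from $\gamma_\ell(t_\ell)$ and then returns to meet the other walk. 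I would split this into two contributions: (i) one of the walks exits the ball $N_\rho$ within time $\rho/(2\bar d)$ (say) --- but for a SRW on a bounded-degree graph this requires an atypically fast escape, so a Chernoff-type bound on the number of jumps in a short time window gives a decay like $\exp(-c\rho/\bar d)$; and (ii) the walks stay inside for time at least $\rho/(2\bar d)$ but then collide, whose probability is controlled by the meeting-probability estimate Lemma \ref{meetprob} together with Lemma \ref{l:boundH} (which bounds $H(t)$-type ratios by $C_{\kappa_0,\bar d}$, using $\kappa(\G)>\kappa_0$ and $d_{\max}\le\bar d$); the $\exp(-\sqrt\rho/\bar d\rel)$ factor and the $t\bar d/n$ term both emerge from applying Lemma \ref{meetprob} with the threshold chosen on the order of $\sqrt\rho$ (to balance the two error terms in that lemma: $\exp(-T/\rel)$ times the bounded ratio, and $tr_{\max}/n$). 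Summing the $\ell$-indexed contributions yields the stated bound with the factor $k$ (and an extra $k$ for $\hat q$ because the product structure of $q$ means a failure in any of the $k$ factors contributes).

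For the $\hat q$ versus $q$ estimate the argument is essentially the same but applied factor-by-factor: each factor of $q(\t,I_k)$ is $\E(r(\U)\1[W_\U \text{ avoids } W_{\tilde\U} \text{ on } [0, t_{\ell+1}-t_\ell]])$ and the corresponding factor of $\hat q$ has the avoidance only up to $\tau_\U\wedge(t_{\ell+1}-t_\ell)$; the per-factor discrepancy is bounded exactly as above, and since each factor lies in $[0,\bar d]$ (indeed is at most $\bar d$ times a probability, hence $\le \bar d$ after using $r(\U)\le \bar d$, and for the lower-order structure one notes it is bounded below by a constant times $n/\M$, but for this lemma only the crude bound is needed), replacing one factor changes the product by at most $\bar d^{k-1}$ times the per-factor discrepancy; summing over the $k$ factors gives the extra factor $k$ relative to the $\hat g$ bound, i.e.\ the $k\bar d^k$ prefactor. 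Throughout, the constant $\bar d$ absorbs into exponents as $\exp(-\sqrt\rho/\bar d\rel)$ because the relevant escape/return estimates lose at most a factor of $\bar d$ per unit graph-distance.

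The main obstacle I anticipate is step (ii): cleanly bounding the probability that two walks, conditioned to avoid each other until one of them leaves a ball of radius $\rho$, nonetheless collide shortly afterwards. The subtlety is that the exit time $\tau_\ell$ is a random (and correlated-with-the-walk) time, so one cannot directly invoke Lemma \ref{meetprob} with a deterministic starting configuration; I would handle this by conditioning on $\tau_\ell$ and the locations of both walks at time $\tau_\ell$, noting that after $\tau_\ell$ the two walks are again independent SRWs, and then applying Lemma \ref{meetprob} (together with Lemma \ref{l:boundH} to control the transition-probability ratios uniformly) to the post-$\tau_\ell$ trajectories --- with $T$ in Lemma \ref{meetprob} taken comparable to $\sqrt\rho$, so that both the $\exp(-T/\rel)$ term and the requirement $T\le t_{\ell+1}-t_\ell$ are compatible in the regime of interest, and where if $T>t_{\ell+1}-t_\ell$ the bound is even easier. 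Making the bookkeeping of these nested conditionings and the union bound over branches precise, while keeping the constants of the claimed form, is where the care is needed; the probabilistic inputs themselves are all already available in the excerpt.
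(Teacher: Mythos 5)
Your overall architecture matches the paper's proof: the lower bounds are indeed immediate, the difference is bounded by $\bar d^{k}$ times a union bound over the $k$ branches, each branch contribution is split into a ``fast exit'' event (handled by a Poisson/Chernoff tail on the number of jumps, the jump rate being at most $\bar d$) and an ``avoid, then collide'' event (handled by Lemma \ref{meetprob} together with Lemma \ref{l:boundH}), and the extra factor $k$ in the $\hat q$ bound comes from the factor-by-factor comparison, exactly as you say. The genuine gap is in the step you yourself flag as the main obstacle. Conditioning on $\tau_\ell$ and on the positions of the two walks at time $\tau_\ell$, and then applying Lemma \ref{meetprob} with a threshold $T$ to the post-$\tau_\ell$ trajectories, only bounds the probability of a collision occurring at least $T$ \emph{after} $\tau_\ell$; it gives no control over collisions in $[\tau_\ell,\tau_\ell+T]$, which are precisely the dangerous ones. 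At time $\tau_\ell$ the two walks can perfectly well be adjacent (they may drift to the boundary of $N_\rho(\gamma_\ell(t_\ell))$ together while avoiding each other), and then the conditional probability of an immediate collision is of order one. The avoidance \emph{before} $\tau_\ell$, which is what actually makes the event unlikely, cannot be fed into Lemma \ref{meetprob} once you have conditioned at the random time $\tau_\ell$ and restarted the walks there, so the argument as written does not close.

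The paper's resolution is different and avoids the random time altogether: fix the deterministic threshold $T=\sqrt\rho/\bar d$ and split on $\{\tau_\ell<t_\ell+T\}$ versus $\{\tau_\ell\ge t_\ell+T\}$. On the latter event (and when $T\le t_{\ell+1}-t_\ell$; otherwise the branch event is empty), the bad event for branch $\ell$ is contained in $\{$the two walks avoid each other on $[t_\ell,t_\ell+T]$ and collide in $[t_\ell+T,t_{\ell+1}]\}$; conditioning on their (distinct) positions at the \emph{deterministic} time $t_\ell$, this is exactly an event of the form $\{T<\RM<t_{\ell+1}-t_\ell\}$, to which Lemma \ref{meetprob} applies and, via Lemma \ref{l:boundH}, yields $2C_{\kappa_0,\bar d}e^{-T/\rel}+8t\bar d/n$. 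The event $\{\tau_\ell<t_\ell+T\}$ is bounded by twice the tail of a $\Poi(T\bar d)=\Poi(\sqrt\rho)$ variable beyond $\rho$, which is at most $4e^{-\rho}\le 4e^{-\sqrt\rho/(\bar d\rel)}$ since $\rel\ge 1/(2\bar d)$. (Your two inconsistent thresholds, $\rho/(2\bar d)$ for the exit and order $\sqrt\rho$ for the meeting estimate, and the stated $e^{-c\rho/\bar d}$ decay, are minor bookkeeping slips by comparison: a single threshold works once the estimate is run from $t_\ell$ rather than from $\tau_\ell$.) So the ingredients you list are the right ones, but the proposed way of handling the random exit time would fail, and the correct fix is the event inclusion to a deterministic time rather than conditioning at $\tau_\ell$.
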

\begin{proof}
	Comparing the definitions of $\hat g$ and $g$ and those of $\hat q$ and $q$ one readily sees that
	\begin{equation}\label{g-hg}
		\begin{split}
			0\leq &\hat{g}(\t,I_k)-g(\t,I_k)\\
			= &\E
			\biggl(  \left(\prod_{\ell=1}^{k}  r(A_\ell) \1[\gamma_{\ell}(t')\neq \gamma_{i_{\ell}}(t') \text{ for all } t'\in[ t_{\ell},\tau_\ell \wedge t_{\ell+1}]]\right)\\
			&\times \1[\gamma_{\ell}(t'')=\gamma_{i_{\ell}}(t'') \text{ for some } 1 \leq \ell \leq k \text{ and some } t'' \in [\tau_{\ell}\wedge t_{\ell+1}, t_{\ell+1}] ]
			\biggl)
			\\
			\leq & \bar{d}^{k} \sum_{\ell=1}^{k}
			\P \left(
			\{ \gamma_\ell(t')\neq \gamma_{i_\ell}(t') \text{ for all } t' \in [t_\ell, \tau_\ell \wedge t_{\ell+1}] \} \cap \{ \gamma_\ell(t'')= \gamma_{i_\ell}(t'')
			\text{ for some } t'' \in [\tau_\ell, t_{\ell+1}]\}
			\right),
		\end{split}
	\end{equation}
	where we used $\max_{\ell}r(A_{\ell}) \le \bar d$. Further, we have that
	\begin{equation}\label{g-hq}
		\begin{split}
			0\leq &\hat{q}(\t,I_k)-q(\t,I_k)
			\leq
			\bar{d}^k \sum_{\ell=1}^k
			\P (\{W_\U(t')\neq W_{\tilde{\U}}(t')
			\text{ for all } t' \in [0, \tau_\U \wedge t_{\ell+1}-t_\ell]  \}\\
			& \cap \{ W_\U(t'')= W_{\tilde{\U}}(t'') \text{ for some } t'' \in [\tau_\U, t_{\ell+1}-t_\ell]
			\}).
		\end{split}
	\end{equation}
	For any $0<T<\Delta_{\min}(\t)$ and $1\leq \ell \leq k$, we have,
	\begin{multline}
		\P\left(\{
		\gamma_\ell(t')\neq \gamma_{i_\ell}(t') \text{ for all } t' \in [t_\ell, \tau_\ell \wedge t_{\ell+1}] \} \cap \{\gamma_\ell(t')= \gamma_{i_\ell}(t') \text{ for some }
		t' \in [\tau_\ell, t_{\ell+1}], \}
		\right)
		\\
		\leq
		\P\left(\{ \gamma_\ell(t') \neq \gamma_{i_\ell}(t') \text{ for all } t' \in [t_\ell, t_{\ell}+T]\} \cap \{
		\gamma_\ell(t')= \gamma_{i_\ell}(t') \text{ for some }t' \in [t_\ell+T, t_{\ell+1}\}]\right)
		\\ +
		\P(\tau_{\ell} < t_\ell + T),
	\end{multline}
	and similarly,
	\begin{multline}
		\P\left(\{W_\U(t')\neq W_{\tilde{\U}}(t') \text{ for all }
		t' \in [0, \tau_\U \wedge t_{\ell+1}-t_\ell]\}  \cap \{ W_\U(t')= W_{\tilde{\U}}(t') \text{ for some } t' \in [\tau_\U, t_{\ell+1}-t_\ell]
		\} \right)\\
		\leq
		\P \left(\{ W_\U(t')\neq W_{\tilde{\U}}(t') \text{ for all }
		t' \in [0, T]\} \cap \{ W_\U(t')= W_{\tilde{\U}}(t') \text{ for some } t' \in [T, t_{\ell+1}-t_\ell]\} \right)
		\\ + \P(\tau_\U < T).
	\end{multline}
	By Lemma \ref{meetprob}, the first terms in the r.h.s.\ of the above two equations can be bounded by
	\begin{multline}
		2\exp(-T/\rel)
		\frac{\max_{z} \int_0^{2T}  p_{s}(z,z)\mathrm{d} s }{\min_{z} \int_0^{2T} p_s(z,z)\mathrm{d} s  }+\frac{8(t_{\ell+1}-t_\ell)}{n} (T^{-1}\vee \bar{d})
		\\
		\leq
		2C_{\kappa_0, \bar{d}}\exp(-T/\rel)
		+\frac{8(t_{\ell+1}-t_\ell)}{n} (T^{-1}\vee \bar{d}),
	\end{multline}
	where the inequality and the constant $C_{\kappa_0, \bar{d}}$  come from  Lemma \ref{l:boundH}.
	
	We now bound each of $\P(\tau_{\ell} < t_\ell + T)$ and $\P(\tau_{\U} < T)$. 
	Note that in order for the event $\{\tau_{\ell} < t_\ell + T\}$ (resp.\ $\{\tau_{\U} < T\}$) to occur we must have that either $\gamma_\ell$ or $\gamma_{i_\ell}$ (resp.\ $W_\U$ or $W_{\tilde \U}$) needs to jump at least $\rho$ times during $[t_\ell ,t_\ell +T]$ (resp.\ $[0,T]$). Since the jump rate is bounded by $\bar{d}$,
	the law of the number of jumps that a particle performs in a time interval of length $T$ is stochastically dominated by the $\Poi(T\bar{d})$ distribution.
	Hence we have that
	$$
	\P(\tau_{\ell} < t_\ell + T), \P(\tau_{\U} < T)
	\leq
	2\sum_{i=\rho}^{\infty} \frac{(T\bar{d})^ie^{-T\bar{d}}}{i!}.
	$$
	Now we take $T = \sqrt{\rho}/\bar{d}$.
	Then we have
	\begin{equation}
		\begin{split}
			0\leq &\hat{g}(\t,I_k)-g(\t,I_k), \;\hat{q}(\t,I_k)-q(\t,I_k)\\
			\leq & k \bar{d}^{k} \left(
			2C_{\kappa_0, \bar{d}}\exp(-\sqrt{\rho}/(\bar{d}\rel))
			+\frac{8t\bar{d}}{n} 
			+ \frac{4\rho^{\rho/2} e^{-\sqrt{\rho}}}{\rho!}
			\right),
		\end{split}
	\end{equation}
	which can be further upper bounded by
	\begin{equation}
		\begin{split}
			& k\bar{d}^{k} \left(
			2C_{\kappa_0, \bar{d}}\exp(-\sqrt{\rho}/(\bar{d}\rel))
			+\frac{8t\bar{d}}{n} 
			+ \frac{4\rho^{\rho/2} e^{-\sqrt{\rho}}}{\sqrt{2\pi}\rho^{\rho+1/2}e^{-\rho}}
			\right)
			\\
			\leq &
			k\bar{d}^{k} \left(
			2C_{\kappa_0, \bar{d}}\exp(-\sqrt{\rho}/(\bar{d}\rel))
			+\frac{8t\bar{d}}{n} 
			+ 
			4\exp(- \rho)
			\right).
		\end{split}
	\end{equation}
	This implies our conclusion since 
	$$
	\frac{\sqrt{\rho}}{\bar d \rel} \leq 
	\frac{2\bar d\sqrt{\rho}}{\bar d}\leq \rho
	$$
	by equation \eqref{relandrmax} and the assumption $\rho>100$.
\end{proof}

\subsection{Almost independence among branching: estimate $e_k(t)$}
In the previous subsection, we compared $\hat g(\t,I_k)$ to $g(\t,I_k)$ and $\hat q(\t,I_k)$ to $q(\t,I_k)$. In this subsection, we will proceed to compare $\hat g(\t,I_k)$ and $\hat q(\t,I_k)$. Recall the definition of $\hat{g}(\t,I_k)$ and $\hat{q}(\t,I_k)$ in \eqref{defhat_g} and \eqref{defhat_q}, respectively. 
Now if we write 
\begin{equation}  \label{eq:defM}
	M_\ell:=r(A_\ell) \1[\gamma_{\ell}(t')\neq \gamma_{i_{\ell}}(t') \text{ for all } t'\in[ t_{\ell},\tau_\ell \wedge t_{\ell+1}]],
\end{equation}
then,
\begin{equation}
	\hat g(\t,I_k)=\E\left(\prod_{\ell=1}^k M_\ell\right).
\end{equation}
Define 
\begin{equation}
	Q_\ell:=\E \left( r(\U) \1[W_\U(t')\neq W_{\tilde \U}(t')\text{ for all } t'\in[ 0,\tau_\U\wedge (t_{\ell+1}-t_{\ell})]]\right),
\end{equation}
then
\begin{equation}
	\hat q(\t,I_k)=\prod_{\ell=1}^k Q_\ell.
\end{equation}

We next define a few properties of the graph $\G$. 

\begin{definition}\label{def:delta_homo}
	For any $\delta > 0$, a graph $\G$ is said to be \emph{$\delta$-homogeneous} if for any $i \in \ZZ_+$, we have either $|\{v \in V:\deg(v)=i\}|\geq \delta n$ or $|\{v \in V:\deg(v)=i\}|=0$. 
	In addition, we say that $i \in \ZZ_+$ is \emph{feasible} (for $\G$) if $|\{v \in V:\deg(v)=i\}|>0$. For notational convenience, we also consider $0$ as feasible.
\end{definition}
The construction of $\CM_n(D)$ implies the following lemma. 
\begin{lemma} \label{lemma:delta-mix}
	For any degree distribution $D$ that is upper bounded by some constant,  there exists $\delta_D>0$,
	such that if we take $\G \sim \CM_n(D)$, then $\lim\limits_{n\to \infty} \P(\G \textrm{ is } \delta_D\textrm{-homogeneous}) = 1$.
\end{lemma}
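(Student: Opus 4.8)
The plan is to exploit the fact that the degrees $d_1,\dots,d_n$ are i.i.d. samples from $D$, and $D$ has finite support, so the number of vertices attaining each feasible degree concentrates around a linear multiple of $n$ by a standard large-deviation (Chernoff) bound; the only subtlety is that $\G_n$ is obtained by conditioning on $\sum_i d_i$ being even, and later (in Theorem \ref{t:config}) on $\G_n$ being connected, so one must check these conditioning events do not destroy the concentration. First I would set $\operatorname{supp}(D)\subseteq\{0,1,\dots,\bar d\}$ (with $\bar d<\infty$ the hypothesis), let $J:=\{j\le \bar d: D(j)>0\}$ be the set of degrees with positive mass, and put $p_0:=\min_{j\in J}D(j)>0$ (the minimum exists since $J$ is finite). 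For a fixed $j\in J$, the count $Z_j:=|\{v: \deg(v)=j\}|$ is $\mathrm{Binomial}(n,D(j))$ before conditioning, so by Hoeffding's inequality $\P(Z_j < \tfrac12 D(j) n) \le \exp(-cn)$ for some $c=c(D(j))>0$; taking a union bound over the finitely many $j\in J$ gives that with probability at least $1-|J|e^{-c'n}$, every feasible degree $j$ satisfies $Z_j\ge \tfrac12 D(j)n \ge \tfrac12 p_0 n$.

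Next I would handle the two conditionings. The event $E_{\mathrm{even}}=\{\sum_i d_i \text{ is even}\}$ has probability bounded away from $0$ (in fact tending to $1/2$ unless $D$ is supported on even integers, in which case it has probability $1$) — this is elementary, e.g. by a parity/Fourier argument on i.i.d. sums. Hence conditioning on $E_{\mathrm{even}}$ inflates probabilities by at most a bounded factor $1/\P(E_{\mathrm{even}}) = O(1)$, so the bound $\P(\exists j\in J:\,Z_j<\tfrac12 p_0 n \mid E_{\mathrm{even}}) \le |J|e^{-c'n}/\P(E_{\mathrm{even}}) \to 0$ still holds. For the further conditioning on connectivity (needed for the ``$\lim_{n\to\infty}$'' statement as used elsewhere): by the cited Theorem 4.15 in \cite{vanrandomvol2}, $\P(\G_n \text{ connected}\mid E_{\mathrm{even}}) \to 1$ under the minimal-degree-$\ge 3$ assumption, so conditioning on connectivity again inflates probabilities by a factor $1+o(1)$ and preserves the high-probability statement. (For the lemma as literally stated — which mentions only $\G\sim\CM_n(D)$ and not connectivity — only the $E_{\mathrm{even}}$ conditioning is relevant.)

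Finally I would set $\delta_D := \tfrac12 p_0 = \tfrac12\min_{j\in J}D(j)$ and conclude: with probability $1-o(1)$, for every $i\in\ZZ_+$ either $i\notin J$, in which case $|\{v:\deg(v)=i\}|=0$, or $i\in J$, in which case $|\{v:\deg(v)=i\}|\ge \delta_D n$; this is exactly $\delta_D$-homogeneity. The main obstacle is not really hard — it is just bookkeeping to ensure the conditioning events $E_{\mathrm{even}}$ (and connectivity, where needed) do not spoil concentration; the core estimate is a one-line Chernoff bound over a finite set of degree values. One should note that the finiteness of $\operatorname{supp}(D)$ is used twice: to make $p_0>0$ and to make the union bound over $J$ cost only a constant factor.
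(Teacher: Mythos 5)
Your proposal is correct and is essentially the argument the paper leaves implicit: the paper gives no proof, stating only that the lemma follows from the construction of $\CM_n(D)$, and your Chernoff/Hoeffding bound on each binomial count $Z_j$, union bound over the finite support, and observation that conditioning on $\sum_i d_i$ even (and, where relevant, on connectivity) only inflates probabilities by an $O(1)$ factor is exactly the standard way to make that explicit. The only nitpick is the parenthetical about $\P(E_{\mathrm{even}})$: if $D$ is supported entirely on odd integers (e.g.\ the $3$-regular case) the probability is $0$ or $1$ according to the parity of $n$ rather than tending to $1/2$, but along the values of $n$ for which the model is defined it equals $1$, so your conclusion is unaffected.
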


\begin{definition}
	For $\rho \in \ZZ_+$, we call a graph $\G$  \emph{$\rho$-locally tree like} if  $|\{v \in V:N_\rho(v) \text{ is a tree}\}|\geq n-\sqrt{n}$.
\end{definition}

\begin{lemma}  \label{l:localtreelike}
	For any $\rho \in \ZZ_+$, and $\G \sim \CM_n(D)$, we have $$\lim_{n\to \infty}\P(\G \text{ is }\rho\textrm{-locally tree like})= 1.$$
\end{lemma}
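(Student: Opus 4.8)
The plan is to show that with high probability the configuration model graph $\G \sim \CM_n(D)$ contains at least $n - \sqrt{n}$ vertices whose $\rho$-neighborhood is a tree, using the standard exploration/pairing argument for the configuration model. First I would fix $\rho$ and expose the half-edge matching one vertex at a time: starting from a vertex $v$, run a breadth-first exploration of $N_\rho(v)$ by revealing the partners of the half-edges incident to $v$, then of those vertices, and so on up to depth $\rho$. Since $D$ has finite support bounded by some $\bar d$, the total number of half-edges explored in reaching depth $\rho$ from $v$ is at most $\bar d^{\rho+1}$, a constant depending only on $\bar d$ and $\rho$. The neighborhood $N_\rho(v)$ fails to be a tree precisely when, during this exploration, a revealed half-edge is matched either to another half-edge in the already-explored set (creating a cycle) or to a half-edge of an already-discovered vertex.

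The key estimate is that, conditioned on any partial matching with $m = O(1)$ half-edges already revealed (and since $\sum_i d_i = \Theta(n)$ with high probability, there are $\Theta(n)$ free half-edges remaining throughout the constant-size exploration), the probability that the next revealed half-edge closes a cycle is $O(1/n)$. Carrying this over the at most $\bar d^{\rho+1}$ matching steps in the exploration of $N_\rho(v)$ and taking a union bound gives $\P(N_\rho(v) \text{ is not a tree}) \le C_{\bar d, \rho}/n$ for every vertex $v$, with $C_{\bar d, \rho}$ a constant. (One also needs to condition on $\sum_i d_i$ being even and on the high-probability event that $\sum_i d_i \asymp n$; both only cost a $1+o(1)$ factor in the bound, and the degree sequence being i.i.d.\ from a bounded distribution makes the concentration of $\sum_i d_i$ routine via Hoeffding.)

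Given this per-vertex bound, let $Z := |\{v \in V : N_\rho(v) \text{ is not a tree}\}|$. Then $\E Z \le C_{\bar d, \rho}$, so by Markov's inequality $\P(Z > \sqrt{n}) \le C_{\bar d, \rho}/\sqrt{n} \to 0$. On the complementary event $\{Z \le \sqrt{n}\}$ we have $|\{v \in V : N_\rho(v) \text{ is a tree}\}| \ge n - \sqrt{n}$, which is exactly the statement that $\G$ is $\rho$-locally tree like. Combining with the $o(1)$ probabilities of the conditioning events (degree sum not concentrated, or graph not connected — the latter also tends to $0$ by Theorem 4.15 of \cite{vanrandomvol2} cited earlier) yields $\P(\G \text{ is } \rho\text{-locally tree like}) \to 1$.

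The main obstacle — though it is more bookkeeping than genuine difficulty — is making the exploration argument precise while correctly handling the conditioning inherent in the configuration model: revealing partners sequentially, tracking that the number of unmatched half-edges stays $\Theta(n)$ throughout a constant-length exploration, and ensuring the $O(1/n)$ cycle-closing bound holds uniformly over the history of the exploration. Since everything is of constant size ($\bar d$ and $\rho$ fixed, $n \to \infty$), no delicate estimates are needed; the argument is a routine instance of the local weak convergence of $\CM_n(D)$ to $\UGT(D)$ (see Appendix \ref{finitetoinfinite}), specialized to the quantitative first-moment bound on the number of ``bad'' vertices. I would in fact phrase the proof as a direct first-moment computation rather than invoking local weak convergence as a black box, since we need the explicit rate $o(1)$ for $\P(Z > \sqrt n)$.
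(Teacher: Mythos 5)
Your proposal is correct and matches the paper's argument: the paper proves this lemma precisely via the remark that $\E\,|\{v : N_\rho(v)\text{ is not a tree}\}|$ is bounded by a constant depending only on $\bar d$ and $\rho$, followed by Markov's inequality with threshold $\sqrt{n}$, which is exactly your first-moment exploration computation (your appeals to Hoeffding and to connectivity are unnecessary, since degrees in $[3,\bar d]$ make $\sum_i d_i=\Theta(n)$ deterministic and the lemma does not condition on connectedness, but this is harmless).
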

\begin{proof}
	This is a direct consequence of the local weak convergence of the configuration model to unimodular Galton-Watson tree, as stated in Lemma \ref{CMLWC}. (A quantitative estimate can be derived via Markov's inequality, by noting that the expected number of vertices $v$ such that $N_{\rho}(v)$ is not a tree is upper bounded by some constant depending only on $\bar d$ and $\rho$.)
\end{proof}

In order to bound the difference between $\hat q(\t,I_k)$ and $\hat g(\t,I_k)$, we will use induction. We will show inductively that, for any $\ell\in\ZZ_+$ and any sequence of feasible numbers $m_1, \ldots, m_{\ell-1}$, we have $$\|\mathcal{D}(A_\ell \mid M_1=m_1,\ldots, M_{\ell-1}=m_{\ell-1})-\pi\|_{\TV} \mbox{  and  }\left|\E\left(\prod_{j=1}^\ell M_j\right)-\prod_{j=1}^{\ell} Q_j\right|$$ are small while $\P\left(\cap_{j=1}^{\ell} (M_j=m_j)\right)$ is not too small. Here $\mathcal{D}(\cdot)$ denotes the law of the random variable, and $\pi$ is the stationary distribution of the random walk (in our case, it is the uniform distribution over $V_n$). 

We now state the induction step as the following two lemmas. 
\begin{lemma}\label{l:induct1}
	Let $c_1, c_2, c_3>0$, $\delta >c_1$ and $\ell\in \ZZ_+$. Assume $t_{\ell+1}-t_{\ell}\geq 1$. Suppose $\G$ is $\delta$-homogeneous with maximum degree bounded by $\d$. Assume further that for any sequence of feasible numbers $m_1, \ldots, m_{\ell-1}$ (not necessarily mutually distinct), we have
	\begin{equation}\label{eq:assump1}
		\|\mathcal{D}(A_\ell \mid M_1=m_1,\ldots, M_{\ell-1}=m_{\ell-1})-\pi \|_{\TV} \leq c_1.
	\end{equation}
	Also assume that
	\begin{equation}\label{eq:assump2}
		\left|\E\left(\prod_{j=1}^{\ell-1} M_j\right)-\prod_{j=1}^{\ell-1} Q_j\right|\leq c_2 \prod_{j=1}^{\ell-1} Q_j,
	\end{equation}
	and
	\begin{equation}\label{eq:assump3}
		\P\left(\cap_{j=1}^{\ell-1} \{M_j=m_j\}\right)\geq c_3.
	\end{equation}
	Then for $m_\ell$ feasible, we have
	\begin{equation}
		\left|\E\left(\prod_{j=1}^\ell M_j\right)-\prod_{j=1}^\ell Q_j\right|\leq (c_1(1+c_2)\d^{2\rho+2}+c_2)\prod_{j=1}^{\ell} Q_j,
	\end{equation}
	and
	\begin{equation}
		\P\left(\cap_{j=1}^\ell \{M_j=m_j\}\right)\geq(\delta-c_1)c_3\frac{1}{\d^{2\rho+1}}.
	\end{equation}
\end{lemma}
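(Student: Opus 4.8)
The plan is to peel the $\ell$-th factor off using the Markov structure of the branching construction. The conceptual crux is the following structural fact: \emph{conditionally on $A_\ell=v$}, the variable $M_\ell$ is independent of $(M_1,\dots,M_{\ell-1})$ jointly, and its conditional law depends on $v$ only through the (degree-labelled) isomorphism type of a bounded ball around $v$. Indeed $M_1,\dots,M_{\ell-1}$ are measurable with respect to the trajectories of $\gamma_0,\dots,\gamma_{\ell-1}$ on $[0,t_\ell]$, whereas $M_\ell$ is assembled from the continuation of $\gamma_{i_\ell}$ after time $t_\ell$ — which, given $\gamma_{i_\ell}(t_\ell)=A_\ell=v$, is a fresh walk from $v$ by the (strong) Markov property applied to the whole system of walks at the deterministic time $t_\ell$ — together with the fresh vertex $B_\ell\sim\nu_v$, the fresh walk $\gamma_\ell$ started at $B_\ell$, and the stopping time $\tau_\ell$; moreover up to $\tau_\ell$ both $\gamma_\ell$ and $\gamma_{i_\ell}$ stay inside the radius-$\rho$ ball $N_\rho(\gamma_\ell(t_\ell))$, so the event defining $M_\ell$ is a deterministic function of that ball and of the driving Poisson clocks. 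Averaging this conditional law over $v\sim\pi$ reproduces $Q_\ell$ by the definition of $\hat q$ (using the reversibility fact that, when $A_\ell\sim\pi$ and $B_\ell\sim\nu_{A_\ell}$, the pair $(A_\ell,B_\ell)$ is exchangeable, so the choice of endpoint centring the truncation ball is immaterial).

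The second ingredient is a crude, \emph{uniform} lower bound: for every $v$ with $\deg(v)=m_\ell$ one has $\P(M_\ell=m_\ell\mid A_\ell=v)\ge \bar d^{-(2\rho+1)}$, and hence also $Q_\ell\ge\bar d^{-(2\rho+1)}$ (while trivially $Q_\ell\le\bar d$). To see this, fix any neighbour $u_0$ of $v$ and choose two vertex-disjoint paths out of the ball $N_\rho(u_0)$ — one from $u_0$ of length at most $\rho+1$, one from $v$ of length at most $\rho$ (these exist for bounded-degree graphs, cleanly so when the ball is tree-like, which is the case in our applications). On the event that the first jumps of $\gamma_\ell$ and of $\gamma_{i_\ell}$ follow these two paths — an event of probability at least $\nu_v(u_0)\,\bar d^{-(\rho+1)}\,\bar d^{-\rho}$, summing over $u_0$ to at least $\bar d^{-(2\rho+1)}$ — the two walks occupy disjoint vertex sets at all times before $\tau_\ell$, so no collision occurs on $[t_\ell,\tau_\ell\wedge t_{\ell+1}]$ whatever the clocks do, and thus $M_\ell=\deg(v)=m_\ell$. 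This step is where the exponent $2\rho+1$ is produced.

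Granting these two facts, the conclusions follow by bookkeeping. For the first inequality, write $\E\big(\prod_{j=1}^\ell M_j\big)=\E\big(\prod_{j<\ell}M_j\cdot\psi(M_1,\dots,M_{\ell-1})\big)$, where, by the conditional independence, $\psi(\vec m)=\sum_v \P(A_\ell=v\mid \vec M_{<\ell}=\vec m)\,\E(M_\ell\mid A_\ell=v)$; since $\E(M_\ell\mid A_\ell=\cdot)$ takes values in $[0,\bar d]$ and has $\pi$-average $Q_\ell$, hypothesis \eqref{eq:assump1} gives $|\psi(\vec m)-Q_\ell|\le c_1\bar d\le c_1\bar d^{2\rho+2}Q_\ell$. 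Plugging this in, using $\prod_{j<\ell}M_j\ge0$ and then replacing $\E\big(\prod_{j<\ell}M_j\big)$ by $(1\pm c_2)\prod_{j<\ell}Q_j$ via \eqref{eq:assump2}, yields $\big|\E\big(\prod_{j=1}^{\ell}M_j\big)-\prod_{j=1}^{\ell}Q_j\big|\le\big(c_1(1+c_2)\bar d^{2\rho+2}+c_2\big)\prod_{j=1}^{\ell}Q_j$. For the second inequality, $\P\big(\cap_{j=1}^{\ell}\{M_j=m_j\}\big)=\P\big(\cap_{j<\ell}\{M_j=m_j\}\big)\,\P\big(M_\ell=m_\ell\mid \cap_{j<\ell}\{M_j=m_j\}\big)$; the first factor is $\ge c_3$ by \eqref{eq:assump3}, while in the second factor, writing $S=\{v:\deg(v)=m_\ell\}$ and using that $\P(M_\ell=m_\ell\mid A_\ell=v)=0$ off $S$, $\delta$-homogeneity gives $\pi(S)\ge\delta$, so \eqref{eq:assump1} gives $\P(A_\ell\in S\mid\cdot)\ge\delta-c_1$, and combined with the uniform bound $\P(M_\ell=m_\ell\mid A_\ell=v)\ge\bar d^{-(2\rho+1)}$ on $S$ this forces the second factor to be $\ge(\delta-c_1)\bar d^{-(2\rho+1)}$.

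The main obstacle is the first ingredient: making the conditional independence rigorous — in particular verifying that conditioning on $\{A_\ell=v\}$ decouples $M_\ell$ from \emph{all} of $M_1,\dots,M_{\ell-1}$ simultaneously, not just pairwise, which needs a careful application of the Markov property to the coupled system of branching walks at the (deterministic) time $t_\ell$ — and identifying the resulting $\pi$-averaged quantity with $Q_\ell$. The uniform lower bound is elementary but fiddly, and is the step that pins down the precise power $\bar d^{2\rho+1}$.
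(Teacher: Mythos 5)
Your overall route is the same as the paper's: conditional independence of $M_\ell$ from $(M_1,\dots,M_{\ell-1})$ given $A_\ell$, a uniform lower bound $\P(M_\ell=m_\ell\mid A_\ell=v)\ge \d^{-(2\rho+1)}$ via prescribed escape trajectories (whence $Q_\ell\ge \d^{-(2\rho+1)}$), and then the bookkeeping with \eqref{eq:assump1}--\eqref{eq:assump3}. The first conclusion is handled correctly and exactly as in the paper. However, there is a genuine gap in your proof of the second conclusion: you only treat $m_\ell>0$. By the paper's convention (Definition \ref{def:delta_homo}), $0$ is also feasible, and $M_\ell=0$ means the non-collision indicator in \eqref{eq:defM} fails, i.e.\ $\gamma_\ell$ and $\gamma_{i_\ell}$ do collide before $\tau_\ell\wedge t_{\ell+1}$. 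For $m_\ell=0$ your two key assertions break down: it is false that $\P(M_\ell=0\mid A_\ell=v)=0$ off $S=\{v:\deg(v)=0\}$ (the event has positive probability at every $v$), and $\pi(S)\ge\delta$ fails since $S=\eset$, so your bound degenerates. The paper handles this case separately, showing $\P(M_\ell=0\mid A_\ell=v)\ge \frac{1}{8\d}>\d^{-(2\rho+1)}$ by forcing a quick collision (one walk stays put while the other jumps onto it within time of order $1/\d$), and this is exactly where the hypothesis $t_{\ell+1}-t_\ell\ge 1$ is used — a hypothesis your argument never invokes, which is the tell-tale sign of the missing case. The case is not vacuous: in the induction of Lemma \ref{l:hatgq} the conditioning values $m_j$ do include $0$.

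A secondary, more minor point: your justification that the $\pi$-average of $\E(M_\ell\mid A_\ell=\cdot)$ equals $Q_\ell$ appeals to exchangeability of $(A_\ell,B_\ell)$ when $A_\ell\sim\pi$, $B_\ell\sim\nu_{A_\ell}$. As literally stated this is false for non-regular graphs ($\pi(x)\nu_x(y)=\frac{r_{x,y}}{n\,r(x)}$ is not symmetric in $x,y$); what is true, and what you need, is that the \emph{$r(A_\ell)$-weighted} law is symmetric, since $\pi(x)r(x)\nu_x(y)=r_{x,y}/n$. With that correction the identification goes through (and in fact resolves the discrepancy between the ball being centred at $B_\ell$ in \eqref{defhat_g} versus at $\U$ in \eqref{defhat_q}); the paper itself glosses this point at the same level.
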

\begin{lemma}\label{l:induct2}
	Let $c_4>0$ and $1\leq \ell\leq K-1$, and $C, \rho>0$. Let $\G$ be a $\rho$-locally tree like graph with $n$ vertices, maximum degree bounded by $\d$. Take any sequence of feasible integers $m_1,\cdots,m_\ell$, suppose we have
	\begin{equation}
		\P\left(\cap_{j=1}^\ell \{M_j=m_j\}\right)\geq c_4.
	\end{equation}
	Then when $\Delta_{\min}(\t)-C\rel-\rho>0$, we have
	\begin{equation}
		\|\mathcal{D}(A_{\ell+1} \mid M_1=m_1,\ldots, M_{\ell}=m_{\ell})-\pi\|_{\mathrm{TV}}\leq \d^{\ell}\exp(-C)\left(\frac{1}{c_4-\varepsilon}\right)+\frac{\varepsilon}{c_4},
	\end{equation}
	if $c_4>\varepsilon$.
	Here $\varepsilon$ is defined as
	\begin{align}\label{defep}
		\varepsilon = \varepsilon(\Delta_{\min} (\t),\rho, C, n):= \frac{3(\Delta_{\min}(\t)-C\rel)}{(\Delta_{\min}(\t)-C\rel-\rho)^2}+\frac{\d^K}{\sqrt{n}}.
	\end{align}
\end{lemma}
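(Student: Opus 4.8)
The plan is to prove Lemma \ref{l:induct2} by conditioning on the branching structure only through the event $\bigcap_{j=1}^\ell \{M_j = m_j\}$ and showing that, once we run the walk producing $\gamma_{i_{\ell+1}}$ for an extra time of order $C\rel$ beyond $t_{\ell+1}$, its law is close to $\pi$ in total variation, uniformly over the conditioning. First I would recall that $A_{\ell+1} = \gamma_{i_{\ell+1}}(t_{\ell+1})$, and that $\gamma_{i_{\ell+1}}$ evolves as an SRW on $\G$ after time $t_{i_{\ell+1}}$; the issue is that the conditioning event $\bigcap_{j=1}^\ell \{M_j = m_j\}$ is not independent of $\gamma_{i_{\ell+1}}$. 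The key observation is that $M_j$ depends on the trajectories of $\gamma_j$ and $\gamma_{i_j}$ only up to time $\tau_j \wedge t_{j+1}$, and, crucially, the event $\{M_j = m_j\}$ — which records that $\gamma_j(t_j)$ has degree $m_j/\1[\text{no collision}]$, i.e.\ $r(A_j)$ and the non-collision indicator — is determined by the walks inside a radius-$\rho$ neighborhood over a time window, whereas after the last branching time $t_{\ell+1}$ the walk $\gamma_{i_{\ell+1}}$ has a further time window of length $\Delta_{\min}(\t)$ in which it is free.

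The key steps, in order: (1) Decompose according to whether any of the walks $\gamma_j$, $j \le \ell$, performs more than $\rho$ jumps during $[t_j, t_j + (\Delta_{\min}(\t) - C\rel)]$; on the complement of this event the events $\{M_j = m_j\}$ are all measurable with respect to a $\sigma$-algebra that "uses up" at most time $\Delta_{\min}(\t) - C\rel$ of relevant trajectory after each branching. The probability of the bad event is at most $\d^K/\sqrt n$ roughly — actually here we should use the $\rho$-locally tree like hypothesis to control this, which explains the $\d^K/\sqrt{n}$ term in $\varepsilon$; combined with a second-moment / Markov bound on the number of jumps it yields the $\frac{3(\Delta_{\min}(\t)-C\rel)}{(\Delta_{\min}(\t)-C\rel-\rho)^2}$ term (this is essentially a Poisson tail: a rate-$\le\d$ walk making $\ge \rho$ jumps in time $\Delta_{\min}(\t) - C\rel$). (2) On the good event, write the walk producing $A_{\ell+1}$ as: run it up to time $t_{\ell+1} - C\rel$ (this part may be correlated with the conditioning through the tree-like local structure, but the local structure around the relevant vertices is a tree, so the law there is tractable), then run it for the final time $C\rel$; apply the Poincar\'e inequality (Lemma \ref{l:poincareineq}, in particular \eqref{tvd}, or Corollary \ref{linftyctl}) to conclude that regardless of its law at time $t_{\ell+1} - C\rel$, after an additional $C\rel$ it is within $e^{-C}$ of $\pi$ in $\ell_2$ and hence in TV, up to a factor $\d^\ell$ accounting for the degree reweighting. (3) Divide by $\P(\bigcap_{j=1}^\ell\{M_j=m_j\}) \ge c_4$ to pass from an unconditional bound of size $\d^\ell e^{-C} + \varepsilon$ to the conditional bound $\d^\ell e^{-C}/(c_4 - \varepsilon) + \varepsilon/c_4$.

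The main obstacle I anticipate is step (2): carefully setting up the claim that, on the good event where no walk has escaped its radius-$\rho$ ball too fast, the conditioning event $\bigcap_{j=1}^\ell\{M_j=m_j\}$ is a function of the walk trajectories that leaves the last $C\rel$ time units of $\gamma_{i_{\ell+1}}$ "fresh", so that the Markov property lets us condition on the position $\gamma_{i_{\ell+1}}(t_{\ell+1} - C\rel)$ and then apply the spectral contraction without any residual dependence. One has to be precise about the fact that $\gamma_{i_{\ell+1}}$ might itself be one of the $\gamma_{i_j}$'s appearing in an earlier $M_j$, and that the $\tau_j$ stopping times are what localize things; the $\rho$-locally tree like property is needed precisely to ensure the radius-$\rho$ neighborhoods are trees so that the escape-time estimates and the structure of the conditioning are clean. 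The degree-reweighting factors $r(A_j) \le \d$ are harmless and just contribute the $\d^\ell$ (and $\d^{2\rho+2}$-type, in the companion lemma) prefactors. Once the decomposition is set up correctly, the analytic content is just a Poisson tail bound plus \eqref{tvd}.
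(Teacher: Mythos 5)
Your overall skeleton (localize the conditioning so that a window of length $C\rel$ before $t_{\ell+1}$ is left ``fresh'', apply the Poincar\'e contraction there, then divide by $\P(\cap_j\{M_j=m_j\})\geq c_4$) is the paper's skeleton, but your step (1) inverts the key mechanism and would fail. Recall $M_j$ is measurable w.r.t.\ the trajectories up to time $\tau_j\wedge t_{j+1}$, where $\tau_j$ is the \emph{exit} time of the radius-$\rho$ ball. So the good event is that the pair \emph{escapes the ball early}, namely $\tau_\ell\leq t_{\ell+1}-C\rel$: only then is $M_\ell$ determined by an initial segment of the trajectories, leaving the last $C\rel$ unconstrained. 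Your good event is the opposite one (``no walk performs more than $\rho$ jumps''): on that event the pair has \emph{not} left $N_\rho$, so $\tau_j$ is large and $M_j$ constrains the trajectory all the way up to $t_{j+1}$ — exactly the bad scenario — and your claimed measurability statement is false. Moreover the quantity $\frac{3(\Delta_{\min}(\t)-C\rel)}{(\Delta_{\min}(\t)-C\rel-\rho)^2}$ is not a Poisson tail for making $\geq\rho$ jumps (that probability tends to $1$ as $\Delta_{\min}(\t)$ grows, so it cannot be the small error); it is a Chebyshev bound on the probability of \emph{slow escape}: on a tree-like neighborhood with minimum degree $3$ the distance to the starting point stochastically dominates a rate-$(2,1)$ biased walk on $\ZZ$, whose martingale part has variance $3s$, giving $\P(\tau_\ell>t_{\ell+1}-C\rel)\leq \frac{3s}{(s-\rho)^2}$ with $s=\Delta_{\min}(\t)-C\rel$; the $\d^K/\sqrt n$ term is the probability (via Lemma \ref{maxgamma_t(x)}) that $A_\ell$ lands on one of the $\leq\sqrt n$ vertices whose $\rho$-ball is not a tree. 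This drift/escape estimate is the heart of the lemma and is missing from your plan.

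A second gap is in your step (2): ``regardless of its law at time $t_{\ell+1}-C\rel$'' cannot work, because $C$ is a constant, so contracting an arbitrary (e.g.\ point-mass) distribution for time $C\rel$ only reduces the $\ell_2$ distance by $e^{-C}$, and passing from $\ell_2$ to total variation costs a factor $\sqrt n$. You must first bound the \emph{conditional} law at time $t_{\ell+1}-C\rel$ pointwise by $\d^{\ell}/\bigl((c_4-\varepsilon)n\bigr)$, which follows from the unconditional bound $\P(\gamma(t_{\ell+1}-C\rel)=v)\leq \d^{\ell}/n$ (Lemma \ref{maxgamma_t(x)}: the sup-norm is non-increasing under the symmetric kernel, with one factor $\d$ per branching) divided by $\P(t_{\ell+1}-\tau_\ell\geq C\rel,\,M_1=m_1,\ldots,M_\ell=m_\ell)\geq c_4-\varepsilon$. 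Only then does \eqref{tvd} plus Cauchy--Schwarz give the $n$-free bound $e^{-C}\d^{\ell}/(c_4-\varepsilon)$; this is also where both the $\d^{\ell}$ and the $1/(c_4-\varepsilon)$ in the statement actually originate, rather than from a generic ``degree reweighting'' at the end.
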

Deferring the proofs of these two lemmas to the next subsection, we now use them to do the induction.

\begin{lemma}\label{l:hatgq}
	For any $\delta_1$-homogeneous and $\rho$-locally tree like graph $\G$ and for $C>0$ and $1\leq k\leq K$, if
	\begin{enumerate}
		\item $\Delta_{\min}(\t)\geq 1$,
		\item $\Delta_{\min}(\t)-C\rel-\rho>0$,
		\item $\varepsilon\leq \min\{\frac{1}{4}(\frac{\delta_1}{2\d^{2\rho+1}})^{2k},\frac{1}{64k^2\d^{4\rho+4}}\}$ for $\varepsilon$ defined in Lemma \ref{l:induct2},
		\item $\exp(C)\geq\frac{\d^k}{\varepsilon(\Delta_{\min} (\t),\rho, C,n)^2}$,
	\end{enumerate}
	then
	\[
	|\hat g(\t,\phi_k)-\hat q(\t,\phi_k)|\leq 8k \d^{2\rho+2} \sqrt{\varepsilon(\Delta_{\min} (\t),\rho, C,n)}\hat q(\t,\phi_k),
	\]
	when $n$ is large enough.
\end{lemma}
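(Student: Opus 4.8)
The plan is to run the induction argument laid out in the paragraph preceding Lemma \ref{l:hatgq}, using Lemmas \ref{l:induct1} and \ref{l:induct2} as the two alternating induction steps, and then to convert the resulting control on $|\hat g(\t,\phi_k)-\hat q(\t,\phi_k)|$ into the stated multiplicative bound. Concretely, I would prove by induction on $\ell$ (for $1\le \ell\le k$) the joint statement that for every sequence of feasible integers $m_1,\dots,m_{\ell-1}$,
\begin{equation*}
\|\mathcal{D}(A_\ell\mid M_1=m_1,\dots,M_{\ell-1}=m_{\ell-1})-\pi\|_{\TV}\le c_1^{(\ell)},\qquad
\Bigl|\E\Bigl(\prod_{j=1}^{\ell-1}M_j\Bigr)-\prod_{j=1}^{\ell-1}Q_j\Bigr|\le c_2^{(\ell)}\prod_{j=1}^{\ell-1}Q_j,
\end{equation*}
together with the lower bound $\P(\cap_{j=1}^{\ell-1}\{M_j=m_j\})\ge c_3^{(\ell)}$, where $c_1^{(\ell)},c_2^{(\ell)},c_3^{(\ell)}$ are sequences to be tracked. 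First I would initialize: at $\ell=1$ there is no conditioning, $A_1=\gamma_0(t_1)$ is exactly stationary (by construction $\gamma_0(0)\sim\pi$ and the chain is reversible), so $c_1^{(1)}=0$, the empty product gives $c_2^{(1)}=0$, and $c_3^{(1)}=1$ trivially. Then the induction step has two halves. Given the bound on $c_1^{(\ell)}$, Lemma \ref{l:induct1} (with $c_1=c_1^{(\ell)}$, $c_2=c_2^{(\ell)}$, $c_3=c_3^{(\ell)}$, and $\delta=\delta_1$, using assumption (1) that $t_{\ell+1}-t_\ell\ge \Delta_{\min}(\t)\ge 1$ and $\delta_1$-homogeneity) upgrades the product-comparison constant to $c_2^{(\ell+1)}=c_1^{(\ell)}(1+c_2^{(\ell)})\d^{2\rho+2}+c_2^{(\ell)}$ and the probability lower bound to $c_3^{(\ell+1)}=(\delta_1-c_1^{(\ell)})c_3^{(\ell)}\d^{-(2\rho+1)}$. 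Then, feeding $c_4=c_3^{(\ell+1)}$ into Lemma \ref{l:induct2} (using $\rho$-local-tree-likeness, assumption (2) that $\Delta_{\min}(\t)-C\rel-\rho>0$, and the definition of $\varepsilon$), I obtain $c_1^{(\ell+1)}=\d^{\ell}\exp(-C)/(c_3^{(\ell+1)}-\varepsilon)+\varepsilon/c_3^{(\ell+1)}$.

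The bulk of the work is then bookkeeping: one needs to check that assumptions (3) and (4) of Lemma \ref{l:hatgq} are exactly what is required to keep these recursions under control for all $1\le \ell\le k$. Unfolding the recursion for $c_3$ gives $c_3^{(\ell)}\ge\prod_{j=1}^{\ell-1}(\delta_1-c_1^{(j)})\d^{-(2\rho+1)}$, and since one expects $c_1^{(j)}\le\delta_1/2$ (to be verified simultaneously), this yields $c_3^{(\ell)}\ge(\delta_1/2)^{\ell-1}\d^{-(2\rho+1)(\ell-1)}\ge(\delta_1/(2\d^{2\rho+1}))^{k}$. Assumption (3) says $\varepsilon\le\frac14(\delta_1/(2\d^{2\rho+1}))^{2k}$, hence $\varepsilon\le\frac14 c_3^{(\ell)2}$ and in particular $\varepsilon\le\frac12 c_3^{(\ell)}$, so $c_3^{(\ell)}-\varepsilon\ge c_3^{(\ell)}/2$; together with assumption (4), $\exp(C)\ge\d^k/\varepsilon^2\ge\d^\ell/\varepsilon^2$, this forces $\d^{\ell}\exp(-C)/(c_3^{(\ell+1)}-\varepsilon)\le 2\varepsilon^2/c_3^{(\ell+1)}\le 2\varepsilon^2/c_3^{(\ell)}\cdot\d^{2\rho+1}/(\delta_1/2)$, which is $\lesssim\varepsilon$, so altogether $c_1^{(\ell+1)}\le C_k'\varepsilon/c_3^{(\ell)}\le C_k''\varepsilon(2\d^{2\rho+1}/\delta_1)^k$; using (3) again this is $\le\sqrt\varepsilon$ up to the constants appearing in the statement, and in particular stays below $\delta_1/2$, closing the inductive hypothesis $c_1^{(j)}\le\delta_1/2$. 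Similarly, unfolding the $c_2$ recursion and using $c_1^{(j)}\d^{2\rho+2}\lesssim\sqrt\varepsilon\,\d^{2\rho+2}$ together with the smallness assumption (3) (which makes $\sqrt\varepsilon\,\d^{2\rho+2}\le\frac{1}{8k}$), one gets $c_2^{(k+1)}\le\sum_{\ell=1}^{k}c_1^{(\ell)}\d^{2\rho+2}\prod_{j>\ell}(1+c_1^{(j)}\d^{2\rho+2})\le 2\sum_{\ell=1}^k c_1^{(\ell)}\d^{2\rho+2}\le 4k\d^{2\rho+2}\sqrt\varepsilon$ (the telescoping product being bounded by $e^{1/8}<2$). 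I would organize these estimates so that the numerical constant in front is at most $8k$, matching the statement; replacing the precise constants by $8k$ gives slack.

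Finally, I would assemble the conclusion. By definition $\hat g(\t,\phi_k)=\E(\prod_{j=1}^k M_j)$ and $\hat q(\t,\phi_k)=\prod_{j=1}^k Q_j$, so the $\ell=k+1$ instance of the product-comparison bound gives exactly $|\hat g(\t,\phi_k)-\hat q(\t,\phi_k)|\le c_2^{(k+1)}\prod_{j=1}^k Q_j=c_2^{(k+1)}\hat q(\t,\phi_k)\le 8k\d^{2\rho+2}\sqrt{\varepsilon(\Delta_{\min}(\t),\rho,C,n)}\,\hat q(\t,\phi_k)$, which is the assertion; the phrase ``when $n$ is large enough'' covers the requirement $\varepsilon<\infty$ and the implicit need that $\d^K/\sqrt n$ (the second term in $\varepsilon$) be small enough for assumption (3) to be compatible with a fixed choice of $C$ and $\rho$. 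The main obstacle I anticipate is the careful propagation of the constants through the coupled recursions for $(c_1^{(\ell)},c_2^{(\ell)},c_3^{(\ell)})$ — in particular verifying that the lower bound on $c_3^{(\ell)}$ never degrades below the threshold in assumption (3), so that Lemma \ref{l:induct2} may be applied at every step, and simultaneously that $c_1^{(\ell)}\le\delta_1/2$ is preserved so that Lemma \ref{l:induct1}'s hypothesis $\delta>c_1$ holds. Everything else is a matter of substituting the two induction-step lemmas and collapsing geometric sums and products; the constant $8k$ in the statement should come out with room to spare.
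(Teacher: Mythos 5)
Your proposal is correct and follows essentially the same route as the paper: the same induction alternating Lemma \ref{l:induct1} and Lemma \ref{l:induct2}, with the same base case and the same use of hypotheses (3)–(4), differing only in that you track generic constants $(c_1^{(\ell)},c_2^{(\ell)},c_3^{(\ell)})$ and unfold the recursions, while the paper hard-codes the inductive bounds $2\sqrt{\varepsilon}$, $8\ell\sqrt{\varepsilon}\,\d^{2\rho+2}$, and $(\delta_1/(2\d^{2\rho+1}))^{\ell}$. The constant bookkeeping you sketch closes in the same way, so no gap.
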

We note that the constants in the lemma are far from being optimal. We take those for the convenience of computation as will be shown below. They are complicated at first sight, but they naturally arise in our computations.
\begin{proof} [Proof of Lemma \ref{l:hatgq}]
	The proof is done inductively using Lemmas \ref{l:induct1} and \ref{l:induct2}. We will show that for $1\leq \ell\leq k$,
	\begin{align}
		&\|\mathcal{D}(A_\ell \mid M_1=m_1,\ldots, M_{\ell-1}=m_{\ell-1})-\pi\|_{\TV} \leq 2\sqrt{\ep}, \label{eq:induct1}\\
		&\left|\E\left(\prod_{j=1}^\ell M_j\right)-\prod_{j=1}^{\ell} Q_j\right|\leq 8\ell\sqrt{\varepsilon} \d^{2\rho+2} \prod_{j=1}^{\ell} Q_j,\label{eq:induct2}\\
		&\P\left(\cap_{j=1}^{\ell} (M_j=m_j)\right)\geq \left(\frac{\delta_1}{2\d^{2\rho+1}}\right)^\ell.\label{eq:induct3}
	\end{align}
	When $\ell=1$, we have $\mathcal{D}(A_1)=\pi$, $\E M_1=Q_1$ and $\P(M_1=m_1)\geq \delta_1 \frac{1}{\d^{2\rho+1}}$, so equations \eqref{eq:induct1}$-$\eqref{eq:induct3} hold. Now assume that equations  \eqref{eq:induct1}$-$\eqref{eq:induct3} hold for $\ell-1$. The conditions $\Delta_{\min}(t) \geq 1$ and $\varepsilon(\Delta_{\min} (\t),\rho, C,n)\leq \frac{1}{4}(\frac{\delta_1}{2\d^{2\rho+1}})^{2k}$ and equations \eqref{eq:induct1} and \eqref{eq:induct2} ensure that the assumptions in Lemma \ref{l:induct1} hold. Therefore, by Lemma \ref{l:induct1} and the assumption that $\varepsilon\leq \frac{1}{64k^2\d^{4\rho+4}}$,
	\begin{align*}
		\left|\E\left(\prod_{j=1}^\ell M_j\right)-\prod_{j=1}^{\ell} Q_j\right|&\leq ((1+8(\ell-1)\sqrt{\varepsilon} \d^{2\rho+2} )(2\sqrt{\varepsilon})\d^{2\rho+2}+8(\ell-1)\sqrt{\varepsilon} \d^{2\rho+2} ) \prod_{j=1}^{\ell} Q_j\\
		&\leq 8\ell \sqrt{\varepsilon} \d^{2\rho+2} \prod_{j=1}^{\ell} Q_j.
	\end{align*}
	By Lemma \ref{l:induct1}, equation \eqref{eq:induct1}, equation \eqref{eq:induct3} and the assumption that $\epsilon\leq \frac{1}{4}(\frac{\delta_1}{2\bar{d}^{2\rho+1}})^{2k}$,
	\begin{align*}
		\P\left(\cap_{j=1}^{\ell} (M_j=m_j)\right)&\geq (\delta_1-2\sqrt{\varepsilon})\frac{1}{\d^{2\rho+1}}\left(\frac{\delta_1}{2\d^{2\rho+1}}\right)^{\ell-1}
		\geq \left(\frac{\delta_1}{2\d^{2\rho+1}}\right)^\ell.
	\end{align*}
	By Lemma \ref{l:induct2}, equation \eqref{eq:induct3} and the assumptions that $\varepsilon(\Delta_{\min} (\t),\rho, C,\delta_2)\leq \frac{1}{4}(\frac{\delta_1}{2\bar{d}^{2\rho+1}})^{2k}$ and $\exp(C)\geq\frac{\d^k}{\varepsilon^2}$, we have 
	\begin{align*}
		&\quad \|\mathcal{D}(A_\ell \mid M_1=m_1,\ldots, M_{\ell-1}=m_{\ell-1})-\pi\|_{\TV}  \\
		&\leq \d^k\exp(-C) \left( \left(\frac{\delta_1}{2\bar{d}^{2\rho+1}}\right)^{\ell-1}-\varepsilon \right)^{-1}+
		\varepsilon \left(\left(\frac{\delta_1}{2\bar{d}^{2\rho+1}}\right)^{\ell-1}\right)^{-1}\\
		&\leq \varepsilon^2 \left(\left(\frac{\delta_1}{2\bar{d}^{2\rho+1}}\right)^{\ell-1}-\varepsilon\right)^{-1}+\sqrt{\varepsilon}\\
		&\leq 2\sqrt{\varepsilon}.
	\end{align*}
	Therefore, equations \eqref{eq:induct1}$-$\eqref{eq:induct3} hold for $1\leq \ell\leq k$. In particular, when $\ell=k$, we have $\hat g(\t,\phi_k)=\E(\prod_{j=1}^k M_j)$ and $\hat q(\t,\phi_k)=\prod_{j=1}^k Q_j$, and
	\begin{equation}
		\left|\E\left(\prod_{j=1}^k M_j\right)-\prod_{j=1}^k Q_j\right|\leq 8k\d^{2\rho+2}\sqrt{\varepsilon(\Delta_{\min} (\t),\rho, C,n)} \prod_{j=1}^k Q_j.
	\end{equation}
	Thus the conclusion follows.
\end{proof}
\noindent Recall  that we have defined previously the error terms in Proposition \ref{p:gengraph}:
\[
\hat{e}_{k}(\t):=\sup_{I_k\in \Phi_k}\abs{q(\t,I_k)-g(\t,I_k)},\quad
e_k(t):=\int_{0<t_1<\ldots <t_k<t}\hat{e}_{k}(\t) \mathrm{d} \t,
\]
and $E_3=\sum_{k=2}^K \left(\frac{\M}{tn}\right)^{k}e_k(t)$.
We next control $E_3$, by choosing appropriate intermediate parameters and applying Lemma \ref{l:trung} and Lemma \ref{l:hatgq}. 
\begin{prop}   \label{p:control_ek}
	For any $K\in \ZZ_+$ with $K\geq 2$, $0<\delta_1\leq 1$, and $\delta,\kappa_0,C_{\alpha}>0$, there exists an appropriate choice of $\rho=\rho(\delta,\kappa_0,C_\alpha)\in \ZZ_+$, such that whenever $\G$ 
	has maximal degree upper bounded by $\d$,
	is $\delta_1$-homogeneous and $\rho$-locally tree like, $\kappa(\G)>\kappa_0$ and $\frac{\M}{n}\leq C_{\alpha}$, then
	\begin{equation}
		E_3(t)\leq \delta,
	\end{equation}
	for any $t$ large enough and $n$ such that $n/t$ is large enough.
\end{prop}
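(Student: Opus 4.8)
The plan is to bound $\hat{e}_k(\t)$ pointwise by splitting the time simplex according to whether $\t$ is $\Delta$-good (for a suitable $\Delta$ large in terms of $\rho$ and the implicit parameters) and then integrating. First I would fix the free parameters in the right order: given $K,\delta_1,\delta,\kappa_0,C_\alpha$, choose $\rho$ large depending on $\delta,\kappa_0,C_\alpha$ (and $\d,K,\delta_1$, which are all controlled), then let $C$ be a large constant and $\Delta$ a large multiple of $\rho$; these will be tuned so that the hypotheses (1)--(4) of Lemma \ref{l:hatgq} hold on the good set. On the \emph{bad} set $\{\t:\Delta_{\min}(\t)\le G(t,\Delta)\}$ one has the trivial bound $\hat{e}_k(\t)\le \hat{q}(\t,I_k)+\hat{g}(\t,I_k)+\ldots\le 2\d^k$ (using $r(A_\ell)\le\d$), and the Lebesgue measure of this set is at most $kG(t,\Delta)t^{k-1}$; since $G(t,\Delta)\asymp_{\d,\kappa_0}\Delta$ by Lemma \ref{l:boundH} (which gives $\rel(\G)\le C_{\kappa_0,\d}$, hence $G(t,\Delta)=\rel\log H(t)+\rel\Delta=O(\Delta)$), the contribution to $e_k(t)$ from the bad set is $O_{\d,K}(\Delta t^{k-1})$, and multiplied by $(\M/(tn))^k\le (C_\alpha/t)^k$ this is $O(\Delta/t)$, which is $\le\delta/2$ once $t$ is large enough.

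On the good set I would chain the two comparison lemmas. By Lemma \ref{l:trung}, for $\rho>100$,
\[
|\hat{g}(\t,I_k)-g(\t,I_k)|\;,\;|\hat{q}(\t,I_k)-q(\t,I_k)|\;\le\; 8k\d^k\!\left(kC_{\kappa_0,\d}\exp(-\sqrt{\rho}/(\d\,\rel))+\frac{t\d}{n}\right),
\]
and by Lemma \ref{l:hatgq}, under its hypotheses, $|\hat{g}(\t,I_k)-\hat{q}(\t,I_k)|\le 8k\d^{2\rho+2}\sqrt{\varepsilon}\,\hat{q}(\t,I_k)$, where $\varepsilon=\varepsilon(\Delta_{\min}(\t),\rho,C,n)\le \frac{3\Delta_{\min}(\t)}{(\Delta_{\min}(\t)-C\rel-\rho)^2}+\d^K/\sqrt n$. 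For $\t$ in the good set, $\Delta_{\min}(\t)>G(t,\Delta)\ge \rel\Delta$, so choosing $\Delta$ a suitable large multiple of $\max\{C\rel,\rho\}$ makes the first term of $\varepsilon$ of order $1/\Delta_{\min}(\t)$, hence small uniformly; but note $\Delta_{\min}(\t)$ can be as large as $t$, so the bound $\varepsilon\lesssim 1/\Delta_{\min}(\t)$ is what I want — it does \emph{not} degrade for large spacings. The upshot is a pointwise estimate, valid on the good set,
\[
\hat{e}_k(\t)\;\le\; C_{k,\d}\Big(\exp(-\sqrt\rho/(\d\,\rel)) + \tfrac{t\d}{n} + \d^{2\rho+2}\sqrt{\varepsilon}\,\Big)\,\hat{q}(\t,I_k)\;+\;C_{k,\d}\Big(\exp(-\sqrt\rho/(\d\,\rel)) + \tfrac{t\d}{n}\Big),
\]
after absorbing the $q$--$\hat q$ and $g$--$\hat g$ errors; since $\hat{q}(\t,I_k)\le q(\t,I_k)+(\text{error})$ and $q(\t,I_k)\le (n/(2\M))^k\le (n/(2\cdot\tfrac14 n))^k=2^k$ actually one needs the cleaner bound $q(\t,I_k)\lesssim (n/\M)^k$, so that after integrating $\int_{\R^k_{<,t}}\hat q\,\mathrm d\t\lesssim (tn/\M)^k$, and multiplying by $(\M/(tn))^k$ the $\hat q$-proportional part contributes $O_{k,\d}\big(\d^{2\rho+2}\sup_{\t\text{ good}}\sqrt\varepsilon + \exp(-\sqrt\rho/(\d\rel)) + t\d/n\big)$. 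The additive (non-$\hat q$) part, integrated over a set of measure $\le t^k$ and multiplied by $(\M/(tn))^k\le (C_\alpha/t)^k\le C_\alpha^k$, contributes $O_{k,\d}\big(t^k C_\alpha^k(\exp(-\sqrt\rho/(\d\rel))+t\d/n)\big)$ — here I would be careful: the $t\d/n$ factor provides the decay when $n/t$ is large (the $t^k$ is harmless since it multiplies $(t/n)$, giving $t^{k+1}/n\to 0$ as $n/t\to\infty$ with the $t^k$ from the simplex balanced by $(1/t)^k$... actually the $1/t^k$ from $(\M/(tn))^k$ cancels the $t^k$, leaving $C_\alpha^k\cdot (\text{decay})$), so this term is controlled.

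Summing over $2\le k\le K$: choose $\rho$ large enough (depending on $\delta,\kappa_0,C_\alpha,\d,K,\delta_1$, but $\d,K,\delta_1$ are given constants) that $\d^{2\rho+2}\cdot\rel\cdot\Delta^{-1/2}$-type terms and $\exp(-\sqrt\rho/(\d\rel))$-type terms are each $\le \delta/(4K)$ — feasible because $\rel\le C_{\kappa_0,\d}$ is bounded, so $\exp(-\sqrt\rho/(\d\rel))\le\exp(-\sqrt\rho/(\d C_{\kappa_0,\d}))$ beats $\d^{2\rho+2}$ for $\rho$ large, and $\Delta$ can be taken as large a multiple of $\rho$ as needed; then take $t$ large and $n/t$ large to kill the remaining $\Delta/t$ and $t\d/n$ terms. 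One must also verify hypotheses (1)--(4) of Lemma \ref{l:hatgq} on the good set: (1) $\Delta_{\min}(\t)\ge 1$ holds since $\Delta>2$ and $\rel\ge 1/(2\d)$ give $G(t,\Delta)\ge 1$; (2) $\Delta_{\min}(\t)-C\rel-\rho>0$ holds by choosing $\Delta>C+\rho/\rel$ roughly; (3)--(4) are the smallness conditions on $\varepsilon$ versus $(\delta_1/\d^{2\rho+1})$-type thresholds and $\exp(C)\ge \d^k/\varepsilon^2$, all arranged by taking $C$ and then $\Delta$ sufficiently large and $n$ large. The main obstacle I anticipate is exactly this bookkeeping of the hierarchy of parameters $\rho\ll C\ll \Delta$ together with the "$t$ large, $n/t$ large" limits, and in particular making sure that the factor $\d^{2\rho+2}$ coming from Lemma \ref{l:hatgq} — which grows with $\rho$ — is always dominated by the $\Delta^{-1/2}$ or $\exp(-\sqrt\rho/(\d\rel))$ gains; this forces $\Delta$ to be chosen \emph{after} $\rho$, as an enormous function of $\rho$, and one must check this is consistent with the definition $\Delta = K\log(\M\d/n+1)+\log R + 1/\delta'$ used elsewhere (here $R=1$ since all jump rates are $1$, and $\M\d/n=O(1)$, so $\Delta$ is essentially a free parameter $\asymp 1/\delta'$, and one simply takes $\delta'$ small — invoking Remark \ref{variant}).
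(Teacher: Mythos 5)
Your proposal is correct and follows essentially the same route as the paper's proof: split the time simplex at a large constant spacing threshold; on the bad set use the trivial bound $\hat e_k(\t)\le 2\d^k$ together with its measure $O(\Delta_0 t^{k-1})$, which after multiplying by $(\M/(tn))^k\le C_\alpha^k/t^k$ is killed by taking $t$ large; on the good set chain Lemma \ref{l:trung} (truncation errors $\exp(-c\sqrt\rho)$ and $t\d/n$) with Lemma \ref{l:hatgq} (relative error $\d^{2\rho+2}\sqrt\varepsilon$), choosing the parameters in the order $\rho$, then the spacing threshold, then $C$, then $t$ and $n/t$ large. This is exactly the paper's decomposition and parameter hierarchy.

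Two small remarks. First, your parenthetical claim that $\exp(-\sqrt\rho/(\d\,\rel))$ ``beats'' $\d^{2\rho+2}$ is false — an exponential in $\sqrt\rho$ loses to an exponential in $\rho$ — but it is never needed: in your own pointwise bound (as in the paper) the truncation error and the term $\d^{2\rho+2}\sqrt{\varepsilon}$ enter additively, so the factor $\d^{2\rho+2}$ must be, and is, absorbed solely by choosing the spacing threshold (and $n$) large \emph{after} $\rho$, never by the $\rho$-gain. Second, tying the splitting threshold to the $\Delta$ of Proposition \ref{p:gengraph} and invoking Remark \ref{variant} is unnecessary: by \eqref{e3kt2}, $e_k(t)$ is an integral over the whole simplex and makes no reference to $\Delta$, so the threshold is purely internal to this proof — the paper simply introduces its own constant $\Delta_0$ (chosen after $\rho$ and before $C$), which is exactly your ``$\Delta$ as an enormous function of $\rho$'' without the bookkeeping worry. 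A final pedantic point, which applies equally to the paper: condition (4) of Lemma \ref{l:hatgq} involves $\varepsilon(\Delta_{\min}(\t),\rho,C,n)$, which shrinks as the spacings grow, so it is verified only at the threshold; one should note that the probability bound in Lemma \ref{l:escapetime} is monotone in the spacing, so $\varepsilon$ may be replaced throughout by its value at the threshold, after which the verification is as you describe.
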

\begin{proof}
	Take appropriate $\rho>100$ such that
	\begin{align}\label{a:choice0}
		16\bar{d}^{K} \left(KC_{\kappa_0, \bar{d}}\exp(-\sqrt{\rho}/\bar{d}\rel) \right)\leq 16\bar{d}^{K}C_{\kappa_0, \bar{d}} \exp\left(-\sqrt{\rho}\frac{\kappa_0^2}{2\d^2}\right)\leq    \frac{\delta}{3KC_\alpha^K}.
	\end{align}
	For a chosen $\rho$, take $\Delta_0-C\rel$ large enough such that 
	\begin{align*}
		\frac{3(\Delta_0-C\rel)}{(\Delta_0-C\rel-\rho)^2}\leq \frac{1}{2} \min\left\{\frac{1}{4}\left(\frac{\delta_1}{2\d^{2\rho+1}}\right)^{2K},\frac{1}{64K^2\d^{4\rho+4}} ,\left(\frac{1}{3K^2C_\alpha^K\d^{2\rho+K+2}}\right)^2\right\}.
	\end{align*}
	Now, fix $\Delta_0-C\rel$ and pick $C$ large enough such that 
	\begin{align*}
		\exp(C)\geq\frac{\d^K}{\left(\frac{3(\Delta_0-C\rel)}{(\Delta_0-C\rel-\rho)^2}\right)^2}.
	\end{align*}
	This implies that when $n$ is large, we have
	\begin{align}
		&\varepsilon(\Delta_0,\rho,C,n)=\frac{3(\Delta_0-C\rel)}{(\Delta_0-C\rel-\rho)^2}+\frac{\d^K}{n} \nonumber \\ 
		&\quad \quad \qquad \qquad \leq  \min\left\{\frac{1}{4}\left(\frac{\delta_1}{2\d^{2\rho+1}}\right)^{2K},\frac{1}{64K^2\d^{4\rho+4}} ,\left(\frac{1}{3K^2C_\alpha^K\d^{2\rho+K+2}}\right)^2\right\}, \label{a:choose1}\\
		&\exp(C)\geq\frac{\d^K}{\varepsilon(\Delta_0,\rho,C,n)^2}=\frac{\d^K}{\left(\frac{3(\Delta_0-C\rel)}{(\Delta_0-C\rel-\rho)^2}+\frac{\d^K}{n}\right)^2}, \label{a:choose2}\\
		&8K\d^{2\rho+K+2}\sqrt{\frac{3(\Delta_0-C\rel)}{(\Delta_0-C\rel-\rho)^2}+\frac{\d^K}{n}} =8K\d^{2\rho+K+2}\sqrt{\varepsilon(\Delta_0,\rho,C,n)}\leq \frac{\delta}{3KC_\alpha^K}. \label{a:choose3}
	\end{align}
	Now we estimate $\hat e_k(\t)$ for $2\leq k\leq K$. 
	\begin{align*}
		\hat{e}_{k}(\t)&=\sup_{I_k\in \Phi_k}\abs{q(\t,I_k)-g(\t,I_k)}\\
		&\leq \sup_{I_k\in \Phi_k}\abs{ q(\t,I_k)-\hat q(\t,I_k)}+\sup_{I_k\in \Phi_k}\abs{\hat q(\t,I_k)-\hat g(\t,I_k)}+\sup_{I_k\in \Phi_k}\abs{\hat g(\t,I_k)-g(\t,I_k)}.
	\end{align*}
	For the first and the third term, by Lemma \ref{l:trung} and equation \eqref{a:choice0},
	\begin{align*}
		&\sup_{I_k\in \Phi_k}\abs{ q(\t,I_k)-\hat q(\t,I_k)}+\sup_{I_k\in \Phi_k}\abs{\hat g(\t,I_k)-g(\t,I_k)}\\
		&\leq 16\bar{d}^{k} k\left(C_{\kappa_0, \bar{d}}\exp(-\sqrt{\rho}/\bar{d}\rel)+\frac{t\bar{d}}{n} \right)\\
		&\leq \frac{\delta}{3KC_\alpha^K}+ 16k\d^{k+1}\frac{t}{n}.
	\end{align*}
	For the second term, by Lemma \ref{l:hatgq} and the fact that $Q_{\ell}\leq \d$,
	\begin{align*}
		&\sup_{I_k\in \Phi_k}\abs{\hat q(\t,I_k)-\hat g(\t,I_k)} \leq 8k \d^{2\rho+K+2} \sqrt{\frac{3(\Delta_{\min}(\t)-C\rel)}{(\Delta_{\min}(\t)-C\rel-\rho)^2}+\frac{\d^K}{n}}.
	\end{align*}
	When $\Delta_{\min}(\t)\geq \Delta_0$, this is further bounded by $\frac{\delta}{3KC_\alpha^K}$ using equation \eqref{a:choose3}. It follows that
	\begin{align*}
		&e_k(\t)=\int_{0<t_1<\ldots <t_k<t} \hat e_t(\t)\mathrm{d} \t\\
		&\leq \int_{\R^{k}_{<, t}}\left( \sup_{I_k\in \Phi_k}\abs{ q(\t,I_k)-\hat q(\t,I_k)}+\sup_{I_k\in \Phi_k}\abs{\hat g(\t,I_k)-g(\t,I_k)}+\sup_{I_k\in \Phi_k}\abs{\hat q(\t,I_k)-\hat g(\t,I_k)} \right) \mathrm{d} \t\\
		&\leq t^k \left(\frac{\delta}{3KC_\alpha^K}+16\d^{k+1}\frac{t}{n}\right)\\
		&+\int_{\R^{k}_{<, t}, \Delta_{\min}(\t) > \Delta_0} \sup_{I_k\in \Phi_k}\abs{\hat q(\t,I_k)-\hat g(\t,I_k)}\mathrm{d} \t+\int_{\R^{k}_{<, t}, \Delta_{\min}(\t) \leq  \Delta_0} \sup_{I_k\in \Phi_k}\abs{\hat q(\t,I_k)-\hat g(\t,I_k)}\mathrm{d} \t\\
		&\leq t^k \left(\frac{\delta}{3KC_\alpha^K}+16k\d^{k+1}\frac{t}{n}\right)+ t^k\frac{\delta}{3KC_\alpha^K}+\Delta_0t^{k-1} \sup_{\R^{k}_{<, t}} \hat e_k(\t)\\
		&\leq t^k \left(\frac{\delta}{3KC_\alpha^K}+16k\d^{k+1}\frac{t}{n}\right)+ t^k\frac{\delta}{3KC_\alpha^K}+2\Delta_0t^{k-1} \d^k.
	\end{align*}
	This implies that
	\begin{align*}
		E_3(t)&=\sum_{k=2}^K \left(\frac{\M}{tn}\right)^{k}e_k(t)\\
		&\leq \sum_{k=2}^K \frac{C_\alpha^k}{t^k} t^k\left(\frac{\delta}{3KC_\alpha^K}+16k\d^{k+1}\frac{t}{n}+\frac{\delta}{3KC_\alpha^K}+2\Delta_0\frac{1}{t}\d^k\right)\\
		&\leq \frac{2\delta}{3}+16K^2C_\alpha^k\d^{k+1}\frac{t}{n}+2KC_\alpha^k\Delta_0\d^k\frac{1}{t}.
	\end{align*}
	When $t$ and $n/t$ are large enough, we have that
	$
	E_3(t)\leq \delta
	$,
	thus our conclusion follows.
\end{proof}

\subsection{Proofs of induction steps}
In this subsection we prove Lemmas \ref{l:induct1} and \ref{l:induct2}.
\begin{proof}[Proof of Lemma \ref{l:induct1}]
	Let $m_1,\ldots,m_{\ell}$ be feasible. Denote $B_i:=\cap_{j=1}^i \{M_j=m_j\} $. Observe that $M_{\ell}$ is conditionally independent of $(M_1,\ldots,M_{\ell-1})$ given $A_{\ell}$ (indeed, $M_1,\ldots,M_{\ell-1}$ do not involve $\gamma_{\ell}$, and they involve $\gamma_{i_{\ell}}$ only at intervals contained in $[0,t_{\ell}]$). Hence
	\begin{align*}
		&\P\left(B_{\ell} \right)=\P\left(M_\ell=m_\ell \mid B_{\ell-1} \right) \P\left(B_{\ell-1} \right)\\
		&=\sum_{v\in V}\P\left(M_\ell=m_{\ell} \mid A_\ell=v,B_{\ell-1} \right)\P\left(A_\ell=v \mid B_{\ell-1} \right) \P\left(B_{\ell-1}\right)\\
		&=\sum_{v\in V}\P\left(M_\ell=m_\ell \mid A_{\ell}=v\right)\P\left(A_{\ell}=v \mid B_{\ell-1} \right) \P\left(B_{\ell-1}\right)
	\end{align*}
	which is at least
	\begin{align*}
		\sum_{v\in V}\P(M_\ell=m_\ell \mid A_\ell=v)\P\left(A_\ell=v \mid B_{\ell-1}\right)c_3,
	\end{align*}
	using assumption (\ref{eq:assump3}). Note that $M_\ell=m_\ell$ for some feasible $m_\ell \in \mathbb{N}$ if and only if one of the following two cases happen. Either $m_\ell>0$, $\deg(A_\ell)=m_\ell$ and the indicator from the definition of $M_\ell$ occurs or $m_\ell=0$ and indicator from the definition of $M_\ell$ doesn't occur. Now if $m_\ell>0$, when $A_{\ell}=v$ and $\deg (v)=m_\ell$, we have
	\begin{equation}\label{lbnoncollide}
		\P(M_\ell=m_\ell \mid A_\ell=v)=\P(\gamma_{\ell}(t')\neq \gamma_{i_{\ell}}(t') \text{ for all } t'\in[ t_{\ell},\tau_\ell \wedge t_{\ell+1}])
		\geq \frac{1}{\d^{2\rho+1}}.
	\end{equation}
	This can be seen by considering two arbitrary disjoint paths connecting $A_\ell$ and $N_\rho(A_\ell)^c$ and $\nu_{A_\ell}$ and $N_\rho(A_\ell)^c$ respectively, as possible trajectories for the two walks until they exit $N_{\rho}(A_\ell)$. This implies that when $m_\ell>0$,
	\begin{align*}
		&\sum_{v\in V}\P\left(M_\ell=m_\ell \mid A_{\ell}=v\right)\P\left(A_{\ell}=v \mid B_{\ell-1} \right) c_3\\
		\geq &\sum_{v\in V, \deg v=m_\ell}\P\left(A_{\ell}=v \mid B_{\ell-1} \right) \frac{1}{\d^{2\rho+1}}c_3\\
		= &\P\left(\deg (A_\ell)=m_\ell \mid B_{\ell-1} \right) \frac{1}{\d^{2\rho+1}}c_3.
	\end{align*} 
	As we are in the $\delta$-homogeneous graph $\mathcal{G}$, the number of vertices with feasible degree $m_\ell$ (when $m_\ell>0$) is lower bounded by $\delta n$. Using our assumption (\ref{eq:assump1}), when $m_\ell>0$,
	\begin{align*}
		\P\left(\cap_{j=1}^\ell \{M_j=m_j\}\right)\geq
		(\delta-c_1)c_3\frac{1}{\d^{2\rho+1}}.
	\end{align*}
	Now, when $m_\ell=0$ and  $t_{\ell+1}-t_{\ell}\geq 1$, we have
	\begin{equation}\label{lbnoncollide2}
		\P(M_\ell=m_\ell \mid A_\ell=v)=1-\P(\gamma_{\ell}(t')\neq \gamma_{i_{\ell}}(t') \text{ for all } t'\in[ t_{\ell},\tau_\ell \wedge t_{\ell+1}])
		\geq \frac{1}{8\d}>\frac{1}{\bar d^{2\rho+1}}.
	\end{equation}
	Indeed, the probability that $\gamma_{\ell}(t)$ stays at $A_\ell$ during $[t_\ell,t_\ell+\frac{\log 2}{\bar d}]$ and that $\gamma_{i_{\ell}}(t)$ jumps to $A_\ell$ by time $t_\ell+\frac{\log 2}{\bar d}$ is at least $\frac{1}{8\d}$. Therefore when $m_\ell=0$, we also have
	\begin{align*}
		\P\left(\cap_{j=1}^\ell \{M_j=m_j\}\right)\geq \frac{1}{8\d}c_3>
		(\delta-c_1)c_3\frac{1}{\d^{2\rho+1}}.
	\end{align*}
	Since $r(\U)\geq 1$, the above equations \eqref{lbnoncollide} and \eqref{lbnoncollide2} imply $Q_{\ell}\geq\d^{-(2\rho+1)}$.
	Now we consider
	\begin{align*}
		\E\left(\prod_{j=1}^\ell M_j\right)&=\E\left(\E(M_\ell \mid M_1,\ldots,M_{\ell-1})\prod_{j=1}^{\ell-1} M_j\right)\\
		&=\E\left(\E(\E(M_\ell \mid A_\ell) \mid M_1,\ldots,M_{\ell-1})\prod_{j=1}^{\ell-1} M_j\right).
	\end{align*}
	By (\ref{eq:assump1}) and as $0\leq M_\ell\leq r_{\max}\leq \d$,
	\begin{equation}
		|\E(\E(M_\ell \mid A_\ell) \mid  M_1,\ldots,M_{\ell-1})-Q_{\ell}|\leq c_1 \d \leq c_1 \d^{2\rho+2}Q_{\ell}.
	\end{equation}
	Therefore,
	\begin{equation}
		\left|\E\left(\prod_{j=1}^{\ell} M_j\right)-\E\left(\prod_{j=1}^{\ell-1} M_j\right)Q_\ell\right|\leq   c_1\d^{2\rho+2} \E\left(\prod_{j=1}^{\ell-1} M_j\right)Q_\ell.
	\end{equation}
	Using assumption \eqref{eq:assump2}, we have
	\begin{align*}
		\left|\E\left(\prod_{j=1}^\ell M_j\right)-\prod_{j=1}^\ell Q_j\right|&\leq \left|\E\left(\prod_{j=1}^{\ell} M_j\right)-\E\left(\prod_{j=1}^{\ell-1} M_j\right)Q_\ell\right|+\left|\E\left(\prod_{j=1}^{\ell-1} M_j\right)Q_\ell-\prod_{j=1}^\ell Q_j\right|\\
		&\leq c_1\d^{2\rho+2} \E\left(\prod_{j=1}^{\ell-1} M_j\right) Q_\ell +c_2 \prod_{j=1}^\ell Q_j\\
		&\leq \left((1+c_2)c_1\d^{2\rho+2}+c_2\right) \prod_{j=1}^\ell Q_j.
	\end{align*}
	Thus our conclusion follows.
\end{proof}

To prove Lemma \ref{l:induct2}, we present an intermediate result that controls the exit time $\tau_\ell$. 
Recall that $\varepsilon = \frac{3(\Delta_{\min}(\t)-C\rel)}{(\Delta_{\min}(\t)-C\rel-\rho)^2}+\frac{\d^K}{\sqrt{n}}$.
\begin{lemma}\label{l:escapetime}
	Let $c_4>0$ and $1\leq \ell\leq K-1$. For any $\rho$-locally tree like graph $\G$ with maximum degree bounded by $\d$ and any sequence of feasible integers $m_1,\ldots,m_\ell$, if we assume that 
	\begin{equation}
		\P\left(\cap_{j=1}^\ell \{M_j=m_j\}\right)\geq c_4,
	\end{equation}
	then
	\begin{equation}
		\P(t_{\ell+1}-\tau_{\ell}< C\rel \mid  M_1=m_1,\ldots, M_{\ell}=m_{\ell})\leq\frac{1}{c_4}\varepsilon,
	\end{equation}
	when $\Delta_{\min}(\t)-C\rel-\rho>0$ (the $C$ here is the same as the one in the definition of $\ep$ in \eqref{defep}).
\end{lemma}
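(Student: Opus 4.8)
The plan is to first strip off the conditioning, then reduce the statement to the fact that a single random walk escapes a ball of radius $\rho$ in a tree-like neighbourhood well before time $\Delta_{\min}(\t)-C\rel$, and finally to prove that escape estimate by a short second moment computation. First I would use the hypothesis $\P(\cap_{j=1}^{\ell}\{M_j=m_j\})\ge c_4$: writing the conditional probability as a ratio and discarding the intersection in the numerator,
\[
\P\big(t_{\ell+1}-\tau_\ell<C\rel \mid M_1=m_1,\dots,M_\ell=m_\ell\big)\le \frac{1}{c_4}\,\P\big(t_{\ell+1}-\tau_\ell<C\rel\big),
\]
so it suffices to bound the unconditional probability by $\varepsilon$. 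Put $s:=\Delta_{\min}(\t)-C\rel$, so $s>\rho$ by hypothesis. Since $\tau_\ell\ge t_\ell$ and $t_{\ell+1}-t_\ell\ge\Delta_{\min}(\t)$, the event $\{t_{\ell+1}-\tau_\ell<C\rel\}=\{\tau_\ell>t_{\ell+1}-C\rel\}$ forces that neither $\gamma_\ell$ nor $\gamma_{i_\ell}$ leaves $N_\rho(\gamma_\ell(t_\ell))=N_\rho(B_\ell)$ during $[t_\ell,t_\ell+s]$; in particular $\gamma_\ell$ does not. Conditioning on $\gamma_\ell(t_\ell)=v$, given which $\gamma_\ell$ restarted at $t_\ell$ is a fresh SRW from $v$ by the Markov property, the task reduces to showing
\[
\P\big(N_\rho(\gamma_\ell(t_\ell))\text{ is not a tree}\big)+\sup_{v:\ N_\rho(v)\text{ is a tree}}\P_v\big(\text{SRW does not leave }N_\rho(v)\text{ within time }s\big)\ \le\ \varepsilon .
\]

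For the first term I would combine Lemma~\ref{maxgamma_t(x)} with $\rho$-local tree-likeness: there are at most $\sqrt n$ vertices $v$ with $N_\rho(v)$ not a tree, and each satisfies $\P(\gamma_\ell(t_\ell)=v)\le R^{\ell}/n\le\d^{K}/n$ (since $r(x)=\deg(x)$ gives $R=r_{\max}/r_{\min}\le\d$, and $\ell\le K-1$), so this term is at most $\d^K/\sqrt n$, precisely the second summand of $\varepsilon$. It then remains to show, for each fixed $v$ with $N_\rho(v)$ a tree, that the SRW started at $v$ leaves $N_\rho(v)$ before time $s$ except with probability at most $3s/(s-\rho)^2$, which equals $3(\Delta_{\min}(\t)-C\rel)/(\Delta_{\min}(\t)-C\rel-\rho)^2$, the first summand of $\varepsilon$.

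For that bound I would couple the walk (extended, if convenient, to an infinite tree of minimum degree $\ge3$ agreeing with $N_\rho(v)$ near $v$) with two independent ingredients, using the device of writing the holding time at a degree-$d$ vertex as $E/d$ with $E\sim\Exp(1)$ i.i.d., so that the jump \emph{directions} and the jump \emph{times} separate. From the directions: at every step the walk moves away from $v$ with probability at least $2/3$, so its signed distance from $v$ dominates $\tilde Z_j:=\xi_1+\dots+\xi_j$ where the $\xi_i$ are i.i.d., equal to $\pm1$ with probabilities $2/3,1/3$. From the times: since all degrees are $\ge3$, the number of jumps made by time $s$ dominates $\mathcal N\sim\mathrm{Poisson}(3s)$, and $\mathcal N$ is a function of the $E_i$'s alone, hence independent of the $\xi_i$'s. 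On the no-exit event the distance stays $\le\rho$, so $\tilde Z_{\mathcal N}\le\rho$; and since $\E[\tilde Z_{\mathcal N}\mid\mathcal N]=\mathcal N/3$ and $\Var(\tilde Z_{\mathcal N}\mid\mathcal N)=8\mathcal N/9$, one gets $\E\tilde Z_{\mathcal N}=s$ and $\Var(\tilde Z_{\mathcal N})=\tfrac{8}{9}\cdot3s+\tfrac{1}{9}\cdot3s=3s$, whence Chebyshev's inequality gives $\P(\text{no exit by }s)\le\P(\tilde Z_{\mathcal N}\le\rho)\le 3s/(s-\rho)^2$.

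The routine parts are the conditioning removal and the measure estimate for the non-tree centres. The one place needing care is the final coupling — separating the jump-direction randomness from the jump-time randomness so that $\mathcal N$ is genuinely independent of $(\xi_i)$ — since that is exactly what makes the variance come out to $3s$ and hence produces the constant $3$ in the statement. It is also worth recording that the claim is vacuous unless $3s/(s-\rho)^2<1$, i.e.\ unless $s$ is comfortably larger than $\rho$, so a crude Chebyshev bound is all that is needed here.
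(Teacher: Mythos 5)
Your proposal is correct and follows essentially the same route as the paper's proof: remove the conditioning using the lower bound $c_4$, split according to whether the $\rho$-ball around the branching location is a tree (bounding the non-tree case by $\bar{d}^K/\sqrt{n}$ via Lemma \ref{maxgamma_t(x)} and $\rho$-local tree-likeness), and in the tree case dominate the distance from the center by a drifted walk, concluding with a second-moment/Chebyshev bound of $3s/(s-\rho)^2$. The only cosmetic differences are that the paper tracks the parent walk $\gamma_{i_\ell}$ and works directly with the continuous-time rate-$(2,1)$ biased walk on $\ZZ$ (via the martingale $d_2(s)-s$), whereas you track $\gamma_\ell$ from the ball's center and realize the same rate-$3$ biased process by Poissonizing a discrete $\pm1$ walk — both give exactly the same variance $3s$ and hence the same estimate.
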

\begin{proof}
	Intuitively, for most vertices $v$, its neighborhood $N_\rho(v)$ is a tree. In this case, a random walk starting from $v$ has a drift towards the leaves, so the time of exit cannot be too large. In the following, we write down the idea rigorously. Notice that,
	\begin{align*}
		\P(t_{\ell+1}-\tau_{\ell}< C\rel \mid M_1=m_1,\ldots, M_{\ell}=m_{\ell})
		\leq &\frac{\P(t_{\ell+1}-\tau_{\ell}< C\rel)} {\P(M_1=m_1,\ldots, M_{\ell}=m_{\ell})}\\
		\leq & \frac{\P(t_{\ell+1}-\tau_{\ell}< C\rel)}{c_4}.
	\end{align*}
	To control the numerator, we consider the probability
	$
	\P(\tau_\ell-t_\ell>s).
	$
	For $s\in [0,t_{\ell+1}-t_\ell]$, $\gamma_{i_\ell}(s+t_\ell)$ is a random walk starting at $A_\ell$. We denote the graph distance of $\gamma_{i_\ell}(s+t_\ell)$ to $A_\ell$ by $d_1(s)$. Define another random process $d_2(s)$ to be a (time-homogeneous) drifted continuous-time random walk on $\ZZ$ starting from $0$, where the transition rate from $x$ to $x+1$ is $2$ and from $x$ to $x-1$ is $1$ for all $x\in \ZZ$ and all other rates are zero. If $N_\rho(A_\ell)$ is a tree with minimal degree at least 3, then $d_1(s)$ stochastically dominates $d_2(s)$. If we define $\tau'$ to be the first time that $d_2(s)$ gets larger than $\rho$, then given $N_\rho(A_\ell)$ is a tree,
	\begin{align*}
		\P(\tau_\ell-t_\ell>s)\leq \P(\tau'> s).
	\end{align*}
	To bound the right hand side, we compute the second moment. Define $d_3(s):=d_2(s)-s$, then $d_3(s)$ is a continuous-time martingale, with $\E d_3(s)=0$ and $\mathrm{Var}(d_3(s))=3s$. Then, by Chernoff bound, given $N_\rho(A_\ell)$ is a tree,
	\begin{align*}
		\P(\tau_\ell-t_\ell>s)\leq \P(\tau'> s)\leq \P(d_2(s)\leq  \rho)=\P(d_3(s)\leq  \rho-s)\leq \frac{3s}{(s-\rho)^2}.
	\end{align*}
	\noindent Substituting $t_{\ell+1}-t_\ell-C\rel$ for $s$ in the above inequality, we have that 
	\begin{align*}
		\P(t_{\ell+1}-\tau_{\ell}< C\rel)&\leq\P(t_{\ell+1}-\tau_{\ell}< C\rel, N_\rho(A_\ell) \text{ is a tree })+\P(N_\rho(A_\ell) \text{ is not a tree})\\
		&\leq \frac{3(t_{\ell+1}-t_\ell-C\rel)}{(t_{\ell+1}-t_\ell-C\rel-\rho)^2}+\P(N_\rho(A_\ell) \text{ is not a tree}).
	\end{align*}
	By Lemma \ref{maxgamma_t(x)}, we have that for any $v\in V$, $\P(A_\ell=v)\leq \frac{\bar d^\ell}{n}$. So $\P(N_\rho(A_\ell) \text{ is not a tree })\leq \frac{\bar d^\ell}{\sqrt{n}}$. By definition of $\Delta_{\min} (\t)$, we know that $t_{\ell+1}-t_\ell\geq \Delta_{\min} (\t)$. This implies that, 
	\begin{align*}
		&\P(t_{\ell+1}-\tau_{\ell}< C\rel\mid M_1=m_1,\ldots, M_{\ell}=m_{\ell})\\
		\leq &\frac{1}{c_4} \left(\frac{3(t_{\ell+1}-t_\ell-C\rel)}{(t_{\ell+1}-t_\ell-C\rel-\rho)^2}+\frac{\d^K}{\sqrt{n}}\right)\\
		\leq &\frac{1}{c_4}  \left(\frac{3(\Delta_{\min}(\t)-C\rel)}{(\Delta_{\min}(\t)-C\rel-\rho)^2}+\frac{\d^K}{\sqrt{n}}\right),
	\end{align*}
	and our conclusion follows.
\end{proof}

\begin{proof}[Proof of Lemma \ref{l:induct2}]
	Observe that
	\begin{align*}
		&\P(A_{\ell+1}=v\mid M_1=m_1,\ldots, M_{\ell}=m_{\ell})\\
		&=\P(A_{\ell+1}=v\mid t_{\ell+1}-\tau_{\ell}\geq C\rel, M_1=m_1,\ldots, M_{\ell}=m_{\ell})\\
		&\qquad\qquad\qquad\P(t_{\ell+1}-\tau_{\ell}\geq C\rel \mid M_1=m_1,\ldots, M_{\ell}=m_{\ell})\\
		&+\P(A_{\ell+1}=v \mid t_{\ell+1}-\tau_{\ell}< C\rel, M_1=m_1,\ldots, M_{\ell}=m_{\ell})\\
		&\qquad\qquad\qquad\P(t_{\ell+1}-\tau_{\ell}< C\rel \mid M_1=m_1,\ldots, M_{\ell}=m_{\ell}).
	\end{align*}
	By Lemma \ref{l:escapetime}, $\P(t_{\ell+1}-\tau_{\ell}< C\rel \mid M_1=m_1,\ldots, M_{\ell}=m_{\ell})\leq \frac{\varepsilon}{c_4}$. Hence
	\[\P(t_{\ell+1}-\tau_{\ell}\geq C\rel, M_1=m_1,\ldots, M_{\ell}=m_{\ell})\geq c_4-\varepsilon.\]
	For any $v\in V$,
	\begin{align*}
		&\P(\gamma_{\ell}(t_{\ell+1}-C\rel)=v \mid t_{\ell+1}-\tau_{\ell}\geq C\rel, M_1=m_1,\ldots, M_{\ell}=m_{\ell})\\
		\leq &\P(\gamma_{\ell}(t_{\ell+1}-C\rel)=v)/\P(t_{\ell+1}-\tau_{\ell}\geq C\rel, M_1=m_1,\ldots, M_{\ell}=m_{\ell})\\
		\leq &\frac{\d^{\ell}}{(c_4-\varepsilon)n}.
	\end{align*}
	In the last step we used Lemma \ref{maxgamma_t(x)} with $R\leq \d$.
	When $t_{\ell+1}-\tau_{\ell}\geq C\rel$, given $\gamma_{\ell}(t_{\ell+1}-C\rel)$, $A_{\ell+1}$ is independent of $M_1,\ldots,M_\ell$. Therefore, by \eqref{tvd}, we have
	\begin{align*}
		&\sum_v \left(\P(A_{\ell+1}=v \mid t_{\ell+1}-\tau_{\ell}\geq C\rel, M_1=m_1,\ldots, M_{\ell}=m_{\ell})-\frac{1}{n}\right)^2\\
		\leq& \exp(-2C)\sum_v \left(\P(\gamma_{\ell}(t_{\ell+1}-C\rel)=v \mid t_{\ell+1}-\tau_{\ell}\geq C\rel, M_1=m_1,\ldots, M_{\ell}=m_{\ell})-\frac{1}{n}\right)^2,
	\end{align*}
	which is smaller than
	$$ \exp(-2C)\frac{1}{n}\left(\frac{\d^\ell}{c_4-\varepsilon}-1\right)^2.$$
	Therefore,
	\begin{equation}
		\sum_v \left|\P(A_{\ell+1}=v \mid t_{\ell+1}-\tau_{\ell}\geq C\rel, M_1=m_1,\ldots, M_{\ell}=m_{\ell})-\frac{1}{n}\right| \leq \exp(-C)\left(\frac{\d^\ell}{c_4-\varepsilon}\right).
	\end{equation}
	And finally using Lemma \ref{l:escapetime} again, we have that 
	\begin{align*}
		&\|\mathcal{D}(A_{\ell+1} \mid M_1=m_1,\ldots, M_{\ell}=m_{\ell})-\pi\|_{\TV}\\
		&\leq \exp(-C)\left(\frac{\d^\ell}{c_4-\varepsilon}\right)+\frac{\ep}{c_4}.
	\end{align*}
	Thus our conclusion follows.
\end{proof}

\subsection{Limit of $\frac{\M}{n}$ in the configuration model}\label{ssc:tmeetnconfig}
In this section we will prove the convergence of 
$\M/n$ to $(2\alpha(D))^{-1}$ (defined in  equation 
	\eqref{eq:alpha(D)}) in the configuration model. 
\begin{lemma}\label{tmmetlim}
	Let $\G_n$ be sampled from the configuration model $\CM_n(D)$ and condition on that $\G_n$ is connected. Then
	$\M(\G_n)/n$ converges in probability to $(2\alpha(D))^{-1}$ as $n$ goes to $\infty$.
\end{lemma}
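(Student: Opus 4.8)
The goal is to show $\M(\G_n)/n \to (2\alpha(D))^{-1}$ in probability, where $\alpha(D) = \E(\alpha_\infty(o))$ with the expectation over $\G \sim \UGT(D)$. My plan is to split the argument into an upper and a lower bound on $\M(\G_n)$, and to use the identity $\M(\G_n) = \E_{\pi,\pi}\RM$ together with the spectral/resistance machinery already set up in the excerpt.

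\emph{Step 1: reduce $\M$ to an integral of return probabilities and then to $\alpha$.} Recall from \eqref{lambdameet} and \eqref{integralofp} that for a transitive chain $\sum_{i\geq 2}\lambda_i^{-1} = 2\M$ and $n\int_0^\infty (p_{2s}(x,x) - 1/n)\,\mathrm{d}s = \M/n + o(\cdot)$; but $\G_n$ is \emph{not} transitive, so instead I would use the non-transitive analogue: $\frac{1}{n}\sum_x \int_0^\infty (p_s^{(n)}(x,x) - 1/n)\,\mathrm{d}s = \frac{1}{n^2}\sum_x \E_\pi T_x$, combined with the random target identity / eigentime identity, to relate $\M(\G_n)$ to $\frac{1}{n}\sum_x \E_{\pi}T_x$ up to lower-order corrections (using $\max_x \E_\pi T_x \asymp \M$, which holds for reversible chains, plus the fact that $\rel(\G_n) = O(1)$ w.h.p.\ from Lemma \ref{l:configroughbound}, so mixing-time corrections are $O(\log n) = o(n)$). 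The key point is that $\M(\G_n)/n$ is, up to $o(1)$, equal to $\frac{1}{n}\sum_x \left(\text{Green's-function-type quantity at }x\right)$, and this quantity should converge, vertex by vertex in the local weak sense, to $\frac{1}{2}\P_{o,\nu_o}(\RM = \infty)^{-1}\cdot(\text{something})$. Actually the cleanest route is the other representation already highlighted by the authors: for the CRW started from two particles at a uniform edge, the non-collision probability is $\approx (tP_t)^{-1} \approx |V|/(2\M)$, and by Theorem \ref{t:config} this equals $\alpha(D)$ in the limit; so $\M(\G_n)/n \to 1/(2\alpha(D))$ follows \emph{if} one independently establishes $\M(\G_n)/n \to$ some constant. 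Hence the substantive work is proving convergence of $\M(\G_n)/n$ to a constant, and identifying it via local weak convergence.

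\emph{Step 2: local weak convergence of the Green-function quantity.} Write $\M(\G_n)/n = \frac{1}{n}\sum_{x\in V_n} G_n(x) + o(1)$ where $G_n(x) := \int_0^\infty (p_s^{(n)}(x,x) - 1/n)\,\mathrm{d}s$ is (essentially) $\E_{\pi}T_x / n$. The map $x \mapsto G_n(x)$ is a \emph{local} functional up to a controlled tail: because $\G_n$ is an expander w.h.p.\ (Lemma \ref{l:configroughbound}), the return-probability integral is dominated by times $s = O(\log n)$, during which the walk from $x$ sees only the ball $N_{\rho}(x)$ for $\rho = O(\log n)$; combining with $\rho$-local-tree-likeness (Lemma \ref{l:localtreelike}) and the transience of $\UGT(D)$, one shows that $G_n(x)$ is within $o(1)$ of the corresponding Green's function $G_{\mathrm{UGT}}(\text{root})$ for a walk on the local limit, for all but $o(n)$ vertices $x$ (the exceptional vertices whose neighborhood is not tree-like contribute $o(1)$ to the average since $G_n(x)$ is uniformly bounded by Lemma \ref{l:boundH} up to the $\log n$-cutoff, and there are only $\sqrt n$ of them). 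By the local weak convergence $\CM_n(D) \Rightarrow \UGT(D)$ (Lemma \ref{CMLWC} in the appendix) and bounded convergence, $\frac{1}{n}\sum_x G_n(x) \to \E_{\UGT(D)}[G_{\mathrm{UGT}}(o)]$ in probability. Finally I identify $\E_{\UGT(D)}[G_{\mathrm{UGT}}(o)]$ with $\frac{1}{2\alpha(D)}$: on a transient unimodular graph the Green's function at $o$ equals (up to the degree normalization and a factor $2$ coming from continuous vs.\ discrete time) the reciprocal of an escape probability, and the mass-transport / reversibility identity turns the escape probability $\P_o(\text{never return})$ into the edge-based quantity $\P_{o,\nu_o}(\RM=\infty) = \alpha_\infty(o)/\deg(o)$ weighted appropriately, whose average is exactly $\alpha(D)$ as \emph{defined} in \eqref{eq:alpha(D)}. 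This last identification is a short reversibility computation: collapsing two coalescing walks into one walk on the "difference", and using that a walk from $o$ avoids a fixed neighbor forever with probability proportional to the escape probability.

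\emph{Main obstacle.} The hard part is Step 2 — making rigorous the claim that $G_n(x)$, which is an a priori \emph{global} object (an integral of return probabilities over all time, sensitive to the finite size $n$), is well-approximated by the corresponding \emph{local} quantity on $\UGT(D)$, uniformly enough that averaging over $x$ and taking $n\to\infty$ commutes with the local weak limit. This requires quantitative control of the contribution to $\int_0^\infty p_s^{(n)}(x,x)\,\mathrm{d}s$ from times $s \gg \log n$ (where the walk has mixed and $p_s^{(n)}(x,x) \approx 1/n$, so $p_s^{(n)}(x,x) - 1/n$ is tiny but integrated over a long time), and equally control of the transient Green's function tail on $\UGT(D)$ (using transience to get exponential-in-$\rho$ decay of the probability that the walk is at distance $\le \rho$ after the walk has had time to escape). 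The expander property and the uniform relaxation-time bound are exactly what is needed to push the cutoff down to $O(\log n)$, and the minimal-degree-$3$ assumption (hence uniform transience of the local limit with a quantitative escape-probability lower bound) is what controls the tree side; I would assemble these into a single $\varepsilon$-$\delta$ estimate. A cleaner alternative, which I would pursue if the direct estimate gets unwieldy, is to bypass $G_n$ entirely: prove tightness of $\M(\G_n)/n$ (it is $\Theta(1)$ w.h.p.\ by \eqref{eq:hit-meet} and the $O(n)$ hitting-time bound), show any subsequential limit must be deterministic by a second-moment / concentration argument on $\E_{\pi,\pi}\RM$, and then pin down the value using Theorem \ref{t:config}'s two conclusions \eqref{eq:thm11}–\eqref{eq:thm12}, which force $2\M(\G_n)/n \to 1/\alpha(D)$ as soon as $\M(\G_n)/n$ converges at all.
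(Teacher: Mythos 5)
Your main route founders on the lack of transitivity, and your fallback is circular. In Step 1 you want to convert $\M(\G_n)=\E_{\pi,\pi}\RM$ into an averaged single-walk Green's-function quantity $\frac1n\sum_x\int_0^\infty\bigl(p^{(n)}_s(x,x)-\tfrac1n\bigr)\mathrm{d}s$. The identity that makes this possible in the transitive case is $\E_xT_y=2\E_{x,y}\RM$ (whence $\sum_{i\ge2}\lambda_i^{-1}=2\M$); for the configuration model no such identity is available, and the substitutes you invoke (eigentime/random-target identities, $\max_x\E_\pi T_x\gtrsim\M$) give only order-of-magnitude comparisons, which cannot produce the exact constant $(2\alpha(D))^{-1}$ the lemma requires. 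The same problem recurs at the end of Step 2: $\alpha(D)=\E\bigl[\deg(o)\,\P_{o,\nu_o}(\RM=\infty)\bigr]$ is intrinsically a two-walk quantity, and on $\UGT(D)$ the difference of two independent walkers is not itself a random walk, so the proposed "collapse to a difference walk" identifying $\P_{o,\nu_o}(\RM=\infty)$ with a single-walk escape probability (hence the Green's function with $1/(2\alpha(D))$) is unjustified outside group-like settings. Finally, your "cleaner alternative" — show $\M(\G_n)/n$ converges to some constant and read off its value from \eqref{eq:thm11} and \eqref{eq:thm12} — is circular: in this paper \eqref{eq:thm11} is deduced from \eqref{eq:thm12} together with exactly this lemma (the good-graph event in the proof of Theorem \ref{t:config} includes $|\M/n-(2\alpha(D))^{-1}|\le\delta'$), so it cannot be used to identify the limit.

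The paper's proof stays entirely with two-walk quantities, and this is the missing ingredient you need. Kac's formula \eqref{kac2}, applied to the diagonal set of the product chain, gives the exact identity $\sum_k k\,\P(D_\U=k)\,\E\bigl(\C(W_\U,W_{\tilde\U})\mid D_\U=k\bigr)=\frac n2\bigl(1-\frac1n\bigr)$ — the correct non-transitive replacement for the identity your Step 1 lacks. An Aldous--Brown-type regeneration argument then shows $\E\bigl(\C(W_\U,W_{\tilde\U})\mid D_\U=k\bigr)=\bigl(\alpha(D,k)+o(1)\bigr)\M(\G_n)+O(\rel\log n)$: conditioned on the two walkers not meeting within a few relaxation times (times $\log n$), their joint law is essentially $\pi^{\otimes2}$, so the residual meeting time has mean close to $\M$; and the probability of surviving that long converges to $\alpha(D,k)$ by local weak convergence together with the finite-ball truncation of Section \ref{ss:trunc} — this is where the expander property and the transience of $\UGT(D)$ enter, i.e.\ roughly the estimates you envisaged in Step 2, but applied to a non-collision probability rather than to a return-probability integral. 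Summing over $k$ yields $n/2\approx\alpha(D)\,\M(\G_n)$, which is the lemma. If you wish to salvage your plan, replace Step 1 by this Kac identity and redirect your local-weak-convergence analysis to the survival probability.
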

\begin{remark}
	Recall that $\UGT(D)$ denotes the unimodular Galton-Watson tree and $o$ is the root of the tree. Also recall $\tau_{\mathrm{meet}}$ is the (random) meeting time of two independent random walks. Define
\begin{equation}
\alpha(D,k)=\lim_{t\to\infty}\P_{o,\nu_o}(\RM>t|D_o=k),
\end{equation}
where the probability is taken over both the law of random walks and the law of the Galton-Watson tree.
Then
	for all $k$ such that $\P(D=k)>0$, we have $\alpha(D,k)>0$. Indeed, since we assume the minimal degree is 3, the random walk on  $\UGT(D)$ with unit jump rate along each edge is transient. 
\end{remark}

\begin{proof}[Proof of Lemma \ref{tmmetlim}]
	We apply formula \eqref{kac2} to the product chain of two independent random walks and let $A$ be the diagonal set $A:=\{(x,x):x\in V_n\}$. 
	Since $r_{x,y}=\1[x\sim y]$, we have
	\[
	Q(A,A^c)=\sum_x \frac{1}{n^2}2 \sum_y \1[x\sim y]=\frac{2\sum_x D_x}{n^2}.
	\]
	For any pair  $(x,y)\in V_n\times V_n-A$,
	the exit measure $\nu_A$ puts mass 
	\[\frac{(r_{x,y}+r_{y,x})/n^2}{(2\sum_z r(z))/n^2}= \frac{\1[x\sim y]}{\sum_z D_z}\]
	on $(x,y)$. We get
	\begin{equation}\label{QAA}
		Q(A,A^c)\E_{\nu(A)}(T_A)=\frac{2\sum_z D_z}{n^2}\sum_{x\neq y} \frac{\1[x\sim y]}{\sum_{z'} D_{z'}}\E(\C(W_x,W_y))=\frac{2}{n^2}\sum_x D_x \E_{y \sim \nu_x}(\C(W_x,W_y)),
	\end{equation}
	where $\C$ is the coalescence time and $W_x,W_y$ are two independent random walks starting from $x$ and $y$, respectively, and $y$ is sampled from $\nu_x$. 
	
	Recall that $\U$ is defined to
	be a uniform random vertex and $\tilde{\U}$ a uniform random neighbor of $\U$. 
	The last expression in equation \eqref{QAA} can be rewritten as
	\[
	\frac{2}{n}\sum_k k\P(D_{\U}=k)\E(\C(W_{\U}, W_{\tilde{\U}}) \mid D_{\U}=k).
	\]
	Since we are considering the product chain,  we have $\pi^{\otimes 2}(A)=\sum_{x\in V} 1/n^2=1/n$ and $\pi^{\otimes 2}(A^c)=1-1/n$. Hence using \eqref{kac2} we have
	\begin{equation}\label{kac4}
		\sum_k k\P(D_{\U}=k)\E(\C(W_{\U}, W_{\tilde{\U}}) \mid  D_{\U}=k)=\frac{n}{2}\left(1-\frac{1}{n}\right).
	\end{equation}
	To compute $\M$ we make the following claim.
	\begin{claim}\label{claim2}
	The following two statements hold.
	\begin{enumerate}
		\item     The (quenched) conditional probability (i.e., we fix the random graph) $$
		\P(\C(W_{\U}, W_{\tilde{\U}})>4\rel(\G_n) \log n \mid D_{\U}=k,\G_n)
		$$ converges to $\alpha(D,k)>0$ in probability as $n\to\infty$. 
		\item For any $\ep>0$ and any $k$ such that $\P(D=k)>0$, the following event 
		\begin{equation*}
			\begin{split}
				\{&\abs{\E(\C(W_{\U}, W_{\tilde{\U}}) \mid D_{\U}=k, \G_n)-\P(\C(W_{\U}, W_{\tilde{\U}})>4\rel(\G_n) \log n \mid D_{\U}=k,\G_n)\M(\G_n) } \\ \leq & \ep \M(\G_n)+16\log n\}
			\end{split}
		\end{equation*}
		has probability tending to 1 as $n\to\infty$. 
	\end{enumerate}
	\end{claim}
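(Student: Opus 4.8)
I would prove the two parts of Claim~\ref{claim2} in turn, treating Part~(1) as the engine that also feeds Part~(2). Throughout we work on the high-probability event (Lemma~\ref{l:configroughbound}) that $\kappa(\G_n)\geq\kappa_0$ --- hence $\rel(\G_n)\leq\rel_0:=2\bar d/\kappa_0^2$ --- and (Lemmas~\ref{lemma:delta-mix} and~\ref{l:localtreelike}) that $\G_n$ is $\delta_D$-homogeneous and $\rho$-locally tree like; on this event Lemma~\ref{l:boundH} bounds the relevant return-probability-integral ratios by a constant $C_{\kappa_0,\bar d}$ for all times $\leq n$, and $\M(\G_n)\asymp n$ (lower bound from Lemma~\ref{walkparamaters}, upper bound from $t_{\mathrm{hit}}(\G_n)=O(n)$ via $\rel(\G_n)=O(1)$ together with \cite{oliveira2012coalescence}). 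Write $\C:=\C(W_\U,W_{\tilde\U})$ and $p_n(k):=\P(\C>4\rel(\G_n)\log n\mid D_\U=k,\G_n)$.

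\textbf{Part (1).} Fix $s>0$. The event $\{\C>s\}$ differs from a bounded functional of the triple (radius-$L$ neighbourhood of $\U$, walk trajectories up to time $s$, the choice of $\tilde\U$) by at most the probability that one of the two walks makes more than $L$ jumps in time $s$, a Poisson tail that is uniformly small for $L$ large. By the local weak convergence of $\CM_n(D)$ to $\UGT(D)$ (Lemma~\ref{CMLWC}, which holds in probability) together with $\delta_D$-homogeneity (so the degree-$k$ vertices form a positive fraction), the quenched average $\P(\C>s\mid D_\U=k,\G_n)$ concentrates and converges in probability to $\beta(s,k):=\P_{o,\nu_o}(\RM>s\mid D_o=k)$. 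To upgrade from the fixed time $s$ to the $\G_n$-dependent time $4\rel\log n$, I would apply Lemma~\ref{meetprob} with $T=s$ and $t=4\rel\log n\leq n$: since the $H$-ratio is $\leq C_{\kappa_0,\bar d}$, it gives, uniformly over the starting pair, $\P(s<\C<4\rel\log n\mid D_\U=k,\G_n)\leq 2C_{\kappa_0,\bar d}e^{-s/\rel_0}+32\rel_0\bar d\,\log n\,/\,n$, whose second summand vanishes as $n\to\infty$. Hence $\beta(s,k)-2C_{\kappa_0,\bar d}e^{-s/\rel_0}-o_\P(1)\leq p_n(k)\leq\beta(s,k)+o_\P(1)$; letting $n\to\infty$ and then $s\to\infty$ (so $\beta(s,k)\downarrow\alpha(D,k)$ and the exponential error vanishes) yields $p_n(k)\xrightarrow{\P}\alpha(D,k)$, and $\alpha(D,k)>0$ because SRW on $\UGT(D)$ is transient (minimal degree $\geq3$), as noted before the claim.

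\textbf{Part (2).} Put $T:=4\rel(\G_n)\log n\geq 2\rel\log(n^2)$, so that Corollary~\ref{linftyctl} applied to the product chain of two independent walks shows that the law of the pair at time $T$ is, from any start, dominated pointwise by $(1+n^{-2})\pi^{\otimes2}$. Decompose $\E[\C\mid D_\U=k,\G_n]=\E[\C\wedge T]+\E[(\C-T)^+]$: the first term is $\leq T=O(\rel\log n)=o(\M(\G_n))$, hence $\leq\ep\M(\G_n)$ for $n$ large (a fortiori $\leq 16\log n$ after bookkeeping with $\rel\leq\rel_0$). For the second term, conditioning on the pair's position $Z$ at time $T$ and using $\C=\RM$, one has $\E[(\C-T)^+]=\int_0^\infty\P(\C>T+u)\,du$, and re-running the domination bound from $Z$ gives $\P(\C>T+u)\leq(1+n^{-2})\,p_n(k)\,\P_{\pi^{\otimes2}}(\RM>u-T)$ for $u\geq T$; since $\int_0^\infty\P_{\pi^{\otimes2}}(\RM>v)\,dv=\M(\G_n)$, this yields $\E[(\C-T)^+]\leq Tp_n(k)+(1+o(1))p_n(k)\M(\G_n)$ and hence the required upper bound on $\E[\C\mid D_\U=k,\G_n]-p_n(k)\M(\G_n)$. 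For the matching lower bound I would write $\E[\C\mid D_\U=k,\G_n]\geq\E[(\C-2T)^+]=\P(\C>2T)\,\E_{\rho_n}[\RM]$, where $\rho_n$ is the conditional law of the pair at time $2T$ given non-coalescence; by Part~(1) applied at time $8\rel\log n$, $\P(\C>2T)\xrightarrow{\P}\alpha(D,k)$, so it remains to show $\E_{\rho_n}[\RM]\geq(1-o(1))\M(\G_n)$. Writing $\P(\C>2T)\rho_n(z)=\P(\text{pair at }z\text{ at time }2T)-\P(\C\leq2T,\text{ pair at }z\text{ at time }2T)$, applying the $\ell_\infty$-mixing bound to the first term and $\sum_z\P(\C\leq2T,\text{pair at }z)\E_z[\RM]\leq\P(\C\leq2T)\max_z\E_z[\RM]$ to the second, one reduces the claim to the estimate $\max_{(x,y)}\E_{x,y}[\RM]=(1+o(1))\M(\G_n)$; this follows from the standard bound $\max_z\E_z[\RM]\leq(1-\epsilon)^{-1}\big(t^{\mathrm{TV}}_{\mathrm{mix}}(\epsilon)+\M(\G_n)\big)$ for the product chain, together with $t^{\mathrm{TV}}_{\mathrm{mix}}(\epsilon)\leq\rel\big(\log n+\log(1/2\epsilon)\big)$ (from \eqref{tvd}) and $\M(\G_n)\asymp n$, on letting $\epsilon\to0$ slowly with $n$.

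\textbf{The main obstacle.} The crux is Part~(1): passing from the fixed-time local-weak-convergence statement (which controls $\P(\C>s)$ only for fixed $s$) to the non-coalescence probability at the $\G_n$-dependent scale $4\rel(\G_n)\log n$, which requires the non-meeting estimate Lemma~\ref{meetprob} to be applied uniformly over starting configurations and over all times up to $n$, with the expander property entering through Lemma~\ref{l:boundH}. Once Part~(1) is available, the remainder of Part~(2) is a matter of carefully bookkeeping conditional laws, the only genuinely quantitative inputs being the two soft facts $\max_z\E_z[\RM]=(1+o(1))\M(\G_n)$ and $\int_0^\infty\P_{\pi^{\otimes2}}(\RM>v)\,dv=\M(\G_n)$, and, if one prefers to truncate the residual meeting time, the Aldous--Brown exponential approximation \eqref{probestimate1} for the tail of $\P_{\pi^{\otimes2}}(\RM>\cdot)$.
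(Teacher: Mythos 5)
Your proposal is correct, and while your Part (1) follows essentially the paper's route, your Part (2) departs from it in a genuinely different way at one key point. For Part (1) the paper truncates in space (the exit time of the ball $N_\rho(\U)$) and invokes local weak convergence plus the truncation estimates of Section \ref{ss:trunc}, whereas you truncate in time at a fixed $s$ (with an auxiliary radius $L$ and a Poisson tail) and bridge to the scale $4\rel\log n$ via Lemma \ref{meetprob} and Lemma \ref{l:boundH}; these are interchangeable implementations of the same idea, with the same order of limits ($n\to\infty$ first, then the truncation parameter). For the upper bound in Part (2) you and the paper use the same mechanism: allow an extra window of length $4\rel\log n$ in which meetings are ignored and apply Corollary \ref{linftyctl} to the product chain, so that after the window the pair's law is within a multiplicative $(1\pm n^{-2})$ of $\pi^{\otimes 2}$; the paper packages this as a coupling of $\C$ with a modified coalescence time $\tau$, while you state it as the tail bound $\P(\C>T+u)\le(1+n^{-2})\,p_n(k)\,\P_{\pi^{\otimes2}}(\RM>u-T)$ for $u\ge T$, which is valid precisely because you only forbid meetings on $[2T,T+u]$ and apply the $\ell_\infty$ bound to the unconditioned kernel over $[T,2T]$ rather than to the law conditioned on $\{\C>T\}$. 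The real divergence is the lower bound: the paper introduces a second modified process and must show that $\P(\C\le 8\rel\log n\mid \C>4\rel\log n,D_\U=k)$ is small, which uses the quenched positivity of the survival probability from Part (1); you instead decompose $\P(\C>2T,\text{pair at }z)=\P(\text{pair at }z)-\P(\C\le 2T,\text{pair at }z)$ and close the argument with the extra input $\max_{x,y}\E_{x,y}\RM=(1+o(1))\M(\G_n)$, derived from the standard estimate $\max_z\E_z T_A\le(1-\epsilon)^{-1}\bigl(t^{\mathrm{TV}}_{\mix}(\epsilon)+\E_\pi T_A\bigr)$ for the product chain together with $\rel(\G_n)=O(1)$ and $\M(\G_n)\asymp n$ on the high-probability expander event. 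This buys a Part (2) that barely uses Part (1) (only to identify the limit of $\P(\C>2T)$, and even the comparison of $\P(\C>2T)$ with $p_n(k)$ could be handled by Lemma \ref{meetprob} alone), at the price of the additional, but standard, hitting-time/mixing estimate; note that the $(1+o(1))$ precision there is genuinely needed, since with only $\max_{x,y}\E_{x,y}\RM\le C\M$ the cancellation producing the leading term $p_n(k)\M$ in your lower bound would fail.
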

	It now follows from  Claim \ref{claim2} and equation \eqref{kac4} that 
	$\M/n$ converges to $\frac{1}{2\alpha(D)}$ in probability
	since  the definition of $\alpha(D)$ implies
	$$
	\alpha(D)=\sum_k k\P(D_{\U}=k)\alpha(D,k). 
	$$
	It remains to prove the claim. For notational convenience we omit  $\G_n$ from the conditioning. 
	To prove the first part of claim, we fix some $\rho>0$ and replace the quantity $\P(\C(W_{\U}, W_{\tilde{\U}})>4\rel \log n \mid D_{\U}=k )$
	by $\P(\C(W_{\U}, W_{\tilde{\U}})>\tau_{\rho} \mid D_{\U}=k)$, where $\tau_{\rho}$ is the first time that $W_{\U}$ or $W_{\tilde{\U}}$ exits $N_{\rho}(\U)$. For fixed $\rho$, by Lemma \ref{CMLWC}, we see that  $\P(\C(W_{\U}, W_{\tilde{\U}})>\tau_{\rho} \mid D_{\U}=k)$ converges to its corresponding version on $\UGT(D)$ conditioned on the root having degree $k$.  We call this probability $\alpha(D,k,\rho)$. One can show that $\alpha(D,k,\rho)$ converges to $\alpha(D,k)$ as $\rho\to\infty$, using the transience of the simple random walk on a rooted tree whose minimal degree is $\ge 3$. On the other hand, we can repeat the argument in Section \ref{ss:trunc} to show the difference $\P(\C(W_{\U}, W_{\tilde{\U}})>\tau_\U \mid D_{\U}=k)-\P(\C(W_{\U}, W_{\tilde{\U}})>4\rel \log n \mid D_{\U}=k )$ goes to 0 in probability by first sending $n$ to $\infty$ and then $\rho$ to $\infty$. Thus the first part of the claim follows. 
	
	To prove the second part of Claim \ref{claim2}, we condition on $D_{\U}=k$ as well as $W_{\U}$ and $W_{\tilde{\U}}$ don't collide before time  $4\rel \log n$. Define the random time $\tau$ to be the first time that $W_{\U}$ coalesced with $W_{\tilde{\U}}$ in a modified CRW model where the meeting of $W_{\U}$ and $W_{\tilde{\U}}$ in the time interval $ [4\rel\log n, 8\rel \log n]$ is ignored. Clearly we can couple $\tau$ and $\C(W_{\U}, W_{\tilde{\U}})$ together s.t. $\tau\geq \C(W_{\U}, W_{\tilde{\U}})$ almost surely and on the event $\{\C(W_{\U}, W_{\tilde{\U}})> 8\rel \log n\}$ we have $\C(W_{\U}, W_{\tilde{\U}})=\tau$. Note that in the modified model, conditioned on that $W_{\U}$ doesn't collide with $W_{\tilde{\U}}$ before time $4\rel \log n$, the $\ell_{\infty,\pi^{\otimes 2}}$ distance between the joint distribution of $W_{\U}$ and $W_{\tilde{\U}}$ at time $8\rel \log n$ and $\pi^{\otimes 2}$ is smaller than $1/n^2$ by Corollary \ref{linftyctl} applied to the product chain. This implies
	\begin{equation}\label{difftaumeet}
		\abs{\E(\tau \mid \C(W_{\U},W_{\tilde{\U}})>4\rel \log n,D_{\U}=k)-\M}\leq \M/n^2+8\rel \log n.
	\end{equation}
	Using the coupling between $\tau$ and $\C(W_{\U}, W_{\tilde{\U}})$, we see 
	\begin{equation}\label{cu1}
		\begin{split}
			&\abs{ \E(\C(W_{\U}, W_{\tilde{\U}}) \mid 
				\C(W_{\U}, W_{\tilde{\U}})>4\rel \log n,D_{\U}=k)-\M
			}\\
			\leq & \abs{ \E(\C(W_{\U}, W_{\tilde{\U}}) \mid 
				\C(W_{\U}, W_{\tilde{\U}})>4\rel \log n,D_{\U}=k)-\E(\tau \mid \C(\U,\tilde{\U}>4\rel \log n,D_{\U}=k)}\\ +&\abs{\E(\tau \mid \C(W_{\U}, W_{\tilde{\U}})>4\rel \log n,D_{\U}=k)-\M}\\
			\leq & \E(\C(W_{\U}, W_{\tilde{\U}})\1[\C(W_{\U}, W_{\tilde{\U}})\leq 8\rel \log n]|
			\C(W_{\U}, W_{\tilde{\U}})>4\rel \log n,D_{\U}=k)\\
			+&\E(\tau \1[ \C(W_{\U}, W_{\tilde{\U}}) \leq 8\rel \log n]|
			\C(W_{\U}, W_{\tilde{\U}})>4\rel \log n,D_{\U}=k)+\M/n^2+8\rel \log n\\
			\leq & \E(\tau \1[ \C(W_{\U}, W_{\tilde{\U}}) \leq 8\rel \log n]|
			\C(W_{\U}, W_{\tilde{\U}}))>4\rel \log n,D_U=k)+16 \rel \log n +\M/n^2. 
		\end{split}
	\end{equation}
	To control the term $\E(\tau \1[\C(\U,\tilde{\U}) \leq 8\rel \log n]|\C(W_{\U}, W_{\tilde{\U}})>4\rel \log n,D_{\U}=k)$, we consider another modified CRW model where we ignore collisions of $
	\U$ and $\tilde{
		\U}$ in the time interval $[4\rel\log n, 12\rel \log n]$. Denote the  collision time in this new model by $\tau^*$. It follows that $\tau\leq \tau^*$ almost surely. Similar to \eqref{difftaumeet}, we have
	\[
	\abs{\E(\tau^* \mid 4\rel \log n <\C(W_{\U}, W_{\tilde{\U}})\leq 8 \rel \log n, D_\U=k)-\M} \leq \M/n^2+12\rel \log n.
	\]
	It follows that 
	\begin{multline}
		\label{cu2}
		\E(\tau \1[ \C(W_{\U}, W_{\tilde{\U}}) \leq 8\rel \log n]|
		\C(W_{\U}, W_{\tilde{\U}})>4\rel \log n,D_{\U}=k)\\
		\leq 
		\left (12\rel \log n+(1+1/n^2)\M \right)
		\P(\C(W_{\U}, W_{\tilde{\U}})\leq 8\rel \log n|
		\C(W_{\U}, W_{\tilde{\U}})\geq 4\rel \log n,D_{\U}=k)
	\end{multline}
	The first part of Claim \ref{claim2} implies that the quenched probability 
	\[
	\P(\C(W_{\U}, W_{\tilde{\U}})\geq 4\rel \log n,D_{\U}=k)
	\]
	is bounded from below with high probability. This together with Lemma 
	\ref{meetprob}
	implies that with high probability we have 
	\begin{equation}\label{cu3}
		\P(\C(W_{\U}, W_{\tilde{\U}})\leq 8\rel \log n|
		\C(W_{\U}, W_{\tilde{\U}})\geq 4\rel \log n,D_{\U}=k)\leq C\rel \log n \left(n^{-4}+n^{-1}\right),
	\end{equation}
	where $C$ is some constant depending on $\bar d$. 
	Finally, we recall that $\rel$ is bounded from above by some constant with high probability (Lemma \ref{l:configroughbound}). Hence the second part of Claim \ref{claim2} follows from  \eqref{cu1}, \eqref{cu2} and \eqref{cu3}. This completes the proof of Claim \ref{claim2} and hence also Lemma \ref{tmmetlim}. 
\end{proof}

\subsection{Proof of Theorem \ref{t:config} and \ref{t:gwtree}}
We now  prove Theorem \ref{t:config}, using Proposition \ref{p:gengraph} and the bounds on the error terms we obtained in previous sections.

\begin{proof}[Proof of Theorems \ref{t:config}]
	Suppose the degree distribution $D$ is upper bounded by $\bar{d} \in \ZZ_+$.
	Let $\kappa_0$ be defined as in Lemma \ref{l:configroughbound}, and $\delta_D$ be defined as in Lemma \ref{lemma:delta-mix}. For any $\varepsilon > 0$, let $\delta'$ be some number to be determined.
	Take $\rho=\rho(\delta_D, \kappa_0, (2\alpha(D))^{-1} + \delta')$ as given by Proposition \ref{p:control_ek}.
	
	For any $n \in \ZZ_+$, we call a graph $\G$  \emph{good}, if
	\begin{enumerate}
		\item $|\M/n - (2\alpha(D))^{-1}| \leq \delta'$.
		\item $\kappa(\G) \geq \kappa_0$.
		\item $\G$ is $\delta_D$-homogeneous.
		\item $\G$ is $\rho$-locally tree like.
	\end{enumerate}
	Note that when $\G\sim \CM_n(D)$, by
	taking $n$ large enough, we have $\P(\G\textrm{ is good}) > 1 - \ep$, by  Theorem \ref{t:upperbound}, Lemmas \ref{lemma:delta-mix}, \ref{l:localtreelike}, and \ref{tmmetlim}. Now we take a good graph $\G$ on $n$ vertices. 
	By Lemma \ref{l:boundH}, we have  
	\begin{equation}\label{boundH2}
		\frac{\max_z\int_0^s  p_{s'}(z,z)\mathrm{d} s'}{\min_z \int_0^s  p_{s'}(z,z)\mathrm{d} s'} + \max_z \int_0^s p_{s'}(z,z)\mathrm{d} s'\leq  C_{\kappa_0, \bar{d}}
	\end{equation}
	for any $s \leq n$.

	Now we verify the conditions in Proposition \ref{p:gengraph}.
	We first check the second condition. 
	Since we assume $\lim_{n\to\infty}n/t_n=\infty$, we have $t_n\leq n/2$ for $n$ large enough.
	Since $\G$ is a good graph, by Theorem \ref{t:upperbound},
	\[
	\sup_{x\in V, 0\leq s \leq t_n } P_s(x)\frac{ns}{\M}\leq C_0\frac{n}{\M}\sup_{x\in V} \int_0^{t_n} p_{t'}(x,x)\mathrm{d} t'
	\leq 
	4 C_0  C_{\kappa_0,\d} \alpha(D),
	\]
	if $\delta'\leq 1/(4\alpha(D))$. Here $C_0$ is the  constant given in Theorem \ref{t:upperbound}. Hence if we take $K'$ to be $4C_0C_{\kappa_0,\d}\alpha(D)$, the second condition 
	of Proposition \ref{p:gengraph}
	is satisfied. Given $\ep$ and $K'$, we let $K$ and $\delta$ be given as in the statement of Proposition \ref{p:gengraph} (see also Remark \ref{variant}). We then set $ \delta'=\min\{\delta, 1/\alpha(D)\}/4$ and set the value of $\Delta$ to be
	$$
	\Delta:=K\log\left(\frac{\M r_{\max}}{n}+1\right)+ \log R+ \frac{1}{\delta'},
	$$
	where $R=r_{\max}/r_{\min}$. 
	We now check the first condition.
	Since $\G$ is a good graph and $R \leq \bar{d}$, we have \[
	\Delta \leq K\log\left(((2\alpha(D))^{-1} + \delta')\bar{d} + 1\right) + \log(\bar{d}) + (\delta')^{-1}.
	\]
	It follows from the definitions of $E_1, E_2$ and $E_4$ as well as \eqref{boundH2} that
	\[
	E_1 \leq \frac{1}{t_n} \frac{2\bar d}{\kappa_D^2}\left(((2\alpha(D))^{-1} + \delta')\bar{d}\right)^K(2 \left(\log( C_{\kappa_0, \bar{d}}) + K\log\left(((2\alpha(D))^{-1} + \delta')\bar{d} + 1\right) + \log(\bar{d}) + (\delta')^{-1}\right),
	\]
	\[
	E_2 \leq \frac{t_n}{n}\left(((2\alpha(D))^{-1} + \delta')\bar{d}\right)^K \bar{d}^2,
	\]
	and
	\[
	E_4 \leq \frac{1}{t_n}\left(((2\alpha(D))^{-1} + \delta')\bar{d}\right)^K.
	\]
	For $E_3$, we  apply Proposition \ref{p:control_ek} with $\delta_1=\delta_D$ and $C_{\alpha} = (2\alpha(D))^{-1} + \delta'$. Since $\lim_{n\to\infty}t_n=\lim_{n\to\infty}n/t_n=\infty$, we have $E_1+E_2+E_3+E_4 < \delta$ for all $n$ large enough.
	
	Hence by Proposition \ref{p:gengraph} we have
	$
	|t_n P_{t_n} - \frac{1}{\alpha(D)}| < \varepsilon
	$, for any $n$ large enough whenever $\G$ is a good graph. Thus we have
	$$
	\liminf_{n\to\infty} \P\left(\{|t_n P_{t_n} - \frac{1}{\alpha(D)}| < \varepsilon\}\right) \geq \liminf_{n\to\infty}\P(\G_n \mbox{ is good})\geq 1-\ep.
	$$
	Our conclusion follows since $\ep$ is arbitrary. 
\end{proof}

We deduce Theorem \ref{t:gwtree} from Theorem \ref{t:config} and the local weak convergence of the configuration model.

\begin{proof}[Proof of Theorem \ref{t:gwtree}]
	Fix $t \in \RR_{\ge 0}$, let $\G_n\sim \CM_n(D)$ and condition on $\G_n$ being connected.  By Lemma \ref{CMLWC} and Proposition \ref{local1} we see $P_t(\G_n)$  
	converges to $\E_{\UGT(D)}P_t(o)$, where the subscript indicates the expectation is taken over the law of $\UGT(D)$.
	
	Take any increasing and diverging sequence of times $t_1,t_2,\cdots$. By repeating each entry many times, we can construct another sequence of times $t_1', t_2',\cdots$ containing the previous sequence $t_1,t_2,\ldots$ as well as a sequence $\ep_n\to 0$; and for each large enough $n$, $|t_n'P_{t_n'}(\G_n) - t_n'\E_{\mathbb{UST}(D)}P_{t_n'}(o)| < \ep_n$ with probability at least $1-\ep_n$,
	and $\lim_{n\to \infty} n/t_n' = \infty$.
	By Theorem \ref{t:config}, $t_n' P_{t_n'}(\G_n)$ converges to $\frac{1}{\alpha(D)}$ in probability as $n\to \infty$.
	Thus $t_n'\E_{\UGT(D)}P_{t_n'}(o)$ converges to $\frac{1}{\alpha(D)}$, as $n\to \infty$. Since $t_n$ is a subsequence of $t_n'$, we have that $t_n\E_{\UGT(D)}P_{t_n}(o)$ converges to $\frac{1}{\alpha(D)}$ as well. 
\end{proof}
\begin{remark}
	We have to repeat each entry many times because only in this way can we guarantee
	$|t_n'P_{t_n'}(\G_n) - t_n'\E_{\mathbb{UST}(D)}P_{t_n'}(o)|$  is small with high probability, since now we can treat $t_n'$ as almost being a constant and let $n$ be as large as needed. 
\end{remark}

\section{Mean field behavior for transitive Markov chains}\label{s:finitetrans}
In this section we will prove Theorems \ref{transgraph}, \ref{uniformtran} and \ref{infinitetrans}. Recall that for transitive chains we always assume $r(x)=\sum_y r_{x,y}=1$. We also have $R=r_{\max}/r_{\min}=1$. We will need to refine the estimate of the integral of $h(t,\phi_k)$. 

Recall that we let $\U\sim\pi$ and $\tilde \U\sim \nu_{\U}$, and that $W_\U$ and $W_{\tilde \U}$ are two random walks starting from $\U$ and $\tilde \U$ respectively.
The key observation is that due to transitivity, the conditional probability
\begin{equation}\label{const}
	\P(W_{\U}(s)\neq W_{\tilde{\U}}(s),0\leq s\leq t \mid \U)
\end{equation}
is a constant independent of $\U$. Hereafter we denote this quantity by $q(t)$, which is also equal to $\frac{n}{2}f(t)$ where $n$ is the size of the chain and  $f(t)$ is the probability density function of the meeting time of two independent random walks starting from $\pi^{\otimes 2}$ 
(see equation \eqref{eq:densityfortwo}). 
Recall the definition of branching structures in Section \ref{ss:branchstruc} and the notation $\t=(t_1, \ldots, t_k)$ where $0<t_1<\cdots<t_k<t$. For convenience we let $t_0=0$ and $t_{k+1}=t$. 
From the definition of $q(t)$ we get
\begin{equation}\label{q}
	\P\left( 
	\forall t'\in [t_k,t_{k+1}],\gamma_{k}(t')\neq \gamma_{i_{k}}(t')
	|\gamma_{\ell}(t_k),1\leq \ell\leq k-1\right)
	=q(t_{k+1}-t_k).
\end{equation} 
Hence $g(\t, I_k)$ equals to
\begin{equation}\label{gtiktrans}
	\begin{split}
		&\E\left(
		\left(\prod_{\ell=1}^{k-1}   \1[\forall t'\in[ t_{\ell},t_{\ell+1}],\gamma_{\ell}(t')\neq \gamma_{i_{\ell}}(t')]\right)
		\P\left( 
		\forall t'\in [t_k,t_{k+1}], \gamma_{k}(t')\neq \gamma_{i_{k}}(t')
		|\gamma_{\ell}(t_k),1\leq \ell\leq k-1\right)
		\right)\\
		=& q(t_{k+1}-t_k) \E
		\left(\prod_{\ell=1}^{k-1}   \1[\forall t'\in[ t_{\ell},t_{\ell+1}],\gamma_{\ell}(t')\neq \gamma_{i_{\ell}}(t')]
		\right)\\
		=&q(t_{k+1}-t_k) q(t_k-t_{k-1})\E
		\left(\prod_{\ell=1}^{k-2}   \1[\forall t'\in[ t_{\ell},t_{\ell+1}],\gamma_{\ell}(t')\neq \gamma_{i_{\ell}}(t')]
		\right)\\
		=&\prod_{\ell=1}^k q(t_{\ell+1}-t_{\ell})=q(\t,I_k).
	\end{split}
\end{equation}
Recall the definition of $\alpha_t(x)$ and $\alpha_t$ in \eqref{alpha_t(x)}. 
In the transitive setup we have
$\alpha_{t}=\P_{x,\nu_x}(\RM\geq t)$  since $r(x)=1, \forall x$.
We first give a result that relates $\M$ to $\P_{x,\nu_x}(\RM\geq t)$, which is  reminiscent of Lemma \ref{tmmetlim} in  Section \ref{ssc:tmeetnconfig}. As a corollary, the two statements in Theorem \ref{transgraph} are equivalent. 
\begin{prop}\label{tmeetandn} 
	For any sequence of transitive Markov chains $(V_n,\r_n)$ satisfying  \[\lim_{n\to\infty}\rel(\r_n)/\M(\r_n)=0,\]
	and for any sequence $t_n$ satisfying $\rel(\r_n)\ll t_n \ll \M(\r_n)$, we have
	\[
	\lim_{n\to\infty}\frac{2\M(\r_n)\alpha_{t_n}}{n}=1.
	\]
\end{prop}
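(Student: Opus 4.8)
The plan is to compare $\M(\r_n)$ with $\alpha_{t_n}$ via Kac's formula applied to the product chain of two independent walkers. Let $A := \{(x,x) : x \in V_n\}$ be the diagonal set in the product chain on $V_n \times V_n$; the meeting time $\RM$ of two independent walks equals the hitting time $T_A$ of this set, and $\M(\r_n) = \E_{\pi \otimes \pi} T_A$. The product chain has relaxation time $\rel(\r_n)$, and since we assume $r_n(x)=1$ for all $x$, one computes $Q(A, A^c) = \sum_{x} \frac1{n^2} \cdot 2 \sum_y r_{x,y} = \frac{2n}{n^2} = \frac{2}{n}$, and the exit measure $\nu_A$ on a pair $(x,y)$ with $x \sim y$ puts mass $\frac{(r_{x,y}+r_{y,x})/n^2}{(2n)/n^2} = \frac{r_{x,y}}{n}$, so a sample from $\nu_A$ is distributed as $(\U, \tilde\U)$ with $\U \sim \pi$ and $\tilde\U \sim \nu_\U$. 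Plugging into \eqref{kac2}, $\pi^{\otimes 2}(A^c) = 1 - 1/n = Q(A,A^c) \E_{\nu_A}(T_A) = \frac{2}{n}\E_{\U,\tilde\U}(\RM)$, so $\E_{\U,\tilde\U}(\RM) = \frac{n}{2}(1-\frac1n)$; in particular the quantity $\E_{\U,\tilde\U}(\RM)$ grows like $n/2$. This is the exact identity we will exploit, but note it does not by itself give the claim; we still have to connect $\E_{\U,\tilde\U}(\RM)$, which is an expectation over the $(\U,\tilde\U)$-initial condition, with $\M(\r_n) = \E_{\pi\otimes\pi}(\RM)$, and with the non-collision probability $\alpha_{t_n} = \P_{\U,\tilde\U}(\RM \ge t_n)$.

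The main step is a coupling/burn-in argument of exactly the same flavour as the proof of Lemma \ref{tmmetlim}. Conditioned on $\{\RM > t_n\}$ (with $t_n \gg \rel(\r_n)$), I would run a modified product chain in which collisions of the two walkers are suppressed on the interval $[t_n, 2t_n]$; call the resulting meeting time $\tau^*$. We can couple so that $\tau^* \ge \RM$ always, with equality on $\{\RM > 2t_n\}$. Since $t_n \gg \rel(\r_n)$, Corollary \ref{linftyctl} applied to the product chain gives that, conditioned on no collision before $t_n$, the law of the walker pair at time $2t_n$ is within $\ell_{\infty, \pi^{\otimes 2}}$-distance $o(1)$ of $\pi^{\otimes 2}$ (indeed $\le 1/n^2$ once $t_n \ge \rel(\r_n)\log n$; if $t_n$ is only $\omega(\rel(\r_n))$ one gets a quantitative $o(1)$ bound from \eqref{prepoincare}). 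Hence
\begin{equation}\label{eq:burnin}
\bigl|\E(\tau^* \mid \RM > t_n) - \M(\r_n)\bigr| \le o(\M(\r_n)) + 2t_n = o(\M(\r_n)),
\end{equation}
using $t_n \ll \M(\r_n)$. It remains to show $\E_{\U,\tilde\U}(\RM) = \E(\RM \mid \RM > t_n)\,\P_{\U,\tilde\U}(\RM > t_n) + \E(\RM\,\1[\RM\le t_n]) = \alpha_{t_n}\bigl(\M(\r_n) + o(\M(\r_n))\bigr) + o(\M(\r_n))$: the contribution from $\{\RM \le t_n\}$ is at most $t_n = o(\M(\r_n))$, and the discrepancy between $\E(\RM \mid \RM > t_n)$ and $\E(\tau^* \mid \RM > t_n)$ is controlled by $\E(\tau^*\,\1[\RM \le 2t_n]\mid \RM > t_n)$, which we bound by a further burn-in on $[t_n, 3t_n]$ exactly as in \eqref{cu1}--\eqref{cu3}: by Lemma \ref{meetprob}, $\P(\RM \le 2t_n \mid \RM > t_n) = o(1)$ since $t_n \gg \rel(\r_n)$, giving this cross term as $o(\M(\r_n))$. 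Combining with the Kac identity $\E_{\U,\tilde\U}(\RM) = \frac n2(1 - \frac1n)$ yields $\frac{n}{2}(1+o(1)) = \alpha_{t_n}\M(\r_n)(1+o(1))$, i.e.\ $\frac{2\M(\r_n)\alpha_{t_n}}{n} \to 1$.

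The main obstacle is making the burn-in estimate \eqref{eq:burnin} work uniformly when $t_n$ is merely $\omega(\rel(\r_n))$ rather than $\Omega(\rel(\r_n)\log n)$: Corollary \ref{linftyctl} only gives $\ell_\infty$-mixing after $2\rel\log|V_n|$ steps, whereas for smaller $t_n$ I must instead use the Poincaré bound \eqref{prepoincare} (equivalently \eqref{tvd} for the product chain) to get $\|\mu_{2t_n} - \pi^{\otimes 2}\|_{2} \le e^{-2t_n/\rel(\r_n)} \cdot (\text{something polynomial in } n)$, which is $o(1)$ in a suitable sense but needs to be propagated to a bound on $|\E(\tau^*\mid \cdot) - \M|$; the cleanest route is probably to note that $t_n' := \max(t_n, \rel(\r_n)\log n)$ still satisfies $t_n' \ll \M(\r_n)$ under our hypotheses (since $\M(\r_n) = \Theta(n)$ is not automatic here — but $\M(\r_n) \gg \rel(\r_n)$ and, as will follow from $t_n \ll \M(\r_n)$ being required to be nonempty together with $t_n$ arbitrary, we may as well assume $\rel(\r_n)\log n \ll \M(\r_n)$, or absorb the $\log n$ into the $o(1)$ using that $\M(\r_n) = \frac n2(1+o(1)) \ge \rel(\r_n)$ forces $\rel(\r_n) \le n$ so $\rel(\r_n)\log n \le n\log n$; one checks $\frac{\rel(\r_n)\log n}{\M(\r_n)} = \frac{\rel(\r_n)\log n}{n/2}(1+o(1))$ which is $o(1)$ provided $\rel(\r_n) = o(n/\log n)$, a mild strengthening we can verify from transitivity and $\rel(\r_n) = o(\M(\r_n)) = o(n)$ only up to the log — so the careful version runs the burn-in with $t_n$ itself and uses the quantitative Poincaré bound directly). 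I expect this bookkeeping to be the only real subtlety; everything else is a transcription of the argument in Section \ref{ssc:tmeetnconfig}.
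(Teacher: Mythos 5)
Your route is genuinely different from the paper's, and as written it has a gap. The paper's proof is two lines: by Kac's formula \eqref{kac1} applied to the diagonal set of the product chain (together with transitivity, cf.\ \eqref{eq:densityfortwo}), $\alpha_t=\P_{x,\nu_x}(\RM>t)=\tfrac n2 f(t)$, where $f$ is the density of the meeting time of two independent walks started from $\pi^{\otimes 2}$; the Aldous--Brown density bound \eqref{ABdensity} then sandwiches $f(t_n)$ between $\tfrac1\M\bigl(1-\tfrac{2\rel+t_n}{\M}\bigr)$ and $\tfrac1\M\bigl(1+\tfrac{\rel}{2t_n}\bigr)$, and $\rel\ll t_n\ll\M$ finishes. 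You instead use the integrated Kac formula \eqref{kac2} to get the exact identity $\E_{\U,\tilde\U}\RM=\tfrac n2(1-\tfrac1n)$ and then try to prove $\E(\RM\mid\RM>t_n)=\M(1+o(1))$ by a burn-in coupling in the style of Lemma \ref{tmmetlim}. The decomposition is sound in spirit, but the burn-in step cannot be justified from the stated hypotheses with the tools you invoke.

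Concretely: (i) to convert ``the conditional law after burn-in is close to $\pi^{\otimes 2}$'' into ``$\E(\tau^*\mid\cdot)=\M(1+o(1))$'' you need $\ell_{\infty,\pi}$-type control (expected hitting times are not bounded functionals, so TV or $\chi^2$ closeness alone does not suffice), and Corollary \ref{linftyctl} delivers that only after time of order $\rel\log n$, while the hypotheses give only $t_n\gg\rel$. Your proposed repair assumes $\rel\log n\ll\M$, justified by ``$\M=\tfrac n2(1+o(1))$'' --- but that conflates $\M=\E_{\pi,\pi}\RM$ with the exit-measure quantity $\E_{\U,\tilde\U}\RM=\tfrac n2(1-\tfrac1n)$ coming from \eqref{kac2}. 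The proposition explicitly allows $\M\gg n$, equivalently $\alpha_{t_n}\to 0$: for instance two-dimensional tori, or the $n\times n\times\lceil\log n\rceil$ torus discussed in Section \ref{ss:assumptions}, have $\M/\rel$ growing only logarithmically in the number of vertices, so $\rel\log n\asymp\M$ and your slack disappears. (ii) The cross-term bound via Lemma \ref{meetprob} produces a term of order $t_n/n$, which need not be $o(1)$ since only $t_n\ll\M$ is assumed and $\M$ may be much larger than $n$; moreover you must divide that unconditional bound by $\P(\RM>t_n)=\alpha_{t_n}$, which may itself tend to $0$. Both defects disappear if, instead of the burn-in machinery, you use the density identity $\alpha_t=\tfrac n2 f(t)$ from \eqref{kac1} together with \eqref{ABdensity}; that is exactly the paper's argument, and it covers the full regime $\rel\ll t_n\ll\M$ with no extra assumptions.
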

\begin{proof}[Proof of Proposition \ref{tmeetandn}]
	Proposition \ref{tmeetandn} follows directly from equation \eqref{kac1} and 
	equation \eqref{ABdensity}. Indeed, as we mentioned in the beginning of Section \ref{s:finitetrans}, by transitivity, for all $x\in V$,
	\[
	\P_{x,\nu_x}(\RM>t)=\frac{n}{2}f(t).
	\]
	Let $\rel(\r^2)$ be the relaxation time for the product chain of two independent copies of $(V,\r)$. The definition of relaxation time implies that $\rel(\r^2)= \rel(\r)$. Using \eqref{ABdensity} we get
	\begin{equation}\label{taumeet>t}
		\frac{n}{2\M} \left(1-\frac{2\rel(\r)+t}{\M} \right)\leq
		\P_{x,\nu_x}(\RM>t) \leq \frac{n}{2\M}\left(1+\frac{\rel(\r)}{2t}\right).
	\end{equation}
	where we have used the definition  $\M=\E_{\pi,\pi}(\RM)$. For a sequence of Markov chains $(V_n,\r_n)$ such that $\rel(\r_n)\ll t_n \ll \M(\r_n)$, we have
	\[
	\lim_{n\to\infty} \left( 1-\frac{2\rel(\r_n)+t_n}{\M(\r_n)}\right)=\lim_{n\to\infty} \left(
	1+\frac{\rel(\r_n)}{2t_n}\right)=1.
	\]
	Combining  this with equation \eqref{taumeet>t} we get 
	\[
	\lim_{n\to\infty} \frac{2\P_{x,\nu_x}(\RM\geq t_n)\M(\r_n) }{n}=1,
	\]
	as desired. 
\end{proof}

\subsection{General transitive Markov chains}\label{pftrans}
Let $(V_n,\r_n)$ be a sequence of transitive chains 
such that 
$\M(\r_n)/n$ is bounded from above and $\rel(\r_n)\ll \M(\r_n)$.
Let $t_n$ be any sequence of times satisfying 
$\rel(\r_n)\ll t_n\ll \M(\r_n)$. We will show that the conditions in Proposition \ref{p:gengraph} hold for any $\ep>0$ when $n$ is sufficiently large.  

Recall the definition of $H(t)$ and $G(t,\Delta)$ in equation \eqref{defofH} and \eqref{G(t)} respectively. Since the graph is transitive, $H(t)=1$ and $G(t,\Delta)=\rel\Delta$. Also recall that $\t$ is $\Delta$-good if $\Delta_{\min}(\t)> \rel \Delta$. Note by  fact \eqref{relandrmax} we have $\rel(\r_n)\geq 1/2$ (since $r_{\max}=1$) so
$t_n\gg 1$.
Theorem \ref{t:upperbound} and equation \eqref{integralofp} 
together imply that there exists a universal constant $K'$ such that
\[
\sup_{x\in V_n} P_{s}(x)\frac{ns}{\M(\r_n)}
\leq K' \mbox{ for all } 0\leq s\leq t_n.
\]
Thus the second condition of Proposition \ref{p:gengraph} holds. To verify the first condition, we fix any $\ep>0$, let $K>0$ 
and $\delta$ be given by the statement of Proposition \ref{p:gengraph}.  By transitivity  we have $e_k(t)=0$ for all $k$ and $t$, which gives $E_3=0$. 
Since $\M(\r_n)/n$ is
bounded above, $\Delta$ (defined in equation \eqref{defDelta}) is also a bounded number as $n\to\infty$. 
Hence for $n$
large enough we have
\[
E_1=\left(\frac{\M(\r_n)}{n}\right)^K \Delta \frac{\rel(\r_n)}{t_n} \leq \frac{\delta}{3},
\]
and 
\[
E_4=\left(\frac{\M(\r_n)}{n}\right)^K \frac{1}{t_n}\leq \frac{\delta}{3},
\]
since $t_n\gg \rel(\r_n)$, $t_n\gg 1$ and  $\M(\r_n)/n$ stays bounded as $n\to\infty$. For $E_2$, we have
\[
E_2=\left(\frac{\M(\r_n)}{n}\right)^K \frac{t_n}{n} \leq \frac{\delta}{3},
\]
since $t_n\ll \M(\r_n)$.
Thus for $n$ large we have
\[
E_1+E_2+E_3+E_4\leq \frac{\delta}{3}+\frac{\delta}{3}+0+\frac{\delta}{3}\leq \delta.
\]
Thus both conditions of Proposition \ref{p:gengraph} hold and consequently, for any $\ep>0$, when $n$ is large enough we have
\begin{equation}
	\abs{P_{t_n}-\frac{2\M(\r_n)}{nt_n}}\leq \ep \frac{\M(\r_n)}{nt_n}.
\end{equation}
This implies 
\[
\lim_{n\to\infty}  \abs{ \frac{nt_n}{2\M(\r_n)} P_{t_n}-1}=0.
\]
By Proposition \ref{tmeetandn}, we also get
\[
\lim_{n\to\infty}\abs{t_n \alpha_{t_n}P_{t_n}-1}=0. 
\]
This completes the proof of Theorem \ref{transgraph}. 

\subsection{Uniformly transient transitive Markov chains}
For uniformly transient Markov chains we have $\rel(\r_n)\ll \M(\r_n)$ and $\M(\r_n) \asymp n$ (we include a proof of this fact in Proposition \ref{p:treluniform}). If $\rel(\r_n)$ is bounded when $n$ goes to infinity, then Theorem \ref{transgraph} already implies Theorem \ref{uniformtran}. Thus we only need to consider the case when $\rel(\r_n)$ goes to infinity. As in the proof of Theorem \ref{transgraph} we will establish a proposition giving conditions under which mean-field behavior occurs  for transitive chains. 
To this end we 
first upper bound $g(\t,I_k)-h(\t,I_k)$ (see Lemma \ref{l:goodset:tran} below). Then we control the integral of $h(\t,I_k)$ using estimates for $g(\t,I_k)$.

In this section the `good' set is simply defined to be $\{\t:\Delta_{\min}(\t)>\Delta\}$ (while previously we used $\{\t:\Delta_{\min}(\t)>G(t,\Delta)=\rel \Delta\}$ as our $\Delta$-good set). Recall the definition of the branching structure in Section \ref{ss:branchstruc}.
We now state a lemma that controls the meeting probability, from which we will deduce Lemma \ref{l:goodset:tran}.  

\begin{lemma}\label{meetuniformtrans}
	There exists an absolute constant $C$ such that the following two statements hold for all transitive chains $(V,\r)$ with $\abs{V}=n$.
	For any $1<\Delta<\rel/4$ and $t>\Delta$ we have 
	\begin{equation}\label{meetw}
		\begin{split}
			\max_{x,y\in V}\P_{x,y}\left(\exists t'\in [\Delta,t] \text{ such that } W_x(t')=W_y(t')\right) \leq C \left(\int_{2\Delta}^{\rel} p_{s}(z,z)\mathrm{d} s+\frac{t}{n}\right),
		\end{split}
	\end{equation}
	where $W_x,W_y$ are two independent random walks starting from $x$ and $y$ and $z$ is any element of $V$. 
	Moreover, suppose $\ell_1<i_{\ell_2}$, then  we  have
	\begin{equation}\label{meetgamma}
		\begin{split}
			&\max_{x,y}\P(\exists t'\in [t_{\ell_2},t_{\ell_2+1}] \text{ such that } 
			\gamma_{\ell_1}(t')=\gamma_{\ell_2}(t')
			|\gamma_{\ell_1}(t_{\ell_2-1})=x,\gamma_{i_{\ell_2}}(t_{\ell_2-1})=y)\\
			\leq &C \left(\int_{2\Delta}^{\rel} p_{s}(x,x)\mathrm{d} s+\frac{t}{n}\right). 
		\end{split}
	\end{equation}
\end{lemma}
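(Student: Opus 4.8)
\textbf{Proof proposal for Lemma \ref{meetuniformtrans}.}

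The plan is to prove \eqref{meetw} by a first-moment / occupation-time argument, and then deduce \eqref{meetgamma} from \eqref{meetw} by the same kind of ``one extra jump'' comparison used in the proof of Lemma \ref{l:approxhbyg}. Consider two independent walks $W_x, W_y$ started from $x,y$, and let $Z(t') := $ the amount of Lebesgue time up to $t'$ during which $W_x$ and $W_y$ occupy the same vertex. Let $T := \inf\{t' \ge \Delta : W_x(t')=W_y(t')\}$ be their first meeting time after $\Delta$. As in the proofs in Section \ref{s:generalbounds} and Lemma \ref{meetprob}, I would write
\[
\P_{x,y}(T \le t) = \frac{\E\big(Z(T + S) - Z(T)\big)}{\E\big(Z(T+S)-Z(T) \mid T \le t\big)} \le \frac{\E Z(t+S)}{\inf_{z,\,s\le t}\E_{z,z}(Z(s+S)-Z(s))}
\]
for a suitable fixed length $S$; taking $S = 2\Delta$ the denominator is $\ge \min_z \int_0^{2\Delta} p_{2u}(z,z)\,\mathrm{d}u \gtrsim \Delta \wedge 1 \asymp \Delta$ (using $\Delta > 1$ and $r_{\max}=1$, exactly as in Lemma \ref{meetprob}), while by transitivity and \eqref{maxxyp} the numerator satisfies $\E Z(t+2\Delta) = \int_0^{t+2\Delta} p_{2u}(z,z)\,\mathrm{d}u \le \int_0^{2\Delta} p_{2u}(z,z)\,\mathrm{d}u + \int_{2\Delta}^{\rel} p_{2u}(z,z)\,\mathrm{d}u + \int_{\rel}^{t+2\Delta} p_{2u}(z,z)\,\mathrm{d}u$. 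The first of these three pieces is $O(\Delta)$, and using \eqref{integralofp} together with $\Delta < \rel/4$ the last piece is $\lesssim (t+\M)/n$; if one only wants a bound in terms of $t/n$ one should instead peel the tail in windows of length $\rel$ using the Poincaré contraction \eqref{poincare} (so that $\int_{\rel}^{\infty} (p_{2u}(z,z) - 1/n)\,\mathrm{d}u \lesssim \int_0^{\rel} p_{2u}(z,z)\,\mathrm{d}u \cdot \sum_{j\ge 1} e^{-2j}$, which is $O(1)$ and hence absorbed, leaving the genuinely linear-in-$t$ term $t/n$). Dividing, one gets $\P_{x,y}(T\le t) \lesssim \frac1\Delta\big(\int_{2\Delta}^{\rel} p_{2u}(z,z)\,\mathrm{d}u + \Delta + t/n\big)$; here I should double-check the intended normalization — the cleanest route to the exact form stated, with no $1/\Delta$ and no additive constant, is to bound the numerator's contribution from $[0,2\Delta]$ against the denominator directly (their ratio is $O(1)$ since both are $\Theta(\Delta)$), so that it contributes an $O(1)$ term, and then re-examine whether the lemma as used tolerates that $O(1)$; in the application $\Delta \to \infty$, so the honest statement is $\P_{x,y}(T\le t)\le C(\int_{2\Delta}^{\rel}p_s(z,z)\mathrm{d}s + t/n) + o(1)$, and I would adjust the constant/error bookkeeping to match how \eqref{meetw} is invoked in Lemma \ref{l:goodset:tran}.

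For \eqref{meetgamma}, recall that under the branching structure $\gamma_{\ell_1}$ and $\gamma_{i_{\ell_2}}$ evolve as independent random walks on $[t_{\ell_2-1}, t_{\ell_2}]$, at time $t_{\ell_2}$ the particle $\gamma_{\ell_2}$ is born at a vertex sampled from $\nu_{\gamma_{i_{\ell_2}}(t_{\ell_2})}$ (one extra step from $\gamma_{i_{\ell_2}}$'s current location), and then $\gamma_{\ell_1}, \gamma_{\ell_2}$ run independently on $[t_{\ell_2}, t_{\ell_2+1}]$. Conditioning on $\gamma_{\ell_1}(t_{\ell_2-1})=x$, $\gamma_{i_{\ell_2}}(t_{\ell_2-1})=y$, the probability that $\gamma_{\ell_1}$ and $\gamma_{\ell_2}$ ever agree on $[t_{\ell_2}, t_{\ell_2+1}]$ is, writing $\hat p$ for the transition kernel of a walk that makes one forced jump (as in \eqref{defofhatp}) and using Claim \ref{cl1} (here $R=1$ so the constant is a genuine absolute constant) to bound $\hat p_s(y,\cdot)$ by $10\, p_{s+1}(y,\cdot)$, at most the expected collision time
\[
10\max_{x,y}\int_{t_{\ell_2}-t_{\ell_2-1}}^{\,t_{\ell_2+1}-t_{\ell_2-1}} \sum_w p_s(x,w)\, p_{s+1}(y,w)\,\mathrm{d}s\ \Big/\ \Big(\text{lower bound on } \E(\text{collision time after first meeting})\Big),
\]
and the same occupation-time ratio as above applies, with $\max_{x,y}$ replaced by $\max_x$ via \eqref{maxxyp} and \eqref{l:poincareineq}, and with the lower range of integration $t_{\ell_2}-t_{\ell_2-1} \ge \Delta_{\min}(\t) > \Delta$ providing exactly the $\int_{2\Delta}^{\rel}$ cutoff; the restriction $\ell_1 < i_{\ell_2}$ guarantees that before $t_{\ell_2}$ the two relevant trajectories are independent, so the argument applies verbatim.

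The main obstacle I anticipate is the bookkeeping of the three regimes of the occupation-time integral — the short range $[0,2\Delta]$, the ``relaxation window'' $[2\Delta,\rel]$, and the tail $[\rel,\infty)$ — and getting the final bound into precisely the stated form $C(\int_{2\Delta}^{\rel} p_s(z,z)\,\mathrm{d}s + t/n)$ with an \emph{absolute} constant. Controlling the tail requires iterating the Poincaré inequality \eqref{poincare}/\eqref{tvd} over length-$\rel$ blocks, which is routine but must be done carefully to produce the clean $t/n$ term (the geometric series of the exponential factors must be summed and absorbed into $C$, and the $1/n$ equilibrium mass of $p_s(z,z)$ must be handled separately since $\int_{\rel}^{t} 1/n\,\mathrm{d}s = (t-\rel)/n$ is genuinely the source of the $t/n$ term). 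The step using Claim \ref{cl1} to pass from the forced-jump kernel $\hat p$ to an ordinary kernel at a slightly shifted time is exactly as in the proof of Lemma \ref{l:approxhbyg}, so I would cite that argument rather than reproduce it.
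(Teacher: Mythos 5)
Your overall skeleton---an occupation-time (first-moment) bound for \eqref{meetw}, a block decomposition of the tail via the Poincar\'e inequality to produce the $t/n$ term, and the forced-jump kernel comparison of Claim \ref{cl1} to deduce \eqref{meetgamma}---is the same as the paper's. But as written the argument for \eqref{meetw} does not prove the stated inequality, and the place where it fails is exactly the point you flag and then leave unresolved. Two concrete problems. First, your numerator is $\E Z(t+2\Delta)=\int_0^{t+2\Delta}p_{2u}(z,z)\,\mathrm{d}u$, i.e.\ you let the occupation-time integral start at $0$. Since the meeting you care about occurs after time $\Delta$, the relevant expected occupation time is only over $[\Delta,t+S]$, namely $\int_\Delta^{t+S}p_{2u}(x,y)\,\mathrm{d}u$; starting the integral at $\Delta$ is precisely what produces the lower limit $2\Delta$ in the statement and removes any $[0,2\Delta]$ contribution. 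Keeping the $[0,2\Delta]$ mass forces you to divide it out, and your proposed normalization---a denominator of order $\Delta$---is wrong: as in \eqref{intmin} (with $r_{\max}=1$) one only gets $\min_z\int_0^{2\Delta}p_{2u}(z,z)\,\mathrm{d}u\gtrsim 2\Delta\wedge 1\asymp 1$, and for transient-like chains this integral genuinely stays bounded, so it is not $\Theta(\Delta)$. Second, even granting your normalization, you end with an additive $O(1)$ term and suggest tolerating it as an ``$o(1)$'' because $\Delta\to\infty$ in the application. That cannot be tolerated: an additive constant makes \eqref{meetw} vacuous (the left side is a probability), and in the application (Lemma \ref{l:goodset:tran}, hence Proposition \ref{p:uniftrans} and Theorem \ref{uniformtran}) the whole point is that the right-hand side is small, uniformly over a uniformly transient family, once $\Delta$ is large; with an extra $O(1)$ the resulting bound on $g(\t,I_k)-h(\t,I_k)$ is of the same order as $g(\t,I_k)$ itself and the refinement collapses. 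The same leak appears in your tail estimate, where you compare $\int_{\rel}^{\infty}(p_{2u}(z,z)-1/n)\,\mathrm{d}u$ to $\int_0^{\rel}p_{2u}(z,z)\,\mathrm{d}u$ and call it ``$O(1)$ and absorbed''---that integral is not uniformly bounded over all transitive chains (by \eqref{integralofp} it can be of order $\M/n$), and in any case the comparison integral must start at $\Delta$, not $0$.

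The repair is the paper's route and is simpler than what you set up: take the occupation window $[\Delta,t+1]$ (so $S=1$) and note that once the two walkers meet they sit together for expected time at least $e^{-2}$ because both have unit jump rate; so the denominator is the absolute constant $e^{-2}$ and the bound is $e^{2}\int_\Delta^{t+1}p_{2s}(x,y)\,\mathrm{d}s$. Splitting off the equilibrium mass $1/n$ (giving $(t+1)/n$) and decomposing the rest into blocks of length $\rel/2-\Delta$ via \eqref{poincare}, each block being at most $e^{-(\ell-1)/2}\int_\Delta^{\rel/2}p_{2s}(x,x)\,\mathrm{d}s$ thanks to $\rel>4\Delta$ and transitivity, yields exactly $C\left(\int_{2\Delta}^{\rel}p_s(x,x)\,\mathrm{d}s+t/n\right)$ with an absolute constant. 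Your treatment of \eqref{meetgamma} (independence before $t_{\ell_2}$ guaranteed by $\ell_1<i_{\ell_2}$, Claim \ref{cl1} with $R=1$, the time-shifted kernel $p_{s+1}$, and passing from $\max_{x,y}$ to $\max_x$ via \eqref{maxxyp}) matches the paper and goes through once \eqref{meetw} is established in this corrected form.
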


\begin{proof}
	We start with the first statement. Note that $r(x)=1$ for all $x$. Define $N(x,y,t)$ to be the Lebesgue measure of the set
	$$
	\{s\in [\Delta, t+1]: W_x(s)=W_y(s)\}.
	$$
	Since $r(x)=1$ for all $x$, we have that
	$$
	\E(N(x,y,t)|W_x(s)=W_y(s) \mbox{ for some }s\in [\Delta, t])\geq \exp(-2).
	$$
	This implies that 
	\begin{equation*}
		\begin{split}
			& \P_{x,y}(\exists t'\in [\Delta,t]\text{ such that } W_x(t')=W_y(t')) \\
			\leq &\frac{\E(N(x,y,t))}{\E(N(x,y,t)|W_x(s)=W_y(s) \mbox{ for some }s\in [\Delta, t])}\\
			\leq &\exp(2)\E(N(x,y,t))=\exp(2) \int_{\Delta}^{t+1} \left(\sum_{z'}  p_s(x,z')p_s(y,z')\right)\mathrm{d} s\\
			=& \exp(2) \int_{\Delta}^{t+1} \left(\sum_{z'} p_s(x,z')p_s(z',y)\right)\mathrm{d} s=\exp(2)
			\int_{\Delta}^{t+1}p_{2s}(x,y)\mathrm{d}s.
		\end{split}
	\end{equation*}
	It follows that
	\begin{equation}
		\begin{split}
			&    \max_{x,y\in V}\P_{x,y}(\exists t'\in [\Delta,t]\text{ such that } W_x(t')=W_y(t'))\\ 
			= &C  \max_{x,y\in V}\int_{\Delta}^{t+1} p_{2s}(x,y)\mathrm{d} s
			\leq  C \left( \max_{x,y\in V}\int_{\Delta}^{t+1} \left(p_{2s}(x,y)-\frac{1}{n}\right)\mathrm{d} s+\frac{t+1}{n}\right)\\
			\leq &C\left( \sum_{\ell=1}^{\lceil 2(t+1)/(\rel-2\Delta) \rceil}\max_{x,y\in V}\int_{\Delta+(\ell-1) (\rel/2-\Delta)}^{\Delta+\ell (\rel/2-\Delta)} \left(p_{2s}(x,y)-\frac{1}{n}\right)\mathrm{d} s+\frac{t+1}{n}\right).
		\end{split}
	\end{equation}
	For transitive graphs, $p_s(x,x)$ is independent of $x$. Thus using equation \eqref{poincare} and $\rel>4\Delta$, we get
	\begin{equation}
		\begin{split}
			\int_{\Delta+(\ell-1) (\rel/2-\Delta)}^{\Delta+\ell (\rel/2-\Delta)} \left(p_{2s}(x,y)-\frac{1}{n}\right)\mathrm{d}s \leq &
			\exp\left(-(\ell-1) \frac{\rel-2\Delta}{\rel}\right)\int_{\Delta}^{\rel/2} p_{2s}(x,x)\mathrm{d} s\\
			\leq& \exp\left(-\frac{\ell-1}{2}\right)\int_{\Delta}^{\rel/2} p_{2s}(x,x)\mathrm{d} s.
		\end{split}
	\end{equation}
	Summing $\ell$ from 1 to infinity we have
	\[
	\max_{x,y\in V}\P_{x,y}(\exists t'\in [\Delta,t] \text{ such that } W_x(t')=W_y(t')) \leq C \left(\int_{2\Delta}^{\rel} p_{s}(x,x)\mathrm{d} s+\frac{t}{n}\right),
	\]
	which proves the first assertion of Lemma \ref{meetuniformtrans} (note that we assume $t>1$ so that $t+1\leq 2t$). For the second assertion, 
	using equation \eqref{bdhatp} and the definition of the branching structure, the first line in equation \eqref{meetgamma} is bounded from above by 
	$$C \int_{\Delta}^{t+1} \sum_{z'}p_s(x,z')\hat{p}_s(y,z')\mathrm{d} s$$ where we
	recall that $\hat{p}_{\cdot}$ is a transition probability related to $p_{\cdot}$ defined in equation \eqref{defofhatp}. Using equation \eqref{bdhatp} we have
	\begin{equation}
		C \int_{\Delta}^t \sum_{z'}p_s(x,z')\hat{p}_s(y,z')\mathrm{d} s \leq 
		C' \int_{\Delta}^t \sum_{z'}p_s(x,z')p_{s+1}(y,z')\mathrm{d} s=C' \int_{2\Delta}^{t+1} p_{2s+1}(x,y)\mathrm{d} s,
	\end{equation}
	which can be bounded by $$
	C'' \left(\int_{2\Delta}^{\rel} p_{s}(x,x)\mathrm{d} s+\frac{t}{n}\right),
	$$
	by repeating the proof of first assertion of Lemma \ref{meetuniformtrans}. This completes the proof of Lemma \ref{meetuniformtrans}. 
\end{proof}

Using this lemma we can give a refined upper bound for $g(\t,\phi_k)-h(\t,\phi_k)$ for transitive chains.
\begin{lemma}\label{l:goodset:tran}
	Consider coalescing random walks on a transitive Markov chain. There exists some constant $C_k$ depending on $k$ s.t.
	for any $\t\in \R^k_{<, t}$ and $\Delta>1$ that satisfy $\Delta_{\min}(\t)>\rel>4\Delta$ we have
	\begin{equation}
		0\leq g(\t,I_k)-h(\t,I_k)\leq
		C_k \left(\sum_{i=2}^{k}
		\prod_{\ell=1,\ell \neq i-1,i}^{k} 
		q(t_{\ell+1}-t_{\ell})
		\right)  \left(\int_{2\Delta}^{\rel} p_{s}(x,x)\mathrm{d} s+\frac{t}{n}\right).
	\end{equation}
\end{lemma}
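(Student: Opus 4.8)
\textbf{Proof proposal for Lemma \ref{l:goodset:tran}.}

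The plan is to follow the same bookkeeping as in the proof of Lemma \ref{l:approxhbyg}, but exploiting transitivity to replace the crude factor $r_{\max}^k$ by the product of $q$-factors coming from \eqref{gtiktrans}. Recall that $g(\t,I_k)$ is the probability (weighted by $\prod r(A_\ell)=1$ since $r(x)=1$) that every newly born branch avoids its own parent on $[t_\ell,t_{\ell+1}]$, while $h(\t,I_k)$ additionally requires every pair of branches to avoid each other on the whole time interval $[t_{\ell_2},t]$ after the younger one is born. So $0\le g(\t,I_k)-h(\t,I_k)$, and the difference is bounded by $\sum_{0\le \ell_1<\ell_2\le k}$ of the probability of the event that (i) every branch avoids its parent on the relevant intervals, yet (ii) branches $\ell_1$ and $\ell_2$ collide at some time $t''\in[t_{\ell_2},t]$. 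As observed in the proof of Lemma \ref{l:approxhbyg}, on this event we must have $\ell_1\ne i_{\ell_2}$ (otherwise the parent-avoidance indicator for $\ell_2$ kills it), so in fact $\ell_1<i_{\ell_2}$ after relabelling, and the collision cannot happen during $[t_{\ell_2},t_{\ell_2+1}]$ unless it is the ``cross'' collision, or else it happens later in some $[t_j,t_{j+1}]$ with $j\ge \ell_2+1$.

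First I would, for each fixed pair $(\ell_1,\ell_2)$, split according to the branching subinterval $[t_j,t_{j+1}]$, $\ell_2\le j\le k$, in which the first collision between $\gamma_{\ell_1}$ and $\gamma_{\ell_2}$ occurs. On that event all the ``parent'' indicators for branches $m\ne i_{j+1},\dots$ that are born and die in disjoint later intervals are unconstrained by this collision, and — crucially, using the Markov property at the branching times together with the transitivity identity \eqref{q}, exactly as in \eqref{gtiktrans} — each such branch contributes an independent factor $q(t_{m+1}-t_m)$. After extracting these independent $q$-factors one is left with the probability of a two-walk collision event inside a single interval of length at most $t$, starting from essentially stationary positions (via Lemma \ref{maxgamma_t(x)}, or more simply just from the worst-case starting pair), while having avoided each other for time $\Delta_{\min}(\t)>\rel>4\Delta$ beforehand. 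That last probability is exactly what Lemma \ref{meetuniformtrans} bounds: by \eqref{meetw} (for a genuine collision within $[t_j,t_{j+1}]$, $j>\ell_2$, where one applies the Markov property at $t_j$ and then invokes \eqref{meetw} with ``$\Delta$'' there being $\ge\Delta$) and by \eqref{meetgamma} (for the cross collision within $[t_{\ell_2},t_{\ell_2+1}]$, where the parent $\gamma_{i_{\ell_2}}$ makes an extra jump and one needs the $\hat p$-comparison, which is already built into \eqref{meetgamma}), both of these are at most $C(\int_{2\Delta}^{\rel}p_s(x,x)\mathrm d s + t/n)$.

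The combinatorics then produce the claimed shape: when the colliding pair $(\ell_1,\ell_2)$ is replaced by a single ``merged'' branch, the surviving $q$-factors are precisely $\prod_{\ell\ne i-1,i} q(t_{\ell+1}-t_\ell)$ for the appropriate index $i$ (the $i-1,i$ removal accounting for the two intervals absorbed by the colliding pair and its parent), and summing over the $O_k(1)$ choices of $(\ell_1,\ell_2)$ and of $j$ gives the factor $C_k\sum_{i=2}^k \prod_{\ell\ne i-1,i}q(t_{\ell+1}-t_\ell)$; here I would use the trivial bound $q(s)\le 1$ to absorb any mismatch between ``the natural merged $q$-product'' and the stated one, and the condition $\Delta_{\min}(\t)>\rel>4\Delta$ to legitimise every application of Lemma \ref{meetuniformtrans}. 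The main obstacle I anticipate is the careful accounting in the middle step: making rigorous that, conditioned on the first $\gamma_{\ell_1}$--$\gamma_{\ell_2}$ collision landing in a given interval and on all the parent-avoidance events for \emph{other} branches, those other branches really do factor out as independent $q$-terms — this requires invoking the strong Markov property at the successive branching times in the right order and being careful that the branch that ``adopts'' the merged particle ($\gamma_{i_{\ell_2}}$ resp.\ $\gamma_{i_{j}}$) is handled via \eqref{meetgamma} rather than \eqref{meetw}. Everything else (the stochastic domination of jump counts is not even needed here since rates are $1$, and the Poincaré/\eqref{poincare} inputs are already packaged inside Lemma \ref{meetuniformtrans}) is routine.
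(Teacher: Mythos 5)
Your proposal follows essentially the same route as the paper's proof: the same decomposition over the colliding pair $(\ell_1,\ell_2)$ and over the interval $[t_{\ell_3},t_{\ell_3+1}]$ containing their first collision, the same use of transitivity (as in \eqref{q} and \eqref{gtiktrans}) to peel off exact factors $q(t_{\ell+1}-t_\ell)$ for the branches not involved, and the same invocation of \eqref{meetgamma} for the case $\ell_3=\ell_2$ and \eqref{meetw} for $\ell_3>\ell_2$, with $q\le 1$ absorbing the bookkeeping into $C_k\sum_{i=2}^k\prod_{\ell\ne i-1,i}q(t_{\ell+1}-t_\ell)$. The only detail to adjust is that in the $\ell_3>\ell_2$ case the Markov property should be applied at $t_{\ell_3-1}$ rather than at $t_{\ell_3}$, so that the collision window begins at least $\Delta_{\min}(\t)>\rel>4\Delta$ after the conditioning time and \eqref{meetw} with parameter $\Delta<\rel/4$ indeed covers it — this is exactly how the paper conditions on the trajectories up to time $t_{\ell_3-1}$.
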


\begin{proof}
	Using equation \eqref{g-h} we get
	\begin{equation}
		\begin{split}
			0\leq& g(\t,I_k)-h(\t,I_k)\\
			\leq &
			\sum_{0\leq \ell_1 < \ell_2 \leq \ell_3 \leq k}
			\E\Big(\1[\{\forall 1 \leq \ell \leq k \text{ and } t' \in [t_\ell, t_{\ell+1}] \text{ we have }
			\gamma_{\ell}(t')\neq \gamma_{i_\ell}(t')\} 
			\\ & \cap \{
			\exists  t''\in [t_{\ell_3}, t_{\ell_3+1}] \text{ such that } \gamma_{\ell_1}(t'')=\gamma_{\ell_2}(t'')\}] \vphantom{\frac12}  \\
			&\times  \left(\1[\ell_1= i_{\ell_2}]+\1[\ell_1\neq i_{\ell_2} \text{ and } \forall t''' \in [t_{\ell_2-1},t_{\ell_2}] \text{ we have } \gamma_{\ell_1}(t''')\neq 
			\gamma_{i_{\ell_2}}(t''')]\right) \vphantom{\frac12} \Big).
		\end{split}
	\end{equation}
	Using equation \eqref{const} and 
	proceed in a similar fashion to \eqref{gtiktrans}, we see the summand on the right hand side is equal to (recall the equation of $q(t)$ in \eqref{q})
	\begin{equation}\label{pro1} 
		\begin{split}
			& \prod_{\ell=\ell_3+1}^k q(t_{\ell+1}-t_{\ell}) \E\Big( \1[\{\forall 1 \leq \ell \leq \ell_3 \mbox{ and } t' \in [t_\ell, t_{\ell+1}], \mbox{ we have }
			\gamma_{\ell}(t')\neq \gamma_{i_\ell}(t')\}\\
			& \cap \{
			\exists t''\in [t_{\ell_3}, t_{\ell_3+1}] \mbox{ such that } \gamma_{\ell_1}(t'')=\gamma_{\ell_2}(t'')\}] \vphantom{\frac12}   \\
			&\times \left(\1[\ell_1= i_{\ell_2}]+\1[\ell_1\neq i_{\ell_2} \mbox{ and } \forall t''' \in [t_{\ell_2-1},t_{\ell_2}], \mbox{ we have } \gamma_{\ell_1}(t''')\neq 
			\gamma_{i_{\ell_2}}(t''')] \right) \vphantom{\frac12} \Big).
		\end{split}
	\end{equation}
	
	Similar to the  proof of Lemma \ref{l:approxhbyg}, we consider two cases $\ell_3=\ell_2$ and $\ell_3>\ell_2$ separately. In the first case we must have $\ell_1\neq i_{\ell_2}$. Using the second statement in  Lemma \ref{meetuniformtrans} we have the following bound for the conditional probability
	\begin{equation*}
		\begin{split}
			&\P\left(\forall  t' \in [t_{\ell_3-1}, t_{\ell_3}],
			\gamma_{\ell_1}(t')\neq \gamma_{i_{\ell_2}}(t'),
			\exists t''\in [t_{\ell_3}, t_{\ell_3+1}], \gamma_{\ell_1}(t'')=\gamma_{\ell_2}(t'')\right. \\
			&\left. |\gamma_{\ell}(t''), 0\leq \ell \leq \ell_3-1,t_{\ell}\leq t''\leq t_{\ell_3-1} \right)
			\leq C \left(\int_{2\Delta}^{\rel} p_{s}(x,x)\mathrm{d} s+\frac{t}{n}\right). 
		\end{split}
	\end{equation*}
	Therefore, we can upper bound the quantity in \eqref{pro1} by 
	\begin{equation*}
		\begin{split}
			& C\E\left(
			\1[\forall 1 \leq \ell \leq \ell_3-1, t' \in [t_\ell, t_{\ell+1}],
			\gamma_{\ell}(t')\neq \gamma_{i_\ell}(t')
			\right)\left(\int_{2\Delta}^{\rel} p_{s}(x,x)\mathrm{d} s+\frac{t}{n}\right)\prod_{\ell=\ell_3+1}^k q(t_{\ell+1}-t_{\ell})\\
			=&C \left(\int_{2\Delta}^{\rel} p_{s}(x,x)\mathrm{d} s+\frac{t}{n}\right) \prod_{\ell=1}^{\ell_3-2}
			q(t_{\ell+1}-t_{\ell}) \prod_{\ell=\ell_3+1}^k q(t_{\ell+1}-t_{\ell}).
		\end{split}
	\end{equation*}
	In the second case $\ell_3>\ell_2$, we can also get an upper bound of
	$$C \left(\int_{2\Delta}^{\rel} p_{s}(x,x)\mathrm{d} s+\frac{t}{n}\right) \prod_{\ell=1,\ell \neq \ell_3,\ell_3-1}^{k}
	q(t_{\ell+1}-t_{\ell})$$ for a possibly different $C$.
	Hence Lemma \ref{l:goodset:tran} follows by summing over all possible $\ell_1,\ell_2,\ell_3$.
\end{proof}

Now we estimate the integral of $q(\t,I_k)$. Fix any $\Delta\in (1,\rel/4)$.  We split the domain of the integral into $\{\Delta_{\min}(\t)>\Delta\}$ and 
$\{\Delta_{\min}(\t)\leq \Delta\}$. 
On the set $\{\Delta_{\min}(\t)\leq \Delta\}$ we simply use the bound $q(\t,I_k)\leq 1$ since $r(x)=1$ for all $x$. For the set 
$\{\Delta_{\min}(\t)>\Delta\}$, Lemma \ref{l:goodset:tran}  implies that
\begin{equation}\label{g-huniftrans}
	0\leq \int_{\Delta_{\min}(\t)>\Delta}(g(\t,I_k)-h(\t,I_k))\mathrm{d} \t\leq C_k \left(\int_0^t q(s)\mathrm{d} s\right)^{k-2}t^2  \left(\int_{2\Delta}^{\rel} p_{s}(x,x)\mathrm{d} s+\frac{t}{n}\right).
\end{equation}
By definition of $q(t)$, for any $\Delta\leq t_1\leq t_2$, we have
\begin{equation}\label{ctlf1}
	\begin{split}
		0\leq q(t_1)-q(t_2)&\leq  \P(\exists t'\in [t_1,t_2], W_{\U}(t')=W_{\tilde{\U}}(t') )\\
		&\leq  \P(\exists t'\in [\min\{t_1,\rel/4\},t_2], W_{\U}(t')=W_{\tilde{\U}}(t') )\\
		&\leq  C\left(\int_{\min\{2t_1,\rel/2 \}}^{\rel} p_{s}(x,x)\mathrm{d} s+\frac{t_2}{n} \right),
	\end{split}
\end{equation}
where  $(W_{\U},W_{\tilde{\U}})$ is a pair of independent random walks starting from $(\U,\nu_{\U})$ and we have used equation \eqref{meetw} in the last inequality
(with the $\Delta$ there replaced by $\min\{2t_1,\rel/2\}$).
Now using \eqref{eq:densityfortwo} and \eqref{eq:fhattctl},
\begin{equation}\label{ctlf2}
	\frac{n}{2\M}\left(1-\frac{(2+\Delta)\rel}{\M}\right)
	\leq q(\Delta \rel) \leq \frac{n}{2\M}\left(1+\frac{1}{\Delta}\right).
\end{equation} 
Combining equations \eqref{ctlf1} and \eqref{ctlf2}, we see that for all $\Delta\leq s\leq t$,
\begin{equation}\label{ctlf4}
	\abs{q(s)-\frac{n}{2\M}}\leq  C\left(\frac{n}{\M}
	\left(\frac{1}{\Delta}+\frac{(2+\Delta)\rel}{\M}  \right)+\int_{2\Delta}^{\rel} p_s(x,x)\mathrm{d} s+\frac{t+\Delta \rel}{n}
	\right).
\end{equation}
Using \eqref{ctlf4} together with facts  $q(s)\leq 1$ for all $s$
and $n/(2\M)\leq 2$ (by equation \eqref{mrmax/n}), we get 
\begin{equation}\label{ctlf5}
	\begin{split}
		&  \abs{\int_0^t q(s)\mathrm{d} s-\frac{nt}{2\M}}\leq 
		\int_0^{\Delta} q(s)\mathrm{d} s+\frac{\Delta n}{\M}+
		\abs{\int_{\Delta}^t q(s)\mathrm{d} s-\frac{n(t-\Delta)}{2\M}}
		\\
		\leq &3\Delta+ Ct \left(\frac{n}{\M}
		\left(\frac{1}{\Delta}+\frac{(2+\Delta)\rel}{\M}  \right)+\int_{2\Delta}^{\rel} p_s(x,x)\mathrm{d} s+\frac{t+\Delta \rel}{n}
		\right).
	\end{split}
\end{equation}
It follows from \eqref{g-huniftrans} -- \eqref{ctlf5} that
\begin{equation}\label{g-h11}
	\begin{split}
		0\leq &\int_{\Delta_{\min}(\t)>\Delta}(g(\t,I_k)-h(\t,I_k))\mathrm{d} \t
		\leq Ct^2 \left(\int_{2\Delta}^{\rel} p_{s}(x,x)\mathrm{d} s+\frac{t}{n}\right)\\
		&\times \left(\Delta+ t \left(\frac{n}{\M}
		\left(1+\frac{1}{\Delta}+\frac{(2+\Delta)\rel}{\M}  \right)+\int_{2\Delta}^{\rel} p_s(x,x)\mathrm{d} s+\frac{t+\Delta \rel}{n}
		\right)\right)^{k-2}.
	\end{split}
\end{equation}
It remains to control the integral of $g(\t,I_k)(=q(\t,I_k))$. 
Using equation \eqref{ctlf4} together with the facts that $q(t,I_k)\leq 1$ and $n/(2\M)\leq 2$  we have
\begin{equation}\label{q-11}
	\begin{split}
		\abs{ q(\t,I_k)-\left(\frac{n}{2\M}\right)^{k}}=&
		\abs{\prod_{\ell=1}^k f(t_{\ell+1}-t_{\ell}) -\left(\frac{n}{2\M}\right)^k}\\
		\leq &
		C_k  \left(\frac{n}{\M}
		\left(\frac{1}{\Delta}+\frac{(2+\Delta)\rel}{\M}  \right)+\int_{2\Delta}^{\rel} p_s(x,x)\mathrm{d} s+\frac{t+\Delta \rel}{n}
		\right).
	\end{split}
\end{equation}

We can now ensemble above estimates to prove the following proposition, which is an analogue of Proposition \ref{p:gengraph} for transitive chains.
\begin{prop}  \label{p:uniftrans}
	For any $\ep>0$ and $K'>0$, there exists some $K\in \ZZ_+$ and $\delta>0$, such that the following is true for all $\delta' \leq \delta$.
	Take any transitive chain $(V,\r)$, with $n=|V|$. Take 
	\begin{equation}
		\Delta=\left(\frac{\M}{n}+1\right)^K+ \frac{1}{\delta'}.
	\end{equation}
	Define 
	\begin{equation}
		\begin{split}
			E_1&=\frac{\Delta \rel}{\M}\left(\frac{\M}{n}\right)^K, \quad 
			E_2=\left(\frac{\M}{n}\right)^K\int_{2\Delta}^{\rel} p_{s}(x,x)\mathrm{d} s,\\
			E_3&=\frac{t+\Delta \rel}{n} \left(\frac{\M}{n}\right)^K, \quad 
			E_4=\frac{\Delta}{t}  \left(\frac{\M}{n}\right)^K.
		\end{split}
	\end{equation}
	If the following three conditions hold
	\begin{enumerate}
		\item $\Delta<\rel/4$,
		\item  $E_1+E_2+E_3+E_4
		\leq \delta$,
		\item $P_s\frac{ns}{\M}\leq K'$ for $0\leq s\leq t$,
	\end{enumerate}
	then we have 
	\begin{equation}
		\abs{P_t-\frac{2\M}{nt}}\leq \ep \frac{\M}{nt}.
	\end{equation}
\end{prop}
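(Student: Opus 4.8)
The plan is to run the argument of Proposition \ref{p:gengraph} almost verbatim, but feeding into it the sharper transitivity-specific inputs established above --- Lemma \ref{l:goodset:tran} together with the bounds \eqref{ctlf5}, \eqref{q-11} and \eqref{g-h11} --- in place of the generic moment estimate of Lemma \ref{l:ntk-est}. The target is the pair of bounds
\begin{equation*}
\left|\E N_t-\frac{nt}{\M}\right|\le C_K\delta\,\frac{nt}{\M},\qquad
\left|\frac{\E N_t^k}{(\E N_t)^k}-\frac{(k+1)!}{2^k}\right|\le C_K\delta\quad(1\le k\le K),
\end{equation*}
that is, the analogue of \eqref{eq:509}. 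Once these are in hand, everything downstream --- passing from the moments to $N_t/\E N_t\xrightarrow{d}\mathrm{Gamma}(2,2)$ by the method of moments and Carleman's criterion, and bounding $\E\big((N_t/\E N_t)^{-1}\1[N_t/\E N_t<\hat\ep]\big)$ near zero via the Bramson--Griffeath-type computation, now with the third hypothesis $P_s\,ns/\M\le K'$ in the role of \eqref{secondcond} --- is word for word the corresponding portion of the proof of Proposition \ref{p:gengraph}, and delivers $|P_t-2\M/(nt)|\le\ep\,\M/(nt)$ through $P_t=\E(N_t^{-1})$ (Lemma \ref{P_t=E(N_tinverse)}).

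To establish the two displayed bounds, first note that since $r(x)=1$ for all $x$ one has $\E N_t=n\,\P_{\pi^{\otimes 2}}(\RM\le t)=2\int_0^t q(s)\,\mathrm{d} s$ by \eqref{eq:densityfortwo}, so \eqref{ctlf5} controls $|\E N_t-nt/\M|$; dividing by $nt/\M$, using $\M/n\gtrsim1$ (from \eqref{mrmax/n}) and $\Delta\le(\M/n+1)^K+1/\delta'$, each term on the right becomes a bounded-in-$K$ multiple of one of $E_1,E_2,E_3,E_4$ or of $\delta'$, hence is $\le C_K\delta$. For $k\ge2$, \eqref{moment} together with Lemma \ref{densityexpress} gives $\big|\E N_t^k-(k+1)!\sum_{I_k\in\Phi_k}\int_{\R^k_{<,t}}h(\t,I_k)\,\mathrm{d}\t\big|\le C_k\,t^{k-1}$ (using $r_{\max}=1$). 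I would then split the integral at $\Delta_{\min}(\t)=\Delta$: on $\{\Delta_{\min}(\t)\le\Delta\}$, whose Lebesgue measure is $\le k\Delta t^{k-1}$, use the crude bound $h(\t,I_k)\le q(\t,I_k)\le1$; on $\{\Delta_{\min}(\t)>\Delta\}$, write $h=q-(q-h)$ using the identity $g\equiv q$ (see \eqref{gtiktrans}), bound $\int_{\Delta_{\min}(\t)>\Delta}(q-h)\,\mathrm{d}\t$ by \eqref{g-h11} and $q(\t,I_k)$ by \eqref{q-11}. Since $|\Phi_k|=1\cdot2\cdots k=k!$, the main terms assemble to $(k+1)!\,k!\,(n/2\M)^k t^k/k!=(k+1)!\,(nt/2\M)^k$, which after division by $(\E N_t)^k\approx(nt/\M)^k$ is $(k+1)!/2^k$; every residual error, once multiplied by $(\M/(nt))^k$ with $k\le K$, again reduces to a constant times one of $E_1,\dots,E_4$ or $\delta'$. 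Choosing $K$ large and $\delta$ small depending only on $\ep$ and $K'$, exactly as in Proposition \ref{p:gengraph}, the second displayed bound follows.

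Condition (1), $\Delta<\rel/4$, enters only to license the inputs \eqref{ctlf5}, \eqref{q-11}, \eqref{g-h11} and Lemma \ref{l:goodset:tran}, all of which were derived under $\rel>4\Delta$; if $\rel$ stays bounded as $n\to\infty$ one may instead quote Theorem \ref{transgraph} directly, but the estimates above make no such case distinction necessary.

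I do not expect a new conceptual difficulty here: the analytic core --- the refined meeting-probability estimate of Lemma \ref{meetuniformtrans}, and the bounds \eqref{g-h11} and \eqref{q-11} that replace the bounds decaying like $\exp(-\Delta)$ (with threshold $\rel\Delta$) used for general chains --- is already complete. The one step that requires genuine care is the bookkeeping: verifying that every error term, after multiplication by the relevant power $(\M/(nt))^k$, collapses to a bounded-in-$K$ multiple of one of $E_1,\dots,E_4$ or of $\delta'$; in particular, that the crude bound $h\le1$ is invoked only on the thin slab $\{\Delta_{\min}(\t)\le\Delta\}$, whose volume $\le k\Delta t^{k-1}$ turns it into an $O(E_4)$ contribution, and that the count $|\Phi_k|=k!$ recovers the correct main term $(nt/2\M)^k$.
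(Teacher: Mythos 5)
Your proposal is correct and follows essentially the same route as the paper's own proof: the paper likewise splits $\int h(\t,I_k)\,\mathrm{d}\t$ at $\Delta_{\min}(\t)=\Delta$, uses $g\equiv q$ together with \eqref{g-h11} and \eqref{q-11} on the good set and the volume bound $k\Delta t^{k-1}$ with $q\le 1$ on the bad set to obtain the moment estimate \eqref{eq:intest}, and then repeats the argument of Proposition \ref{p:gengraph} (method of moments, Gamma limit, and the control of $N_t/\E N_t$ near zero via condition (3)), exactly as you outline. The only bookkeeping point to flag is that the term $(\M/n)^{k-1}/\Delta$ arising from the $\tfrac{n}{\M\Delta}$ piece of \eqref{q-11} is not literally a bounded multiple of one of $E_1,\dots,E_4$ or of $\delta'$, but becomes small from the choice $\Delta\ge(\M/n+1)^K+1/\delta'$ once $K$ is taken large, which your final ``choose $K$ large, $\delta$ small'' step accommodates.
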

\begin{proof}[Proof of Proposition \ref{p:uniftrans}]
	Using \eqref{g-h11}, \eqref{q-11} and the fact  $n/\M\leq 4$ we have 
	(recall  $\R^k_{<, t}= \{(t_1,\ldots, t_k):0<t_1<\ldots<t_k<t\}$)
	\begin{equation}\label{eq:intest}
		\begin{split}
			&\abs{\sum_{I_k\in \Phi_k}\int_{\R^{k}_{<, t}} h(\t,I_k)\mathrm{d} \t-\left(\frac{nt}{2\M}\right)^k  } \\
			\leq& \abs{\sum_{I_k\in \Phi_k}\int_{\R^{k}_{<, t}} q(\t,I_k)\mathrm{d} \t-\left(\frac{nt}{2\M}\right)^k  }+\abs{\sum_{I_k\in \Phi_k}\int_{\R^{k}_{<, t}} (h(\t,I_k)-q(\t,I_k))\mathrm{d}\t
			}  \\
			\leq  &   \int_{\Delta_{\min}(\t)>\Delta}
			\sum_{I_k\in \Phi_k}\abs{q(\t,I_k)-\left(\frac{n}{2\M}\right)^k  }\mathrm{d} \t  + \int_{\Delta_{\min}(\t)\leq \Delta} \sum_{I_k\in \Phi_k}q(\t,I_k)\mathrm{d} \t \\
			&+ k!    \left(\frac{n}{2\M}\right)^k \Delta t^{k-1}+
			\int_{\Delta_{\min}(\t)>\Delta}
			\sum_{I_k\in \Phi_k} \abs{ h(\t,I_k)-q(\t,I_k)  }\mathrm{d} \t \\    
			\leq & C_k \left[ t^k \left(\frac{n}{\M}
			\left(\frac{1}{\Delta}+\frac{(2+\Delta)\rel}{\M}  \right)+\int_{2\Delta}^{\rel} p_s(x,x)\mathrm{d} s+\frac{t+\Delta \rel}{n}
			\right)+ \Delta t^{k-1}
			\right.\\
			&\left. +\left(\Delta+ t \left(\frac{n}{\M}
			\left(1+\frac{1}{\Delta}+\frac{(2+\Delta)\rel}{\M}  \right)+\int_{2\Delta}^{\rel} p_s(x,x)\mathrm{d} s+\frac{t+\Delta \rel}{n} \right)  \right)^{k-2} \right.\\
			& \times \left. t^2 \left(\int_{2\Delta}^{\rel} p_{s}(x,x)\mathrm{d} s+\frac{t}{n}\right)
			\right].
		\end{split}
	\end{equation}
	Using this one can get a bound for $\abs{\E(N_t^k)-(k+1)!(\frac{nt}{\M})^{k}}$ (similar to the proof of Lemma \ref{l:ntk-est}). The proof of  Proposition \ref{p:uniftrans} follows from the same argument as the proof of Proposition \ref{p:gengraph} via Lemma \ref{l:ntk-est}. We omit the details here.
\end{proof}

\begin{proof}[Proof of Theorem \ref{uniformtran}]
	As before we will show that the conditions in Theorem \ref{uniformtran} imply conditions of Proposition \ref{p:uniftrans} for any fixed $\ep$ and large $n$. Let $\delta$ be given by Proposition \ref{p:uniftrans}.  Recall that we only need to deal with the case where $\rel(\r_n)\to\infty$ as $n\to\infty$. The third condition in Proposition \ref{p:uniftrans} is satisfied, as we showed in proof of Theorem \ref{transgraph}. The first condition also holds for large $n$ since $\M(\r_n)/n$ remains upper bounded (by uniform transience)  and $\rel(\r_n)\to\infty$. For the second condition, we note
	\begin{itemize}
		\item $E_1\leq \frac{\delta}{4}$ for large $n$ since $\rel(\r_n)\ll \M(\r_n)$;
		\item $E_2\leq \frac{\delta}{4}$ for small $\delta'$ and large $n$ by the definition of uniform transience;
		\item $E_3\leq \frac{\delta}{4}$ for large $n$  since we have $t_n\ll \M(\r_n)$ and $\rel(\r_n)\ll \M(\r_n)$ (by uniform transience);
		\item $E_4\leq \frac{\delta}{4}$ for large $n$ since $t_n$ goes to $\infty$. 
	\end{itemize}
	
	We also need to use the fact that $\M(\r_n)/n$ stays bounded to control $E_2, E_3$ and $E_4$. 
	Hence all  conditions of Proposition \ref{p:uniftrans} are satisfied for any $\ep>0$ by first picking a sufficiently small $\delta'$ and then making $n$ large enough.  
	Thus we have
	$$
	\lim_{n\to\infty}  \frac{nt_n}{2\M(\r_n)} \abs{  P_{t_n} -\frac{2\M(\r_n)}{nt_n}}=0,
	$$ as desired. To prove the second statement of Theorem \ref{uniformtran}, it suffices to show
	\[
	\lim_{n\to\infty}\abs{\alpha_{t_n}-\frac{n}{2\M(\r_n)}}=0.
	\]
	
	For a fixed $\Delta$, using equation \eqref{ctlf4} we have (recall $\alpha_{t_n}=q(t_n)$ by the definition of $q(t)$)
	\begin{equation}\label{q-n/2m}
		\begin{split}
			\limsup_{n\to\infty}    \abs{q(t_n)-\frac{n}{2\M(\r_n)}}\leq 
			C\left( \frac{1}{\Delta}+\limsup_{n\to\infty}\int_{2\Delta}^{\rel(\r_n)}p_s(x,x)\mathrm{d}s\right).
		\end{split}
	\end{equation}
	Since the l.h.s. of equation \eqref{q-n/2m} is independent of $\Delta$, we can send $\Delta$ to infinity to get
	\begin{equation*}
		\begin{split}
			\limsup_{n\to\infty}    \abs{q(t_n)-\frac{n}{2\M(\r_n)}}\leq \limsup_{\Delta\to\infty}
			C\left( \frac{1}{\Delta}+\limsup_{n\to\infty}\int_{2\Delta}^{\rel(\r_n)}p_s(x,x)\mathrm{d}s\right)=0,
		\end{split}
	\end{equation*}
	by the definition of uniform transience. This completes the proof of Theorem \ref{uniformtran}. 
\end{proof}

\subsection{Infinite transient transitive unimodular graphs}\label{s:infinitetrans}
The goal of this section is to prove Theorem \ref{infinitetrans}. 
We first recall \cite[Lemma 2.1]{foxall2018coalescing}, which shows that Lemma \ref{P_t=E(N_tinverse)} holds true for unimodular graph as well, provided that we define $N_t$ be the number of walkers that coalesced with the walker starting from $o$, the root of the unimdular graph, in the sense of the graphical representation of CRW. The strategy to control $\E(N_t^{-1})$ is similar to the framework for finite graphs. As in finite graphs, we 
write $$\E(N_t^{-1})=\E(N_t)^{-1}\E\left(
\frac{\E(N_t)}{N_t}\right).$$

We estimate $\E(N_t^{-1})$ in two steps. We first  show the convergence of $N_t/\E(N_t)$ to a Gamma distribution with density function $4xe^{-2x}\1[x\geq 0]$ using the method of moments. We will also determine the leading order of $\E(N_t)$ in this step.  We then show  the contribution to $\E(\E(N_t)/N_t)$ when $N_t/\E(N_t)$ is near $0$ is small. Once these two steps are completed Theorem \ref{infinitetrans} follows immediately. 

We let $\mathbf{X}(t):=(X_0(t),\ldots, X_k(t))$ where $X_0, X_1, \ldots, X_{k-1}, X_k$ be $k+1$ coalescing random walks.   Then we have 
\begin{equation}
	\begin{split}
		\E(N_t^{k})&=\sum_{o_1,\ldots, o_k\in V} \P(\C(X_0,X_1,\cdots, X_k)\leq t\mid 
		\mathbf{X}(0)=(o,o_1,\ldots, o_k)
		)\\
		&=\sum_{o',o_1,\ldots, o_k\in V}
		\P(\C(X_0,X_1,\cdots, X_k)\leq t, X_0(t)=o' \mid  \mathbf{X}(0)=(o,o_1,\ldots, o_k))\\
		&=\sum_{o',o_1,\ldots, o_k\in V}
		\P(\C(X_0,X_1,\cdots, X_k)\leq t, X_0(t)=o \mid  \mathbf{X}(0)=(o',o_1,\ldots, o_k)),
	\end{split}
\end{equation}
where in the last equality we have applied mass transport principle to the diagonally invariant function $F(x,y)$ defined by
$$
F(x,y):=\sum_{o_1,\ldots, o_k\in V}
\P(\C(X_0,X_1,\cdots, X_k)\leq t, X_0(t)=y
\mid  \mathbf{X}(0)=(x,o_1,\ldots, o_k)).
$$

Here $F(x,y)$ being `diagonally invariant' means $F(x,y)=F(\phi(x), \phi(y))$ for all $x,y\in \G$ and $\phi$ in the set of automorphism group of $\G$. See \cite[Chapter 8]{lyons2017probability}
for more details about mass transport principle.
By  the same argument as in the proof of Lemma \ref{l:reverdiffinit}  we get
\begin{equation}
	\begin{split}
		&\sum_{o',o_1,\ldots, o_k\in \G}
		\P(\C(X_0,X_1,\cdots, X_k)\leq t, X_0(t)=o \mid  \mathbf{X}(0)=(o',o_1,\ldots, o_k))\\
		&=(k+1)!\sum_{I_k \in \Phi_k} \int_{\R^{k}_{<, t}} h(\t, I_k)\mathrm{d}\t.
	\end{split}
\end{equation}

Here  $h(\t,I_k)$ is define using equation \eqref{cond_form} with $\gamma_0(0)=o$. Similarly we define $g(\t,I_k)$ and $q(\t,I_k)$ using \eqref{gt,phi_k} and \eqref{cond_form4}, respectively. 
As in the proof of Theorem \ref{uniformtran} we let the `good' set be $\{\t: \Delta_{\min}(\t)>\Delta\}$. On the good set, using the method of the proof of Lemma \ref{l:approxhbyg}, the difference $\abs{g(\t,I_k)-h(\t,I_k)}$ can be upper bounded by 
$C_k\int_{2\Delta}^{\infty}p_s(x,x)\mathrm{d} s$.
Let $X(s),s\geq 0$ and $Y(s),s\geq 0$ be two independent random walks.
We have the bound
$$
\abs{q(\t,I_k)-\P_{o,\nu_o}(X(s)\neq Y(s),\forall s\geq 0)^k}\leq k\P_{x,\nu_x}(\exists s\geq \Delta, X(s)=Y(s)) \leq Ck \int_{2\Delta}^{\infty}p_s(x,x)\mathrm{d} s
$$
on the good set. It follows that (note that $q(\t,I_k)=g(\t,I_k)$ for transitive chains)
\begin{equation*}
	\abs{ \int_{\R^k_{<.t}} h(\t,I_k)\mathrm{d}\t-\P_{o,\nu_o}(X(s)\neq Y(s),\forall s\geq 0)^k t^k/k!}\leq
	C_k\left(\left(\int_{2\Delta}^{\infty}p_s(o,o)\mathrm{d}s\right) t^k+ \Delta t^{k-1}\right).
\end{equation*}

Summing over all possible $I_k\in \Phi_k$ and using
Lemma \ref{densityexpress}
we get, for $\Delta, t\geq 1$ and $n$ large enough,
\begin{equation}\label{folner}
	\abs{\E(N_t^k)-\P_{o,\nu_o}(X(s)\neq Y(s),\forall s\geq 0)^k t^k (k+1)!} \leq  C_k\left(\left(\int_{2\Delta}^{\infty}p_s(o,o)\mathrm{d} s\right) t^k+ \Delta t^{k-1}\right).
\end{equation}

The transience of $\G$ implies 
$\int_{2\Delta}^{\infty}p_s(o,o)\mathrm{d} s$ converges to 0 as $\Delta$ goes to infinity. 
For any fixed $K$ and $\ep$, when $t$ is large enough, using \eqref{folner} we get
\begin{equation}\label{unimoment}
	\abs{\frac{\E(N_t^k)}{(\E(N_t))^k}-(k+1)!/2^k}\leq \ep,
\end{equation}
for $1\leq k\leq K$. This implies the convergence of $N_t/\E(N_t)$ to the Gamma  distribution with density $4x e^{-2x}\1[x\geq 0]$. It remains to show
$$\lim_{\delta\to 0}\limsup_{t\to\infty}
\E\left(
\frac{\E(N_t)}{N_t};\frac{N_t}{\E(N_t)} \leq \delta\right)=0.
$$
This can be proven directly by  using \cite[Lemma 2]{bramson1980asymptotics}, where we set the $f_t$ there to be $t$. Then $f_tP_t$ is uniformly bounded over $t$ by Theorem \ref{t:upperbound} and thus the condition of that lemma is satisfied. 
Combining the above two steps we have $$
\lim_{t\to\infty} tP_t(\G)=\lim_{t\to\infty} t\E(N_t^{-1})
=\lim_{t\to\infty}\frac{2t}{\E(N_t)}
=\frac{1}{\P_{o,\nu_o}
	(X(s)\neq Y(s),\forall s\geq 0)}.
$$
This completes  the proof of Theorem 
\ref{infinitetrans}.

\begin{appendix}

\section{Classical results on Markov chains}\label{as:markov}
We provide proofs to some classical results on Markov chains from Section \ref{ss:relaxmixtime}.

\subsection{The Poincar\'{e} inequality}
\begin{proof}[Proof of Lemma \ref{l:poincareineq}]
	Note that now the stationary distribution $\pi$ is uniform.
	Equation \eqref{tvd} is a direct consequence of equation \eqref{prepoincare} by noting that  $\sum_x \left(\mu_t(x)-\frac{1}{|V|} \right)^2=|V|\Var(p_{t} \mu)$. To prove equation \eqref{poincare},
	we first note that by Cauchy-Schwartz inequality and symmetry of $p_{t/2}$, we have
	\[p_{t}(x,y)^2=\left(\sum_z p_{t/2}(x,z)p_{t/2}(z,y)\right)^2 \le \sum_z p_{t/2}(x,z)^2\sum_z p_{t/2}(y,z)^2=p_{t}(x,x)p_{t}(y,y),\]
	and hence $\max_{x,y}p_{t}(x,y)=\max_x p_t(x,x)$ for all $t \ge 0$.
	Also, if $\mu$ is the Dirac measure at state $x$, then by the symmetry of $p_{t/2}$,
	\[
	p_{t}(x,x)=\sum_{y}p_{t/2}(x,y)p_{t/2}(y,x)=\sum_y \mu_{t/2}(y)^2=
	\left(\sum_y \left(\mu_{t/2}(y)-\frac{1}{|V|}\right) \right)^2 + \frac{1}{|V|}.\] Thus \eqref{poincare} follows from \eqref{tvd}. \end{proof}

\subsection{Mixing time and relaxation time}
\begin{proof}[Proof of Lemma \ref{l:boundH}]
	Assume $n=\abs{V}$.
	Since $p_s(x,x)\geq \exp(-\bar d s)$, we see for $t\leq 1/\bar d$
	$$
	\min_x \int_0^t p_{s}(x,x)\mathrm{d}s\geq t\exp(-1),
	$$
	while $$
	\max_x \int_0^t p_{s}(x,x)\mathrm{d}s\leq t.
	$$
	Hence it suffices to  prove that  for some constant $C$ depending on $\kappa_0$ and $\bar d$, we have 
	$$
	\max_x \int_0^n p_{s}(x,x)\mathrm{d}s\leq C.
	$$
	for all $x\in V$.
	Using equation \eqref{poincare}, we see that
	$$
	p_s(x,x)-\frac{1}{n}\leq \exp\left(-\frac{s}{\rel}\right).
	$$
	It follows that
	$$
	\int_0^n \left(p_s(x,x)-\frac{1}{n}\right)\mathrm{d}s\leq 
	\int_0^{\infty} \exp\left(-\frac{s}{\rel}\right)\mathrm{d}s\leq \rel.
	$$
	Hence we have $$
	\int_0^n p_s(x,x)\mathrm{d}s\leq 1+\rel\leq 1+\frac{2\bar d}{\kappa_0^2}, 
	$$
	as desired. 
\end{proof}

\subsection{Relations between Markov chain parameters}
\begin{proof}[Proof of Lemma \ref{walkparamaters}]
	To prove equation \eqref{relandrmax}, we see by definition $\rel=1/\lambda$  where $-\lambda$ is the largest non-zero eigenvalue of the $Q$-matrix associated with the transition rate $\r$ (i.e., $Q_{ij}=r_{i,j} $ for all $i\neq j$ and $Q_{i,i}=-r(i)$ for all $i$). 
	One can show the matrix $Q+2r_{\max}I$ is positive definite ($I$ is the identity matrix), implying $-\lambda\geq -2r_{\max}$. Hence $\rel\geq 1/(2r_{\max})$. 
	On the other hand, equation \eqref{mrmax/n} follows directly from
	Lemma 2 in \cite{aldous} applied to the product chain of two independent Markov chains by considering the diagonal set. 
	Finally equation \eqref{maxxyp} follows from the Cauchy-Schwartz inequality
	\begin{equation*}
		\begin{split}
			p_s(x,y)&=\sum_{z}p_{s/2}(x,z)p_{s/2}(z,y)\leq
			\sqrt{\left(\sum_z p_{s/2}(x,z)^2\right)\left(\sum_z p_{s/2}(z,y)^2\right)}\\
			&\leq \frac{1}{2}\left(\left(\sum_z p_{s/2}(x,z)^2\right)+
			\left(\sum_z p_{s/2}(z,y)^2\right)
			\right)
		\end{split}
	\end{equation*} and 
	\begin{align*}
		&\sum_z p_{s/2}(x,z)^2=\sum_{z}p_{s/2}(x,z)p_{s/2}(z,x)=p_s(x,x),\\
		&\sum_z p_{s/2}(z,y)^2=\sum_{z}p_{s/2}(y,z)p_{s/2}(z,y)=p_s(y,y),
	\end{align*}
	owing to the fact that $\r$ is symmetric (so $p$ is also symmetric).  
\end{proof}

\subsection{Bounds for meeting probability}
\begin{proof}[Proof of Lemma \ref{meetprob}]
	Denote by $q(x,y)$ the probability in question.
	Denote by $N_1(t,t+T,x,y)$ the time two independent random walks that starts from     $x$ and $y$ spend together between time $t$ and $t+T$ and by $N_2(t,z)$ the time two random walks both starting from $z$ spend together between time 0 and time $t$.
	It follows that
	\begin{equation}
		q(x,y)\leq \frac{\E(N_1(t,t+T,x,y))}{\E(N_1(T,t+T,x,y)  \mid  
			\mbox{a collision happens between } T \mbox{ and }t)  }. 
	\end{equation}
	Note that the denominator is bounded below by
	$$
	\min_{z}\E(N_2(z,t))\geq \min_z \int_0^T \sum_w p_s(z,w)^2\mathrm{d} s=\min_z \int_0^T
	p_{2s}(z,z)\mathrm{d} s.
	$$
	The numerator is bounded above by 
	$$
	\int_t^{t+T}\sum_w p_s(x,w)p_s(y,w)\mathrm{d} s\leq \max_{z,w} \int_t^{t+T} p_{2s}(z,w) \mathrm{d} s.
	$$
	The inequality \eqref{poincare} implies
	$$
	\max_{z,w}\int_0^{t+T} p_{2s}(z,w) \mathrm{d} s\leq \left(\sum_{i=1}^{\lceil  t/T\rceil}\exp\left(-\frac{iT}{\rel}\right) \right)
	\max_x \int_0^T p_{2s}(x,x)\mathrm{d} s+\frac{t}{n}.
	$$
	We claim that
	\begin{equation}\label{intmin}
		\min_z\int_0^T p_{2s}(z,z)\mathrm{d} s
		\geq \frac{1}{8}
		\left(T\wedge \frac{1}{r_{\max}}\right).
	\end{equation}
	To see that this is true, note $\min_z p_{2s}(z,z)\geq \exp(-2sr_{\max})$. If $T<1/r_{\max}$ then $\min_z p_{2s}(z,z)$  will be be at least  $e^{-2}$ for all $0\leq s\leq T$ and hence  
	the integral will be at least $T/8$.
	
	It follows that $q(x,y)$ is bounded above by
	$$
	\frac{\exp(-T/\rel)}{1-\exp(-T/\rel)}
	\frac{\max_{z} \int_0^{2T} p_{s}(z,z)\mathrm{d} s }{\min_{z} \int_0^{2T} p_s(z,z)\mathrm{d} s  }+\frac{8t}{n} \left(\frac{1}{T} \vee r_{\max}\right).
	$$
	Since $q(x,y)$ is trivially bounded above by $1$ we arrive at the form presented in the statement of Lemma \ref{meetprob}. 
\end{proof}

\subsection{Uniformly transient Markov chains}
\begin{proof}[Proof of Proposition \ref{p:treluniform}]
	Without loss of generality we may assume $\abs{V_n}=n$. 
	We first prove $\rel(\r_n)=o(\M(\r_n))$ by contradiction. Suppose $\rel(\r_n)\geq cn$ for some constant $c>0$ and all $n$.
	Using the spectral decomposition (see, e.g., \cite[Section 12.1]{levin2017markov}) and the fact that the stationary measure is the uniform measure in our case, we see that
	$$
	p_t^{(n)}(x,x)\geq 1/n
	$$
	for any $t\geq 0$. 
	Using this 
	and the fact $\M(\r_n)\geq n/4$ (by \eqref{mrmax/n}), we see that for any fixed $s$ and $n>s/c$, 
	\begin{equation}
		\max_{x,y\in V_n}\int_{s\wedge \rel(\r_n)}^{\rel(\r_n)}
		p_t^{(n)}(x,y)\mathrm{d}t \geq \int_{s}^{cn} \frac{1}{n}\mathrm{d}t\geq \frac{cn-s}{n}.
	\end{equation}
	Sending $n\to\infty$ and then $s\to\infty$, we get
	$$
	\lim_{s\to\infty}\limsup_{n\to\infty} \max_{x,y\in V_n}\int_{s\wedge \rel(\r_n)}^{\rel(\r_n)}
	p_t^{(n)}(x,y)\mathrm{d}t \geq \lim_{s\to\infty}c>0,
	$$
	which contradicts the definition of uniform transience.
	To prove $\M(\r_n)=\Theta(n)$ it suffices to show $\M(\r_n)=O(n)$ since the other direction is true by \eqref{mrmax/n}. 
	Again by using the spectral decomposition  we see for all $x$ and all $t,s\geq 0$
	$$
	0\leq p^{(n)}_{t+s}(x,x)-\pi(x)\leq  \exp(-s/\rel(\r_n))(p_{t}^{(n)}(x,x)-\pi(x)).
	$$
	This implies for any $j\geq 0$,
	\begin{equation*}
		0\leq \int_{(j+1)\rel}^{(j+2)\rel} ( p^{(n)}_{t}(x,x)-\pi(x))\mathrm{d}t \leq e^{-1}
		\int_{j\rel}^{(j+1)\rel} ( p^{(n)}_{t}(x,x)-\pi(x))\mathrm{d}t. 
	\end{equation*}
	It follows that
	\begin{equation}\label{utrans}
		\begin{split}
			&\int_0^{\infty} ( p^{(n)}_{t}(x,x)-\pi(x))\mathrm{d}t =
			\sum_{j=0}^{\infty}  \int_{j\rel}^{(j+1)\rel}(p_t(x,x)-\pi(x))\mathrm{d}t\\
			\leq &\left(\sum_{j=0}^{\infty}e^{-j}\right) \int_0^{\rel}
			( p^{(n)}_{t}(x,x)-\pi(x))\mathrm{d}t\leq 
			\frac{e}{e-1}  \int_0^{\rel}
			( p^{(n)}_{t}(x,x)-\pi(x))\mathrm{d}t.
		\end{split}
	\end{equation}
	The first line of \eqref{utrans} is equal to 
	$
	\pi(x)\E_{\pi}(T_x)
	$ by \cite[Proposition 10.26]{levin2017markov}. 
	On the other hand the
	condition of uniform transience implies the existence of a constant $C>0$ such that
	\begin{equation}\label{utrans2}
		\sup_{x\in V_n} \int_0^{\rel}
		( p^{(n)}_{t}(x,x)-\pi(x))\mathrm{d}t\leq C. 
	\end{equation}
	It follows from \eqref{utrans}, \eqref{utrans2} and the inequality $t_{\mathrm{hit}}\leq 2\max_{x\in V_n}\E_{\pi}(T_x)$ that
	$$
	t_{\mathrm{hit}}(\r_n)  \leq \frac{2Ce}{e-1}n
	$$
	since $\pi(x)=1/n$ in our case. Since $\M=t_{\mathrm{hit}}/2$ for transitive chains, we conclude that $\M(\r_n)=O(n)$, as desired. 
\end{proof}

	
	\section{Local weak convergence of random graphs}\label{finitetoinfinite}
	To explain the notion of local weak convergence we need a few definitions. 
	\begin{definition}
		For any graph $\G$, $v \in V$, and $\rho\in \ZZ_+$, let $N_{\rho}(v,\G)$ be the subgraph induced by $\{u:d(u,v)\leq \rho\}$, where $d$ is the graph distance.
	\end{definition}
	
	\begin{definition}
		We consider the space $\scG_*$ of all rooted graphs.
		We make it a metric space: for any $(\G_1, x_1), (\G_2, x_2) \in \scG_*$,
		their distance is defined to be
		$$
		d_*((\G_1, x_1), (\G_2, x_2)) := (1 + \rho_*)^{-1}
		$$
		where $\rho^*$ is the maximum integer such that 
		$ N_{\rho^*}(x_1,\G_1)$ is isomorphic to $ N_{\rho^*}(x_2,\G_2)$.
	\end{definition}
	
	\begin{definition}
		For a sequence of  finite (possibly random) graphs $\G_n$, make each of them a random element in the space $\scG_*$, by choosing a root $\U_n$ uniformly among all vertices of $\G_n$.
		The \emph{local weak limit}  of $(\G_n,\U_n)$ is defined as the weak limit of these random elements $(\G_n,\U_n)$ in $\scG_*$, under the topology given by the metric $d_*$. Equivalently, we say $(\G_n,\U_n)$ converges to $(\G,o)$ in the local weak sense if 
		for any function $\psi:\scG_* \to \R$ that is continuous under the metric $d_*$, we have
		$$
		\E_{\U_n}(\psi(\G_n,\U_n)) \mbox{ converges to } \E(\psi(\G,o)) \mbox{ in probability},
		$$
		where $\E_{\U_n}(\psi(\G_n,\U_n))$ indicates the expectation is only taken w.r.t. the randomness of $\U_n$ and  $\E(\psi(\G,o))$  is the expectation of $\psi(\G,o)$ taken w.r.t. the law of $(\G,o)$ in $\scG_*$.
	\end{definition}

	Let $\UGT(D)$ be the unidomular Galton-Watson tree as introduced before Theorem \ref{t:config}.

	\begin{lemma}\label{CMLWC}
		Let $\G_n\sim \CM_n(D)$. Let $\U_n$ be a uniformly chosen vertex of $\G_n$. Let $\G\sim \UGT(D)$ and $o$ be the root of $\G$. Then $(\G,o)$ is the local weak limit of 
		$(\G_n,\U_n)$ 
	\end{lemma}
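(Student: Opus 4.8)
\textbf{Proof proposal for Lemma \ref{CMLWC}.}

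The statement is the standard local weak convergence of the configuration model to the unimodular Galton--Watson tree, so the plan is to verify it by the method of moments on neighborhood counts, following the classical argument (see, e.g., \cite[Chapter 4]{vanrandomvol2} or the original work of Benjamini--Schramm and Aldous--Steele). First I would reduce the claim to a statement about finite neighborhoods: since the metric $d_*$ makes $\scG_*$ separable and the balls $N_\rho$ generate the topology, it suffices to show that for every fixed rooted graph $(H,x)$ and every fixed radius $\rho\in\ZZ_+$, the (random) empirical fraction
\[
\frac{1}{n}\sum_{v\in V(\G_n)}\1[N_\rho(v,\G_n)\cong (H,x)]
\]
converges in probability to $\P(N_\rho(o,\G)\cong (H,x))$, where $\G\sim\UGT(D)$. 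Equivalently, writing $\psi(\G,v):=\1[N_\rho(v,\G)\cong(H,x)]$ (a bounded $d_*$-continuous function), I must show $\E_{\U_n}(\psi(\G_n,\U_n))\to\E(\psi(\G,o))$ in probability; linear combinations of such indicators are dense enough (by an approximation argument using that $\psi$ depends only on a bounded-radius ball and each ball has bounded size, as $D$ has bounded support) to give the general continuous $\psi$.

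The key steps are then: (i) Compute the first moment $\E(\psi(\G_n,\U_n)) = \P(N_\rho(\U_n,\G_n)\cong(H,x))$ and show it converges to $\P(N_\rho(o,\G)\cong(H,x))$. This is the heart of the matter and uses the explicit pairing construction of $\CM_n(D)$: conditionally on the degree sequence $(d_1,\dots,d_n)$, one explores the ball of radius $\rho$ around the uniform root by revealing half-edge pairings one at a time; because the support of $D$ is contained in $[3,\bar d]$ and $\rho$ is fixed, only $O(1)$ half-edges are revealed, so each successive pairing is, up to an error $O(1/n)$, a fresh uniform pairing with an unused half-edge attached to a vertex whose degree is distributed as the size-biased law $D^*$ at every generation past the root and as $D$ at the root. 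Combined with the law of large numbers for the empirical degree distribution (which converges to $D$ since $d_1,\dots,d_n$ are i.i.d.\ $D$), this matches exactly the branching mechanism of $\UGT(D)$, giving convergence of the first moment. (ii) Compute the second moment and show $\E(\psi(\G_n,\U_n)^2)\to \P(N_\rho(o,\G)\cong(H,x))^2$; here $\psi(\G_n,\U_n)^2$ involves two independent uniform roots $\U_n,\U_n'$, and with high probability their $\rho$-balls are disjoint (again since $\rho$ is fixed and $\bar d<\infty$, the total size of the two explored neighborhoods is $O(1)\ll\sqrt n$), so the exploration around the second root is asymptotically independent of the first and contributes the same limiting probability. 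This yields $\Var(\E_{\U_n}\psi(\G_n,\U_n))\to 0$, hence convergence in probability. (iii) Pass from neighborhood indicators to arbitrary $d_*$-continuous $\psi$ by the density argument above, and finally note that conditioning on $\G_n$ being connected does not affect the limit, since $\P(\G_n\text{ connected})\to 1$ under the hypothesis that $D$ is supported in $[3,\bar d]$ (as recorded after Definition \ref{defn:configm}), so conditioning changes any probability by at most $o(1)$.

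I expect the main obstacle to be step (i), specifically the bookkeeping in the half-edge exploration: one has to argue carefully that the degree of each newly discovered vertex is asymptotically size-biased and that these degrees are asymptotically independent across the (boundedly many) vertices revealed, controlling the accumulating $O(1/n)$ errors from sampling-without-replacement of half-edges and from the conditioning on $\sum_i d_i$ being even. Everything else --- the second moment computation, the disjointness of two balls, the reduction to neighborhood counts, and removing the connectivity conditioning --- is routine given bounded degrees and a fixed radius. Since this is a known result, in the write-up I would likely state it with a reference to \cite{vanrandomvol2} and only sketch the exploration argument, emphasizing that the bounded-degree assumption is what keeps all explored neighborhoods of size $O(1)$ and thus makes the couplings to $\UGT(D)$ transparent.
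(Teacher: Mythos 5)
Your proposal is correct, and it is essentially the paper's approach: the paper does not prove this lemma at all but simply cites Theorem 2.11 of van der Hofstad's volume (\cite{van2017stochastic}), and your reduction to neighborhood counts plus the half-edge exploration with a first- and second-moment argument is exactly the standard proof behind that citation, with the bounded support of $D$, the even-sum conditioning, and the connectivity conditioning handled as you describe. One minor terminological slip: a newly discovered vertex has size-biased degree, while $D^*$ as defined in the paper is its offspring distribution (degree minus one), so "degree distributed as $D^*$" should read "number of further half-edges distributed as $D^*$"; this does not affect the argument.
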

	A proof of this fact can be found in, e.g., \cite[Theorem 2.11]{van2017stochastic}.
	The following proposition gives a connection between CRW on finite and infinite graphs. 
	\begin{prop}\label{local1}
		Suppose $(\G_n,\U_n)$ is a sequence of random rooted graphs with uniformly bounded degree 
		that converges in the local weak sense to $(\G,o)$. Consider CRW on $\G_n$ and $\G$ with unit edge rate. 
		Then for any fixed finite time $t$, 
		$P_{t}(\G_n)$ converges in distribution to $\E_{\G}(P_t(o))$,
		where $P_{t}(\G_n)$ is the density of CRW in $\G_n$  and $\E_{\G}(P_t(o))$ is the expectation of $P_t(o)$ under the law of $(\G,o)$.
	\end{prop}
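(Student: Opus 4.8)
The plan is to reduce the statement to the defining property of local weak convergence by exhibiting $P_t(v,\G):=\P(v\in\xi_t)$ as a uniform limit, over rooted graphs of degree at most a fixed bound $d_{\max}$, of bounded continuous functions on $\scG_*$. Fix $t$, and let $d_{\max}$ be the uniform degree bound; note that $\G$, being the local weak limit of graphs with degrees $\le d_{\max}$, also has degrees $\le d_{\max}$ almost surely. Recall that $P_t(\G_n)=\E_{\U_n}\big(P_t(\U_n,\G_n)\big)$ and that $\E_\G\big(P_t(o)\big)$ is a constant, so convergence in distribution to it is the same as convergence in probability.

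The key structural input is the graphical representation of the model: letting $W_u$ be the single random walk started at $u$ and driven by the Poisson clocks on the directed edges, coalescence does not change trajectories, so the CRW particle initially at $u$ sits at $W_u(t)$ at time $t$; hence $\{v\in\xi_t\}=\{\exists\,u:\ W_u(t)=v\}$. Since $W_u$ jumps at rate at most $d_{\max}$, the number of its jumps during $[0,t]$ is stochastically dominated by a $\mathrm{Poisson}(d_{\max}t)$ variable, and the number of vertices at graph distance $k$ from $v$ is at most $d_{\max}^{k}$. Therefore $\sum_{k>\rho}d_{\max}^{k}\,\P\big(\mathrm{Poisson}(d_{\max}t)\ge k\big)$, being the tail of a convergent series, tends to $0$ as $\rho\to\infty$; choosing $\rho=\rho(t,d_{\max},\varepsilon)$ accordingly, with probability at least $1-\varepsilon$ we have simultaneously (i) no $W_u$ with $d(u,v)>\rho/2$ reaches $v$ by time $t$, and (ii) no $W_u$ with $d(u,v)\le\rho/2$ leaves $N_{\rho}(v,\G)$ by time $t$.

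Now let $\tilde W_u$ denote the walk from $u$ that ignores the clocks on all edges not internal to $N_{\rho}(v,\G)$, and set
\[
\psi_{\rho}(\G,v):=\P\big(\exists\,u\in N_{\rho/2}(v,\G):\ \tilde W_u(t)=v\big).
\]
This quantity is a function of the isomorphism type of the rooted ball $\big(N_{\rho}(v,\G),v\big)$ only, hence is continuous (indeed locally constant) on $\scG_*$, and is bounded by $1$. Coupling $\{\tilde W_u\}$ and $\{W_u\}$ through the same clocks, events (i)--(ii) force $\{v\in\xi_t\}$ and $\{\exists\,u\in N_{\rho/2}(v,\G):\ \tilde W_u(t)=v\}$ to coincide, so $|P_t(v,\G)-\psi_{\rho}(\G,v)|\le\varepsilon$ uniformly over rooted graphs with degrees $\le d_{\max}$. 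Applying the definition of local weak convergence to the bounded continuous function $\psi_{\rho}$ gives $\E_{\U_n}\big(\psi_{\rho}(\G_n,\U_n)\big)\to\E_\G\big(\psi_{\rho}(\G,o)\big)$ in probability; combining this with $|P_t(\G_n)-\E_{\U_n}(\psi_{\rho}(\G_n,\U_n))|\le\varepsilon$ and $|\E_\G(P_t(o))-\E_\G(\psi_{\rho}(\G,o))|\le\varepsilon$ yields $\limsup_n\P\big(|P_t(\G_n)-\E_\G(P_t(o))|>3\varepsilon\big)=0$, and letting $\varepsilon\downarrow0$ finishes the proof.

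The main obstacle is the localization step, i.e.\ making precise that $P_t(v,\cdot)$ is approximately local: occupation of $v$ at time $t$ is a genuinely global functional (certifying that $v$ is \emph{un}occupied requires ruling out particles arriving from far away), so the whole argument rests on the finite--speed--of--propagation estimate for bounded--degree graphs above, together with the bookkeeping that makes $\psi_\rho$ literally a function of the rooted ball $\big(N_\rho(v,\G),v\big)$ (achieved by only using the clocks on edges internal to that ball); the coupling bound $|P_t(v,\G)-\psi_\rho(\G,v)|\le\varepsilon$ is then routine, and the remaining steps are purely formal once continuity and boundedness of $\psi_\rho$ are in hand.
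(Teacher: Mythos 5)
Your proof is correct and follows essentially the same route as the paper's: approximate the occupancy probability uniformly over bounded-degree rooted graphs by a ball-local, bounded continuous functional $\psi_\rho$ on $\scG_*$, apply the definition of local weak convergence to $\psi_\rho$, and then send $\rho\to\infty$, with the uniform error controlled by a finite-speed-of-propagation estimate that uses the degree bound. The only real difference is presentational: the paper localizes the root's opinion cluster after passing to the voter model by duality, whereas you localize directly in the CRW graphical representation via the identity $\{v\in\xi_t\}=\{\exists\,u:\ W_u(t)=v\}$ and clocks restricted to the ball — both hinge on the same bounded-degree speed estimate.
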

	\begin{proof}
		Using the duality between the voter model and CRW, it is equivalent to proving the convergence of the survival probability of the voter model.
		For $o\in \G$, we let $\psi(\G,o)$ be the probability that the initial opinion of $o$ survives to time $t$, i.e., $\P(\zeta^o_t\neq \eset)$   and similarly define $\psi(\G_n,\U_n)$. Then $P_t(\G_n)=\E_{\U_n}(\psi(\G_n,\U_n))$ and $P_t(o)=\psi(\G,o)$. We also define $\psi_{\rho}(\G,o)$ to be 
		$\P(\zeta^o_t \neq \eset, \zeta^o_s \cap N_{\rho}(o)^c =\eset, \forall 0\leq s\leq t)$ and similarly define 
		$\psi_{\rho}(\G_n,\U_n)$. Then $\psi_{\rho}$ is a continuous function in the space $\scG_*$ under the metric $d_*$. 
		Due to that the degree of $\G_n$ is uniformly bounded, the quantity $\abs{\psi_{\rho}(\G_n,\U_n)-\psi(\G_n,\U_n)}$ converges to $0$ as $\rho\to \infty$, uniformly in $n$. Similarly $\abs{\psi_{\rho}(\G,o)-\psi(\G,o)}$ also converges to $0$. Proposition \ref{local1} now follows from applying the definition of local weak convergence to the function $\psi_{\rho}$ and then  sending $\rho\to\infty$. 
	\end{proof}

\end{appendix}

\begin{acks}[Acknowledgments]
The authors are grateful to Rick Durrett for introducing the problem and sharing his perspectives.
We also thank anonymous referees for carefully reading this manuscript and providing many useful comments and suggestions.
The project was initiated in the workshop 2019
AMS-MRC: Stochastic Spatial Models. 
\end{acks}

\begin{funding}
	SL and LZ were provided 2019 AMS-MRC Collaboration Travel Funds, which enabled them a visit to Duke University, during which part of this work was completed. Both the workshop and the travel funds are supported by the National Science Foundation under Grant Number NSF DMS-1641020. JH is supported by NSERC grants.
	\end{funding}




\bibliographystyle{imsart-number}
\bibliography{bibliography}

\end{document}